\documentclass[11pt, a4paper]{amsart}
\usepackage[all]{xy}
\usepackage{iftex}
\ifPDFTeX
\usepackage[utf8]{inputenc}
\fi
\usepackage[T1]{fontenc}
\usepackage{stix2} 
\usepackage{graphicx}
\usepackage[backend=biber, style=alphabetic, sorting=nyt, maxnames=6]{biblatex}

\usepackage{mathtools}
\numberwithin{equation}{section}

\DeclarePairedDelimiter\norm{\lVert}{\rVert}
\DeclarePairedDelimiter\floor{\lfloor}{\rfloor}

\usepackage{esint}
\usepackage{algorithm, algorithmicx}
\usepackage{algpseudocode}
\usepackage{makecell}
\allowdisplaybreaks

\usepackage{amsbsy,
	amsopn,
	amscd, 
	amsxtra, 
	amsthm,
	verbatim}
\usepackage{upref}
\usepackage{xcolor}
\usepackage{enumitem}
\usepackage[colorlinks=true, linkcolor=blue, citecolor=magenta, urlcolor=blue]{hyperref}

\usepackage{thmtools}
\usepackage{caption}
\usepackage{subcaption}
\usepackage{booktabs}
\usepackage{longtable}
\usepackage[margin=1in]{geometry}
\usepackage[capitalise]{cleveref}

\newtheorem{lemma}{Lemma}[section]
\newtheorem{theorem}[lemma]{Theorem}
\newtheorem{proposition}[lemma]{Proposition}
\newtheorem*{proposition*}{Proposition}
\newtheorem{corollary}[lemma]{Corollary}
\newtheorem*{corollary*}{Corollary}
\newtheorem{definition}[lemma]{Definition}

\theoremstyle{remark}
\newtheorem{remark}[lemma]{Remark}
\newtheorem{example}[lemma]{Example}

\Crefname{theorem}{Theorem}{Theorems}
\Crefname{proposition}{Proposition}{Propositions}
\Crefname{lemma}{Lemma}{Lemmas}
\Crefname{corollary}{Corollary}{Corollaries}
\Crefname{definition}{Definition}{Definitions}
\Crefname{assumption}{Assumption}{Assumptions}
\Crefname{remark}{Remark}{Remarks}
\Crefname{example}{Example}{Examples}
\crefname{theorem}{Theorem}{Theorems}
\crefname{proposition}{Proposition}{Propositions}
\crefname{lemma}{Lemma}{Lemmas}
\crefname{corollary}{Corollary}{Corollaries}
\crefname{definition}{Definition}{Definitions}
\crefname{assumption}{Assumption}{Assumptions}
\crefname{remark}{Remark}{Remarks}
\crefname{example}{Example}{Examples}
\crefformat{equation}{(#2#1#3)}

\newcommand{\ud}{\,\mathrm{d}}
\newcommand{\R}{\mathbb{R}}

\newcommand{\mmd}{\mathrm{MMD}}

\newcommand{\dm}{|\nabla|}
\newcommand{\nab}{\nabla}

\newcommand{\XN}{X_N}
\newcommand{\Sc}{{\mathcal{S}}}

\newcommand{\vep}{\varepsilon}

\DeclareMathOperator{\supp}{supp}

\newcommand{\indic}{\mathbf{1}}
\let\div\relax
\DeclareMathOperator{\div}{\mathrm{div}}
\DeclareMathOperator{\sgn}{sgn}
\DeclareMathOperator{\Real}{Re}
 
\renewcommand{\P}{\mathcal{P}}

\newcommand{\dist}{\mathrm{dist}}
\newcommand{\diam}{\mathrm{diam}}

\title{Wasserstein gradient flows of Maximum Mean Discrepancy with energy kernels}
\author{Matthew Rosenzweig, Dejan Slep\v{c}ev and Lihan Wang}
\address{Matthew Rosenzweig, Department of Mathematical Sciences, Carnegie Mellon University, 7127 Wean Hall, 5000 Forbes Avenue, Pittsburgh, PA 15213, USA}
\email{mrosenz2@andrew.cmu.edu}
\address{Dejan Slep\v{c}ev, Department of Mathematical Sciences, Carnegie Mellon University, 7123 Wean Hall, 5000 Forbes Avenue, Pittsburgh, PA 15213, USA}
\email{slepcev@math.cmu.edu}
\address{Lihan Wang, Department of Mathematics, National University of Singapore, 10 Lower Kent Ridge Road, Block S17, 08-14, 119076, Singapore}
\email{lihanw@nus.edu.sg}
\thanks{MR is supported by NSF grants DMS-2342349 and DMS-2441170. DS is supported by NSF grants DMS-2342349, DMS-2511684, and DMS-2407166. LW is supported by NSF grant DMS-2407166.}
\date{}

\subjclass[2020]{35Q70, 49Q22, 35B40 (Primary); 35R09,  46E22, 60B10, 82C22 (Secondary)}
\keywords{Maximum mean discrepancy, Wasserstein gradient flow, energy kernels, Riesz potentials, aggregation equation, mean-field limit, modulated energy, interacting particle system, Lagrangian critical point, long-time behavior}

\begin{document}


\begin{abstract}
We study the Wasserstein gradient flow of the squared Maximum Mean Discrepancy (MMD) generated by the nonsmooth energy kernels $K(z)=-|z|^q$, $0<q<2$.  In dimensions $d\ge2$, the corresponding energies are not displacement semiconvex, so standard Wasserstein-gradient-flow theory does not apply.  When $d+q-2>0$, we prove global well-posedness on $\R^d$ for probability densities in subcritical $L^p$ spaces, with targets in the same integrability class and with finite moments. We also include the one-dimensional Coulomb endpoint $d=q=1$.

For the associated $N$-particle system, we prove global noncollision and fixed-$N$ convergence to the collision-free critical set, a particle-to-continuum criticality principle, and a modulated-energy mean-field estimate that yields convergence of the particle dynamics to the continuum flow as $N\to\infty$ on every finite time interval.  We also construct collision-free saddle equilibria, showing that deterministic particle trajectories need not approach global empirical minimizers.

For $1\le q<2$, every continuum solution in our class has a narrowly relatively compact orbit, every $\omega$-limit point is Lagrangian critical, and the orbit approaches the Lagrangian critical set.  For $0<q<1$, the same conclusions hold under uniform-in-time moment and subcritical $L^p$ bounds.  We prove that an absolutely continuous Lagrangian critical point equals the target when the source and target have finite moments of order $q$, except when $0<q<1$ and $d\in\{1,3\}$.  In the residual three-dimensional regime, rigidity holds under a finite $(q+1)$-th moment, compactness of the positive part of the discrepancy, or radiality of the discrepancy, whereas in dimension one explicit nested-interval examples yield non-minimizing critical points.  Under the preceding uniform bounds, rigidity gives convergence of the continuum flow to the target throughout the rigid part of the well-posedness range.

Finally, we show that no initial-data-independent multiplicative MMD decay modulus exists on $\R^d$, and that global Polyak--\L ojasiewicz inequalities fail in several whole-space and periodic Riesz/Coulomb regimes.
\end{abstract}

\maketitle

\section{Introduction}

Maximum mean discrepancy rose to prominence as a practical nonparametric criterion for two-sample testing \cite{gretton2007kernel,gretton2012kernel}.  Its Wasserstein gradient flow formally transports a source distribution toward a prescribed target and provides a dynamical model for statistical machine learning, sampling, and generative learning \cite{arbel2019maximum,altekruger2023neural,hertrich2024generative,hagemann2024posterior}.  In data-driven applications, the target distribution $\mu$ is typically replaced by its empirical distribution $\mu_N$.

Let $\mu$ be a target probability measure on $\R^d$.\footnote{With a slight abuse of notation, we use the same symbols $\mu$ and $\rho$ for absolutely continuous measures and for their Lebesgue densities throughout the paper.}  We study the energy MMD generated by
\begin{equation}
K(z)=-|z|^q,\qquad 0<q<2.
\end{equation}
We refer to this negative-distance family as the Euclidean energy kernels; for $q=1$, the associated MMD is, up to normalization, the classical energy distance \cite{szekely2005new,szekely2003statistics}.
Its centered-kernel definition, moment requirements, and the significance of the threshold $q=2$ are recalled at the beginning of \Cref{sec:background}.  The formal continuum flow is
\begin{equation}\label{eq:wgfmmd}
		\partial_t \rho_t = -\nabla\cdot(\rho_t v_t), \qquad v_t=-\nabla K*(\rho_t-\mu),
\end{equation}
and, writing $[N]\coloneqq\{1,\ldots,N\}$, its (diagonal-free) particle counterpart is
\begin{equation}\label{eq:wgfmmdparticle}
		\dot{x}_i^t = -\frac1N\sum_{j\in[N]\setminus\{i\}} \nab K(x_i^t-x_j^t) + \nab(K\ast\mu)(x_i^t), \qquad i\in[N].
\end{equation}
The missing self-interaction in \eqref{eq:wgfmmdparticle} is part of the model, necessary to allow for singularities in the velocity field. We use $\nabla K(0)=0$ in this paper only as shorthand for this convention.  Since the continuum theory concerns $L^p$ densities while empirical measures are atomic, the two evolutions must be constructed separately.

 For these nonsmooth kernels, standard Wasserstein-gradient-flow and mean-field theories do not apply directly: the interaction force fails to be Lipschitz at the particle diagonal for every $0<q<2$, and in dimensions $d\ge2$ the energies are not displacement semiconvex. 
Our work touches on various aspects of both the continnum flow and the interacting particles and connects them quantitatively.  Our main contributions are in the following five directions.

\begin{enumerate}[label=\textup{\roman*.},leftmargin=*]

\item \textbf{Global well-posedness of the continuum flow.} 
In the range $d+q-2>0$ we prove existence and uniqueness of a global weak solution of \eqref{eq:wgfmmd} for probability densities in subcritical $L^p$ spaces with finite moments (that is, $p>p_c:=d/(d+q-2)$; see \eqref{eq:admissible-p-range}), with a velocity field that is Lipschitz in space uniformly on bounded time intervals, together with the associated energy--dissipation identity.  The one-dimensional Coulomb endpoint $d=q=1$ is included and follows from the stronger Cauchy theory of \cite{ChodronDeCourcelRosenzweigCoulombDiscrepancies}.  See \Cref{thm:wgfmmdwp}.

\item \textbf{Particle dynamics.}  For the system \eqref{eq:wgfmmdparticle} started from any pairwise distinct configuration, we prove global noncollision with uniform-in-time separation and boundedness, and convergence at fixed $N$ to the collision-free critical set of the particle energy (\Cref{thm:particle-global-noncollision}).  We prove a particle-to-continuum criticality principle (\Cref{prop:particle-critical-closure}) and uniform-in-time consistency for well-prepared data (\Cref{cor:well-prepared-particle-uniform-time}).  Explicit collision-free saddle equilibria (\Cref{prop:particle-saddle-configurations}) show that deterministic trajectories need not approach global empirical minimizers.

\item \textbf{Mean-field limit.}  We prove a modulated-energy estimate (\Cref{thm:wgfmmdpoc}): a quantitative comparison, in squared MMD, of the empirical measure of the particle flow with the continuum solution driven by the same target.  A probability measure is approximated by the empirical measure of $N$ independent samples with MMD error of order $N^{-1/2}$, at a rate that does not depend on the dimension.  The estimate therefore yields convergence of the particle dynamics to the continuum flow as $N\to\infty$ on every finite time interval, at the same rate $N^{-1/2}$ for such well-prepared random initial data.

\item \textbf{Stationary states and long-time behavior.}  We obtain a near-complete stationary picture among absolutely continuous states (\Cref{thm:lagrangian-rigidity-flexibility}): at the natural moment level, a Lagrangian critical point (one at which the velocity field $\nabla K*(\rho-\mu)$ vanishes $\rho$-almost everywhere; see \Cref{critical:def:Lcrit}, and \Cref{sec:stationary} for the comparison with the Wasserstein notion of \cite{boufadene2023global}) equals the target except when $0<q<1$ and $d\in\{1,3\}$; in the residual three-dimensional regime, rigidity holds under a finite moment of order $q+1$, compactness of the positive part of the discrepancy, or radiality of the discrepancy about some point; and in dimension one explicit nested-interval examples yield non-minimizing critical points.  Combined with the energy--dissipation identity, this gives asymptotic criticality without additional long-time bounds for $1\le q<2$, and conditional convergence to the target throughout the rigid part of the present well-posedness range (\Cref{thm:conditional-critical-convergence,cor:conditional-energy-convergence}).

\item \textbf{Obstructions to uniform convergence rates.}
We construct explicit examples showing that uniform convergence rates and global Polyak--\L ojasiewicz inequalities cannot hold.  Dynamically, for every $(d,q)$ in the well-posedness range and every admissible target, translating a fixed compactly supported source rules out any multiplicative MMD decay modulus independent of the initial datum (\Cref{thm:noconvmmd}).  Statically, no global PL inequality holds on $\R^d$ for any $0<q<2$ and any compactly supported target, and the same construction covers the negative-order Riesz range $-d<q<0$ (\Cref{prop:nogPL}).  On $\mathbb T^d$, where spatial escape is unavailable, a high-frequency construction gives the same failure for $2-d<q<2$ and every bounded, uniformly positive target (\Cref{prop:noPLsubCo}); this range is sharp, since at the Coulomb endpoint $q=2-d$ a PL inequality can instead hold.  These are statements about uniformity over the whole admissible class, not about any individual solution.  The mechanism is transport at finite speed: a source at distance $R$ from the target is driven by a far-field velocity of order $R^{q-1}$, so it requires a time of order $R^{2-q}$ merely to reach the target before any relaxation can begin, while its squared discrepancy remains of order $R^{q}$ throughout.  What is excluded is a single decay modulus valid simultaneously at every initial separation; rates depending on the initial support scale, or holding after a corresponding waiting time, are not excluded.  \Cref{sec:1dexample} exhibits precisely this behavior.
\end{enumerate}

Finally, in the one-dimensional $q=1$ particle model, we exhibit a two-time-scale phenomenon: particles outside the target support first travel toward it and then relax exponentially, provided the target density is bounded below on its support.  A second example shows that this positivity cannot be dropped: for a target vanishing at an interior point, a particle started inside the support relaxes only polynomially.

We also sharpen the description of the topology metrized by $\mmd_q$: it dominates every transport cost of order $r<q/2$ (\Cref{prop:mmd-topology}), and this threshold is sharp, confirming a conjecture of Modeste and Dombry \cite{modeste2023characterization} that convergence in $\mmd_q$ need not imply convergence in the $q/2$-transport topology (\Cref{ex:MMDtoWass}).

\begin{remark}[The role of the exponent $q$]\label{rem:role-of-q}
The exponent enters in two independent ways, and the two thresholds do not coincide.  Metrically, \Cref{lem:genSob} identifies $\mmd_q$ with the $\dot H^{-(d+q)/2}$ norm, so $q$ tunes the strength of the discrepancy: smaller $q$ gives a stronger norm and finer resolution, while $q=2$ is degenerate, the centered kernel reducing to $k_2(x,y)=x\cdot y$ and the discrepancy detecting only the difference of means.  Analytically, our hypotheses change at $q=1$.  For $1\le q<2$, asymptotic criticality holds with no additional long-time bounds (\Cref{thm:conditional-critical-convergence}), rigidity at the natural moment level holds in every dimension (\Cref{thm:lagrangian-rigidity-flexibility}(i)), the particle-to-continuum criticality principle requires no microscopic nonconcentration hypothesis (\Cref{prop:particle-critical-closure}), and the interaction force is bounded at the diagonal.  For $0<q<1$ each of these acquires an additional hypothesis.  The rigidity exceptions are confined to $d\in\{1,3\}$, so in even dimensions and in odd dimensions $d\ge5$ rigidity holds throughout $0<q<2$.  In applications $q$ is generally chosen on modeling grounds; the above is not a recommendation, but a record of which regimes the present guarantees cover.
\end{remark}

\subsection{Background}\label{sec:background}

For a positive-semidefinite kernel $k$ with reproducing kernel Hilbert space (RKHS) $\mathcal H_k$, the associated maximum mean discrepancy is
\begin{equation}\label{eq:rkhs-ipm-positive-kernel}
\sup_{\|f\|_{\mathcal H_k}\le1}\int f\ud(\rho-\mu)
=\left(\iint k(x,y)\ud(\rho-\mu)(x)\ud(\rho-\mu)(y)\right)^{1/2};
\end{equation}
this is the integral probability metric generated by the unit ball of $\mathcal H_k$; see \cite{muller1997integral}.  For $0<q<2$, the translation-invariant interaction profile $(x,y)\mapsto-|x-y|^q$ is not positive semidefinite but is conditionally positive semidefinite (that is, $-\iint|x-y|^q\ud\nu(x)\ud\nu(y)\ge0$ for every signed measure $\nu$ of total mass zero, such as $\nu=\rho-\mu$).  Centering it gives the positive-semidefinite kernel

\begin{equation}\label{eq:centered-positive-kernel-intro}
k_q(x,y):=\frac12\bigl(|x|^q+|y|^q-|x-y|^q\bigr).
\end{equation}
We define
\begin{align}
\mmd_q^2 (\rho,\mu) & := \frac12 \int_{\R^d} \int_{\R^d}
\bigl(|x|^q+|y|^q-|x-y|^q\bigr)\ud (\rho-\mu)(x) \ud (\rho-\mu)(y)
\label{eq:defmmd}\\
& \, = -\frac12 \int_{\R^d} \int_{\R^d} |x-y|^q \ud (\rho-\mu)(x) \ud (\rho-\mu)(y)
\qquad \text{if } \rho,\mu \in \mathcal{P}_q(\R^d).
\label{def:mmdq}
\end{align}
Here and throughout, $\mathcal L^d$ denotes $d$-dimensional Lebesgue measure.
For $r>0$ and $\lambda\in\mathcal P(\R^d)$, set
\begin{equation}\label{eq:moment-notation}
\mathcal M_r(\lambda):=\int_{\R^d}|x|^r\ud\lambda(x),
\qquad
\mathcal P_r(\R^d):=\{\lambda\in\mathcal P(\R^d):\mathcal M_r(\lambda)<\infty\}.
\end{equation}
The centered expression in \eqref{eq:defmmd} is finite for pairs of measures in $\mathcal P_{q/2}(\R^d)$.  Under the stronger hypothesis $\rho,\mu\in\mathcal P_q(\R^d)$, the translation-invariant representation \eqref{def:mmdq} is available \cite{sejdinovic2013equivalence}.  The well-posedness theory below assumes moments of order $r\ge1$; since $q/2<1$, this ensures finiteness of the centered MMD.  We use the translation-invariant representation only when the $\mathcal P_q$ hypothesis is explicitly available.

Taking $k=k_q$ in \eqref{eq:rkhs-ipm-positive-kernel} recovers the centered definition \eqref{eq:defmmd}.  The test class is then concrete: since $(d+q)/2>d/2$ for every $q>0$, the RKHS of $k_q$ is the homogeneous Sobolev space $\dot H^{(d+q)/2}(\R^d)$ modulo constants, and the constants are immaterial because $\rho-\mu$ has zero mass; see \Cref{rem:rkhs-sobolev}.  The threshold $q=2$ marks the degenerate endpoint of this family: $-|x-y|^2$ remains conditionally positive semidefinite, but $k_2(x,y)=x\cdot y$, so the resulting discrepancy detects only the difference between the means of $\rho$ and $\mu$.  Beyond $q=2$, conditional positive semidefiniteness fails in general.  Thus $0<q<2$ is the nondegenerate negative-distance MMD range; see \cite{sejdinovic2013equivalence}.  The Fourier--Sobolev representation in \Cref{lem:genSob} identifies $\mmd_q$ with a negative homogeneous Sobolev norm.

Several features make these kernels attractive in statistics and computation.  On $\mathcal P_{q/2}(\R^d)$, convergence in $\mmd_q$ implies weak convergence; indeed, it implies convergence in every transport cost of order $0<r<q/2$, as recorded in \Cref{prop:mmd-topology}; see also \cite{simon2023metrizing,sejdinovic2013equivalence}.  The restriction $0<r<q/2$ is sharp: \Cref{ex:MMDtoWass} confirms the conjecture of Modeste and Dombry \cite[Theorem~13 and the paragraph following it]{modeste2023characterization} that MMD convergence need not imply convergence in the $q/2$-transport topology.  For the unbounded energy kernel considered here and $\mu\in\mathcal P_q(\R^d)$, empirical MMD has the dimension-free $N^{-1/2}$ Monte Carlo scaling familiar from standard MMD theory \cite{gretton2007kernel,gretton2012kernel,tolstikhin2017minimax}.  More precisely, \Cref{prop:empirical_mmd_bound} gives an exact formula of order $N^{-1}$ for the expected squared MMD of the empirical measure together with a high-probability consequence.  This contrasts with the dimension-dependent empirical rates for Wasserstein distances \cite{fournier2015rate} and motivates MMD gradient flows and their particle approximations.

Unlike smooth bounded kernels, the energy kernel remains sensitive at large spatial scales: since $|z|^q$ grows at infinity, the discrepancy does not saturate when source and target mass are widely separated.  This feature contributes to the practical interest in these kernels in generative learning and sampling; see \cite{altekruger2023neural,hertrich2024generative,hagemann2024posterior}.

A further computational advantage, observed by Kolouri, Nadjahi, Shahrampour, and {\c{S}}im{\c{s}}ekli \cite{kolouri2022generalized} and exploited by Hertrich, Wald, Altekr\"uger, and Hagemann \cite{hertrich2024generative}, is the sliced identity.  For $\xi\in\mathbb S^{d-1}$, let $P_\xi(x):=\xi\cdot x$, and let $\mathcal U_{\mathbb S^{d-1}}$ denote the uniform probability measure on $\mathbb S^{d-1}$.  Then
\begin{equation}\label{eq:sliced-mmd-intro}
\mmd_q^2(\rho,\mu)=C_{d,q}\,\mathbb E_{\xi\sim\mathcal U_{\mathbb S^{d-1}}}\big[\mmd_q^2((P_\xi)_\#\rho,(P_\xi)_\#\mu)\big].
\end{equation}
In practice, one samples directions $\xi$, projects the source and target samples, and evaluates the resulting one-dimensional discrepancies and gradients.  In the energy-distance case $q=1$, sorting the one-dimensional projections gives $O((M+N)\log(M+N))$ complexity per sampled direction, instead of the naive $O(N^2+MN)$ per-step cost after precomputing the target--target term.

At short scales, however, the force creates an analytical difficulty: although the particle-particle force $-\nabla K(z)=q|z|^{q-2}z$ is repulsive, it fails to be Lipschitz at the particle diagonal throughout $0<q<2$; the force itself is singular only when $0<q<1$.  This distinction is relevant to the diagonal-free particle dynamics and the noncollision result in \Cref{thm:particle-global-noncollision}.

At the level of the continuum flow, the kernels present a second obstruction.
For interaction profiles $K$ such that $(x,y)\mapsto K(x-y)$ is jointly semiconvex in the $x$ and $y$ variables, the standard theory of Wasserstein gradient flows in the space of probability measures \cite{ambrosio2008gradient} provides well-posedness of \eqref{eq:wgfmmd}, allows for general measure-valued solutions including discrete measures, and gives stability estimates that imply mean-field convergence of \eqref{eq:wgfmmdparticle} to \eqref{eq:wgfmmd}.  These tools do not apply directly to the energy kernels above.  In one dimension, the fixed-target MMD functional associated with $K(x-y)=-|x-y|$ is displacement convex, and this flow was analyzed through quantiles by Duong, Stein, Beinert, Hertrich, and Steidl \cite{duong_steidl26}; see also the work of Bonaschi, Carrillo, Di Francesco, and Peletier \cite{bonaschi2015equivalence} who treated the pure source--source interaction energy.  This is an endpoint phenomenon: for every $0<q<1$, the corresponding fixed-target functional is not displacement convex, although its pure source--source interaction term is.  In dimensions $d\ge2$, the energies are likewise not displacement semiconvex, so well-posedness in the full space of probability measures is not available from the standard theory.  The remaining range considered here instead requires the $L^p$ approach described next.

\smallskip

\subsection{Main results}

We state the results in the order of the objects they concern rather than in the order of the contributions listed above: first the continuum flow, through its well-posedness, its stationary picture, and its long-time behavior; then the particle system and the mean-field estimate linking the two; and finally the obstructions to uniform convergence.

We begin with global well-posedness for the continuum flow \eqref{eq:wgfmmd}.  The one-dimensional Coulomb endpoint $d=q=1$ is included for completeness and follows from the stronger Cauchy theory of Chodron de Courcel and the first author \cite{ChodronDeCourcelRosenzweigCoulombDiscrepancies}.  In the remaining range $d+q-2>0$, we work in Lebesgue classes and adapt the aggregation-equation method of Bertozzi--Laurent--Rosado \cite{bertozzi2011lp} to the additional target field $\nabla K*\mu$.

For $d+q-2\ge0$, we set, here and throughout,
\begin{equation}
p_c:=
\begin{cases}
\dfrac{d}{d+q-2},&d+q-2>0,\\[4pt]
\infty,&d+q-2=0.
\end{cases}
\end{equation}
\begin{theorem}[Global well-posedness]\label{thm:wgfmmdwp}\label{thm:wgfmmdwp-linfty}

Let $r\ge1$, let $0<q<2$ satisfy $d+q-2\ge0$, and let
\begin{equation}\label{eq:admissible-p-range}
p_c\le p\le\infty,
\quad \text{and} \quad
p>p_c\quad\text{if }d+q-2>0.
\end{equation}

If $\mu,\rho_0\in\mathcal P_r(\R^d)\cap L^p(\R^d)$ are probability densities, then \eqref{eq:wgfmmd} has a unique global weak solution in the class of \Cref{def:continuum-solution-class}.  For every $T>0$, the $r$-th moment remains bounded on $[0,T]$ and
\begin{equation}\label{eq:intro-velocity-class}
v_t=-\nabla K*(\rho_t-\mu),\qquad
v\in L^\infty([0,T];W^{1,\infty}(\R^d))\cap C([0,T]\times\R^d).
\end{equation}
When $p=\infty$, one additionally has
\begin{equation}\label{eq:linfty-endpoint-bound}
\|\rho_t\|_{L^\infty}
\le \exp \Bigl(t\|(-\Delta K)*\mu\|_{L^\infty}\Bigr)\|\rho_0\|_{L^\infty},
\qquad t\ge0.
\end{equation}
\end{theorem}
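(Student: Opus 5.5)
The plan follows the Bertozzi--Laurent--Rosado $L^p$ theory for aggregation equations \cite{bertozzi2011lp}, treating $\nabla K*\mu$ as an additional, time-independent external field. The endpoint $d=q=1$ is disposed of first by invoking \cite{ChodronDeCourcelRosenzweigCoulombDiscrepancies}. In that case $\nabla K=-\sgn$, $\Delta K=-2\delta$, and the needed Lipschitz and $L^\infty$ control is exactly the Coulomb Cauchy theory there. It remains to treat $d+q-2>0$ and $p>p_c$.

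\textbf{Step 1 (potential estimates).} We have $\nabla K(z)=-q|z|^{q-2}z$, homogeneous of degree $q-1$, and $D^2K$ homogeneous of degree $q-2>-d$, so $D^2K$ is locally integrable. We split $D^2K=D^2K\,\indic_{|z|\le1}+D^2K\,\indic_{|z|>1}$. The near part lies in $L^{p'}$ exactly when $(2-q)p'<d$, i.e.\ $p>d/(d+q-2)=p_c$. The far part is bounded. Hence for any zero-mass difference $\rho-\mu$ with $\rho,\mu\in L^1\cap L^p$,
\[
\|D^2K*(\rho-\mu)\|_{L^\infty}\lesssim \|\rho\|_{L^1\cap L^p}+\|\mu\|_{L^1\cap L^p}.
\]
For the first derivative, $|\nabla K*\nu(x)|\lesssim \int|x-y|^{q-1}\ud|\nu|(y)$. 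For $q\ge1$, $\nabla K*(\rho-\mu)$ has at most linear growth, and zero mass gives boundedness through the mean value theorem with the first moment ($r\ge1$). For $q<1$, it is bounded directly by the $L^p$ and $L^1$ norms. In both cases $v_t\in W^{1,\infty}$ with a bound depending only on $\|\rho_t\|_{L^1\cap L^p}$, $\|\mu\|_{L^1\cap L^p}$ and $\mathcal M_1$. Continuity in $x$ follows from the $L^{p'}$/$L^1$ pairing of translates.

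\textbf{Step 2 (a priori bounds and local existence).} Along the characteristics $X_t$ of $v_t$ we have $\frac{d}{dt}\rho_t(X_t)=-\rho_t\,\div v_t$, with $\div v_t=\Delta K*(\rho_t-\mu)$. This gives
\[
\frac{d}{dt}\|\rho_t\|_{L^p}^p=(p-1)\int\rho_t^p\,\Delta K*(\rho_t-\mu).
\]
The key sign is $-\Delta K=q(d+q-2)|z|^{q-2}\ge0$, so the source--source term $\int\rho^p\,\Delta K*\rho\le0$ is dissipative. Only the target term remains:
\[
\frac{d}{dt}\|\rho_t\|_{L^p}^p\le(p-1)\|(-\Delta K)*\mu\|_{L^\infty}\|\rho_t\|_{L^p}^p,
\]
and $\|(-\Delta K)*\mu\|_{L^\infty}<\infty$ by Step 1. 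This closes a global exponential bound on $\|\rho_t\|_{L^p}$. Letting $p\to\infty$, or arguing pointwise along characteristics, yields \eqref{eq:linfty-endpoint-bound}. Moments are propagated by $\frac{d}{dt}\mathcal M_r\le r\int|x|^{r-1}|v_t|\ud\rho_t$ combined with the growth bound of Step 1 and Gr\"onwall. Existence is obtained by a fixed point in $C([0,T];L^1\cap L^p)$ with a $W^{1,\infty}$ velocity (equivalently, via mollified kernels $K_\vep$ and compactness). The uniform Lipschitz velocity makes the characteristic flow and the push-forward representation $\rho_t=(X_t)_\#\rho_0$ rigorous. The a priori bounds then extend the local solution globally.

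\textbf{Step 3 (uniqueness), the main obstacle.} Uniqueness is where the nonsemiconvexity bites, and I expect it to be the hardest step. Following Loeper/BLR, I would compare two solutions through the $W_1$ (or $W_2$) distance of their Lagrangian flows, coupled via $\rho_0$. This requires the difference estimate
\[
\|\nabla K*(\rho^1-\rho^2)\|_{L^\infty}\lesssim W_1(\rho^1,\rho^2)\,\bigl(\text{norms of }\rho^i\bigr),
\]
obtained by writing $\nabla K*(\rho^1-\rho^2)(x)=\int[\nabla K(x-X^1)-\nabla K(x-X^2)]\ud\rho_0$ and using the Lipschitz bound from Step 1 along the segment between the two flows. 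The delicate point is that $D^2K$ is not bounded, only in $L^{p'}_{loc}$, so the Lipschitz bound must be applied to the \emph{convolved} field, i.e.\ to interpolated densities with uniform $L^p$ bounds. The $\mu$ terms cancel, so the argument reduces to the BLR one and Gr\"onwall gives uniqueness. The energy--dissipation identity follows by differentiating $\mmd_q^2(\rho_t,\mu)$ along the Lipschitz flow, which gives $-\int|v_t|^2\ud\rho_t$.
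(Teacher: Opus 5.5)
Your overall architecture matches the paper's: the endpoint by citation, a mollified kernel and mollified data, the $L^p$ bound from $-\Delta K\ge0$ that leaves only the target term, moment propagation, compactness, the characteristic representation, and Lagrangian $W_1$ uniqueness. But Step~3 fails at exactly the point you flag. The estimate
\[
\|\nabla K*(\rho^1-\rho^2)\|_{L^\infty}\lesssim W_1(\rho^1,\rho^2)\,\bigl(\text{norms of }\rho^i\bigr)
\]
is false for every $0<q<2$, whatever $L^1\cap L^p$ or moment norms you allow on the right. To see this, fix:
a compactly supported probability density $f$; a radial unit-mass bump $\eta_\ell$ supported in $B_\ell$; a unit vector $e$; a point $x$; and $m:=\ell^{d/p_*}$, so that $\|m\eta_\ell\|_{L^p}$ stays bounded. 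Set $\rho^1:=(1-m)f+m\eta_\ell(\cdot-x)$ and $\rho^2:=(1-m)f+m\eta_\ell(\cdot-x-2\ell e)$. Then $W_1(\rho^1,\rho^2)\le2m\ell$. At $x$, the centered bump contributes nothing, by oddness of $\nabla K$. The shifted bump contributes a field whose $e$-component is at least $q3^{q-2}m\ell^{q-1}$, because $(y-x)\cdot e\ge\ell$ and $|y-x|\le3\ell$ on its support. The ratio is therefore $\gtrsim\ell^{q-2}\to\infty$.

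Your interpolation remedy does not change this. Writing $\nabla K(x-X^1)-\nabla K(x-X^2)=-\int_0^1D^2K(x-X_\theta)(X^1-X^2)\ud\theta$ and using $L^p$ bounds on $(X_\theta)_\#\rho_0$ only lets you factor out $\|X^1-X^2\|_{L^\infty(\rho_0)}$. The singular factor has to be integrated against a density, while the weight $|X^1-X^2|$ is merely in $L^1(\rho_0)$. So you never obtain $\int|X^1-X^2|\ud\rho_0$.

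There are two ways to repair this.

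\textbf{The paper's route.} The $W_1$ Gr\"onwall argument never needs $v^1-v^2$ in $L^\infty$. Split $|v^1(X^1)-v^2(X^2)|\le|v^1(X^1)-v^1(X^2)|+|v^1(X^2)-v^2(X^2)|$; the second term integrated against $\rho_0$ equals $\int|v^1-v^2|\ud\rho^2$. Write $v^1-v^2$ against the coupling (the $\mu$ terms cancel) and use $|\nabla K(u)-\nabla K(w)|\le C|u-w|(|u|^{q-2}+|w|^{q-2})$. Then integrate in $y$ against $\rho^2\in L^1\cap L^p$ \emph{first}. This bounds the term by $C\sup_z\int|y-z|^{q-2}\rho^2(y)\ud y\cdot\int|X^1-X^2|\ud\rho_0$, which closes by the singular-convolution bound; see \Cref{lem:kernel-gradient-difference,prop:W1-stability}.

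\textbf{Keeping your interpolation.} Run Gr\"onwall on $\|X^1_t-X^2_t\|_{L^\infty(\rho_0)}$ instead. For short times, $(1-\theta)X^1+\theta X^2$ is a bi-Lipschitz perturbation of the identity with Jacobian bounded below. Hence the interpolants have uniform $L^p$ bounds and $\|v^1-v^2\|_{L^\infty}\le C\|X^1-X^2\|_{L^\infty(\rho_0)}$. Uniqueness then follows on a short interval, and on $[0,T]$ by iteration.

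Separately, a contraction in $C([0,T];L^1\cap L^p)$ is not available for rough $\rho_0$: pushforwards of an $L^p$ function under nearby maps need not be $L^p$-close. The mollification-and-compactness route you list as an alternative is the one that works, as in the paper.
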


\begin{remark}[Well-posedness hypotheses]

Let $p_*$ denote the H\"older conjugate of $p$.  When $d+q-2>0$, condition \eqref{eq:admissible-p-range} is equivalent to $(2-q)p_*<d$; it is exactly the subcritical integrability needed to convolve the Hessian of the kernel $|x|^{q-2}$ with $L^p$ densities.  For the target-free  equation, $p_c$ is the $L^p$-critical exponent for the time-preserving scaling
\begin{equation}
\rho(t,x)\longmapsto \lambda^{d+q-2}\rho(t,\lambda x),
\end{equation}
which leaves the $L^{p_c}$ norm invariant while rescaling the total mass.  The finite critical endpoint $p=p_c<\infty$ remains excluded; we return to the open problem of including it in \Cref{sec:further-questions}.  In this strict dimensional range, the case $p=\infty$ is obtained from any admissible finite exponent and the maximum principle.  If $d+q-2=0$, then necessarily $d=q=1$, and \eqref{eq:admissible-p-range} forces $p=p_c=\infty$; this Coulomb endpoint follows instead from \cite[Theorem~1.2 and Proposition~2.1]{ChodronDeCourcelRosenzweigCoulombDiscrepancies}.

\end{remark}

We turn to the stationary picture: a nearly complete rigidity--flexibility classification for absolutely continuous Lagrangian critical points.

\begin{theorem}[Lagrangian critical-point rigidity and flexibility]\label{thm:lagrangian-rigidity-flexibility}
Let $d\ge1$ and $0<q<2$. The following statements hold.
\begin{enumerate}[label=\textup{(\roman*)}]
\item Let $\mu,\rho\in\mathcal P_q(\R^d)$, assume $\rho\ll\mathcal{L}^d$, and suppose that $\rho$ is a Lagrangian critical point of the MMD energy with target $\mu$ in the sense of \Cref{critical:def:Lcrit}.  Then $\rho=\mu$ in any of the following cases:
\begin{enumerate}[label=\textup{(\alph*)}]
\item $d$ is even;
\item $d$ is odd and $1\le q<2$;
\item $d\ge5$ is odd and $0<q<1$.
\end{enumerate}
\item If $d=3$ and $0<q<1$, under the measure and criticality hypotheses of \textup{(i)}, the conclusion $\rho=\mu$ also holds provided that at least one of the following conditions is satisfied:
\begin{enumerate}[label=\textup{(\alph*)}]
\item $\rho,\mu\in\mathcal P_{q+1}(\R^3)$;
\item the positive part of the signed discrepancy, $(\rho-\mu)^+$, has compact support;
\item $\rho-\mu$ is invariant under rotations about some point.
\end{enumerate}
The compact-positive-part condition holds, in particular, if $\supp\rho$ is compact.
\item If $d=1$ and $0<q<1$, then for every $0<r<R$ there exist distinct compactly supported absolutely continuous probability measures $\rho_r,\mu\in\mathcal P_q(\R)$ satisfying
\begin{equation}
\supp\rho_r=[-r,r]\subsetneq[-R,R]=\supp\mu
\end{equation}
such that $\rho_r$ is a Lagrangian critical point for the target $\mu$, but is not a minimizer.
\end{enumerate}
\end{theorem}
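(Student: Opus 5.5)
The plan is to work with the discrepancy $\nu:=\rho-\mu$ and its potential $\varphi:=K*\nu=-|\cdot|^q*\nu$. Lagrangian criticality (\Cref{critical:def:Lcrit}) means $\nabla\varphi=0$ $\rho$-a.e. Since $\rho\ll\mathcal L^d$, this says $\nabla\varphi=0$ Lebesgue-a.e. on $E:=\{\rho>0\}$, and $E$ contains the support of $\nu^+$ up to a null set.

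The first step is a regularity/level-set transfer. Using the representation of $\nabla\varphi$ as a locally $W^{1,1}$ function (as in the velocity estimates behind \Cref{thm:wgfmmdwp}), I would apply the classical fact that the weak gradient of a Sobolev function vanishes a.e. on any level set. This gives $D^2\varphi=0$ a.e. on $E$, hence $\Delta\varphi=0$ a.e. on $E$. More generally, it gives $\Delta^k\varphi=0$ a.e. on $E$ as long as $\Delta^{k-1}\nabla\varphi$ stays Sobolev.

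The second step uses the identity $\Delta^k|x|^q=c_k|x|^{q-2k}$ with $c_k=\prod_{j=1}^k(q-2j+2)(q+d-2j)$. I would choose $k$ so that $q-2k=2s-d$ with $s\in(0,1]$. Then $u_k:=\Delta^k\varphi=-c_k\,I_{2s}\nu$ is a Riesz potential of order $2s\le2$, and for such potentials the domination and maximum principles are available. From $u_k=0$ a.e. on $\supp\nu^+$, the domination principle should give $I_{2s}\nu^+\le I_{2s}\nu^-$ everywhere, up to the sign of $c_k$. Pairing with $\nu$ and using the positive definiteness of $I_{2s}$, i.e. $\int I_{2s}\nu\,\mathrm d\nu=c\|\nu\|^2_{\dot H^{-s}}$ (the Fourier picture of \Cref{lem:genSob}), should force $\nu=0$. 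The sign bookkeeping of $c_k$ depends on the parity of $d$ and on whether $q\ge1$; I expect this bookkeeping to produce exactly the case list (a)--(c). Where the sign comes out wrong at level $k$, I would go down to $k-1$ and use that $u_{k-1}$ is sub- or superharmonic.

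I expect the main obstacle to be the moment and integrability justification. At the natural level $\mathcal P_q$ the higher iterates $\Delta^{k}\varphi$ need not be defined as convergent integrals, and the energy pairing requires care. This is precisely where $d=3$, $0<q<1$ fails at the $\mathcal P_q$ level.

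For (ii), I would supply the missing justification in three ways. Under a finite $(q+1)$-moment, the relevant potential $I_{1+q}\nu$ and its derivative decay well enough to integrate by parts. Under compact support of $\nu^+$, one can apply the maximum principle on a bounded set and control $\nu^-$ at infinity by the sign of the kernel. Under radial symmetry of $\nu$, Newton's theorem reduces the problem to an explicit ODE or integral inequality in the radius.

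For (iii), I would construct the example directly on $\R$. Fix $\rho_r$ symmetric on $[-r,r]$, and seek $\nu$ so that $\varphi'=q\,(\sgn(x)|x|^{q-1})*\nu$ vanishes on $[-r,r]$. This is a Riesz-type integral equation of order $1+q$ in one dimension, for which explicit interval solutions exist (Dyda-type formulas with weights $(R^2-x^2)^{\alpha}$). I would build $\mu$ from nested weights on $[-r,r]$ and $[-R,R]$, tuning the coefficients so that $\mu\ge0$ and $\mu$ has unit mass. Non-minimality then follows because $\rho_r\neq\mu$, so $\mmd_q(\rho_r,\mu)>0$. The delicate point is verifying the positivity of $\mu$ near $\pm r$, where the weights are singular.
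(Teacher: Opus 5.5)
\paragraph{The step that concludes $\nu=0$ fails.} Your reduction to a Riesz contact matches the paper: Sobolev locality on $\{\rho>0\}$, then $(-\Delta)^m(K*\nu)$ a positive multiple of $\mathcal I_\alpha\nu$ with $\alpha=d+q-2m$, then exhaustion and the domination principle giving $\mathcal I_\alpha\nu^+\le\mathcal I_\alpha\nu^-$ everywhere. The closing step is where it breaks. Since $\mathcal I_\alpha\nu=0$ $\nu^+$-a.e.\ and $\mathcal I_\alpha\nu\le0$ everywhere, pairing with $\nu$ gives
\[
\int\mathcal I_\alpha\nu\,\ud\nu=\int\bigl(\mathcal I_\alpha\nu^--\mathcal I_\alpha\nu^+\bigr)\,\ud\nu^-\ge0 .
\]
Positive definiteness already says this, so there is no contradiction.

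No argument of this type can succeed, because it never uses that $\nu=\rho-\mu$ has zero total mass. Take $0<\alpha<\min\{2,d\}$, let $\nu^-$ be a bump outside a ball, and let $\nu^+$ be its order-$\alpha$ Riesz balayage onto the ball. Then $\nu^+\ll\mathcal L^d$, contact holds $\nu^+$-a.e., domination holds everywhere, and $\nu\ne0$. The only property that fails is equal mass, since $\nu^+(\R^d)<\nu^-(\R^d)$.

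The missing idea is a mass-cancellation step (\Cref{critical:prop:riesz-one-sign}): pair $\mathcal I_\alpha\nu$ with Lebesgue measure on $B_R$, not with $\nu$. Set $F_R(y):=c_{d,\alpha}\int_{B_R}|x-y|^{\alpha-d}\,\ud x$. Zero mass gives $\int_{B_R}\mathcal I_\alpha\nu\,\ud x=\int(F_R(y)-F_R(0))\,\ud\nu(y)$. Because $F_R$ is radial, $|F_R(y)-F_R(0)|\le C|y|^\alpha$ uniformly in $R$, and this difference tends to $0$ pointwise when $\alpha<2$. So under a finite $\alpha$-moment these averages tend to $0$. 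The one-signed potential therefore vanishes a.e., and Riesz inversion gives $\nu=0$. With this lemma, (ii)(a) is immediate: the $(q+1)$-moment is exactly the $\alpha$-moment the lemma needs at $\alpha=q+1$.

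\paragraph{What actually drives the case list, and the remaining parts.} There is no sign bookkeeping. $(-\Delta)^mK$ is a positive multiple of $|x|^{\alpha-d}$ for every admissible $m$, and the direction of domination is fixed by which Jordan part carries the contact. The cases come from the moment cost $\alpha$ of the cancellation step together with the constraint $\alpha<\min\{2,d\}$.
\begin{itemize}
\item For $d$ even, $\alpha=q$; for $d$ odd and $q>1$, $\alpha=q-1$. Both are covered by $\mathcal P_q$.
\item For $d$ odd and $0<q<1$, one is forced to $\alpha=q+1$, which exceeds the natural moment.
\item For $d\ge5$ the paper recovers the natural moment from a second contact at order $\alpha+2=q+3<d$ (\Cref{critical:thm:two-contact}). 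With $V:=-\mathcal I_\alpha\nu\ge0$, the composition $\mathcal I_2(V\mathcal L^d)=\mathcal I_{\alpha+2}\nu$ vanishes at a point where $\nu^+$ has positive density, yet it is strictly positive unless $V\equiv0$.
\item At $d=3$ this second contact is unavailable. That, not ill-defined iterates, is the source of the exceptional regime. The $q$-moment alone gives $K*\nu\in W^{\ell,1}_{\mathrm{loc}}$ for every integer $\ell<d+q$ (\Cref{critical:lem:power-regularity}). The $L^p$-based velocity estimates you invoke are neither available here nor needed.
\item For $d$ odd and $q=1$ no admissible $\alpha<\min\{2,d\}$ exists. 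The paper treats this case by the local polyharmonic argument of \Cref{critical:prop:energy-Lp}.
\end{itemize}

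In (ii)(b), equal mass is again what closes the argument. Let $\gamma_F$ be the equilibrium measure of $F=\supp\nu^+$. Then
\[
\nu^+(\R^3)=\int\mathcal I_\alpha\gamma_F\,\ud\nu^+=\int\mathcal I_\alpha\nu^+\,\ud\gamma_F\le\int\mathcal I_\alpha\nu^-\,\ud\gamma_F=\int\mathcal I_\alpha\gamma_F\,\ud\nu^-\le\nu^-(\R^3).
\]
Equality of the masses forces $\supp\nu^-\subset F$, by the strict exterior inequality for $\mathcal I_\alpha\gamma_F$. Both parts are then compactly supported and the cancellation lemma applies. (ii)(c) reduces to (ii)(b), because the radial flux identity forces $\supp\nu^+$ to be bounded.

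For (iii) you need not solve for $\mu$. Take $\rho_r=\eta_{q,r}\,\ud x$ and $\mu=\eta_{q,R}\,\ud x$ from \Cref{critical:lem:interval-eqm}. Both potentials are constant on $(-r,r)$, so $K*(\rho_r-\mu)$ is constant there, and positivity of $\mu$ is automatic.
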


Part~\textup{(i)} follows from \Cref{critical:thm:natural-moment-rigidity}, the three alternatives in part~\textup{(ii)} follow from \Cref{critical:thm:residual-three-rigidity}, and part~\textup{(iii)} is proved in \Cref{critical:thm:nested-intervals}.  Thus any unresolved natural-$q$-moment case in dimension three must have nonradial discrepancy and unbounded support of its positive part.

The energy--dissipation identity for the solution in \Cref{thm:wgfmmdwp} also yields long-time asymptotic criticality.  For $1\le q<2$, every sequence of times tending to infinity has a subsequence along which the flow converges narrowly to a Lagrangian critical point, and the full orbit approaches the Lagrangian critical set, without any additional long-time bound.  For $0<q<1$, the same conclusions hold under uniform moment and $L^p$ bounds.  Under those bounds, if the available moment order meets the corresponding rigidity hypothesis, every $\omega$-limit point equals the target and the full trajectory converges to it.  Within the range of \Cref{thm:wgfmmdwp}, this applies throughout part~\textup{(i)} of \Cref{thm:lagrangian-rigidity-flexibility} and also for $d=3$ and $0<q<1$ either under the additional-moment alternative in part~\textup{(ii)} or when the initial source and target are radial about a common center; see \Cref{thm:conditional-critical-convergence,cor:conditional-energy-convergence}.  These conclusions are qualitative and no rate is asserted.
\medskip

Before stating the mean-field estimate, we establish global noncollision and fixed-$N$ asymptotic criticality for the particle dynamics, including the one-dimensional range $0<q<1$, where uniqueness is not asserted.  We also prove uniform-in-time consistency for well-prepared particle data: if $\mathcal E_N(X_N^0) := \mmd_q(\frac{1}{N}\sum_{i=1}^N \delta_{x^i_0})\to0$, then the particle empirical measure $\rho_{t_N}^N\rightharpoonup\mu$ narrowly for every sequence $t_N\ge0$; see \Cref{cor:well-prepared-particle-uniform-time}.  For particle data with uniformly bounded initial energies, we show that there exist $N_k\to\infty$, $t_k\to\infty$, and a Lagrangian critical point $\bar\rho$ such that $\rho_{t_k}^{N_k}\rightharpoonup\bar\rho$ narrowly.  When $0<q<1$, this conclusion holds under the additional assumption of a uniform discrete Riesz bound with inverse-power exponent $1-q$ for the particle $\omega$-limit sets (no such nonconcentration condition is needed when $1\le q<2$).  The case $0<q<1$ includes $d=1$ under $p>1/q$; see \Cref{prop:particle-critical-closure,cor:particle-late-time-critical-limit}.  We then construct collision-free saddle equilibria showing that convergence to global empirical minimizers cannot hold for every deterministic initial configuration.
\begin{theorem}[Global noncollision \texorpdfstring{and fixed-$N$ asymptotic criticality}{and fixed-N asymptotic criticality}]\label{thm:particle-global-noncollision}\label{thm:particle-asymptotic-criticality}
Let $0<q<2$.  Assume one of the following:

\begin{enumerate}[label=\textup{(\roman*)}]
\item $d+q-2>0$ and $p_c<p\le\infty$;
\item $d=q=1$ and $p=p_c=\infty$;
\item $d=1$, $0<q<1$, and $1/q<p\le\infty$.
\end{enumerate}

Let $\mu\in\mathcal P_{\max\{1,q\}}(\R^d)\cap L^p(\R^d)$ be a probability density.  For $N\ge2$, set
\begin{equation}\label{eq:intro-collision-free-configurations}
\mathcal D_N:=\{(x_i)_{i\in[N]}\in(\R^d)^N: \, x_i\ne x_j \, \text{ for all distinct }i,j\in[N]\}.
\end{equation}
For every $X_N^0\in\mathcal D_N$, in cases \textup{(i)} and \textup{(ii)}, \eqref{eq:wgfmmdparticle} has a unique solution $X_N\in C^1([0,\infty);\mathcal D_N)$.  In case \textup{(iii)}, it has at least one solution in $C^1([0,\infty);\mathcal D_N)$, and every maximal-lifespan $C^1$ solution with values in $\mathcal D_N$ is global.

For the particle energy $\mathcal E_N$ defined in \eqref{eq:particle-energy-definition}, set
\begin{equation}\label{eq:particle-critical-set}
\operatorname{Crit}_{\mathcal D_N}\mathcal E_N
:=
\left\{X_N\in\mathcal D_N:\nabla\mathcal E_N(X_N)=0\right\}.
\end{equation}
Every such global solution satisfies
\begin{equation}\label{eq:particle-uniform-boundedness-separation}
\sup_{t\ge0}\max_{i\in[N]}|x_i^t|<\infty,
\qquad
\inf_{t\ge0}\min_{\substack{i,j\in[N]\\i\ne j}}|x_i^t-x_j^t|>0.
\end{equation}
Moreover, its $\omega$-limit set
\begin{equation}\label{eq:particle-omega-limit}
\omega(X_N)
:=
\bigcap_{T\ge0}\overline{\{X_N^t:t\ge T\}}
\end{equation}
is nonempty, compact, and connected, and
\begin{equation}\label{eq:particle-omega-critical-level}
\omega(X_N)
\subset
\operatorname{Crit}_{\mathcal D_N}\mathcal E_N
\cap\left\{\mathcal E_N=\mathcal E_N^\infty\right\},
\qquad
\mathcal E_N^\infty:=\lim_{t\to\infty}\mathcal E_N(X_N^t).
\end{equation}
Consequently,
\begin{equation}\label{eq:particle-distance-critical-set}
\dist\!\left(X_N^t,\operatorname{Crit}_{\mathcal D_N}\mathcal E_N\right)
\longrightarrow0.
\end{equation}

\end{theorem}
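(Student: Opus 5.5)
The plan is to treat \eqref{eq:wgfmmdparticle} as a finite-dimensional gradient flow on $\mathcal D_N$ and get the needed compactness from two a priori bounds: energy coercivity for boundedness, and a monotonicity argument for separation. Since $K(0)=0$, the diagonal-free particle energy $\mathcal E_N(X_N)$ coincides with $\mmd_q^2$ between the empirical measure $\frac1N\sum_i\delta_{x_i}$ and $\mu$. In particular it is nonnegative. I would verify directly from \eqref{eq:particle-energy-definition} that \eqref{eq:wgfmmdparticle} reads $\dot x_i=-N\nabla_{x_i}\mathcal E_N(X_N)$. This gives the dissipation identity
\begin{equation*}
\frac{d}{dt}\mathcal E_N(X_N^t)=-N\,|\nabla\mathcal E_N(X_N^t)|^2 ,
\end{equation*}
so $\mathcal E_N(X_N^t)\le\mathcal E_N(X_N^0)$ and $\int_0^\infty|\nabla\mathcal E_N(X_N^t)|^2\ud t<\infty$.

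\textbf{Local theory.} The target field $\nabla K*\mu$ has the following regularity.
\begin{itemize}
\item In case (i), it is Lipschitz: its derivative is $|x|^{q-2}$-type convolved with an $L^p$ density, and $p>p_c$ is exactly the integrability used in \Cref{thm:wgfmmdwp}.
\item In case (ii), it is Lipschitz since $\mu\in L^\infty$ and $\partial_x(\nabla K*\mu)=-2\mu$ up to constants.
\item In case (iii), it is only H\"older continuous, with some exponent $\alpha>0$ coming from $p>1/q$.
\end{itemize}
The pairwise forces are smooth on $\mathcal D_N$. Hence Cauchy--Lipschitz gives local existence and uniqueness in cases (i) and (ii), while Peano gives local existence in case (iii). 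A maximal $C^1$ solution with values in $\mathcal D_N$ is then global once we show that it stays in a compact subset of $\mathcal D_N$ on bounded time intervals. That is the content of \eqref{eq:particle-uniform-boundedness-separation}.

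\textbf{Separation.} Fix a pair $i\ne j$, set $z=x_i-x_j$ and $e=z/|z|$. The key observation is that $-\nabla K(w)=q|w|^{q-2}w$ is the gradient of the convex function $|w|^q$, hence a monotone map. For every third particle $k$, apply monotonicity with $a=x_i-x_k$ and $b=x_j-x_k$, which satisfy $a-b=z$. This shows that the contribution of $x_k$ to $\frac{d}{dt}|z|=e\cdot(\dot x_i-\dot x_j)$ is nonnegative. The mutual interaction of the pair contributes exactly $\frac{2q}{N}|z|^{q-1}$. Therefore
\begin{equation*}
\frac{d}{dt}|x_i-x_j|\ \ge\ \frac{2q}{N}|x_i-x_j|^{q-1}-\omega(|x_i-x_j|),
\end{equation*}
where $\omega(s)=Ls$ in cases (i) and (ii), and $\omega(s)=Cs^\alpha$ in case (iii). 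In every case $\omega(s)=o(s^{q-1})$ as $s\to0$, because $q-1<1$ and $q-1<0<\alpha$ respectively. Hence $|x_i-x_j|$ increases whenever it is below an explicit threshold $\delta_N>0$. This yields
\begin{equation*}
\inf_t|x_i^t-x_j^t|\ge\min\{\delta_N,\min_{i\ne j}|x_i^0-x_j^0|\}.
\end{equation*}

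\textbf{Boundedness (expected main obstacle).} For fixed $N$ I would prove that $\mathcal E_N$ is coercive: $\max_i|x_i|\to\infty$ forces $\mathcal E_N\to\infty$. The approach is to split the configuration into a far cluster and a near cluster. The far cluster has mass $m\in[1/N,1]$ and lies at distance $R\to\infty$ from the origin and from the rest. Then $-\frac12\iint|x-y|^q\ud\nu\ud\nu$, with $\nu$ the discrepancy, picks up a cross term of size $\gtrsim m(1-m)R^q$ or $m\,R^q$ from the far mass against $\mu$ and the near particles. This dominates all the other terms, which stay bounded using $\mu\in\mathcal P_{\max\{1,q\}}$; alternatively one can argue via the Fourier representation of \Cref{lem:genSob} at low frequencies. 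Combined with the energy bound this gives $\sup_t\max_i|x_i^t|<\infty$. If the cluster bookkeeping becomes messy, I would fall back on a Lyapunov argument for $\max_i|x_i|^2$ that uses the inward far-field drift $\nabla K*\mu$.

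\textbf{Long time.} The trajectory lies in a compact set $\mathcal K\subset\mathcal D_N$, so the velocity is bounded and $\nabla\mathcal E_N$ is uniformly continuous on $\mathcal K$. Hence $t\mapsto|\nabla\mathcal E_N(X_N^t)|^2$ is uniformly continuous and integrable, and therefore tends to $0$. The standard properties of $\omega$-limit sets of bounded continuous trajectories give that $\omega(X_N)$ is nonempty, compact and connected, and contained in $\mathcal K\subset\mathcal D_N$. By continuity of $\nabla\mathcal E_N$ and $\mathcal E_N$, together with the monotone limit $\mathcal E_N^\infty$, we get \eqref{eq:particle-omega-critical-level}. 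Finally, \eqref{eq:particle-distance-critical-set} follows by contradiction from the compactness of $\mathcal K$.
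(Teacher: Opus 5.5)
\textbf{Gap: the separation step fails for $0<q<1$.} Your separation step rests on the claim that $-\nabla K(w)=q|w|^{q-2}w$ is the gradient of the convex function $|w|^q$, hence monotone. But $|w|^q$ is convex only for $q\ge1$. For $0<q<1$ it is concave along rays, and its gradient is not monotone: with $a=2e$, $b=e$ one has $(a-b)\cdot\bigl(q|a|^{q-2}a-q|b|^{q-2}b\bigr)=q(2^{q-1}-1)<0$. Concretely, take a third particle $x_k$ close to $x_i$ on the side away from $x_j$. It repels $x_i$ toward $x_j$ more strongly than it repels $x_j$, with a net compressive contribution of order $\frac qN|x_i-x_k|^{q-1}$. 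This can far exceed the mutual repulsion $\frac{2q}{N}|x_i-x_j|^{q-1}$, so the inequality $\frac{\ud}{\ud t}|x_i-x_j|\ge\frac{2q}{N}|x_i-x_j|^{q-1}-\omega(|x_i-x_j|)$ is false in general. Restricting to the closest pair does not obviously repair this when $d\ge2$. Many particles at distances comparable to $|x_i-x_j|$ in a narrow cone about $x_i-x_j$ all contribute with the bad sign, since $e\cdot D^2|w|^q\,e=q(q-1)|w|^{q-2}<0$ for $w\parallel e$. The affected range is all of case~(iii) and the part $d\ge2$, $0<q<1$ of case~(i). There your proposal establishes neither global existence nor the separation bound in \eqref{eq:particle-uniform-boundedness-separation}. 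For $1\le q<2$, by contrast, your pairwise argument is correct. It even gives an explicit uniform-in-time separation bound directly, which is cleaner than the paper's two contradiction arguments.

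\textbf{The missing idea.} You need a collapse functional whose internal contribution is positive without any convexity; this is the paper's route. Take the centered quadratic spread $S_C=\sum_{i\in C}|x_i-\bar x_C|^2$ of a cluster $C$, $|C|=m\ge2$, of particles converging to a common point. Antisymmetry of the internal forces $I_i$ gives the exact virial identity $2\sum_{i\in C}(x_i-\bar x_C)\cdot I_i=\frac{2q}{N}\sum_{i<j,\,i,j\in C}|x_i-x_j|^q\ge\frac{2q}{N}(mS_C)^{q/2}$, valid for every $0<q<2$. The separated outside particles and the target field contribute at most $LS_C$. In case~(iii), where $K'*\mu$ is only bounded and continuous, there is an extra term $\sqrt m\,\|K'*\mu\|_{L^\infty}S_C^{1/2}$. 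Both are dominated as $S_C\downarrow0$ because $q/2<1$, resp.\ $q/2<1/2$.

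Since this is a cluster estimate rather than a pairwise one, you must first produce the cluster. At a finite maximal time $T_*$, the bound $\int_0^{T_*}|\dot X_N^t|^2\ud t<\infty$ makes every particle converge. As $t\to\infty$, boundedness gives a convergent subsequence $X_N^{t_n}$ with a coincidence. Finite total dissipation then gives $\sup_{0\le h\le T}|X_N^{t_n+h}-X_N^{t_n}|\to0$, which contradicts the growth of $S_C$ forced on $[t_n,t_n+T]$.

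\textbf{A smaller point on boundedness.} Your cross-term bookkeeping is off. A concentrated far cluster of mass $m$ at distance $R$ gives $\mmd_q^2\approx m^2R^q$: the far/target term $mR^q$ is partly cancelled by the far/near term $m(1-m)R^q$. The cluster approach also gets awkward with several scales. The low-frequency route you mention is exactly \Cref{prop:mmdtomoment}, which gives $\max_i|x_i|^r\le N\bigl(\mathcal M_r(\mu)+C\,\mathcal E_N^{r/q}\bigr)$ for $0<r<q/2$ and settles that step at once.
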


\begin{proposition}[Collision-free saddle configurations]\label{prop:particle-saddle-configurations}
Let $q=1$ and $d\ge2$.  Let $B_1\subset \R^d$ denote the unit ball, and let $\mu$ denote the uniform probability measure on $B_1$.
For the particle energy $\mathcal E_N$ with target $\mu$, the following statements hold.
\begin{enumerate}[label=\textup{(\roman*)}]
\item For every $N\ge4$, there is an explicit configuration $X_N^{\mathrm{line}}\in\operatorname{Crit}_{\mathcal D_N}\mathcal E_N$ such that $D^2\mathcal E_N(X_N^{\mathrm{line}})$ has both a positive and a negative eigenvalue.  The associated empirical measures converge narrowly, as $N\to\infty$, to a nonatomic probability measure supported on a line.
\item For every integer $m\ge3$, there is a configuration $X_{m^2}^{\mathrm{bulk}}=(x_i^{(m),\mathrm{bulk}})_{i\in[m^2]}\in\operatorname{Crit}_{\mathcal D_{m^2}}\mathcal E_{m^2}$ such that $D^2\mathcal E_{m^2}(X_{m^2}^{\mathrm{bulk}})$ has both a positive and a negative eigenvalue, and
\begin{equation}\label{eq:bulk-saddle-empirical-convergence}
\frac1{m^2}\sum_{i=1}^{m^2}\delta_{x_i^{(m),\mathrm{bulk}}}
\rightharpoonup\mu
\qquad\text{as }m\to\infty.
\end{equation}
\end{enumerate}
\end{proposition}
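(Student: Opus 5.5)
The plan is to write the first-order condition explicitly, use symmetry to make criticality a finite-dimensional problem in a few scalar unknowns, and read off eigenvalues of $D^2\mathcal E_N$ from the known Hessian of $-|z|$. For $q=1$ the particle energy is, up to an additive constant,
\[
\mathcal E_N(X_N)=-\frac1{2N^2}\sum_{i\ne j}|x_i-x_j|+\frac1N\sum_i\Phi(x_i),\qquad \Phi:=|\cdot|*\mu .
\]
Hence $\nabla\mathcal E_N(X_N)=0$ if and only if, for every $i$,
\[
\frac1N\sum_{j\ne i}\frac{x_i-x_j}{|x_i-x_j|}=\nabla\Phi(x_i).
\]
Since $\mu$ is the uniform measure on $B_1$, $\Phi$ is radial and convex, with $\nabla\Phi(x)=g(|x|)\,x/|x|$. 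Here $g$ is continuous and nondecreasing, with $g(0)=0$ and $g(s)\to1$ as $s\to\infty$. Because $\supp\mu$ is full-dimensional, $g$ should be strictly increasing, and since $\Delta\Phi=(d-1)|\cdot|^{-1}*\mu$ is bounded near $0$, $g(s)/s\to g'(0)\in(0,\infty)$. The Hessian of $-|z|$ is $-(I-\hat z\hat z^{T})/|z|$. The radial function $\Phi$ has radial curvature $g'(r)$ and transverse curvature $g(r)/r$.

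For (i), I place all particles on the $e_1$-axis, symmetric about $0$, ordered $x_1<\dots<x_N$. By rotational symmetry about the axis every force is parallel to $e_1$. The criticality condition then reduces to the scalar equation $\frac{2i-1-N}{N}=\operatorname{sgn}(x_i)\,g(|x_i|)$. I solve it explicitly by
\[
x_i=\operatorname{sgn}(2i-1-N)\,g^{-1}\!\bigl(|2i-1-N|/N\bigr),
\]
which gives distinct points lying in $\R e_1$. The empirical measures converge to the push-forward of the uniform measure on $[-1,1]$ under $s\mapsto \operatorname{sgn}(s)g^{-1}(|s|)$. This limit is nonatomic because $g^{-1}$ is strictly increasing.

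For the Hessian of the line configuration:
\begin{itemize}
\item In directions along the line, the interaction term is piecewise linear, so its second derivative vanishes away from collisions. Moving a single particle along $e_1$ therefore gives the quadratic form $\frac1N g'(|x_i|)>0$.
\item Moving a single particle $i$ in a direction $e_\perp\perp e_1$ gives
\[
Q_i=\frac1N\frac{g(|x_i|)}{|x_i|}-\frac1{N^2}\sum_{j\ne i}\frac1{|x_i-x_j|}.
\]
\end{itemize}
I take $i$ to be a central particle. Its neighbours sit at spacing comparable to $1/(N\,\sup g')$, so the sum is at least of order $N\log N$ times $\inf_{[0,x_N]}g'$, while the first term is of order $g'(0)/N$. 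Using two-sided bounds on $g'$ on $[0,g^{-1}(1-1/N)]$, computed from the explicit formula for $\Phi$ inside and outside $B_1$, this should give $Q_i<0$. I expect the main obstacle to be making this quantitative down to the threshold $N\ge4$. If the single-particle mode is not negative for small $N$, the first fallback is the antisymmetric mode in which the two central particles move in opposite transverse directions; this doubles the interaction gain between them.

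For (ii), I replace the line by a symmetric "bulk" configuration and use the principle of symmetric criticality. I fix a finite reflection group $G\subset O(d)$ whose orbits are large. I build $m^2$ points as $m$ shells, each a $G$-orbit (or union of orbits) of $m$ points placed on reflection axes, with radii $r_1<\dots<r_m$. Invariance under the reflections fixing each axis forces every net force to be radial. Criticality then reduces to $m$ scalar equations in the radii, which I solve by minimizing $\mathcal E_{m^2}$ over the radii, or by a monotone fixed-point/intermediate-value argument as in (i). To get \eqref{eq:bulk-saddle-empirical-convergence}, I would show that the critical energy tends to zero. For this I compare with a symmetrized quasi-uniform configuration, whose energy is $o(1)$. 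Then $\mmd_1\to0$, and narrow convergence to $\mu$ follows from \Cref{prop:mmd-topology}.

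The Hessian sign structure is handled as in (i):
\begin{itemize}
\item The uniform translation mode leaves the interaction invariant, so it gives the quadratic form $\frac1{m^2}\sum_i\langle D^2\Phi(x_i)e,e\rangle>0$. This yields a positive eigenvalue.
\item For a negative eigenvalue, I move a single particle of the innermost shell tangentially. Its transverse target curvature is about $g'(0)/m^2$. The interaction gain is $\frac1{m^4}\sum_j|x_i-x_j|^{-1}$, which I expect to dominate once the shells are dense, i.e. for $m\ge3$.
\end{itemize}
The verification of this last inequality, uniformly from $m=3$ on, is again the delicate step.
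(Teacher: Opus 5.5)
\textbf{Part (ii): the symmetric ansatz cannot reach $\mu$ when $d\ge3$.} This is the main gap. Equivariance alone forces the force at $x_i$ to be radial only if $\R x_i$ is the entire fixed space of the stabilizer of $x_i$. For a finite reflection group, such lines lie inside single irreducible components. For fixed $d$, the only irreducible types with arbitrarily many such lines are the dihedral groups acting on $2$-planes. So for $d\ge3$ your configurations are carried by at most $\lfloor d/2\rfloor$ two-planes and boundedly many lines through the origin. By compactness of the Grassmannians, every narrow limit is carried by such a set, which is $\mu$-null. Hence the empirical measures cannot converge to $\mu$; your ansatz is adequate only for $d=2$, where it becomes phase-aligned regular $m$-gons.

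The missing idea is that symmetry should supply only part of criticality. The paper keeps only the dihedral symmetry in $\Pi=\operatorname{span}\{e_1,e_2\}$ and lets each polygon seed $(r_\ell,z_\ell)$ range over $[0,\infty)\times\Pi^\perp$. It minimizes $\mathcal E_{m^2}$ over this class, which contains competitors converging to $\mu$. That minimization is what makes your energy comparison legitimate; a solution found by an intermediate-value argument admits no comparison. The paper then proves the constrained minimizer is collision-free, which you presuppose by writing $r_1<\dots<r_m$: opening a collapsed polygon, or splitting two coincident seeds, lowers the energy at first order. Finally, reflection across $\operatorname{span}\{e_1\}\oplus\Pi^\perp$ only places the gradient at the vertex $r_\ell e_1+z_\ell$ in $\operatorname{span}\{e_1\}\oplus\Pi^\perp$. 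Those components are killed by the first-order conditions in $r_\ell$ and $z_\ell$, and cyclic symmetry handles the other vertices.

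\textbf{Part (ii): the negative direction is unjustified.} Set $N=m^2$ and suppose the empirical measure is close to $\mu$. Then the target curvature $\frac1N D^2\Phi(x_i)[w,w]$ and the interaction gain $\frac1{N^2}\sum_{j\ne i}|x_i-x_j|^{-1}\bigl(1-\bigl(\tfrac{(x_i-x_j)\cdot w}{|x_i-x_j|}\bigr)^2\bigr)$ approximate the same integral against $\mu$. They therefore cancel at leading order. Your displayed gain also drops the angular factor, which overstates it. The sign is thus decided by discreteness corrections at the interparticle scale, and you give no reason it is negative.

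The paper uses a collective mode instead: it rotates one whole polygon about $\Pi^\perp$. This leaves the target term and the intra-polygon distances unchanged. Each cross-distance sum becomes $m\sum_{a=0}^{m-1}f(\theta+2\pi a/m)$ with $f(\alpha)=\sqrt{A-2c\cos\alpha}$ and $A>2c>0$. The binomial expansion in $\cos\alpha$ shows that every nonconstant cosine coefficient of $f$ is strictly negative. Hence these sums are strictly convex at $\theta=0$, and the second variation is strictly negative for every $m\ge3$. Your translation mode is a valid positive direction, in place of the paper's dilation.

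\textbf{Part (i): the fallback is required.} Your construction matches the paper's: the same points $\phi^{-1}((2i-N-1)/N)e_1$, where $\phi(s)=\operatorname{sgn}(s)g(|s|)$, the same axial positive direction, and the same limit. However, your primary negative direction genuinely fails, so the fallback is not optional. Take $N=4$ with points $\pm a,\pm b$. Moving one central particle transversally gives $\frac1{16}\bigl(\frac1{2a}-\frac{2b}{b^2-a^2}\bigr)$, which is negative only if $b/a<2+\sqrt5$. As $d\to\infty$, $\phi(s)\to s/\sqrt{1+s^2}$, so $b/a\to\sqrt{135/7}\approx4.39$ and this direction becomes positive.

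Your fallback is exactly the paper's mode, and it needs no estimate. Since $\phi(a)=1/N$, the transverse curvature is $\phi(a)/a=1/(Na)$. For the mode $(1,-1)$ on the central pair, the target term $\frac{2}{N^2a}$ therefore cancels the internal pair term $\frac1{N^2}\cdot\frac{4}{2a}$ exactly. What remains are the strictly negative interactions with the other $N-2\ge2$ particles. For odd $N\ge5$, use coefficients $(1,0,-1)$ on the triple $(-a,0,a)$, where now $\phi(a)=2/N$.
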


\begin{remark}[Uniqueness for regular targets]
In case~\textup{(iii)} of \Cref{thm:particle-global-noncollision}, uniqueness is not asserted because the assumption $p>1/q$ guarantees continuity, but not an Osgood modulus of continuity, for $K'*\mu$.  If, in addition, $\mu\in C_b^\alpha(\R)$ for some $1-q\le\alpha<1$, then uniqueness does hold.  Indeed, the standard H\"older--Zygmund estimates for fractional potentials \cite[Chapter~V]{Stein1970singular} show that $K'*\mu$ is locally Lipschitz when $\alpha>1-q$, while for $\alpha=1-q$ it belongs to the Zygmund class and is therefore locally log-Lipschitz.  Since the pair-interaction field is smooth on $\mathcal D_N$, the full particle vector field has an Osgood modulus on compact subsets of $\mathcal D_N$.  The Osgood uniqueness criterion \cite[Problem~2.25]{Teschl2012ODE} gives uniqueness up to the first collision, and the noncollision argument in the proof below then yields a unique global solution.
\end{remark}

Because the standard Dobrushin--Sznitman coupling argument relies on a uniform Lipschitz bound for the interaction force, it does not apply directly here, where $|\nabla^{\otimes 2}K(x)|\sim |x|^{q-2}$ at the particle diagonal \cite{dobrushin,sznitman1991topics}.  Adapted Wasserstein methods can nevertheless treat certain singular interactions: for divergence-free Euler-type kernels, Hauray \cite{hauray_wasserstein_2009} obtains a quantitative $W_\infty$ estimate under a coupled initial $W_\infty$-approximation and minimum-separation condition.  Those structural and configuration hypotheses do not directly furnish the deterministic MMD estimate proved here for arbitrary pairwise-distinct configurations.  The relative-entropy framework of Jabin--Wang \cite{jabin2018quantitative} is likewise not directly applicable: it compares an $N$-particle law with a tensorized continuum law rather than a single deterministic empirical trajectory, and its main estimates are formulated in the periodic setting.  The whole-space extensions discussed there do not cover the growth $|\nabla K(x)|\sim |x|^{q-1}$ of the interaction field at infinity when $q>1$.

We instead follow the modulated-energy method introduced by Duerinckx and Serfaty \cite{duerinckx_mean-field_2016,Serfaty2020} and further developed by Q. H. Nguyen, Serfaty, and the first author \cite{NRS2021}.  This latter work is closest to the present setting, and the method entails differentiating the modulated MMD energy and controlling the resulting transport commutator through the homogeneous Sobolev representation of MMD.  For overviews of the method and its commutator formulation, see \cite{SerfatyLN,Rosenzweig2026Commutators}.  Ultimately, we obtain the following mean-field estimate.

\begin{theorem}[Mean-field MMD estimate]\label{thm:wgfmmdpoc}\label{cor:wgfmmdpoc-linfty}
Assume the hypotheses of \Cref{thm:wgfmmdwp} with $r\ge\max\{1,q\}$.  Fix $T>0$ and $N\ge2$.  Let $\rho_t$ be the corresponding continuum solution, let $X_N^t$ be the particle solution from \Cref{thm:particle-global-noncollision} with pairwise distinct initial configuration, and set
\begin{equation}\label{eq:intro-empirical-error}
\rho_t^N:=\frac1N\sum_{i=1}^N\delta_{x_i^t}.
\end{equation}
Then, for $0\le t\le T$,
\begin{equation}\label{eq:main}
\mmd_q^2(\rho_t^N,\rho_t)
\le \exp\left(C_{d,q}\int_0^t\left(\|\nabla v_s\|_{L^\infty}+\|\dm^{\frac{d+q}{2}}v_s\|_{L^{\frac{2d}{d+q-2}}}\indic_{d+q>2}\right)\ud s\right)
\mmd_q^2(\rho_0^N,\rho_0).
\end{equation}
\end{theorem}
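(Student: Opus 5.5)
The plan is the standard modulated-energy argument. Set
\[
F_N(t):=\mmd_q^2(\rho_t^N,\rho_t)=-\tfrac12\iint_{x\ne y}|x-y|^q\,\ud(\rho_t^N-\rho_t)^{\otimes2}(x,y).
\]
The diagonal is excised in the $\rho^N\otimes\rho^N$ term. This is consistent with the convention $\nabla K(0)=0$, and the resulting quantity equals the centered MMD, which is finite because $r\ge\max\{1,q\}$ gives $\rho_t\in\mathcal P_q$ and the particles stay bounded by \Cref{thm:particle-global-noncollision}.

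\textbf{Step 1: time derivative.} Differentiate $F_N$ using the particle ODE \eqref{eq:wgfmmdparticle} and the weak form of \eqref{eq:wgfmmd}. Both evolutions are driven by the same target, and the velocity is $v_t$ for $\rho_t$. The particle velocity is
\[
u^N_t(x_i)=-\nabla K*(\rho^N_t-\mu)(x_i),
\]
with the self-term dropped. Writing $u^N=v+(u^N-v)$, the terms involving $\mu$ cancel exactly. Ordinary chain-rule calculations (justified by $v\in L^\infty W^{1,\infty}\cap C$ and by the uniform moment bounds) give
\[
\frac{d}{dt}F_N=-\int|\nabla K*(\rho^N-\rho)|^2\ud\rho^N\;+\;\frac12\iint_{x\ne y}(v(x)-v(y))\cdot\nabla K(x-y)\,\ud(\rho^N-\rho)^{\otimes2}.
\]
The first term is the nonpositive dissipation; in the diagonal-free sense it is simply dropped. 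The second term is the transport commutator, i.e.\ the first variation of the energy along the flow map of $v$.

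\textbf{Step 2: commutator estimate.} Everything then reduces to a functional inequality of the form
\[
\Bigl|\iint_{x\ne y}(v(x)-v(y))\cdot\nabla K(x-y)\,\ud(\rho^N-\rho)^{\otimes2}\Bigr|\le C\Bigl(\|\nabla v\|_{L^\infty}+\|\dm^{\frac{d+q}{2}}v\|_{L^{\frac{2d}{d+q-2}}}\indic_{d+q>2}\Bigr)\mmd_q^2(\rho^N,\rho),
\]
valid for any atomic $\rho^N$ with distinct points. This is a sharp commutator estimate for the Riesz-type kernel $|x|^q$, which by \Cref{lem:genSob} corresponds to $\dot H^{-(d+q)/2}$ with $s=(d+q)/2>d/2$.

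Because $s>d/2$, point masses have finite energy. No smearing or renormalization of Dirac masses is needed, and no additive $N$-dependent error appears; this is why the estimate \eqref{eq:main} is purely multiplicative.

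I would prove the inequality by writing $\nu:=\rho^N-\rho$ and passing to Fourier variables. The commutator becomes
\[
\int \hat\nu(\xi)\,\overline{\hat\nu(\eta)}\;\hat v(\xi-\eta)\cdot m(\xi,\eta),
\]
where the symbol $m$ is built from $|\xi|^{-d-q}\xi-|\eta|^{-d-q}\eta$. The task is then a Kato--Ponce / Coifman--Meyer type bound:
\[
\bigl|\langle [v\cdot\nabla,\dm^{-(d+q)}]\nu,\nu\rangle\bigr|\lesssim\bigl(\|\nabla v\|_{L^\infty}+\|\dm^{s}v\|_{L^{2d/(2s-2)}}\bigr)\|\nu\|_{\dot H^{-s}}^2,
\]
which I would obtain following the approach of \cite{NRS2021}. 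This step is the main obstacle. The difficulty is to show that the high–low paraproduct piece costs only $\|\nabla v\|_{L^\infty}$, while the piece in which $v$ carries the high frequency is controlled via Sobolev embedding by $\|\dm^{s}v\|_{L^{2d/(d+q-2)}}$.

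A positive-part subtlety arises in one case. The identification of the RKHS as $\dot H^{s}$ modulo constants gives $\nu\in\dot H^{-s}$ only because $\nu$ has zero mass. When $d+q\le2$ (i.e.\ $d=1$, $q\le1$), the extra term disappears, and a direct real-space argument works instead: use $|v(x)-v(y)|\le\|\nabla v\|_{L^\infty}|x-y|$ together with the positivity of the kernel $|x-y|^q$ in Fourier.

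\textbf{Step 3: conclusion.} Combining the two steps gives
\[
F_N'(t)\le C\bigl(\|\nabla v_t\|_{L^\infty}+\|\dm^{\frac{d+q}{2}}v_t\|_{L^{\frac{2d}{d+q-2}}}\indic_{d+q>2}\bigr)F_N(t).
\]
Grönwall's inequality on $[0,T]$ then yields \eqref{eq:main}. Time integrability of the coefficient follows from \eqref{eq:intro-velocity-class}, together with an $L^p$ bound on $\rho_t-\mu$ to control $\dm^{s}v$ by fractional integration.
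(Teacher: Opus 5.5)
Your overall architecture is the paper's: the modulated-energy identity of \Cref{lem:mf-modulated-energy-identity}, a commutator bound in $\dot H^{-(d+q)/2}$, Gr\"onwall, and the HLS argument of \Cref{lem:mf-velocity-factor} for the coefficient.

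For $d+q>2$ your route to the commutator differs from the paper's. The paper works in physical space. It integrates by parts $m$ times so that the differentiated kernel has Riesz exponent $s_m=2m-q\in[d-2,d)$. It bounds that terminal term by \cite[Proposition~3.1]{NRS2021} at the cost of $\|\nabla v\|_{L^\infty}$ only. The lower-order terms are handled by homogeneous Kato--Ponce plus HLS, and this is where $\|\dm^{(d+q)/2}v\|_{L^{2d/(d+q-2)}}$ enters.

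Your Littlewood--Paley/Coifman--Meyer analysis of the symbol $\xi|\xi|^{-d-q}-\eta|\eta|^{-d-q}$ is a viable alternative. The low-frequency-$v$ regime is a Coifman--Meyer commutator costing $\|\nabla v\|_{L^\infty}$. The high-frequency-$v$ and high--high regimes close with HLS of order $(d+q-2)/2$ and the exponent $2d/(d+q-2)$. However, you must carry all of this out by hand, whereas the paper outsources the delicate terminal term to an existing sharp estimate.

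The genuine gap is the endpoint $d+q\le2$, which under the hypotheses of \Cref{thm:wgfmmdwp} is exactly $d=q=1$. Your real-space argument uses the pointwise bound $|(v(x)-v(y))\cdot\nabla K(x-y)|\le q\|\nabla v\|_{L^\infty}|x-y|^q$. After taking absolute values you are left with $\iint|x-y|^q\ud|\nu|\ud|\nu|$. Conditional positive-definiteness of $-|x-y|^q$ says nothing about this quantity: it is a property of the signed quadratic form, and it is lost once you multiply by the non-translation-invariant factor $(v(x)-v(y))/(x-y)$ or pass to $|\nu|$.

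Concretely, since $\rho^N\perp\rho$ you have $|\nu|=\rho^N+\rho$. Hence $\iint|x-y|^q\ud|\nu|\ud|\nu|\ge\iint|x-y|^q\ud\rho\ud\rho>0$ however small $\mmd_q(\rho^N,\rho)$ is. You would therefore get an additive $O(1)$ error instead of \eqref{eq:main}.

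The paper covers this case with the Calder\'on-type unlocalized commutator estimate \cite[Theorem~4.1]{RosenzweigSerfaty2026SharpCommutators}. At $d=q=1$ you can also close it by hand. Set $h(x):=\nu((-\infty,x])$ and $h_-(x):=h(x^-)$. Then $\mmd_1^2(\rho^N,\rho)=\int_\R h^2\ud x$. Using $\int\sgn(x-y)\ud\nu(y)=h(x)+h_-(x)$ and $\ud(h^2)=(h+h_-)\ud h$, an integration by parts gives the exact identity $\iint_{x\ne y}(v(x)-v(y))K'(x-y)\ud\nu(x)\ud\nu(y)=2\int_\R v'h^2\ud x$. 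The boundary terms vanish by the first-moment bound. This gives the bound $2\|v'\|_{L^\infty}\mmd_1^2(\rho^N,\rho)$.

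Finally, a Fourier/paraproduct estimate is naturally proved for smooth zero-mean inputs. You still need to identify its bounded extension at $\nu=\rho^N-\rho$ with the diagonal-free integral in the energy identity, and $s>d/2$ alone does not give this. The paper first checks $\nu\in\dot H^{-(d+q)/2}$ from zero mass and the finite $q$-moment. It then approximates $\nu$ by cut-off, mollified zero-mass functions. The key fact is that $b_v(x,y)=(v(x)-v(y))\cdot\nabla K(x-y)$ is continuous, vanishes on the diagonal, and grows at most like $1+|x|^q+|y|^q$. Consequently the empirical self-interactions disappear in the limit.
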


\begin{remark}[One-dimensional super-Coulomb range]
The restriction above excludes the range $d=1$ and $0<q<1$ only in the presence of a nonzero target.  When $\mu=0$, the mean-field conclusion continues to hold by combining the preceding commutator estimate with the local classical well-posedness theory of Choi--Jeong \cite{ChoiJeong2021FractionalPorousMedium}; foundational weak and regularity theories for the target-free fractional porous medium equation were developed in \cite{CaffarelliVazquez2011FractionalPressure,CaffarelliSoriaVazquez2013Regularity,BilerImbertKarch2015NonlocalPorousMedium}.

These target-free results also suggest an extension to a nonzero sufficiently regular target.  Indeed, writing $f_t:=\rho_t-\mu$, one has, in the finite-part sense,
\begin{equation}
\partial_x v_t(x)=q(1-q)\int_{\R}\frac{f_t(x)-f_t(y)}{|x-y|^{2-q}}\ud y.
\end{equation}
Consequently, $f_t\in C^\alpha(\R)$ with $\alpha>1-q$ yields the $L^\infty$ bound on $\partial_x v_t$ required by \Cref{prop:comm}.  Treating the MMD flow in this range would therefore require propagation of such regularity for the target-dependent equation.  We do not pursue this extension here.
\end{remark}

\begin{remark}
If $(x_i^0)_{i\in[N]}$ are independent with law $\rho_0\in\mathcal P_q(\R^d)$, then
\begin{equation}\label{eq:intro-monte-carlo}
\mathbb E\big[\mmd_q^2(\rho_0^N,\rho_0)\big]
=-\frac1{2N}\int_{(\R^d)^2}K(x-y)\ud\rho_0(x)\ud\rho_0(y)=O(N^{-1}).
\end{equation}
The sampled points are distinct almost surely, so \Cref{thm:particle-global-noncollision,thm:wgfmmdpoc} propagate this scaling on every fixed time interval; the velocity factor is finite by \Cref{lem:mf-velocity-factor}.
\end{remark}

A central long-time question is whether $\rho_t\to\mu$ and at what rate.  For $1\le q<2$, \Cref{thm:conditional-critical-convergence} gives qualitative convergence to the Lagrangian critical point set without additional long-time bounds, but it does not identify a possibly singular limit point with the target.  The gap is a single estimate: under the uniform-in-time bound \eqref{eq:conditional-uniform-bounds}, every $\omega$-limit point is an $L^p$ density with finite $r$-th moment, and rigidity then identifies it with $\mu$ (\Cref{cor:conditional-energy-convergence}).  Whether that bound propagates along the flow is the first question raised in \Cref{sec:further-questions}.  When $0<q<1$, asymptotic criticality holds under uniform moment and $L^p$ bounds.  We also show that, for every target covered by the well-posedness theorem, no multiplicative MMD decay modulus can be uniform over the full admissible class of initial data.
\begin{proposition}[No universal global MMD convergence rate]\label{thm:noconvmmd}
Let $r\ge1$, let $0<q<2$ satisfy $d+q-2\ge0$, let $p$ satisfy \eqref{eq:admissible-p-range}, and let $\mu\in\mathcal P_r(\R^d)\cap L^p(\R^d)$ be a probability density.  There is no function $\gamma:[0,\infty)\to(0,\infty)$, independent of the initial datum and satisfying $\gamma(t)\to0$, such that, for every probability density $\rho_0\in\mathcal P_r(\R^d)\cap L^p(\R^d)$, the corresponding solution satisfies
\begin{equation}\label{eq:impossibledecay}
\mmd_q^2(\rho_t,\mu)\le\gamma(t)\mmd_q^2(\rho_0,\mu),\qquad t\ge0.
\end{equation}
\end{proposition}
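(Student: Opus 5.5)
We argue by contradiction, following the transport-at-finite-speed mechanism described in the introduction. Suppose such a $\gamma$ exists and fix $t_0>0$ with $\gamma(t_0)\le 1/4$, say. We fix a compactly supported probability density $\eta\in L^\infty$ supported in $B_1$ and consider the translated initial data $\rho_0^R:=\eta(\cdot-Re_1)$ for $R\gg1$; these lie in $\mathcal P_r\cap L^p$, so \Cref{thm:wgfmmdwp} gives global solutions $\rho_t^R$. We then show that for $R$ large the squared discrepancy at time $t_0$ is still comparable to its initial value, which contradicts \eqref{eq:impossibledecay}.

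\textbf{Step 1: size of the initial discrepancy.} Using the translation-invariant form \eqref{def:mmdq} (here $r\ge1\ge q/2$, and if $r<q$ we use the centered form and truncate $\mu$ with a small error), we get
\[
2\,\mmd_q^2(\rho_0^R,\mu)=2\iint|x-y|^q\ud\rho_0^R\ud\mu-\iint|x-y|^q\ud\eta\ud\eta-\iint|x-y|^q\ud\mu\ud\mu .
\]
The cross term behaves like $R^q(1+o(1))$ by dominated convergence, since $|x-y|^q/R^q\to1$ pointwise in $y$ and is bounded by $C(1+|y|^q)$. Hence $\mmd_q^2(\rho_0^R,\mu)\sim R^q$. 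This is the routine step.

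\textbf{Step 2: the support moves at speed at most $O(R^{q-1})$, or at most $O(1)$ in any case.} We use the Lagrangian flow of $v$, which is Lipschitz by \eqref{eq:intro-velocity-class}. We bound $|v_t(x)|\le q\int|x-y|^{q-1}\ud(\rho_t+\mu)(y)$. For $q\le1$ this is bounded by local integrability of $|z|^{q-1}$ against $L^p$ densities. Here we need $\|\rho_t\|_{L^p}$ controlled on $[0,t_0]$ uniformly in $R$. This should follow from the a priori $L^p$ estimate in the well-posedness proof, which depends only on $\|\rho_0\|_{L^p}$, $\|\mu\|_{L^p}$ and $\|(-\Delta K)*\mu\|$, and these are translation invariant in the source. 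For $1<q<2$ we have $|v|\le C(1+|x|^{q-1}+\mathcal M_{q-1})$, which gives sublinear growth and hence trajectories moving a distance at most $C t_0 R^{q-1}=o(R)$. In both cases, for $R$ large, $\supp\rho_{t}^R\subset B(Re_1,R/2)$ for all $t\le t_0$. \textbf{This uniform-in-$R$ control is the main obstacle.} I would first try to extract it from the explicit $L^p$ Gr\"onwall bound and the $r$-moment bound in \Cref{thm:wgfmmdwp}, checking that the constants depend on $\rho_0$ only through translation-invariant quantities and $\mathcal M_r(\rho_0)$ in a way that allows tracking centered moments.

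\textbf{Step 3: lower bound at time $t_0$.} Since $\rho_{t_0}^R$ is a probability measure supported within distance $R/2$ of $Re_1$, Step 1 applied to $\rho_{t_0}^R$ gives $\mmd_q^2(\rho_{t_0}^R,\mu)\ge c R^q$. More carefully, the cross term is at least $(R/2)^q$ up to $o(R^q)$. The self-interaction term is at most $(\diam\supp)^q$, which is controlled since the support stays in a ball of radius $1+o(R)$. Care is needed here to get the diameter genuinely smaller than the separation. The cleanest route is to bound the self term by the second moment about the center and use Step 2's displacement bound $o(R)$, which yields $\mmd_q^2(\rho_{t_0}^R,\mu)\ge (\tfrac12-o(1))R^q\cdot$const, comparable to the initial value. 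Alternatively, we can use the energy-dissipation identity only to note monotonicity, which is not needed here. The ratio $\mmd_q^2(\rho_{t_0}^R,\mu)/\mmd_q^2(\rho_0^R,\mu)\to1$ as $R\to\infty$ then contradicts $\gamma(t_0)<1$.
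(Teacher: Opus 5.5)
Your argument is correct and follows the paper's route: a translated bump, a far-field bound $\mmd_q^2\asymp R^q$, and finite-speed confinement of the support. The confinement rests on the translation-invariant $L^p$ Gr\"onwall bound when $q\le1$ and on the $O(R^{q-1})$ velocity bound when $1<q<2$.

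The paper obtains the far-field bound from the Hilbert-space triangle inequality through $\delta_{x_R}$ and $\delta_0$. That needs only $\mu\in\mathcal P_{q/2}$, so it avoids your truncation of $\mu$ when $1\le r<q$.

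The one real difference is that you fix a single time $t_0$ with $\gamma(t_0)<1$ and let $R\to\infty$, while the paper works along waiting times $T_R\to\infty$ with two-sided constants $c<C$. Once the displacement at time $t_0$ is shown to be $o(R)$, your version gives a slightly stronger conclusion: any admissible $\gamma$ must satisfy $\gamma(t)\ge1$ for every $t$.
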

The obstruction already appears along translations of a fixed compactly supported probability density: the solution remains in a far-field neighborhood of its initial location for a time $T_R\to\infty$, while its squared MMD from the fixed target remains comparable to $R^q$.\par
It therefore rules out rates uniform over the initial support scale, but does not rule out qualitative convergence or estimates after a datum-dependent waiting time.  Consequently, the standard global Polyak--\L ojasiewicz inequality
\begin{equation}\label{eq:intro-global-PL}
\mmd_q^2(\rho,\mu)\le C\int_{\R^d}|\nabla K*(\rho-\mu)|^2\ud\rho
\end{equation}
cannot hold uniformly on this class.

The remaining results of this paper describe other mechanisms that can and cannot drive convergence.  \Cref{sec:stationary} develops the rigidity mechanisms and the nested-interval construction underlying \Cref{thm:lagrangian-rigidity-flexibility}, and then proves asymptotic criticality, unconditionally for $1\le q<2$ and under uniform bounds for $0<q<1$, together with conditional convergence to the target in the rigid part of the present well-posedness range.  \Cref{sec:PL} proves further whole-space and periodic failures of global PL inequalities.  Further metric and completion results for $\mmd_q$ are collected in Appendix~\ref{app:completion-proof}.


\subsection{\texorpdfstring{Prior and companion work}{Prior and companion work}}

For smooth kernels, Arbel, Korba, Salim, and Gretton \cite{arbel2019maximum} proved well-posedness and mean-field approximation, and discussed conditions under which long-time convergence can hold, while Chen, Mustafi, Glaser, Korba, Gretton, and Sriperumbudur \cite{chen2024regularized} studied a regularized variant.  Galashov, De Bortoli, and Gretton \cite{galashov2024deep} developed learned-kernel MMD flows.  Related Wasserstein mean-field dynamics arise in neural-network training \cite{mei2019mean,rotskoff2018neural,sirignano2018meanfield,chizat2018global,takakura2024kernel}.

For energy kernels, Kolouri, Nadjahi, Shahrampour, and {\c{S}}im{\c{s}}ekli \cite{kolouri2022generalized} and Hertrich, Wald, Altekr\"uger, and Hagemann \cite{hertrich2024generative} developed the sliced approach reviewed above.  Hertrich, Gr\"af, Beinert, and Steidl \cite{hertrich2024wasserstein} studied Jordan--Kinderlehrer--Otto (JKO) \cite{jordan1998variational} steps for Riesz discrepancies, and Duong, Stein, Beinert, Hertrich, and Steidl \cite{duong_steidl26} analyzed the one-dimensional distance-kernel flow through quantiles.  The theoretical difficulties caused by the nonsmoothness of the kernel at the diagonal led Rux, Quellmalz, and Steidl \cite{ruxQuellmalzSteidl2026Smoothed} to mollify the negative distance kernel to make it Lipschitz differentiable, and therefore accessible to the standard Wasserstein-gradient-flow theory, while retaining conditional positive semidefiniteness, nearly linear growth at infinity, and the slicing structure.  The present work is complementary: rather than regularizing the kernel, we establish the corresponding properties of the flow for the nonsmooth energy kernels themselves.

Boufad\`ene and Vialard \cite{boufadene2023global} study the Wasserstein gradient flow of the Coulomb discrepancy and emphasize that the Coulomb energy is not geodesically convex in Wasserstein geometry, so global convergence cannot be read off from the standard convex theory.  Their notions of Wasserstein and Lagrangian criticality, in particular their Definitions~3.1--3.2 and Theorem~3.4, are used below as a reference point for our stationary-state discussion in \Cref{sec:stationary}.  Building on their odd-dimensional energy-distance rigidity result, we establish rigidity for absolutely continuous Lagrangian critical points in every natural-moment regime except $d\in\{1,3\}$ and $0<q<1$.  The three-dimensional exception is covered under an additional moment hypothesis, when the positive part is compactly supported, or when the discrepancy is radial, while the one-dimensional nested-interval construction shows that uniqueness fails.  Our PL counterexamples are complementary: uniqueness of absolutely continuous critical points does not by itself furnish a global functional inequality or a uniform convergence rate.

A closely related work by Chizat, Colombo, Colombo, and Fern\'andez-Real
\cite{chizat2026quantitativeconvergencewassersteingradient} studied Wasserstein gradient flows of kernel mean discrepancies on the torus for Sobolev/Riesz kernels,
\begin{equation}\label{eq:companion-torus-energy}
\mathscr E_s^\nu(\rho)=\frac12\|\rho-\nu\|_{\dot H^{-s}(\mathbb T^d)}^2.
\end{equation}
Under the identification \(s=(d+q)/2\), the negative-distance kernel considered here corresponds to the range \(s\in(d/2,d/2+1)\).  Their well-posedness theory is formulated in a critical Lorentz class on \(\mathbb T^d\), whereas \Cref{thm:wgfmmdwp} gives global Euclidean solutions in finite subcritical \(L^p\) classes and at the endpoint \(L^\infty\), with moment assumptions used to control the target field at infinity.  Their main quantitative results concern convergence to the target on compact geometries: exponential convergence in the Coulomb case \(s=1\) and local polynomial convergence for \(s>1\) under positivity, regularity, and small-discrepancy assumptions.  
  The two sets of results meet exactly at \(s=1\).  Indeed, \(s=1\) is the condition \(d+q-2=0\), so their Coulomb case meets the present family only at the endpoint \(d=q=1\), whereas \(s>1\) is exactly the strict well-posedness range \(d+q-2>0\) of \Cref{thm:wgfmmdwp}, and exactly the range \(2-d<q<2\) of \Cref{prop:noPLsubCo}.  This explains why their exponential rate does not extend past the endpoint: for every \(s>1\), which is precisely the regime in which they obtain only local polynomial convergence, \Cref{prop:noPLsubCo} rules out a global PL inequality on \(\mathbb T^d\), so the standard route to exponential decay is unavailable there.  On \(\mathbb R^d\), \Cref{thm:noconvmmd,prop:nogPL} likewise exclude any initial-data-independent MMD decay modulus and any global PL inequality.  In addition to these continuum questions, the present paper treats diagonal-free particle dynamics; the noncollision and modulated-energy propagation estimates in \Cref{thm:particle-global-noncollision,thm:wgfmmdpoc} are separate from the torus convergence analysis in \cite{chizat2026quantitativeconvergencewassersteingradient}.

Another closely related work, by Chodron de Courcel and the first author \cite{ChodronDeCourcelRosenzweigCoulombDiscrepancies}, studies Wasserstein gradient flows for the Coulomb discrepancy on $\R^d$ and $\mathbb T^d$.  It establishes global weak solutions from arbitrary Borel initial probability measures for bounded target densities, together with instantaneous $L^\infty$ regularization.  On the torus, it proves a global PL inequality for the near-uniform targets and obtains exponential decay of the squared MMD for arbitrary bounded, uniformly positive targets through a defective PL inequality that allows vacuum in the evolving density.  In the Euclidean setting, it establishes radial PL coercivity and exponential convergence under source-support inclusion and target-positivity assumptions, while dynamical persistence and static constructions rule out any initial-data-uniform multiplicative decay modulus or global PL inequality on the unrestricted whole-space class.  It also proves rigidity of Lagrangian critical points whenever $(\rho-\mu)^+$ is absolutely continuous and, in dimension two, qualitative convergence to the target in narrow, weak-$*$ $L^\infty$, and negative-Sobolev topologies.  These Coulomb-specific results overlap with the present work only for $d=q=1$, and their Cauchy theory supplies that endpoint case in \Cref{thm:wgfmmdwp}. 

We also take this opportunity to advertise two closely related works in preparation.  Together with the preceding Coulomb paper, these projects address overlapping but complementary aspects of the questions considered here.  The two works in preparation are not used in the proofs of the main results of the present paper.

The first, by Chodron de Courcel and the first author, studies the unconfined, target-free quadratic Wasserstein gradient flow of sub-Coulomb Riesz interaction energies in the range $-2<s<d-2$ \cite{ChodronDeCourcelRosenzweig2026SubCoulomb}.  This work constructs canonical global weak solutions in the scaling-critical and subcritical Lebesgue classes, proves uniqueness in the corresponding Eulerian class and monotonicity of lower Lebesgue norms, and shows that higher Lebesgue integrability is not created.  This absence of any hypercontractivity is in stark contrast to the Coulomb/super-Coulomb case. It also develops a radial measure-valued well-posedness theory on annuli.  The aforementioned results also apply to the confined setting, including the MMD target case, under suitable assumptions on the confinement.
The main contrast with the companion Coulomb case is that the sub-Coulomb evolution is transport-dominated and preserves, rather than improves, Lebesgue singularities.

The second, joint work of the first author with Hess-Childs and Serfaty, studies the optimal empirical quantization problem for homogeneous and screened Riesz MMDs and diagonal-excluded modulated energies \cite{HessChildsRosenzweigSerfaty2026Quantization}.  Given a target probability measure $\mu$ and a sample size $N$, the empirical quantization problem asks for points $(x_i)_{i\in[N]}$ minimizing the prescribed discrepancy or energy between $\mu$ and the equal-weight empirical measure $\frac{1}{N}\sum_{i=1}^N\delta_{x_i}$; see \cite{GrafLuschgy2000} for the classical theory.  The work \cite{HessChildsRosenzweigSerfaty2026Quantization} proves fixed-cardinality existence throughout the homogeneous Riesz and logarithmic range, upper and lower bounds for the quantization rate which match in their dependence on the number of samples, and separation estimates for empirical minimizers.  It also treats a larger class of (screened) Riesz-type kernels.  This work is a static/equilibrium complement to the present dynamical analysis.  We mention that, for energy kernels, overlapping results were recently obtained independently by Colasanto, Focardi, Fornasier, and Mattesini \cite{colasantoSharpRatesMMD2026}.


\subsection{Organization of the paper}\label{ssec:intro-organization}
\Cref{sec:mmd} gives the Fourier--Sobolev representation used throughout the paper.  \Cref{sec:LPwp} proves the unified continuum well-posedness theorem and the energy--dissipation identity.  \Cref{sec:meanfield} establishes particle noncollision, fixed-$N$ asymptotic criticality, particle-to-continuum criticality with discrete nonconcentration required only when $0<q<1$, collision-free saddle equilibria, and the mean-field estimate.  \Cref{sec:stationary} proves natural-moment rigidity across the energy-kernel family, treats compact-positive-part and radial rigidity in the residual regime $d=3$, $0<q<1$, gives one-dimensional counterexamples, and then proves asymptotic criticality and conditional target convergence.  \Cref{sec:PL,sec:LT} give obstructions to global convergence, while \Cref{sec:1dexample} gives explicit one-dimensional particle dynamics.  Further questions appear in \Cref{sec:further-questions}; Appendix~\ref{app:completion-proof} contains the auxiliary MMD results cited above.

\subsection*{Acknowledgments}\leavevmode\par
The authors thank Antonin Chodron de Courcel, Borjan Geshkovski, Sylvia Serfaty, and Fran{\c{c}}ois-Xavier Vialard for stimulating discussions regarding this work.

\subsection*{Use of AI}\leavevmode\par
The authors used generative-AI tools (OpenAI’s ChatGPT and Anthropic’s Claude) during the development and preparation of this paper to identify potentially relevant literature, explore possible proof strategies, test draft arguments for errors or gaps, improve exposition, and check internal consistency and cross-references. Outputs from these tools were treated as unverified suggestions. The authors followed up on literature suggestions by consulting relevant sources and worked through mathematical suggestions before deciding whether to incorporate them into the paper. Recognizing that these tools may suggest ideas without reliably indicating their provenance, the authors made particular efforts to identify and cite relevant antecedents in the literature. The authors made all final decisions concerning the manuscript and take full responsibility for its contents.

\section{Fourier--Sobolev representation of the energy MMD}\label{sec:mmd}
For a finite Borel measure $\mu$ on $\R^d$, we use the Fourier transform convention
\begin{equation}\label{eq:ft}
\widehat\mu(\xi):=\int_{\R^d}e^{-2\pi i x\cdot\xi}\ud\mu(x).
\end{equation}
The identity
\begin{equation}\label{eq:fourierkernel}
|z|^q=c_{d,q}\int_{\R^d}\frac{1-\cos(2\pi\xi\cdot z)}{|2\pi\xi|^{d+q}}\ud\xi,
\qquad 0<q<2,
\end{equation}
is the Schoenberg--L\'evy representation; see, for example, \cite[(3.2) and (3.12)]{dNPV2012}.

For two probability measures, their difference has zero total mass, so \eqref{eq:fourierkernel} converts the centered energy MMD into a positive weighted $L^2$-distance between their Fourier transforms. This is the Fourier--Sobolev representation recorded in the following lemma.

\begin{lemma}[Fourier--Sobolev representation of the energy MMD]\label{lem:genSob}
Let $q\in(0,2)$ and $\mu_1,\mu_2\in\mathcal P_{q/2}(\R^d)$.  With $\mmd_q$ defined by the centered kernel in \eqref{eq:defmmd},
\begin{equation}\label{eq:genSob}
\mmd_q^2(\mu_1,\mu_2)
=C_{d,q}\int_{\R^d}|2\pi\xi|^{-(d+q)}|\widehat\mu_1(\xi)-\widehat\mu_2(\xi)|^2\ud\xi
=C_{d,q}\|\mu_1-\mu_2\|_{\dot H^{-\frac{d+q}{2}}}^2.
\end{equation}
If $\mu_1,\mu_2\in\mathcal P_q(\R^d)$, this is equivalently the Fourier representation of the uncentered energy-kernel form \eqref{def:mmdq}.
\end{lemma}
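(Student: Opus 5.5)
The plan is to insert the Schoenberg--L\'evy identity \eqref{eq:fourierkernel} into the centered definition \eqref{eq:defmmd}, interchange the order of integration, and recognize the resulting inner double integral as $|\widehat\nu(\xi)|^2$, where $\nu:=\mu_1-\mu_2$. Since \eqref{eq:fourierkernel} applies to each of $|x|^q$, $|y|^q$ and $|x-y|^q$, the centered integrand becomes
\begin{equation*}
|x|^q+|y|^q-|x-y|^q
=c_{d,q}\int_{\R^d}\frac{1-\cos(2\pi\xi\cdot x)-\cos(2\pi\xi\cdot y)+\cos(2\pi\xi\cdot(x-y))}{|2\pi\xi|^{d+q}}\ud\xi .
\end{equation*}
The numerator factors as $\Real\bigl[(1-e^{-2\pi i\xi\cdot x})\overline{(1-e^{-2\pi i\xi\cdot y})}\bigr]$. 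I will check this by expanding the product, since the cross terms $\sin\sin$ combine with $\cos\cos$ to give $\cos(2\pi\xi\cdot(x-y))$.

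Next I integrate against $\nu\otimes\nu$. Because $\nu(\R^d)=0$, we have $\int(1-e^{-2\pi i\xi\cdot x})\ud\nu(x)=-\widehat\nu(\xi)$. Formally this yields
\begin{equation*}
\mmd_q^2(\mu_1,\mu_2)=\tfrac{c_{d,q}}{2}\int_{\R^d}|2\pi\xi|^{-(d+q)}|\widehat\nu(\xi)|^2\ud\xi,
\end{equation*}
which is \eqref{eq:genSob} with $C_{d,q}=c_{d,q}/2$.

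The main obstacle is justifying Fubini, because the $\xi$-integrand is not absolutely integrable uniformly in $(x,y)$. I would bound the factored form by $|1-e^{-2\pi i\xi\cdot x}|\,|1-e^{-2\pi i\xi\cdot y}|$ and use $|1-e^{i\theta}|\le\min(2,|\theta|)$. This gives
\begin{equation*}
\int_{\R^d}\frac{|1-e^{-2\pi i\xi\cdot x}|\,|1-e^{-2\pi i\xi\cdot y}|}{|2\pi\xi|^{d+q}}\ud\xi
\le\Bigl(\int\frac{|1-e^{-2\pi i\xi\cdot x}|^2}{|2\pi\xi|^{d+q}}\ud\xi\Bigr)^{1/2}\Bigl(\int\frac{|1-e^{-2\pi i\xi\cdot y}|^2}{|2\pi\xi|^{d+q}}\ud\xi\Bigr)^{1/2}
\end{equation*}
by Cauchy--Schwarz. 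Each factor equals $(2|x|^q/c_{d,q})^{1/2}$ by \eqref{eq:fourierkernel}, since $|1-e^{i\theta}|^2=2(1-\cos\theta)$. The triple integral against $|\nu|\otimes|\nu|$ is therefore bounded by a constant times $\bigl(\int|x|^{q/2}\ud|\nu|\bigr)^2$, which is finite precisely because $\mu_1,\mu_2\in\mathcal P_{q/2}$. Tonelli and Fubini then justify both the interchange and the factorization into $|\int(1-e^{-2\pi i\xi\cdot x})\ud\nu|^2$. The same bound shows the right side of \eqref{eq:genSob} is finite, and the $\dot H^{-(d+q)/2}$ notation is just a name for this weighted integral.

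For the final sentence, I assume $\mu_i\in\mathcal P_q$. Then $|x|^q$ is $\nu$-integrable, and the terms $\iint|x|^q\ud\nu\ud\nu$ and $\iint|y|^q\ud\nu\ud\nu$ each vanish because $\nu$ has zero mass. The centered form therefore coincides with \eqref{def:mmdq}, and the Fourier identity transfers to it.
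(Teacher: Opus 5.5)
Your proposal is correct and is essentially the paper's argument written in scalar form. The paper packages your factorization $\Real\bigl[(1-e^{-2\pi i\xi\cdot x})\overline{(1-e^{-2\pi i\xi\cdot y})}\bigr]$ as the inner product $\Real\langle\Phi_x,\Phi_y\rangle_H$ for the feature map $\Phi_x(\xi)=e^{-2\pi i\xi\cdot x}-1$. Your Cauchy--Schwarz bound $C|x|^{q/2}|y|^{q/2}$ is exactly the estimate $\|\Phi_x\|_H\|\Phi_y\|_H$ that makes the Bochner integral $\int\Phi_x\ud\nu(x)$ well defined under $\mathcal P_{q/2}$. Your Tonelli--Fubini step plays the role of the paper's Bochner--Fubini identification of that integral with $\widehat\nu$ (up to sign), so nothing is missing.
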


\begin{proof}
Set
\begin{equation}\label{eq:feature-space-main}
H:=L^2(\R^d,|2\pi\xi|^{-(d+q)}\ud\xi;\mathbb C),
\qquad \Phi_x(\xi):=e^{-2\pi i\xi\cdot x}-1.
\end{equation}
By \eqref{eq:fourierkernel}, $\|\Phi_x-\Phi_y\|_H^2=C_{d,q}|x-y|^q$ after changing the positive dimensional constant.  Taking $y=0$ and using $\Phi_0=0$, we obtain $\|\Phi_x\|_H=C_{d,q}^{1/2}|x|^{q/2}$.  Thus the assumption $\mu_j\in\mathcal P_{q/2}(\R^d)$ ensures that $M_j:=\int\Phi_x\ud\mu_j(x)$ is a well-defined Bochner integral in $H$.  If $\nu:=\mu_1-\mu_2$, the centered kernel identity and bilinearity give
\begin{equation}\label{eq:feature-energy-main}
\mmd_q^2(\mu_1,\mu_2)=C_{d,q}\left\|\int_{\R^d}\Phi_x\ud\nu(x)\right\|_H^2.
\end{equation}
Since $\nu(\R^d)=0$, the Bochner--Fubini theorem shows that the $H$-valued integral in \eqref{eq:feature-energy-main} has a representative equal to $\widehat\nu(\xi)$ for $\mathcal L^d$-a.e. $\xi$.  Since $\widehat\nu$ is continuous, we use it as the continuous representative of the Bochner integral.  This proves \eqref{eq:genSob}.  For measures in $\mathcal P_q$, the one-body terms in \eqref{eq:defmmd} vanish against $\nu\otimes\nu$.
\end{proof}

\begin{remark}[RKHS interpretation]\label{rem:rkhs-sobolev}
The feature-space construction in the proof identifies the RKHS of $k_q$ with an anchored realization of $\dot H^{(d+q)/2}(\R^d)$, equivalently with the homogeneous Sobolev space modulo constants.
\end{remark}

\begin{remark}[Fourier regularity and integer moments]\label{lem:mmd-PsFourier}
If $\int|x|^s\ud\mu(x)<\infty$, then for every multi-index $\beta$ and $0<\gamma\le1$ with $|\beta|+\gamma\le s$,
\begin{equation}\label{eq:HolderFourier}
[\partial^\beta\widehat\mu]_{C^{0,\gamma}}
\le C_{\beta,\gamma}\int_{\R^d}|x|^{|\beta|+\gamma}\ud\mu(x).
\end{equation}
For the sharp noninteger characterization in terms of an integral modulus of the highest Fourier derivative, see Cho \cite{Cho}.  At an integer order $m\ge1$, bounded $m$-th moments do not give a uniform modulus for the $m$-th derivative.  Indeed, in one dimension, for
\begin{equation}\label{eq:oscillatory-fourier-measures}
\mu_n=(1-n^{-m})\delta_0+n^{-m}\delta_n,
\end{equation}
one has $\mathcal M_m(\mu_n)=1$ and
\begin{equation}\label{eq:oscillatory-fourier-derivative}
\widehat\mu_n^{(m)}(\xi)=(-2\pi i)^m e^{-2\pi i n\xi},\qquad
\left|\widehat\mu_n^{(m)}\left(\frac1{2n}\right)-\widehat\mu_n^{(m)}(0)\right|=2(2\pi)^m.
\end{equation}
Thus oscillations occur on scales tending to zero despite the uniform moment bound.
\end{remark}

The moment, transport, sampling, topology, and completion properties not needed before the PDE analysis are collected in Appendix~\ref{app:completion-proof}.

\section{Well-posedness and properties of solutions}\label{sec:LPwp}
We prove \Cref{thm:wgfmmdwp}.  We first reduce the endpoint $d=q=1$ to the Coulomb Cauchy theory of \cite{ChodronDeCourcelRosenzweigCoulombDiscrepancies}.  In the remaining range $d+q-2>0$, finite-$p$ solutions are constructed by adapting the aggregation-equation strategy of \cite{bertozzi2011lp,laurent2007local}; the case $p=\infty$ is then obtained by applying the finite-$p$ theory at an auxiliary exponent and propagating an $L^\infty$ bound along characteristics.

\begin{definition}[Continuum solution class]\label{def:continuum-solution-class}
Fix $T>0$, let $d\ge1$ and $r\ge1$, let $0<q<2$ satisfy $d+q-2\ge0$, and let $p$ satisfy \eqref{eq:admissible-p-range}.  Let $\rho_0,\mu\in\mathcal P_r(\R^d)\cap L^p(\R^d)$ be probability densities.  A weak solution of \eqref{eq:wgfmmd} on $[0,T]$ has velocity
\begin{equation}\label{eq:continuum-solution-velocity}
v_t=-\nabla K*(\rho_t-\mu),\qquad
v\in L^\infty([0,T];W^{1,\infty}(\R^d))\cap C([0,T]\times\R^d),
\end{equation}
satisfies
\begin{equation}\label{eq:wp-weak-formulation-intro}
\int_0^T\int_{\R^d}(\partial_t\varphi+v_t\cdot\nabla\varphi)\rho_t\ud x\ud t
+\int_{\R^d}\varphi(0,x)\rho_0(x)\ud x=0
\end{equation}
for every $\varphi\in C_c^\infty([0,T)\times\R^d)$, and has $\sup_{0\le t\le T}\mathcal M_r(\rho_t)<\infty$.  When $p<\infty$, we require
\begin{equation}\label{eq:wp-solution-class}
\rho\in C([0,T];\mathcal P_1(\R^d))\cap C([0,T];L^p(\R^d))\cap C^1([0,T];W^{-1,p}(\R^d)),
\end{equation}
where $\mathcal P_1$ carries the $W_1$ topology.  When $p=\infty$, we instead require
\begin{equation}\label{eq:wp-linfty-solution-class}
\rho\in C([0,T];\mathcal P_1(\R^d))\cap C_{w^*}([0,T];L^\infty(\R^d))
\cap\bigcap_{1\le s<\infty}C([0,T];L^s(\R^d)),
\end{equation}
together with $\rho\in L^\infty((0,T)\times\R^d)$.
\end{definition}

Suppose $d=q=1$.  Let $\mathsf g(x):=-|x|/2$ be the one-dimensional Coulomb kernel, so that $K=2\mathsf g$.  After the corresponding linear rescaling of time, global existence, uniqueness in the bounded-density class, and the $L^\infty$ estimate follow from \cite[Theorem~1.2 and Proposition~2.1]{ChodronDeCourcelRosenzweigCoulombDiscrepancies}.  For the resulting solution,
\begin{equation}
|v_t|\le2,
\qquad
\partial_x v_t=2(\rho_t-\mu)
\quad\text{in }\mathcal D'(\R).
\end{equation}
Thus the velocity is bounded and spatially Lipschitz on every finite time interval.  Its flow transports $\rho_0$ to $\rho_t$ and gives the temporal continuity properties in \eqref{eq:wp-linfty-solution-class}, while
\begin{equation}
\mathcal M_r(\rho_t)^{1/r}
\le \mathcal M_r(\rho_0)^{1/r}+2t
\end{equation}
gives the required moment bound.  Moreover, $(-\Delta K)*\mu=2\mu$, and the cited $L^\infty$ estimate implies \eqref{eq:linfty-endpoint-bound}.  Hence the cited solution belongs to the class of \Cref{def:continuum-solution-class} and proves \Cref{thm:wgfmmdwp} when $d=q=1$.  For the remainder of this section, assume $d+q-2>0$.

For finite $p$, the proof has four steps:
\begin{enumerate}[label=\textup{(\arabic*)}]
\item solve a smooth regularization;
\item derive estimates uniform in the regularization parameter;
\item pass to the singular limit and identify the velocity;
\item prove stability and uniqueness.
\end{enumerate}
Throughout the four-step argument below, $p<\infty$.  We then treat the case $p=\infty$ in the strict range $d+q-2>0$ by refining the finite-$p$ solution.  We conclude with a remark on a Coulomb-specific obstruction to uniform higher-regularity bounds.

Set $p_*=p/(p-1)$ for finite $p$ and $p_*=1$ for $p=\infty$.  Under the standing assumption $d+q-2>0$, the condition $p>p_c$ is equivalent to $(2-q)p_*<d$, so
\begin{equation}\label{eq:kernel-local-integrability}
\nabla K,\nabla^{\otimes 2}K\in L^{p_*}_{\mathrm{loc}}(\R^d),\qquad\text{and hence }\Delta K\in L^{p_*}_{\mathrm{loc}}(\R^d).
\end{equation}
The following elementary estimate will be used repeatedly.

\begin{lemma}[Singular-convolution bound]\label{lem:singular-convolution-bound}
Let $1<p\le \infty$ and $0<a<d/p_*$.  Put $\theta:=a p_*/d$.  Let $\eta_\vep$ be a nonnegative unit-mass mollifier for $\vep>0$, and set $\eta_0:=\delta_0$.  Then for every $f\in L^1(\R^d)\cap L^p(\R^d)$ and every $\vep\ge0$,
\begin{equation}\label{eq:singintbd-mollified}
\sup_{x\in\R^d}\int_{\R^d}(\eta_\vep*|\cdot|^{-a})(x-y)|f(y)|\ud y
\le C_{d,a,p}\|f\|_{L^1}^{1-\theta}\|f\|_{L^p}^{\theta},
\end{equation}
with the constant independent of $\vep$.
\end{lemma}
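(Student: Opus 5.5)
The plan is to reduce the mollified case to the unmollified one and then prove the unmollified bound by splitting at an optimized radius.

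\emph{Reduction to $\vep=0$.} Since $\eta_\vep\ge0$ has unit mass, Tonelli gives
\begin{equation*}
\int_{\R^d}(\eta_\vep*|\cdot|^{-a})(x-y)|f(y)|\ud y=\int_{\R^d}\eta_\vep(z)\left(\int_{\R^d}|x-z-y|^{-a}|f(y)|\ud y\right)\ud z .
\end{equation*}
Hence the left side of \eqref{eq:singintbd-mollified} is at most $\sup_{x'}\int|x'-y|^{-a}|f(y)|\ud y$, the $\vep=0$ quantity. It therefore suffices to prove the estimate for $\vep=0$, and the constant is automatically independent of $\vep$.

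\emph{Splitting at radius $R$.} Fix $x$ and $R>0$ and split the integral over $|x-y|<R$ and $|x-y|\ge R$. On the far region we bound $|x-y|^{-a}\le R^{-a}$, which contributes at most $R^{-a}\|f\|_{L^1}$. On the near region we apply H\"older with exponents $(p,p_*)$:
\begin{equation*}
\int_{|x-y|<R}|x-y|^{-a}|f(y)|\ud y\le\|f\|_{L^p}\,\bigl\||\cdot|^{-a}\bigr\|_{L^{p_*}(B_R)} .
\end{equation*}
The hypothesis $ap_*<d$ is exactly what makes this local norm finite, and it equals $C_{d,a,p}R^{d/p_*-a}$. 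When $p=\infty$ we have $p_*=1$, and the same computation gives $C R^{d-a}$. We assume $p>1$ so that $p_*<\infty$ and H\"older applies in this form.

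\emph{Optimizing.} This yields
\begin{equation*}
\int|x-y|^{-a}|f(y)|\ud y\le C\bigl(R^{-a}\|f\|_{L^1}+R^{d/p_*-a}\|f\|_{L^p}\bigr).
\end{equation*}
Choose $R$ so that the two terms balance, namely $R^{d/p_*}=\|f\|_{L^1}/\|f\|_{L^p}$, which is allowed whenever $f\neq0$; the case $f=0$ is trivial. Then the bound becomes $2C\|f\|_{L^1}R^{-a}$, and substituting $R$ gives
\begin{equation*}
\|f\|_{L^1}^{1-ap_*/d}\,\|f\|_{L^p}^{ap_*/d}=\|f\|_{L^1}^{1-\theta}\|f\|_{L^p}^{\theta},
\end{equation*}
with $\theta=ap_*/d\in(0,1)$. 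The bound is uniform in $x$.

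There is no real obstacle here. The only points needing care are the Tonelli reduction, which is what makes the constant independent of $\vep$, and the local integrability condition $ap_*<d$ at the H\"older step.
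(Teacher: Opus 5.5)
Your proof is correct and follows the paper's argument: Tonelli to remove the mollifier, then H\"older on $B_R(x)$ plus the trivial far-field bound, balanced at $R=(\|f\|_{L^1}/\|f\|_{L^p})^{p_*/d}$. The only difference is cosmetic. The paper moves $\eta_\vep$ onto $f$, setting $f_\vep=\eta_\vep*|f|$ and using Young's inequality for its $L^1$ and $L^p$ norms. You instead move it onto the evaluation point and absorb it into the supremum over $x$. Either way the constant is independent of $\vep$ with no extra work.
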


\begin{proof}
For $\vep\ge0$, set $f_\vep:=\eta_\vep*|f|$, with the convention $f_0=|f|$.  By associativity of convolution and Tonelli's theorem,
\begin{equation}
\int_{\R^d}(\eta_\vep*|\cdot|^{-a})(x-y)|f(y)|\ud y
=\int_{\R^d}|x-z|^{-a}f_\vep(z)\ud z.
\end{equation}
Young's inequality gives $\|f_\vep\|_{L^1}=\|f\|_{L^1}$ and $\|f_\vep\|_{L^p}\le\|f\|_{L^p}$.  Hence, for every $x\in\R^d$ and $R>0$, H\"older's inequality on $B_R(x)$ and the trivial far-field bound give
\begin{align}
\int_{\R^d}|x-z|^{-a}f_\vep(z)\ud z
&\le \|f_\vep\|_{L^p}\left(\int_{|z|\le R}|z|^{-ap_*}\ud z\right)^{1/p_*}+R^{-a}\|f_\vep\|_{L^1}\notag\\
&\le C_{d,a,p}R^{d/p_*-a}\|f\|_{L^p}+R^{-a}\|f\|_{L^1}.\label{eq:singular-convolution-split}
\end{align}
The same formula covers $p=\infty$.  If $f\ne0$, choosing
\begin{equation}\label{eq:singular-convolution-optimal-radius}
R=\left(\frac{\|f\|_{L^1}}{\|f\|_{L^p}}\right)^{p_*/d}
\end{equation}
balances the two terms and proves \eqref{eq:singintbd-mollified}, uniformly in $\vep$; the case $f=0$ is immediate.
\end{proof}

In particular, with $a=2-q$, the assumption $p>p_c$ gives the following Lipschitz bound for the velocity field $v=-\nabla K\ast (\rho-\mu)$:
\begin{align}\label{eq:velbd}
	\|\nab v\|_{L^\infty} \lesssim \sup_{x\in\R^d} \int_{\R^d} |x-y|^{q-2}\ud |\rho_t-\mu|(y) \lesssim \|\rho_t-\mu\|_{L^1}^{1-\frac{(2-q)p_*}{d}} \|\rho_t-\mu\|_{L^p}^{\frac{(2-q)p_*}{d}}.
\end{align}

Fix a nonnegative mollifier $\eta \in C_c^\infty(B_1)$ with $\int_{\R^d}\eta(x)\ud x=1$, and set
$\eta_\vep(x)=\vep^{-d}\eta(x/\vep)$. We regularize both the kernel and the data by
\begin{equation}
K_\vep = K*\eta_\vep, \qquad \rho_{0,\vep}=\rho_0*\eta_\vep, \qquad \mu_\vep = \mu*\eta_\vep.
\end{equation}
Since $\rho_0,\mu \in \P_r(\R^d)\cap L^p(\R^d)$, the functions
$\rho_{0,\vep},\mu_\vep \in C_b^\infty(\R^d)\cap \P_r(\R^d)$. More generally, let $f\in L^{s_2}(\R^d)$ and suppose, for some integers $0\le j\le k$, that $\nab^{\otimes j}f\in L^{s_2}(\R^d)$. If $1\le s,s_1,s_2\le\infty$ satisfy $1+\frac1s=\frac1{s_1}+\frac1{s_2}$, then Young's inequality gives
\begin{equation}
\|\nab^{\otimes k}(f\ast\eta_\vep)\|_{L^s}
\le C_{k-j,\eta,s_1}\,\vep^{-(k-j)-d(1-1/s_1)}
\|\nab^{\otimes j}f\|_{L^{s_2}}.
\end{equation}
Indeed, componentwise, $\nab^{\otimes k}(f\ast\eta_\vep)=(\nab^{\otimes j}f)\ast\nab^{\otimes(k-j)}\eta_\vep$. Under the hypotheses used here only the case $j=0$ is required, and no derivative of the data is assumed; in particular,
\begin{equation}
\|\nab^{\otimes k}\rho_{0,\vep}\|_{L^s}
\le C_{k,\eta,s_1}\,\vep^{-k-d(1-1/s_1)}\|\rho_0\|_{L^{s_2}},
\qquad
\|\nab^{\otimes k}\mu_\vep\|_{L^s}
\le C_{k,\eta,s_1}\,\vep^{-k-d(1-1/s_1)}\|\mu\|_{L^{s_2}}.
\end{equation}
Additionally, by Minkowski's inequality,
\begin{equation}
\mathcal M_r(\rho_{0,\vep})^{\frac1r}  \le \mathcal M_r(\rho_0)^{\frac1r} + \vep,
\qquad
\mathcal M_r(\mu_\vep)^{\frac1r} \le \mathcal M_r(\mu)^{\frac1r} + \vep.
\end{equation}
We next record the regularized-kernel estimates used in the uniform a priori bounds below.

\begin{lemma}[Regularized-kernel bounds]\label{lem:regularized-kernel-bounds}
Let $q\in(0,2)$ satisfy $d+q-2>0$, and let $K_\vep=K*\eta_\vep$ with $\eta_\vep\ge0$ and $\int_{\R^d}\eta_\vep(x)\ud x=1$.  Then, in the sense of distributions,
\begin{equation}
\Delta K=-q(d+q-2)|x|^{q-2}\le0,
\qquad
\Delta K_\vep=(\Delta K)*\eta_\vep\le0.
\end{equation}
Moreover, for all $f\in L^1(\R^d)\cap L^p(\R^d)$,
\begin{equation}\label{eq:reg-hessian-conv}
\sup_{x\in\R^d}\int_{\R^d}|\nabla^{\otimes 2}K_\vep(x-y)|\,|f(y)|\ud y
\le C\|f\|_{L^1}^{1-\frac{(2-q)p_*}{d}}\|f\|_{L^p}^{\frac{(2-q)p_*}{d}}.
\end{equation}
If $0<q\le1$, then
\begin{equation}\label{eq:reg-gradient-conv}
\sup_{x\in\R^d}\int_{\R^d}|\nabla K_\vep(x-y)|\,|f(y)|\ud y
\le C\|f\|_{L^1}^{1-\frac{(1-q)p_*}{d}}\|f\|_{L^p}^{\frac{(1-q)p_*}{d}},
\end{equation}
where for $q=1$ the right-hand side is interpreted as $C\|f\|_{L^1}$.  If $1<q<2$, then
\begin{equation}\label{eq:kernel-holder}
|\nabla K_\vep(x-z)-\nabla K_\vep(x)| \le C|z|^{q-1}
\qquad \text{for all } x,z\in \R^d.
\end{equation}
Finally, the bounds
\begin{equation}\label{eq:kernel-local-bounds}
\|\nabla K_\vep\|_{L^{p_*}(B_1)} + \|\nabla^{\otimes 2}K_\vep\|_{L^{p_*}(B_1)} \le C,
\end{equation}
\begin{equation}\label{eq:kernel-far-bounds}
\sup_{|x|\ge 2}\Big(\indic_{q\in(0,1]}|\nabla K_\vep(x)|
+|\nabla^{\otimes 2}K_\vep(x)|+|\nabla^{\otimes 3}K_\vep(x)|\Big) \le C,
\end{equation}
hold uniformly in $\vep\in(0,1)$.
\end{lemma}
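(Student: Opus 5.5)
The plan is to reduce every statement to explicit pointwise bounds on derivatives of $K(x)=-|x|^q$, and then to observe that convolution with a nonnegative unit-mass mollifier supported in $B_\vep\subset B_1$ preserves each bound. Away from the origin we have
\[
\nabla K(x)=-q|x|^{q-2}x,\qquad |\nabla^{\otimes 2}K(x)|\le C|x|^{q-2},\qquad |\nabla^{\otimes 3}K(x)|\le C|x|^{q-3}.
\]
The trace of the Hessian is
\[
\Delta K(x)=-q\bigl(q-2+d\bigr)|x|^{q-2}.
\]
This quantity is locally integrable because $q-2>-d$. Since $q>0$, $\nabla K$ is locally integrable and continuous. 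One integration by parts on $\R^d\setminus B_\delta$ then gives the distributional identity. The boundary terms are $O(\delta^{d-1+q-1})\to0$, since $d+q-2>0$, and there is no singular part. Because $\eta_\vep\ge0$, the convolution $\Delta K_\vep=(\Delta K)*\eta_\vep$ is $\le0$.

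For \eqref{eq:reg-hessian-conv}, I will use the same argument: the distributional Hessian of $K$ equals its pointwise Hessian, which is locally integrable, so
\[
|\nabla^{\otimes2}K_\vep|\le \eta_\vep*|\nabla^{\otimes2}K|\le C\,\eta_\vep*|\cdot|^{q-2}.
\]
Applying \Cref{lem:singular-convolution-bound} with $a=2-q$ gives the result, since $a<d/p_*$ is exactly $p>p_c$. For \eqref{eq:reg-gradient-conv} with $0<q<1$, we similarly have $|\nabla K_\vep|\le q\,\eta_\vep*|\cdot|^{q-1}$. Here $a=1-q<2-q<d/p_*$, so the same lemma applies. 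For $q=1$, $|\nabla K|\le1$ gives $C\|f\|_{L^1}$ directly.

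For \eqref{eq:kernel-holder} with $1<q<2$, the map $z\mapsto|z|^{q-2}z$ is $(q-1)$-Hölder on $\R^d$. This is a standard estimate, obtained by splitting according to whether $|z-w|$ is small or large compared with $\max(|z|,|w|)$. The Hölder bound commutes with convolution against $\eta_\vep$, since
\[
\nabla K_\vep(x-z)-\nabla K_\vep(x)=\int\bigl(\nabla K(x-z-y)-\nabla K(x-y)\bigr)\eta_\vep(y)\ud y,
\]
and each integrand is bounded by $C|z|^{q-1}$.

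For the local bounds \eqref{eq:kernel-local-bounds}, Young's inequality gives
\[
\|\nabla^{\otimes k}K_\vep\|_{L^{p_*}(B_1)}\le\|\nabla^{\otimes k}K\|_{L^{p_*}(B_2)}.
\]
This holds because $\supp\eta_\vep\subset B_1$, and the right-hand side is finite by \eqref{eq:kernel-local-integrability}.

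For the far bounds \eqref{eq:kernel-far-bounds}, take $|x|\ge2$ and $|y|\le\vep<1$, so that $|x-y|\ge1$. The pointwise bounds on $\nabla K$ (for $q\le1$, using $|x|^{q-1}\le1$), on $\nabla^{\otimes2}K$ and on $\nabla^{\otimes3}K$ are then all $\le C$ on $\{|w|\ge1\}$, and averaging against $\eta_\vep$ preserves them.

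The only step needing real care is the distributional Hessian identity, meaning the absence of a delta contribution at the origin. It follows from the vanishing boundary term computed above, with $\nabla K$ continuous and $|\nabla K|\,\delta^{d-1}\to0$. Everything else is routine.
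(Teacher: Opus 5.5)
Your proposal is correct and follows essentially the same route as the paper. Both arguments use the pointwise homogeneous derivative bounds, note that the nonnegative mollifier preserves their sign and size, apply \Cref{lem:singular-convolution-bound} with $a=2-q$ and $a=1-q$, average the global $(q-1)$-H\"older bound for $1<q<2$, use Young's inequality for the local bounds, and use $|x-z|\ge1$ for the far bounds. One small slip: $\nabla K$ is not continuous at the origin when $0<q\le1$. Your integration-by-parts justification of the distributional identities never uses that continuity, only the vanishing boundary terms $|K|\delta^{d-1}\to0$ and $|\nabla K|\delta^{d-1}\sim\delta^{d+q-2}\to0$, so nothing is affected.
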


\begin{proof}
Since $d+q-2>0$, $|x|^{q-2}$ is locally integrable and
\begin{equation}\label{eq:regularized-kernel-pointwise-bounds}
\Delta K=-q(d+q-2)|x|^{q-2},\qquad
|\nabla K(x)|\le C|x|^{q-1},\qquad
|\nabla^{\otimes 2}K(x)|+|\Delta K(x)|\le C|x|^{q-2}
\end{equation}
for $x\ne0$.  Convolution with the nonnegative mollifier preserves the sign of $\Delta K$ and gives
\begin{equation}\label{eq:regularized-kernel-convolution-bounds}
|\nabla^{\otimes 2}K_\vep|+|\Delta K_\vep|\le C\eta_\vep*|\cdot|^{q-2},
\qquad
|\nabla K_\vep|\le C\eta_\vep*|\cdot|^{q-1}\quad(0<q<1).
\end{equation}
Thus \eqref{eq:reg-hessian-conv} follows from \eqref{eq:singintbd-mollified} with $a=2-q$, and \eqref{eq:reg-gradient-conv} follows with $a=1-q$.  When $q=1$, $|\nabla K|$ is bounded, which gives the stated $L^1$ estimate directly.

For $1<q<2$, the homogeneity of $\nabla K$ and the mean-value theorem, according as $|x-y|$ is or is not comparable to $\max\{|x|,|y|\}$, yield
\begin{equation}\label{eq:kernel-gradient-global-holder}
|\nabla K(x)-\nabla K(y)|\le C|x-y|^{q-1},
\qquad x,y\in\R^d.
\end{equation}
Averaging \eqref{eq:kernel-gradient-global-holder} against $\eta_\vep$ proves \eqref{eq:kernel-holder}.  Finally, \eqref{eq:kernel-local-bounds} follows from Young's inequality and the local $L^{p_*}$ integrability of $\nabla K,\nabla^{\otimes 2}K$.  The estimate \eqref{eq:kernel-far-bounds} follows from the pointwise derivative bounds away from the origin, since $|x|\ge2$ and $z\in\operatorname{supp}\eta_\vep\subset B_1$ imply $|x-z|\ge1$.
\end{proof}

\subsection{Regularized problem}\label{ssec:LPwp-regularized}
With the regularized kernel and data fixed above, we implement the first step of the construction through the mollified continuity equation
\begin{equation}\label{eq:regeq}
\partial_t \rho^\vep + \div(\rho^\vep v^\vep)=0,
\qquad
v^\vep = -\nabla K_\vep*(\rho^\vep-\mu_\vep),
\qquad
\rho^\vep\big|_{t=0}=\rho_{0,\vep}.
\end{equation}

\begin{lemma}\label{lem:regularized-problem}
For every $\vep\in (0,1)$, \eqref{eq:regeq} has a unique global classical solution
$\rho^\vep \in C^1([0,\infty)\times \R^d)$. In addition, $\rho_t^\vep$ is a probability density
for every $t\ge 0$.
\end{lemma}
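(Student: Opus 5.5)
The plan is to solve \eqref{eq:regeq} by characteristics, treating it as a nonlinear ODE on flow maps in a Banach space. For $\vep\in(0,1)$ fixed, the regularized kernel $K_\vep$ is smooth with $\nabla^{\otimes k}K_\vep$ bounded for every $k\ge2$. Indeed, near the origin we have $|\nabla^{\otimes 2}K_\vep|\le C\eta_\vep*|\cdot|^{q-2}\le C_\vep$, and \eqref{eq:kernel-far-bounds} handles $|x|\ge2$. Higher derivatives are obtained by moving them onto $\eta_\vep$. The gradient $\nabla K_\vep$ may grow like $|x|^{q-1}$ when $q>1$, but it is globally H\"older by \eqref{eq:kernel-holder}, hence of at most linear growth.

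Given a probability density $\rho$ with finite first moment, the field $v=-\nabla K_\vep*(\rho-\mu_\vep)$ is therefore well defined, with $|v(x)|\le C(1+|x|)$ after using the moments. It is smooth, and $\|\nabla^{\otimes k}v\|_{L^\infty}\le 2\|\nabla^{\otimes k}K_\vep\|_{L^\infty}$ for $k\ge1$, independently of $\rho$.

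I would then set up a Picard/Cauchy--Lipschitz scheme on flow maps $X_t(\alpha)$ with $X_t-\mathrm{id}\in C_b(\R^d)$, or in the weighted class $\sup_\alpha|X_t(\alpha)-\alpha|/(1+|\alpha|)<\infty$. The dynamics are
\begin{equation}
\dot X_t(\alpha)=-\int\nabla K_\vep(X_t(\alpha)-X_t(\beta))\rho_{0,\vep}(\beta)\ud\beta+(\nabla K_\vep*\mu_\vep)(X_t(\alpha)).
\end{equation}
The right-hand side is Lipschitz in $X$ with constant $2\|\nabla^{\otimes 2}K_\vep\|_{L^\infty}$ in the sup norm of $X-\mathrm{id}$, since differences only involve $\nabla^{\otimes2}K_\vep$. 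It is also well defined on this space because $X_t(\beta)-\beta$ is bounded and $\rho_{0,\vep}$ has finite first moment. The Lipschitz constant is global and independent of $X$, so Picard iteration yields a unique global solution with no blow-up alternative needed.

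Next I define $\rho_t^\vep:=(X_t)_\#\rho_{0,\vep}$. Each $X_t$ is a $C^\infty$ diffeomorphism: it is the flow of a smooth vector field $v^\vep_t$ with bounded derivatives, and $v^\vep$ is continuous in $t$. Consequently $\rho^\vep_t(X_t(\alpha))\det\nabla X_t(\alpha)=\rho_{0,\vep}(\alpha)$, so $\rho^\vep_t$ is a nonnegative probability density. It is $C^1$ in $(t,x)$ because $\rho_{0,\vep}\in C^\infty_b$ and the Jacobians are $C^1$ in time. The velocity identifies with $-\nabla K_\vep*(\rho^\vep_t-\mu_\vep)$ by change of variables, so $\rho^\vep$ is a classical solution of \eqref{eq:regeq}. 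Moments propagate since $|v|$ grows at most linearly.

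For uniqueness among classical solutions, take any $C^1$ solution $\tilde\rho$ that stays a probability density with locally bounded first moment. Its velocity field is Lipschitz, so its Lagrangian flow solves the same ODE, and uniqueness of the ODE gives $\tilde\rho=\rho^\vep$.

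The step I expect to be the main obstacle is the bookkeeping at infinity when $1<q<2$. There $\nabla K_\vep$ is unbounded, so the integrals defining $v$ and the Picard map require first-moment control. I would handle this by working in the affine space $\mathrm{id}+C_b$ and using \eqref{eq:kernel-holder} together with $\mathcal M_r(\rho_{0,\vep}),\mathcal M_r(\mu_\vep)<\infty$ with $r\ge1$, which keeps everything finite.
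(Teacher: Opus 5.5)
Your proof is correct, but it is organized differently from the paper's. The paper adapts Laurent's Eulerian fixed point. For a curve $\sigma\in C([0,T];\mathcal P_1(\R^d))$ it pushes $\rho_{0,\vep}$ forward by the flow of the field generated by $\sigma$. It then shows that this map sends a ball of curves with bounded first moment into itself and is a $W_1$-contraction, both only for small $T$. Because the self-mapping step uses the moment-dependent bound on the field, global existence needs a separate first-moment Gr\"onwall estimate and iteration of the local construction.

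You instead write the problem as an ODE for $X_t-\mathrm{id}$ in the Banach space $C_b(\R^d;\R^d)$. Its right-hand side is globally Lipschitz with a constant depending only on $\|\nabla^{\otimes2}K_\vep\|_{L^\infty}$, so Cauchy--Lipschitz gives global existence and uniqueness in one step. The density, its regularity, and the identification of the velocity then follow from $\dot X_t=v^\vep_t(X_t)$. Your route is more economical for obtaining globality. The paper's route keeps the argument in the measure-valued $W_1$ framework it uses again in the stability estimate.

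Two small slips, neither affecting the argument:
\begin{itemize}
\item Counting the target term $(\nabla K_\vep*\mu_\vep)(X_t(\alpha))$, the Lipschitz constant is $3\|\nabla^{\otimes2}K_\vep\|_{L^\infty}$, not $2\|\nabla^{\otimes2}K_\vep\|_{L^\infty}$.
\item The derivative bound should read $\|\nabla^{\otimes k}v\|_{L^\infty}\le2\|\nabla^{\otimes(k+1)}K_\vep\|_{L^\infty}$.
\end{itemize}
Also, the mass cancellation you invoke in your last paragraph shows that the right-hand side is actually bounded, not merely of linear growth. That boundedness is exactly what you need for it to map $\mathrm{id}+C_b$ into $C_b$.
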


\begin{proof}

Fix $\vep\in(0,1)$ and set $L_\vep:=\|\nabla^{\otimes 2}K_\vep\|_{L^\infty}$.  Laurent's existence theorem is formulated under global integrability assumptions not satisfied by the present growing kernel, so we adapt only the characteristic fixed-point scheme from its proof \cite{laurent2007local}.  For $\sigma\in C([0,T];\mathcal P_1(\R^d))$, define
\begin{equation}\label{eq:regularized-trial-field}
b_\vep[\sigma_t](x):=-\int_{\R^d}\nabla K_\vep(x-y)\ud(\sigma_t-\mu_\vep)(y).
\end{equation}
For probability measures $\sigma,\tau\in\mathcal P_1(\R^d)$, coupling the relevant measures and using the equality of the source and target masses give
\begin{align}
\operatorname{Lip}_x b_\vep[\sigma]&\le2L_\vep,\label{eq:regularized-field-spatial-lipschitz}\\
\|b_\vep[\sigma]-b_\vep[\tau]\|_{L^\infty}&\le L_\vep W_1(\sigma,\tau),\label{eq:regularized-field-measure-lipschitz}\\
\|b_\vep[\sigma]\|_{L^\infty}&\le L_\vep\bigl(\mathcal M_1(\sigma)+\mathcal M_1(\mu_\vep)\bigr).\label{eq:regularized-field-bounded}
\end{align}
Indeed, the last estimate follows by writing the field against any coupling of $\sigma$ and $\mu_\vep$ and subtracting the two kernel gradients inside the integral.

Let $X^\sigma$ be the flow of $b_\vep[\sigma]$ and set
\begin{equation}\label{eq:regularized-fixed-point-map}
(\mathcal T\sigma)_t:=X^\sigma(t,0,\cdot)_\#\rho_{0,\vep}.
\end{equation}
On a closed ball of $C([0,T];\mathcal P_1)$ with uniformly bounded first moment, \eqref{eq:regularized-field-bounded} shows that $\mathcal T$ maps the ball into itself when $T$ is sufficiently small.  Moreover, \eqref{eq:regularized-field-spatial-lipschitz}--\eqref{eq:regularized-field-measure-lipschitz} and Gr\"onwall's inequality give
\begin{equation}\label{eq:regularized-fixed-point-contraction}
\sup_{0\le t\le T}W_1\bigl((\mathcal T\sigma)_t,(\mathcal T\tau)_t\bigr)
\le L_\vep T e^{2L_\vep T}\sup_{0\le t\le T}W_1(\sigma_t,\tau_t).
\end{equation}
Thus $\mathcal T$ is a contraction for smaller $T$, yielding a unique local solution transported by its characteristic flow.  Positivity and conservation of mass are automatic from the pushforward construction.

To continue the local solution globally, we control its first moment.  For the fixed point, \eqref{eq:regularized-field-bounded} yields
\begin{equation}\label{eq:regularized-first-moment-continuation}
\mathcal M_1(\rho_t^\vep)
\le \mathcal M_1(\rho_{0,\vep})
+L_\vep\int_0^t\bigl(\mathcal M_1(\rho_s^\vep)+\mathcal M_1(\mu_\vep)\bigr)\ud s.
\end{equation}
Gr\"onwall's inequality bounds the first moment on every finite interval, so the local construction may be iterated globally.  Finally, $\nabla^{\otimes m}K_\vep\in L^\infty$ for every integer $m\ge2$; differentiating the characteristic system and using the smooth data therefore gives $\rho^\vep\in C^1([0,\infty)\times\R^d)$.

\end{proof}

\subsection{Uniform estimates}\label{ssec:LPwp-uniform}

To pass to the singular equation, we need estimates on the smooth approximations that are uniform in $\vep$.  The $L^p$ and moment bounds below control the velocity, while the time estimates provide the compactness used in \Cref{ssec:LPwp-limit}.

\begin{lemma}\label{lem:uniform-estimates}
For every $T>0$ there exists a constant $C_T>0$, independent of $\vep$, such that
\begin{equation}\label{eq:uniform-estimates-approx}
\sup_{0\le t\le T}\Big(
\|\rho_t^\vep\|_{L^p}
+ \mathcal M_r(\rho_t^\vep)
+ \|v_t^\vep\|_{L^\infty}
+ \|\nabla v_t^\vep\|_{L^\infty}
+ \|\partial_t v_t^\vep\|_{L^\infty}
\Big)
\le C_T.
\end{equation}
Finally,
\begin{equation}\label{eq:W1-equi}
W_1(\rho_t^\vep,\rho_s^\vep) \le C_T |t-s|
\qquad \text{for all } s,t\in [0,T].
\end{equation}
\end{lemma}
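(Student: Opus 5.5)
The proof will be a chain of a priori estimates along the characteristic flow $X^\vep$ of $v^\vep$. Each estimate uses only \Cref{lem:regularized-kernel-bounds} and \Cref{lem:singular-convolution-bound}, so the constants are independent of $\vep$.

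\textbf{Step 1: $L^p$ bound.} Along characteristics,
\begin{equation*}
\frac{d}{dt}\rho^\vep(t,X^\vep_t)=-\rho^\vep\,\div v^\vep,
\qquad
\div v^\vep=-\Delta K_\vep*(\rho^\vep-\mu_\vep).
\end{equation*}
Since $\Delta K_\vep\le0$ and $\rho^\vep\ge0$, the term $\Delta K_\vep*\rho^\vep$ is $\le0$. Hence
\begin{equation*}
-\rho^\vep\div v^\vep\le\rho^\vep\,\bigl((-\Delta K_\vep)*\mu_\vep\bigr).
\end{equation*}
By \eqref{eq:reg-hessian-conv} applied to $f=\mu$, $\|(-\Delta K_\vep)*\mu_\vep\|_{L^\infty}\le C_\mu$. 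Multiplying by $p(\rho^\vep)^{p-1}$ and integrating gives
\begin{equation*}
\frac{d}{dt}\|\rho^\vep_t\|_{L^p}^p=(p-1)\int(\rho^\vep)^p\,\Delta K_\vep*(\rho^\vep-\mu_\vep)\le(p-1)C_\mu\|\rho^\vep_t\|_{L^p}^p.
\end{equation*}
The derivative formula comes from the continuity equation and integration by parts, which is justified because the regularized solution is smooth with enough decay. Gr\"onwall's inequality then gives
\begin{equation*}
\|\rho^\vep_t\|_{L^p}\le e^{C_\mu t}\|\rho_0\|_{L^p},
\end{equation*}
with no need to control the nonlinear term. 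Combined with mass one, \eqref{eq:reg-hessian-conv} applied to $f=\rho^\vep_t-\mu_\vep$ then gives the uniform bound on $\|\nabla v^\vep_t\|_{L^\infty}$.

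\textbf{Step 2: bound on $v^\vep$ and moments.} This is the step I expect to need the most care, because $\nabla K$ grows at infinity when $q>1$. I split into two cases.

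For $0<q\le1$, \eqref{eq:reg-gradient-conv} bounds $\|v^\vep_t\|_{L^\infty}$ directly by the $L^1$ and $L^p$ norms.

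For $1<q<2$, I use that $\rho^\vep$ and $\mu_\vep$ have equal mass and write
\begin{equation*}
v^\vep(x)=-\iint\bigl(\nabla K_\vep(x-y)-\nabla K_\vep(x-z)\bigr)\ud\rho^\vep(y)\ud\mu_\vep(z).
\end{equation*}
Applying \eqref{eq:kernel-holder} gives
\begin{equation*}
|v^\vep|\le C\iint|y-z|^{q-1}\le C\bigl(\mathcal M_{q-1}(\rho^\vep)+\mathcal M_{q-1}(\mu_\vep)\bigr)\le C\bigl(1+\mathcal M_1(\rho^\vep)+\mathcal M_1(\mu_\vep)\bigr).
\end{equation*}
For the moments, I differentiate along characteristics, $\frac{d}{dt}|X_t|\le\|v^\vep_t\|_{L^\infty}$, which gives
\begin{equation*}
\mathcal M_r(\rho^\vep_t)^{1/r}\le\mathcal M_r(\rho_{0,\vep})^{1/r}+\int_0^t\|v^\vep_s\|_{L^\infty}\ud s.
\end{equation*}
Since $r\ge1$, $\mathcal M_1\le\mathcal M_r^{1/r}$. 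This closes into a linear Gr\"onwall inequality for $\mathcal M_r^{1/r}$, so both $\mathcal M_r(\rho^\vep_t)$ and $\|v^\vep_t\|_{L^\infty}$ are bounded by $C_T$.

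\textbf{Step 3: $W_1$ Lipschitz bound.} Estimate \eqref{eq:W1-equi} follows by coupling $\rho^\vep_s$ and $\rho^\vep_t$ through the characteristic flow:
\begin{equation*}
W_1(\rho^\vep_t,\rho^\vep_s)\le\int|X_t-X_s|\ud\rho_0\le\sup\|v^\vep\|_{L^\infty}\,|t-s|.
\end{equation*}

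\textbf{Step 4: time derivative of $v^\vep$.} Using the equation and integrating by parts,
\begin{equation*}
\partial_tv^\vep=-\nabla K_\vep*\partial_t\rho^\vep=-\int\nabla^{\otimes2}K_\vep(x-y)\,v^\vep(y)\,\rho^\vep(y)\ud y.
\end{equation*}
Here the integration by parts against the growing $\nabla K_\vep$ is justified by the $r\ge1$ moments. By \eqref{eq:reg-hessian-conv} applied to $f=v^\vep\rho^\vep$, and since $\|v^\vep\rho^\vep\|_{L^1\cap L^p}\le\|v^\vep\|_{L^\infty}\|\rho^\vep\|_{L^1\cap L^p}$, this is bounded by $C_T$.
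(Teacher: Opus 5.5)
Your proposal is correct and follows the paper's proof essentially step for step. It uses the same $L^p$ identity, exploiting $\Delta K_\vep\le0$ so that only $(-\Delta K_\vep)*\mu_\vep$ enters Gr\"onwall. It uses the same split $0<q\le1$ versus $1<q<2$, with mass cancellation and \eqref{eq:kernel-holder} closing the velocity and $r$-th moment bounds jointly; your Minkowski-along-characteristics derivation of the moment bound is a slightly cleaner variant of the paper's differential inequality. Finally, it bounds $\partial_t v^\vep$ through \eqref{eq:reg-hessian-conv} with $f=\rho^\vep v^\vep$ and proves \eqref{eq:W1-equi} by the same characteristic coupling. The only slip is a harmless sign: integrating by parts gives $\partial_t v^\vep=+\nabla^{\otimes2}K_\vep*(\rho^\vep v^\vep)$, which does not affect the $L^\infty$ bound.
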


\begin{proof}
We begin with the $L^p$ estimate. Since $\rho^\vep$ is smooth, multiplying \eqref{eq:regeq} by
$(\rho^\vep)^{p-1}$ and integrating by parts gives
\begin{equation}\label{eq:approx-lp-identity}
\frac{\ud}{\ud t}\|\rho_t^\vep\|_{L^p}^p
= (p-1)\int_{\R^d} (\rho_t^\vep(x))^p\,\Delta K_\vep*(\rho_t^\vep-\mu_\vep)(x)\ \ud x.
\end{equation}
Because $\Delta K_\vep\le 0$ and $\rho_t^\vep\ge 0$, the self-interaction contribution is nonpositive.
Thus,
\begin{equation}
\frac{\ud}{\ud t}\|\rho_t^\vep\|_{L^p}^p
\le (p-1)\|\Delta K_\vep*\mu_\vep\|_{L^\infty}\, \|\rho_t^\vep\|_{L^p}^p.
\end{equation}
Using \eqref{eq:reg-hessian-conv} with $f=\mu_\vep$, together with $\|\mu_\vep\|_{L^1}=1$ and $\|\mu_\vep\|_{L^p}\le \|\mu\|_{L^p}$, we obtain
\begin{align}\label{eq:laplaceKbd}
\|\Delta K_\vep*\mu_\vep\|_{L^\infty}
\lesssim \|\mu\|_{L^p}^{\frac{(2-q)p_*}{d}}.
\end{align}
Gr\"onwall's inequality therefore yields
\begin{equation}\label{eq:approxLpest}
\|\rho_t^\vep\|_{L^p} \le e^{Ct}\|\rho_0\|_{L^p}
\qquad \text{for all } t\in [0,T],
\end{equation}
with $C$ depending only on $d,q,p$ and $\|\mu\|_{L^p}$.

We next estimate the velocity. If $0<q\le 1$, then applying \eqref{eq:reg-gradient-conv} to $f=\rho_t^\vep-\mu_\vep$ gives
\begin{equation}
\begin{aligned}
|v_t^\vep(x)| \lesssim \|\rho_t^\vep - \mu_\vep\|_{L^1}^{1-\frac{(1-q)p_*}{d}} \|\rho_t^\vep - \mu_\vep\|_{L^p}^{\frac{(1-q)p_*}{d}} \lesssim \|\rho_t^\vep\|_{L^p}^{\frac{(1-q)p_*}{d}} + \|\mu\|_{L^p}^{\frac{(1-q)p_*}{d}} \le C_T.
\end{aligned}
\end{equation}
If $1<q<2$, then $\rho_t^\vep$ and $\mu_\vep$ have the same mass, so
\begin{equation}
v_t^\vep(x)
= -\int_{\R^d}\big(\nabla K_\vep(x-y)-\nabla K_\vep(x)\big)\ud(\rho_t^\vep-\mu_\vep)(y).
\end{equation}
Using \eqref{eq:kernel-holder},
\begin{align}\label{eq:velocity-holder}
|v_t^\vep(x)|\le C\int_{\R^d}|y|^{q-1}\ud(\rho_t^\vep+\mu_\vep)(y) 
&= C\big(\mathcal M_{q-1}(\rho_t^\vep)+\mathcal M_{q-1}(\mu_\vep)\big).
\end{align}

We next propagate the $r$-th moment; in the regime $1<q<2$, this also closes the moment-dependent velocity estimate \eqref{eq:velocity-holder}. When $r=1$, the following calculation is performed with smooth convex approximations of $|x|$ and then passed to the limit. Observe from  \eqref{eq:regeq},
\begin{equation}
\frac{\ud}{\ud t}\mathcal M_r(\rho_t^\vep)
= r\int_{\R^d} |x|^{r-2}x\cdot v_t^\vep(x)\,\rho_t^\vep(x)\ud x
\le r\|v_t^\vep\|_{L^\infty}\,\mathcal M_r(\rho_t^\vep)^{\frac{r-1}{r}}.
\end{equation}
Set $Y_\vep(t):=\mathcal M_r(\rho_t^\vep)^{1/r}$. The preceding differential inequality gives
\begin{equation}
Y_\vep(t)\le Y_\vep(0)+\int_0^t\|v_s^\vep\|_{L^\infty}\ud s
=\mathcal M_r(\rho_{0,\vep})^{1/r}+\int_0^t\|v_s^\vep\|_{L^\infty}\ud s.
\end{equation}
If $0<q\le1$, the already established bound $\|v_t^\vep\|_{L^\infty}\le C_T$ and the preceding integral inequality directly bound $Y_\vep$ on $[0,T]$.  If $1<q<2$, the assumption $r\ge1$ gives $r>q-1$, and H\"older's inequality yields $\mathcal M_{q-1}(\nu)\le\mathcal M_r(\nu)^{(q-1)/r}$ for every $\nu\in\mathcal P(\R^d)$. Hence \eqref{eq:velocity-holder} implies
\begin{equation}
Y_\vep(t)
\le Y_\vep(0)+C\int_0^t\left(Y_\vep(s)^{q-1}+\mathcal M_r(\mu_\vep)^{(q-1)/r}\right)\ud s.
\end{equation}
The initial values $Y_\vep(0)$ and the quantities $\mathcal M_r(\mu_\vep)^{1/r}$ are bounded uniformly in $\vep\in(0,1)$ by the mollified moment estimates above. Since $0<q-1<1$, the elementary bound $y^{q-1}\le1+y$ for $y\ge0$ and Gr\"onwall's inequality bound $Y_\vep$ on $[0,T]$.  Thus, in either case,
\begin{equation}\label{eq:momentr-uniform-approx}
\sup_{0\le t\le T}\mathcal M_r(\rho_t^\vep) \le C_T.
\end{equation}
This in turn implies that $\|v_t^\vep\|_{L^\infty}\le C_T$ for all $t\in[0,T]$.

With the velocity and moment bounds now closed, it remains to control the spatial and temporal derivatives of $v^\vep$ appearing in \eqref{eq:uniform-estimates-approx}. The estimate \eqref{eq:reg-hessian-conv} and the $L^p$ bound on $\rho_t^\vep$ give 
\begin{equation}
\begin{aligned}
\|\nab v_t^\vep\|_{L^\infty} \le \|\rho_t^\vep-\mu_\vep\|_{L^1}^{1-\frac{(2-q)p_*}{d}} \|\rho_t^\vep-\mu_\vep\|_{L^p}^{\frac{(2-q)p_*}{d}} \le C_T.
\end{aligned}
\end{equation}
Similarly, since $\partial_t\rho^\vep = -\div(\rho^\vep v^\vep)$,
\begin{equation}
\partial_t v_t^\vep = -\nabla K_\vep * \partial_t\rho_t^\vep = \nabla^{\otimes 2}K_\vep*(\rho_t^\vep v_t^\vep),
\end{equation}
and \eqref{eq:reg-hessian-conv} gives
\begin{equation}
\|\partial_t v_t^\vep\|_{L^\infty}
\le \|\rho_t^\vep v_t^\vep\|_{L^1}^{1-\frac{(2-q)p_*}{d}} \|\rho_t^\vep v_t^\vep\|_{L^p}^{\frac{(2-q)p_*}{d}}
\le C_T.
\end{equation}
This proves \eqref{eq:uniform-estimates-approx}.

Finally, let $X^\vep(t,s,\cdot)$ denote the characteristic flow of $v^\vep$. Since
$\rho_t^\vep = X^\vep(t,s,\cdot)_\#\rho_s^\vep$, we have
\begin{equation}
W_1(\rho_t^\vep,\rho_s^\vep)
\le \int_{\R^d}|X^\vep(t,s,x)-x|\,\rho_s^\vep(x)\ud x
\le \int_s^t \|v_\tau^\vep\|_{L^\infty}\ud\tau
\le C_T|t-s|,
\end{equation}
which is exactly \eqref{eq:W1-equi}.

\end{proof}

\subsection{Passage to the limit and verification}\label{ssec:LPwp-limit}

Fix $T>0$. By \eqref{eq:uniform-estimates-approx} and Arzel\`a--Ascoli, after passing to a subsequence,
\begin{equation}\label{eq:veps-to-v}
v^\vep \to v
\qquad \text{locally uniformly on } [0,T]\times \R^d.
\end{equation}
By \eqref{eq:W1-equi}, the family $\{\rho^\vep\}_{\vep\in(0,1)}$ is equicontinuous in $W_1$.  Moreover, the collection of time slices
$\{\rho_t^\vep:\vep\in(0,1),\ t\in[0,T]\}$ is relatively compact in $(\mathcal P_1(\R^d),W_1)$.  Indeed, the velocity bound gives $|X^\vep(t,0,x)-x|\le C_T$, while the mollifications $\rho_{0,\vep}$ have uniformly integrable first moments because $\rho_0\in\mathcal P_1$ and $\operatorname{supp}\eta_\vep\subset B_1$.  The pushforward representation therefore gives uniform integrability of the first moments of $\rho_t^\vep$ on $[0,T]$.  Hence, after passing to a further subsequence,
\begin{equation}\label{eq:rhoeps-to-rho-W1}
\rho^\vep \to \rho
\qquad \text{in } C([0,T];\P_1(\R^d))
\end{equation}
with respect to the $W_1$ metric. In particular, $\rho_t^\vep \rightharpoonup \rho_t$ narrowly for every $t$.
The limiting curve also satisfies the $L^p$ and moment bounds required by \Cref{def:continuum-solution-class}. Indeed, by \eqref{eq:approxLpest} and Banach--Alaoglu,
$\rho^\vep \rightharpoonup^* \rho$ in $L^\infty(0,T;L^p(\R^d))$; thus
$\rho \in L^\infty(0,T;L^p(\R^d))$. By lower semicontinuity of moments and
\eqref{eq:momentr-uniform-approx},
\begin{equation}\label{eq:limit-r-moment}
\sup_{0\le t\le T}\mathcal M_r(\rho_t) \le C_T,
\end{equation}
so in fact $\rho_t\in \P_r(\R^d)$ for all $t\in[0,T]$.

We next identify the limit velocity. If $0<q\le 1$, fix $(t,x)\in [0,T]\times \R^d$ and $\delta\in (0,1)$.  Let $\chi_\delta\in C_c^\infty(B_{2\delta})$ equal one on $B_\delta$.  For $0<\vep<\delta$, the support of the mollifier and Young's inequality give
\begin{equation}\label{eq:near-field-local-uniform-integrability}
\|\chi_\delta\nabla K_\vep\|_{L^{p_*}}
\le \|\nabla K\|_{L^{p_*}(B_{3\delta})}.
\end{equation}
Consequently, the uniform $L^p$ bounds imply
\begin{equation}\label{eq:near-field-limsup}
\limsup_{\vep\downarrow0}
\left|\int_{\R^d}\chi_\delta(x-y)\nabla K_\vep(x-y)\ud(\rho_t^\vep-\mu_\vep)(y)\right|
\le C_T\|\nabla K\|_{L^{p_*}(B_{3\delta})},
\end{equation}
which tends to zero as $\delta\downarrow0$.  On the complementary region, $(1-\chi_\delta)\nabla K_\vep$ converges uniformly to $(1-\chi_\delta)\nabla K$ and is bounded and continuous.  Using \eqref{eq:rhoeps-to-rho-W1}, we conclude that
\begin{equation}
v(t,x) = -\int_{\R^d}\nabla K(x-y)\ud(\rho_t-\mu)(y).
\end{equation}
If $1<q<2$, then we use cancellation of the total mass:
\begin{equation}
v_t^\vep(x)
= -\int_{\R^d}\big(\nabla K_\vep(x-y)-\nabla K_\vep(x)\big)\ud(\rho_t^\vep-\mu_\vep)(y).
\end{equation}
For fixed $x$, the functions
$G_{\vep,x}(y):=\nabla K_\vep(x-y)-\nabla K_\vep(x)$ converge locally uniformly to
$G_x(y):=\nabla K(x-y)-\nabla K(x)$, and by \eqref{eq:kernel-holder},
$|G_{\vep,x}(y)|\le C|y|^{q-1}$. Since $q-1<1$, the function $G_x$ is continuous and has at most linear growth.
Therefore \eqref{eq:rhoeps-to-rho-W1} implies
\begin{equation}
v(t,x)
= -\int_{\R^d}\big(\nabla K(x-y)-\nabla K(x)\big)\ud(\rho_t-\mu)(y)
= -\int_{\R^d}\nabla K(x-y)\ud(\rho_t-\mu)(y).
\end{equation}
Thus, in all cases,
\begin{equation}\label{eq:limit-velocity-formula}
v_t = -\nabla K*(\rho_t-\mu).
\end{equation}
In particular, the estimates proved in \Cref{lem:uniform-estimates} pass to the limit and show that
\begin{equation}
v\in L^\infty([0,T];W^{1,\infty}(\R^d))
\cap C([0,T]\times \R^d).
\end{equation}

We now pass to the weak formulation. Let $\varphi\in C_c^\infty([0,T)\times \R^d)$. Since
$\rho^\vep$ is a classical solution of \eqref{eq:regeq},
\begin{equation}
\int_0^T\!\!\int_{\R^d} \big(\partial_t\varphi + v_t^\vep\cdot \nabla_x\varphi\big)\rho_t^\vep\ud x\ud t
+ \int_{\R^d}\varphi(0,x)\rho_{0,\vep}(x)\ud x =0.
\end{equation}
Using \eqref{eq:veps-to-v}, \eqref{eq:rhoeps-to-rho-W1}, and the fact that
$\rho_{0,\vep}\to \rho_0$ in $L^1(\R^d)$, we may pass to the limit and obtain
\begin{equation}
\int_0^T\!\!\int_{\R^d} \big(\partial_t\varphi + v_t\cdot \nabla_x\varphi\big)\rho_t\ud x\ud t
+ \int_{\R^d}\varphi(0,x)\rho_0(x)\ud x =0.
\end{equation}
Therefore $\rho$ is a weak solution of \eqref{eq:wgfmmd} on $[0,T]$.

We next record the flow representation and the asserted time regularity.  The uniform time-derivative estimate passes to the limit and gives
\begin{equation}\label{eq:limit-v-time-lip}
\|v_t-v_s\|_{L^\infty(\R^d)}\le C_T|t-s|,
\qquad s,t\in[0,T].
\end{equation}
Together with $v\in L^\infty_tW^{1,\infty}_x$, this yields a global bi-Lipschitz flow $X(t,s,\cdot)$ and the standard bounds
\begin{align}
|X(t,s,x)-X(t,s,y)|&\le e^{C_T|t-s|}|x-y|,
\qquad |X(t,s,x)-x|\le C_T|t-s|,\notag\\
e^{-C_T|t-s|}&\le \det\nabla_xX(t,s,x)\le e^{C_T|t-s|}
\quad\text{for $\mathcal L^d$-a.e. }x.\label{eq:limit-flow-jacobian}
\end{align}
For the bi-Lipschitz flow estimates and boundedness of composition by the inverse flow on $L^p$, see \cite[Lemmas~2.7--2.8]{bertozzi2011lp}.  The representation and backward-adjoint uniqueness theory for the linear continuity equation, in the stronger regularity class used here, is given in \cite[Remark~8.1.5 and Propositions~8.1.7--8.1.8]{ambrosio2008gradient}.  It yields
\begin{equation}\label{eq:limit-characteristic-density}
\rho_t=X(t,0,\cdot)_\#\rho_0.
\end{equation}
Writing $J(t,s,a):=\det\nabla_aX(t,s,a)$, the corresponding density formula is
\begin{align}
\rho_t(X(t,0,a))J(t,0,a)&=\rho_0(a)\quad\text{for $\mathcal L^d$-a.e. }a,\notag\\
\rho_t(x)&=\rho_0(X(0,t,x))
\exp\left(-\int_0^t\div v_\tau(X(\tau,t,x))\ud\tau\right)
\quad\text{for $\mathcal L^d$-a.e. }x.\label{eq:limit-density-formula}
\end{align}
The density-approximation and composition argument in the proof of \cite[Proposition~2.11]{bertozzi2011lp}, including the appendix argument used there, applied to \eqref{eq:limit-characteristic-density} yields
\begin{equation}\label{eq:limit-rho-C-Lp}
\rho\in C([0,T];L^p(\R^d)),
\qquad
\|\rho_t\|_{L^p}\le e^{C_Tt}\|\rho_0\|_{L^p}.
\end{equation}
Finally, \eqref{eq:limit-v-time-lip} and \eqref{eq:limit-rho-C-Lp} imply $\rho v\in C([0,T];L^p)$; hence the weak equation gives
\begin{equation}\label{eq:limit-rho-C1-Wminus}
\partial_t\rho=-\div(\rho v)\in C([0,T];W^{-1,p}(\R^d)),
\qquad
\rho\in C^1([0,T];W^{-1,p}(\R^d)).
\end{equation}
Since $T$ is arbitrary, the finite-$p$ solution is global and belongs to the class of \Cref{def:continuum-solution-class}.

\subsection{Uniqueness}\label{ssec:LPwp-uniqueness}
Uniqueness is a consequence of the following Dobrushin-type stability estimate.  The argument only uses the solution class already obtained above: the associated velocities are bounded and globally Lipschitz in space on each finite time interval, and the solutions are represented by their characteristic flows.

The stability argument requires pointwise control of kernel-gradient differences, which we record first.

\begin{lemma}[Kernel-gradient difference estimate]\label{lem:kernel-gradient-difference}
Let $0<q<2$.  There exists a constant $C_q<\infty$ such that, for all $u,v\in\R^d\setminus\{0\}$,
\begin{equation}\label{eq:kernel-difference}
|\nabla K(u)-\nabla K(v)|
\le C_q |u-v|\bigl(|u|^{q-2}+|v|^{q-2}\bigr).
\end{equation}
The same estimate holds with the convention that the right-hand side is $+\infty$ if $u=0$ or $v=0$.
\end{lemma}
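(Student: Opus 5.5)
The plan is to use homogeneity and split into two geometric regimes. Recall $\nabla K(z)=-q|z|^{q-2}z$ for $z\ne0$. This map is smooth away from the origin and homogeneous of degree $q-1$. Its derivative satisfies
\begin{equation*}
|\nabla^{\otimes2}K(z)|\le C_q|z|^{q-2},\qquad z\ne0.
\end{equation*}
The cases $u=0$ or $v=0$ are trivial by the stated convention, so assume $u,v\ne0$. By symmetry we may also assume $|u|\ge|v|$.

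\emph{Near regime, $|u-v|\le\frac12|u|$.} Here the segment $[u,v]$ stays away from the origin: every point $w=v+s(u-v)$ satisfies $|w|\ge|u|-|u-w|\ge\frac12|u|$. The mean-value theorem along the segment then gives
\begin{equation*}
|\nabla K(u)-\nabla K(v)|\le C_q\sup_{w\in[u,v]}|w|^{q-2}\,|u-v|\le C_q2^{2-q}|u|^{q-2}|u-v|.
\end{equation*}
This is bounded by the right-hand side of \eqref{eq:kernel-difference}, because we only need $|u|^{q-2}\le |u|^{q-2}+|v|^{q-2}$.

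\emph{Far regime, $|u-v|>\frac12|u|$.} We bound each term crudely using $|\nabla K(z)|=q|z|^{q-1}$:
\begin{equation*}
|\nabla K(u)-\nabla K(v)|\le q\bigl(|u|^{q-1}+|v|^{q-1}\bigr).
\end{equation*}
Write $|u|^{q-1}=|u|\,|u|^{q-2}$ and $|v|^{q-1}=|v|\,|v|^{q-2}$. Since $|v|\le|u|<2|u-v|$, this is at most
\begin{equation*}
2q|u-v|\bigl(|u|^{q-2}+|v|^{q-2}\bigr).
\end{equation*}
This step holds for every $0<q<2$, regardless of whether $q-1$ is negative: we never compare $|u|^{q-1}$ with $|v|^{q-1}$, we only pull out the factors $|u|$ and $|v|$.

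Combining the two regimes gives \eqref{eq:kernel-difference} with $C_q=\max\{C_q2^{2-q},2q\}$.

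The only point needing care is the near regime: the segment must avoid a neighborhood of the origin, with $|w|$ comparable to $|u|$, so that the Hessian bound $|w|^{q-2}$ can be controlled by $|u|^{q-2}$ even though $q-2<0$. The choice of the threshold $\frac12|u|$, using the larger of $|u|$ and $|v|$, secures exactly this.
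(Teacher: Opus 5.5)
Your proof is correct and follows essentially the same route as the paper. Both use a near/far split: in the near case, the mean-value theorem with the Hessian bound $|\nabla^{\otimes 2}K(w)|\le C_q|w|^{q-2}$ along a segment that avoids the origin; in the far case, the crude bound $q(|u|^{q-1}+|v|^{q-1})$ with the factors $|u|,|v|\lesssim|u-v|$ pulled out. The only cosmetic difference is where you cut: you split on whether $|u-v|\le\frac12\max\{|u|,|v|\}$, whereas the paper splits on whether the segment $[u,v]$ stays at distance at least $\frac12\min\{|u|,|v|\}$ from the origin.
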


\begin{proof}
It suffices to prove the estimate for nonzero $u,v$.  If the segment $[u,v]$ stays at distance at least $\frac12\min\{|u|,|v|\}$ from the origin, then the mean-value theorem and the bound $|\nabla^{\otimes 2}K(w)|\le C_q|w|^{q-2}$ give
\begin{equation}
|\nabla K(u)-\nabla K(v)|
\le C_q |u-v|\min\{|u|,|v|\}^{q-2}
\le C_q |u-v|\bigl(|u|^{q-2}+|v|^{q-2}\bigr),
\end{equation}
because $q-2<0$.  Otherwise the segment passes within distance $\frac12\min\{|u|,|v|\}$ of the origin.  In that case $|u-v|\ge \frac12\min\{|u|,|v|\}$, and the triangle inequality also gives $\max\{|u|,|v|\}\le 3|u-v|$.  Hence, using $|\nabla K(w)|\le C_q|w|^{q-1}$,
\begin{equation}
|\nabla K(u)-\nabla K(v)|
\le C_q\bigl(|u|^{q-1}+|v|^{q-1}\bigr)
\le C_q |u-v|\bigl(|u|^{q-2}+|v|^{q-2}\bigr).
\end{equation}
This proves the lemma.
\end{proof}

\begin{proposition}[$W_1$-stability and uniqueness]\label{prop:W1-stability}
Let $T>0$, let $0<q<2$ satisfy $d+q-2>0$, and let $1<p<\infty$ satisfy $p>p_c$.  Set $p_*:=p/(p-1)$ and
\begin{equation}
\beta:=\frac{(2-q)p_*}{d}\in(0,1).
\end{equation}
Let $\mu\in\mathcal P_1(\R^d)\cap L^p(\R^d)$, and let $\rho^{(1)},\rho^{(2)}$ be two weak solutions of \eqref{eq:wgfmmd} on $[0,T]$ in the solution class constructed above, with respective initial data $\rho_0^{(1)},\rho_0^{(2)}\in \mathcal P_1(\R^d)\cap L^p(\R^d)$.  Then, for every $t\in[0,T]$,
\begin{equation}\label{eq:W1-stability-integral}
W_1\bigl(\rho_t^{(1)},\rho_t^{(2)}\bigr)
\le
\exp\!\left(
C_{d,q,p}\int_0^t
\bigl(
\|\rho_s^{(1)}\|_{L^p}^{\beta}
+
\|\rho_s^{(2)}\|_{L^p}^{\beta}
+
\|\mu\|_{L^p}^{\beta}
\bigr)
\ud s
\right)
W_1\bigl(\rho_0^{(1)},\rho_0^{(2)}\bigr).
\end{equation}
In particular, two solutions in this class with the same initial datum coincide on $[0,T]$.
\end{proposition}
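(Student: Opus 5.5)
We argue by a Lagrangian coupling along the characteristic flows, in the spirit of the Dobrushin argument but with the Lipschitz constant of the kernel replaced by the singular bound of \Cref{lem:kernel-gradient-difference}. Let $X^{(k)}(t,0,\cdot)$ be the bi-Lipschitz flow of $v^{(k)}$, so that $\rho_t^{(k)}=X^{(k)}(t,0,\cdot)_\#\rho_0^{(k)}$ by \eqref{eq:limit-characteristic-density}. Fix an optimal $W_1$-coupling $\pi_0$ of $\rho_0^{(1)},\rho_0^{(2)}$ and push it forward by $(X^{(1)},X^{(2)})$ to obtain a coupling $\pi_t$ of $\rho_t^{(1)},\rho_t^{(2)}$. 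Set
\begin{equation*}
Q(t):=\int|X^{(1)}(t,0,a)-X^{(2)}(t,0,b)|\ud\pi_0(a,b)\ge W_1(\rho_t^{(1)},\rho_t^{(2)}),
\end{equation*}
which is Lipschitz in $t$ since the velocities are bounded. Then
\begin{equation*}
\frac{\ud}{\ud t}Q\le\int|v_t^{(1)}(x)-v_t^{(2)}(y)|\ud\pi_t(x,y)
\le\int|v^{(1)}_t(x)-v^{(1)}_t(y)|\ud\pi_t+\int|v^{(1)}_t(y)-v^{(2)}_t(y)|\ud\rho_t^{(2)}(y).
\end{equation*}

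We handle the two terms separately. For the first, \eqref{eq:velbd} and \Cref{lem:singular-convolution-bound} (with $a=2-q$) give $\|\nabla v^{(1)}_t\|_{L^\infty}\lesssim\|\rho^{(1)}_t\|_{L^p}^\beta+\|\mu\|_{L^p}^\beta$, using $\|\rho-\mu\|_{L^1}\le2$. This bounds the first term by that quantity times $Q(t)$. For the second, the target cancels:
\begin{equation*}
v^{(1)}_t(y)-v^{(2)}_t(y)=-\int\bigl(\nabla K(y-x')-\nabla K(y-y')\bigr)\ud\pi_t(x',y').
\end{equation*}
Applying \Cref{lem:kernel-gradient-difference} with $u=y-x'$, $v=y-y'$ gives
\begin{equation*}
\int|v^{(1)}_t-v^{(2)}_t|\ud\rho^{(2)}_t\le C_q\iint|x'-y'|\bigl(|y-x'|^{q-2}+|y-y'|^{q-2}\bigr)\ud\rho_t^{(2)}(y)\ud\pi_t(x',y').
\end{equation*}
Integrating first in $y$ and using \Cref{lem:singular-convolution-bound} for $f=\rho^{(2)}_t$, we get $\sup_z\int|y-z|^{q-2}\ud\rho_t^{(2)}(y)\lesssim\|\rho^{(2)}_t\|_{L^p}^\beta$. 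Hence the second term is at most $C\|\rho^{(2)}_t\|_{L^p}^\beta Q(t)$. The diagonal sets where $u$ or $v$ vanishes are $\rho^{(2)}_t$-null by absolute continuity, so the $+\infty$ convention causes no trouble.

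Gr\"onwall's inequality then gives \eqref{eq:W1-stability-integral} for $Q(t)$, since $Q(0)=W_1(\rho^{(1)}_0,\rho^{(2)}_0)$, and hence for $W_1$. Equal data give $W_1\equiv0$.

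The main technical point is justifying the differentiation of $Q$ and the Fubini exchanges. We would work with the integral form $Q(t)\le Q(0)+\int_0^t\int|v_s^{(1)}(X^{(1)})-v_s^{(2)}(X^{(2)})|\ud\pi_0\ud s$, which only needs measurability and boundedness of the velocities. The Tonelli step on nonnegative integrands is then legitimate. Note that the $L^p$ norms on the right side are finite and integrable in time by \eqref{eq:limit-rho-C-Lp}.
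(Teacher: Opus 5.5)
Your proposal is correct and follows the paper's proof essentially step for step: the same pushed-forward optimal $W_1$ coupling, the same split into a term controlled by $\|\nabla v^{(1)}_t\|_{L^\infty}$ and a target-cancelling term handled by \Cref{lem:kernel-gradient-difference} and \Cref{lem:singular-convolution-bound}, and the same Gr\"onwall closure. One citation should be adjusted: for an arbitrary element of the solution class, justify the flow representation by uniqueness for the linear continuity equation with bounded, spatially Lipschitz velocity (as the paper does via \cite{ambrosio2008gradient}); \eqref{eq:limit-characteristic-density} was derived only for the constructed solution.
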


\begin{proof}
For $i=1,2$, set
\begin{equation}
v_t^{(i)}:=-\nabla K*(\rho_t^{(i)}-\mu).
\end{equation}
By \cite[Propositions~8.1.7--8.1.8]{ambrosio2008gradient}, every solution in the stated class is represented by the unique flow of its bounded, spatially Lipschitz velocity.  Let $X_t^{(i)}$ denote these flows, so that $\rho_t^{(i)}=(X_t^{(i)})_\#\rho_0^{(i)}$.  Let $\gamma_0\in\Gamma(\rho_0^{(1)},\rho_0^{(2)})$ be an optimal coupling for $W_1$, define
\begin{equation}
\gamma_t:=(X_t^{(1)},X_t^{(2)})_{\#}\gamma_0\in\Gamma(\rho_t^{(1)},\rho_t^{(2)}),
\end{equation}
and set
\begin{equation}
D(t):=\iint_{\R^d\times\R^d}|x-y|\ud\gamma_t(x,y)
=
\iint_{\R^d\times\R^d}|X_t^{(1)}(x)-X_t^{(2)}(y)|\ud\gamma_0(x,y).
\end{equation}
The map $D$ is absolutely continuous, $D(0)=W_1(\rho_0^{(1)},\rho_0^{(2)})$, and
\begin{equation}\label{eq:W1-le-D}
W_1\bigl(\rho_t^{(1)},\rho_t^{(2)}\bigr)\le D(t).
\end{equation}
For $\mathcal L^1$-a.e. $t\in[0,T]$,
\begin{align}
D'(t)
&\le
\iint |v_t^{(1)}(x)-v_t^{(2)}(y)|\ud\gamma_t(x,y)\notag\\
&\le
\iint |v_t^{(1)}(x)-v_t^{(1)}(y)|\ud\gamma_t(x,y)
+
\iint |v_t^{(1)}(y)-v_t^{(2)}(y)|\ud\gamma_t(x,y)\notag\\
&=:I_1(t)+I_2(t).\label{eq:Dprime-split}
\end{align}

For $I_1$, using $|\nabla^{\otimes 2}K(z)|\le C_{d,q}|z|^{q-2}$ and \eqref{eq:singintbd-mollified} with $a=2-q$ and $\vep=0$,
\begin{align}
\|\nabla v_t^{(1)}\|_{L^\infty}
&\le C_{d,q}\sup_x\int |x-y|^{q-2}\ud|\rho_t^{(1)}-\mu|(y) \notag\\
&\le C_{d,q,p}\bigl(\|\rho_t^{(1)}\|_{L^p}^{\beta}+\|\mu\|_{L^p}^{\beta}\bigr).
\end{align}
Therefore
\begin{equation}\label{eq:I1-bound}
I_1(t)
\le C_{d,q,p}\bigl(\|\rho_t^{(1)}\|_{L^p}^{\beta}+\|\mu\|_{L^p}^{\beta}\bigr)D(t).
\end{equation}

For $I_2$, the target terms cancel.  Since $\gamma_t\in\Gamma(\rho_t^{(1)},\rho_t^{(2)})$,
\begin{equation}
v_t^{(1)}(y)-v_t^{(2)}(y)
= -\iint
\bigl(\nabla K(y-z)-\nabla K(y-\widetilde z)\bigr)
\ud\gamma_t(z,\widetilde z).
\end{equation}
Using \Cref{lem:kernel-gradient-difference}, Fubini, and again \eqref{eq:singintbd-mollified} with $a=2-q$ and $\vep=0$, we obtain
\begin{align}
I_2(t)
&\le C_{d,q}\iiint
|z-\widetilde z|
\bigl(|y-z|^{q-2}+|y-\widetilde z|^{q-2}\bigr)
\ud\rho_t^{(2)}(y)\ud\gamma_t(z,\widetilde z) \notag\\
&\le C_{d,q}\left(\sup_{a\in\R^d}\int |y-a|^{q-2}\rho_t^{(2)}(y)\ud y\right)
\iint |z-\widetilde z|\ud\gamma_t(z,\widetilde z) \notag\\
&\le C_{d,q,p}\|\rho_t^{(2)}\|_{L^p}^{\beta}D(t).
\end{align}
Thus
\begin{equation}\label{eq:I2-bound}
I_2(t)\le C_{d,q,p}\|\rho_t^{(2)}\|_{L^p}^{\beta}D(t).
\end{equation}
Combining \eqref{eq:Dprime-split}, \eqref{eq:I1-bound}, and \eqref{eq:I2-bound}, we have for $\mathcal L^1$-a.e. $t\in[0,T]$,
\begin{equation}
D'(t)
\le C_{d,q,p}
\bigl(
\|\rho_t^{(1)}\|_{L^p}^{\beta}
+
\|\rho_t^{(2)}\|_{L^p}^{\beta}
+
\|\mu\|_{L^p}^{\beta}
\bigr)D(t).
\end{equation}
Gr\"onwall's lemma gives \eqref{eq:W1-stability-integral}.  Together with \eqref{eq:W1-le-D}, this proves the stability estimate.  If the two initial data coincide, then the right-hand side of \eqref{eq:W1-stability-integral} is zero for every $t\in[0,T]$, hence $\rho_t^{(1)}=\rho_t^{(2)}$ for all $t\in[0,T]$.
\end{proof}

Thus the finite-$p$ solution is unique; in particular, the full regularized family converges to it, completing the finite-$p$ part of \Cref{thm:wgfmmdwp}.

\begin{proof}[$L^\infty$ refinement for $d+q-2>0$ in \Cref{thm:wgfmmdwp}]
Choose $p_0>p_c$.  Since $L^1\cap L^\infty\subset L^{p_0}$, the finite-$p_0$ construction supplies the solution and its characteristic representation.  Moreover,
\begin{equation}\label{eq:endpoint-Amu-finite}
\|(-\Delta K)*\mu\|_{L^\infty}<\infty
\end{equation}
by \Cref{lem:singular-convolution-bound} with $p=\infty$ and $a=2-q$.  The density formula \eqref{eq:limit-density-formula} gives, for $\mathcal L^d$-a.e. $x$,
\begin{equation}\label{eq:endpoint-density-formula}
\rho_t(X(t,0,x))
=\rho_0(x)\exp\left(\int_0^t\Delta K*(\rho_s-\mu)(X(s,0,x))\ud s\right).
\end{equation}
Since $\Delta K\le0$ and $\rho_s\ge0$,
\begin{equation}\label{eq:endpoint-characteristic-inequality}
\Delta K*(\rho_s-\mu)\le(-\Delta K)*\mu\le \|(-\Delta K)*\mu\|_{L^\infty},
\end{equation}
so \eqref{eq:linfty-endpoint-bound} follows immediately.

For every finite $s$, the composition and approximation argument cited after \eqref{eq:limit-characteristic-density}, applied with exponent $s$, gives $\rho\in C([0,T];L^s)$.  The uniform $L^\infty$ bound and strong $L^1$ continuity imply weak-star continuity in $L^\infty$.  Finally, let $\widetilde\rho$ be any other solution in the case $p=\infty$, and retain the auxiliary exponent $p_0$.  Both densities belong to $C([0,T];L^{p_0})$.  For either solution,
\begin{align}
\|\rho_t v_t-\rho_s v_s\|_{L^{p_0}}
&\le\|v_t\|_{L^\infty}\|\rho_t-\rho_s\|_{L^{p_0}}
+\|\rho_s(v_t-v_s)\|_{L^{p_0}}.\label{eq:endpoint-flux-continuity}
\end{align}
The first term tends to zero by strong $L^{p_0}$ continuity.  Joint continuity of $v$ gives pointwise convergence in the second term, while the uniform velocity bound and $\rho_s\in L^{p_0}$ give dominated convergence.  Hence $\rho v\in C([0,T];L^{p_0})$, and the weak equation yields $\rho\in C^1([0,T];W^{-1,p_0})$; the same holds for $\widetilde\rho$.  Both solutions therefore belong to the finite-$p_0$ uniqueness class, and \Cref{prop:W1-stability} gives $\rho=\widetilde\rho$.  This completes \Cref{thm:wgfmmdwp}.
\end{proof}

We next record the energy--dissipation identity satisfied by the solution constructed above.

\begin{proposition}[Energy--dissipation identity]\label{prop:energy-dissipation}
Under the hypotheses of \Cref{thm:wgfmmdwp}, let $\rho_t$ be the corresponding solution, and set
\begin{equation}\label{eq:continuum-dissipation}
\mathscr D_q(\rho_t\mid\mu)
:=\int_{\R^d}|\nabla K*(\rho_t-\mu)|^2\ud\rho_t.
\end{equation}
Then $t\mapsto\mmd_q^2(\rho_t,\mu)$ is continuously differentiable and, for every $0\le s\le t$,
\begin{equation}\label{eq:continuum-energy-dissipation}
\mmd_q^2(\rho_t,\mu)
+\int_s^t\mathscr D_q(\rho_\tau\mid\mu)\ud\tau
=\mmd_q^2(\rho_s,\mu).
\end{equation}
\end{proposition}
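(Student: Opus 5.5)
We prove the identity by differentiating the energy along the characteristic flow, with two cases. When $d=q=1$, the identity is inherited from the Coulomb theory of \cite{ChodronDeCourcelRosenzweigCoulombDiscrepancies}, since $K=2\mathsf g$ up to the time rescaling already used. In the main range $d+q-2>0$, we establish the identity first for the regularized solutions $\rho^\vep$ of \eqref{eq:regeq} and then pass to the limit $\vep\downarrow0$.

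\emph{Regularized identity.} We set $E_\vep(t):=-\frac12\iint K_\vep(x-y)\ud(\rho_t^\vep-\mu_\vep)(x)\ud(\rho_t^\vep-\mu_\vep)(y)$, which uses the translation-invariant form. This is legitimate because $\rho^\vep_t,\mu_\vep\in\mathcal P_r$ with $r\ge1>q$, and $|K_\vep(z)|\lesssim 1+|z|^q$. We compute $\frac{\ud}{\ud t}E_\vep$ using $\rho_t^\vep=X^\vep(t,0,\cdot)_\#\rho_{0,\vep}$ and symmetry of $K_\vep$:
\[
\frac{\ud}{\ud t}E_\vep(t)=-\int\nabla K_\vep*(\rho_t^\vep-\mu_\vep)\cdot v_t^\vep\ud\rho_t^\vep=-\int|v_t^\vep|^2\ud\rho_t^\vep.
\]
Differentiating under the integral is justified because $v^\vep$ is bounded and $|\nabla K_\vep(x-y)|\lesssim 1+|x-y|^{q-1}$ is integrable against the first moments. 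Integrating in time gives the identity for $\rho^\vep$, with dissipation $\int|v^\vep_\tau|^2\ud\rho^\vep_\tau$.

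\emph{Passage to the limit.} We use \eqref{eq:rhoeps-to-rho-W1}, together with uniform $r$-moment bounds, $r\ge1>q$. We need $E_\vep(t)\to\mmd_q^2(\rho_t,\mu)$ for each $t$. Write $K_\vep=K+(K_\vep-K)$. Then $|K_\vep-K|\lesssim\vep^{q}$ uniformly, because $|z|^q$ is $q$-Hölder when $q\le1$ and globally $C^{1,q-1}$ with symmetric mollifier when $q>1$; this gives an $O(\vep^{\min(q,1)})$ error. The remaining term $\iint|x-y|^q\ud\nu_\vep\ud\nu_\vep$ converges because $|x-y|^q$ is continuous with growth $\lesssim 1+|x|^q+|y|^q$, and the $q$-moments are uniformly integrable: they are dominated by $r$-moments when $r>q$, and $r\ge1>q$ always holds. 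For the dissipation, $v^\vep\to v$ locally uniformly by \eqref{eq:veps-to-v} with a uniform $L^\infty$ bound, and $\rho^\vep_\tau\rightharpoonup\rho_\tau$ narrowly. Hence $\int|v_\tau^\vep|^2\ud\rho_\tau^\vep\to\int|v_\tau|^2\ud\rho_\tau=\mathscr D_q(\rho_\tau\mid\mu)$ for every $\tau$: split into a large ball, where the convergence is uniform, and its complement, controlled by tightness. Dominated convergence in $\tau$, with bound $C_T^2$, gives the integrated identity; \eqref{eq:limit-velocity-formula} identifies $v$ with $-\nabla K*(\rho-\mu)$.

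\emph{$C^1$ regularity.} By the identity, it suffices that $\tau\mapsto\mathscr D_q(\rho_\tau\mid\mu)$ is continuous. This follows since $v\in C([0,T]\times\R^d)$ is bounded, with $\|v_t-v_s\|_{L^\infty}\le C_T|t-s|$ by \eqref{eq:limit-v-time-lip}, and $\rho\in C([0,T];\mathcal P_1)$. Concretely, $|\int|v_t|^2\ud\rho_t-\int|v_s|^2\ud\rho_s|\le 2C_T\|v_t-v_s\|_{L^\infty}+|\int|v_s|^2\ud(\rho_t-\rho_s)|$; the last term tends to zero because $|v_s|^2$ is bounded Lipschitz, by the $W^{1,\infty}$ bound. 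The fundamental theorem of calculus then gives continuous differentiability. The case $d=q=1$ can alternatively be handled by the same argument, using bounded $v$ and the flow representation, if one prefers not to cite.

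The main obstacle is the convergence $E_\vep(t)\to\mmd_q^2(\rho_t,\mu)$ for the unbounded kernel, specifically the uniform integrability of $|x-y|^q$. This is handled by the $r$-moment bound with $r\ge1>q$ and the uniform kernel-approximation estimate.
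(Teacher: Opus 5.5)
You have a genuine gap, and it comes from your repeated use of ``$r\ge1>q$''. \Cref{thm:wgfmmdwp} allows every $0<q<2$ with only $r\ge1$. So for $1<q<2$ and $1\le r<q$, neither $\rho_t$ nor $\mu$ need lie in $\mathcal P_q(\R^d)$. An infinite $q$-th moment of $\rho_0$ also persists along the flow, since characteristics move a bounded distance on $[0,T]$.

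In that regime $\iint|x-y|^q\,\ud|\nu|\,\ud|\nu|=\infty$ in general. Your $E_\vep$, built on the translation-invariant form \eqref{def:mmdq}, is then not an absolutely convergent integral, so there is nothing to differentiate. The limit step, uniform integrability of $|x-y|^q$ ``dominated by $r$-moments'', also has nothing to stand on. Even when $r=q>1$, bounded $q$-th moments alone do not give the uniform integrability you invoke. This is exactly why the paper defines $\mmd_q$ through the centered kernel \eqref{eq:defmmd}. It uses only $|k_q(x,y)|\le C(|x|^{q-1}|y|+|y|^{q-1}|x|)$, which is integrable under first moments.

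Your scheme can be repaired by centering at the regularized level. Take $\eta$ radial, which is harmless by uniqueness, and set $\nu_t^\vep:=\rho_t^\vep-\mu_\vep$ and $\kappa_\vep(x,y):=K_\vep(x-y)-K_\vep(x)-K_\vep(-y)+K_\vep(0)$.
\begin{itemize}
\item By \eqref{eq:kernel-holder}, $|\kappa_\vep(x,y)|\le C\min\{|x|^{q-1}|y|,\,|x||y|^{q-1}\}\le C(|x||y|)^{q/2}$ uniformly in $\vep$. Hence $\frac12\iint\kappa_\vep\,\ud\nu_t^\vep\,\ud\nu_t^\vep$ is finite.
\item Its time derivative is $\int(\nabla K_\vep*\nu_t^\vep)\cdot v_t^\vep\,\ud\rho_t^\vep=-\int|v_t^\vep|^2\ud\rho_t^\vep$. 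The one-variable terms contribute only $\nabla K_\vep(x)\,\nu_t^\vep(\R^d)=0$.
\item Convergence to $\mmd_q^2(\rho_t,\mu)$ follows from $|\kappa_\vep-2k_q|\lesssim\vep^q$ together with uniformly bounded first moments, since $q/2<1$.
\end{itemize}

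Three smaller points.
\begin{itemize}
\item Your $E_\vep$ has the wrong overall sign: with $K=-|z|^q$ you want $+\frac12\iint K_\vep\,\ud\nu\,\ud\nu$. The second equality in your derivative display is also off by a sign, since $\nabla K_\vep*(\rho_t^\vep-\mu_\vep)=-v_t^\vep$. The two slips cancel for the correctly signed energy.
\item The dissipation identity for the regularized flow requires $K_\vep$ to be even. Otherwise only the symmetrized kernel appears, so you do need the radial mollifier.
\item For $d=q=1$ the paper only points to an approximation-based energy \emph{inequality} in the Coulomb reference. Use your direct flow argument there rather than the citation.
\end{itemize}

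For comparison, the paper avoids regularization altogether. It uses the globally Lipschitz normalized potential $u_t(x)=\int(K(x-y)-K(y))\,\ud(\rho_t-\mu)(y)$, which satisfies $\nabla u_t=-v_t$. By polarization, $\mmd_q^2(\rho_b,\mu)-\mmd_q^2(\rho_a,\mu)=\int\frac{u_a+u_b}{2}\,\ud(\rho_b-\rho_a)$. The right-hand side is then evaluated with the weak continuity equation. This handles every case, including $d=q=1$, at once, and requires no knowledge of the approximation scheme.
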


\begin{proof}
Recall that $k_q$ denotes the centered kernel defined in \eqref{eq:centered-positive-kernel-intro}.
The H\"older continuity of $K$ for $0<q\le1$ and the $(q-1)$-H\"older continuity of $\nabla K$ for $1<q<2$ give
\begin{equation}
|k_q(x,y)|
\le C_q
\begin{cases}
|x|^q+|y|^q,&0<q\le1,\\
|x|^{q-1}|y|+|y|^{q-1}|x|,&1<q<2.
\end{cases}
\end{equation}
Both bounds are integrable against $\ud|\rho_t-\mu|(x)\ud|\rho_t-\mu|(y)$ under the finite-first-moment hypothesis.  Define the associated normalized potential by
\begin{equation}
u_t(x):=2\int_{\R^d}k_q(x,y)\ud(\rho_t-\mu)(y)
=\int_{\R^d}\bigl(K(x-y)-K(y)\bigr)\ud(\rho_t-\mu)(y),
\end{equation}
where the second identity uses $(\rho_t-\mu)(\R^d)=0$.  The local convolution estimates identify its weak gradient as
\begin{equation}
\nabla u_t=\nabla K*(\rho_t-\mu)=-v_t.
\end{equation}
On every bounded time interval $\nabla u=-v$ is bounded and jointly continuous, so $u_t$ has a globally Lipschitz representative.  For $0\le a\le b\le T$, symmetry and polarization in the centered representation \eqref{eq:defmmd} give
\begin{equation}\label{eq:continuum-energy-polarization}
\mmd_q^2(\rho_b,\mu)-\mmd_q^2(\rho_a,\mu)=\int_{\R^d}\frac{u_b+u_a}{2}\ud(\rho_b-\rho_a)=\int_a^b\int_{\R^d}\frac{\nabla u_b+\nabla u_a}{2}\cdot v_\tau\ud\rho_\tau\ud\tau.
\end{equation}
Indeed, the weak continuity equation extends from compactly supported smooth tests to globally Lipschitz tests with at most linear growth by cutoff and mollification, using the bounded velocity and the finite-horizon first-moment bound.  The polarization and difference-quotient argument is standard for Wasserstein gradient-flow energy identities (cf.\ \cite[Chapter~11]{ambrosio2008gradient} and \cite[Proposition~2.5]{chizat2026quantitativeconvergencewassersteingradient}); compare also the approximation-based energy inequality in \cite[(2.22)]{ChodronDeCourcelRosenzweigCoulombDiscrepancies}.

Dividing \eqref{eq:continuum-energy-polarization} by $b-a$ and letting $b\to a$, the joint continuity and uniform boundedness of $\nabla u=-v$, together with the $W_1$-continuity of $\rho_t$, give
\begin{equation}
\frac{\ud}{\ud t}\mmd_q^2(\rho_t,\mu)
=\int_{\R^d}\nabla u_t\cdot v_t\ud\rho_t
=-\int_{\R^d}|\nabla K*(\rho_t-\mu)|^2\ud\rho_t.
\label{eq:continuum-energy-derivative}
\end{equation}
The right-hand side is continuous in $t$, so $t\mapsto\mmd_q^2(\rho_t,\mu)$ is continuously differentiable.  Integrating \eqref{eq:continuum-energy-derivative} proves \eqref{eq:continuum-energy-dissipation}.
\end{proof}

Before recording the higher-regularity obstruction below, we note that the same computation as in \Cref{sec:meanfield} gives an MMD stability estimate for two continuum solutions with the same target.  For different targets, an additional forcing term involving the target mismatch appears; we do not formulate that refinement here.

\begin{remark}[Coulomb higher-regularity obstruction]\label{rem:coulomb-higher-regularity-obstruction}\label{prop:1d-coulomb-sobolev-blowup}
The admissible integrability range in \Cref{thm:wgfmmdwp} should not be interpreted as giving uniform-in-time control of stronger spatial norms. At the Coulomb endpoint, smooth compactly supported radial source and target densities can generate solutions whose H\"older seminorms grow exponentially. The same construction yields unbounded growth of every supercritical Sobolev norm. This Coulomb-specific phenomenon, including the one-dimensional mechanism previously recorded here, is proved in \cite[Theorem~1.2 and Proposition~2.5]{ChodronDeCourcelRosenzweigCoulombDiscrepancies}.
\end{remark}

\section{\texorpdfstring{Particle dynamics and mean-field estimate}{Particle dynamics and mean-field estimate}}\label{sec:meanfield}
We prove \Cref{thm:particle-global-noncollision,prop:particle-saddle-configurations,thm:wgfmmdpoc} in this section.  The noncollision and asymptotic-separation arguments exploit the repulsive sign through a centered second-moment estimate for particles approaching a common collision point.  The saddle constructions combine explicit collinear equilibria with symmetry-constrained polygonal configurations.  The passage from discrete to continuum criticality in \Cref{prop:particle-critical-closure} rests on pair symmetrization and regime-dependent control of the resulting two-point integral: bounded continuity at $q=1$, moment control for $1<q<2$, and a discrete Riesz bound for $0<q<1$.  Energy dissipation and the MMD topology yield the uniform-in-time consistency result in \Cref{cor:well-prepared-particle-uniform-time}.  The proof of \Cref{cor:particle-late-time-critical-limit} obtains limits along sequences $N_k\to\infty$ and $t_k\to\infty$ by selecting exact critical configurations from the fixed-$N$ $\omega$-limit sets, establishing tightness from uniform energy and moment bounds, and then applying the closure result; \Cref{lem:particle-critical-virial} supplies the required uniform energy bound for compactly supported targets.  When $0<q<1$, \Cref{lem:sublinear-frostman-riesz,prop:particle-hessian-nonconcentration} give two sufficient criteria for the required Riesz bound.  The mean-field estimate follows the modulated-energy approach reviewed in \cite{SerfatyLN,Rosenzweig2026Commutators}: the square of the MMD distance between $\rho_t^N$ and $\rho_t$ is a coercive quantity whose growth is controlled by a transport commutator and Gr\"onwall's inequality.

For a configuration $\XN=(x_i)_{i\in[N]}$, set $\rho^N:=\frac1N\sum_{i=1}^N\delta_{x_i}$, the time-independent analogue of the empirical measure $\rho_t^N$ in \eqref{eq:intro-empirical-error}.  When $\rho^N$ and $\rho$ have finite $q$-moment, the representation \eqref{def:mmdq} gives
\begin{equation}\label{eq:mf-modulated-energy-static}
\mmd_q^2(\rho^N,\rho)
=\frac12\int_{(\R^d)^2}K(x-y)\ud(\rho^N-\rho)(x)\ud(\rho^N-\rho)(y),
\qquad K(x)=-|x|^q.
\end{equation}
We call this the modulated energy.  Compared with the Coulomb (and more generally singular Riesz) modulated energy, the energy itself does not require an excision of the diagonal: $K$ is continuous and $K(0)=0$, so the empirical self-interaction terms in \eqref{eq:mf-modulated-energy-static} vanish.  This observation does \emph{not} remove the diagonal issue from the particle ODE or from the differentiated identity, because $\nabla K(0)$ may be undefined or infinite.  The velocity convention below is therefore diagonal-free.

Accordingly, we use the following solution concept.

\begin{definition}[Collision-free particle solution]\label{def:collision-free-particles}
Fix $T>0$.  A collision-free particle solution of \eqref{eq:wgfmmdparticle} on $[0,T]$ is an absolutely continuous map
\begin{equation}
[0,T]\ni t\mapsto \XN^t=(x_i^t)_{i\in[N]}\in(\R^d)^N
\end{equation}
such that $x_i^t\neq x_j^t$ for every distinct $i,j\in[N]$ and every $t\in[0,T]$, and such that for $\mathcal L^1$-a.e. $t\in[0,T]$,
\begin{equation}\label{eq:collision-free-particle-ode}
\dot x_i^t
=-\frac1N\sum_{j\in[N]\setminus\{i\}}\nabla K(x_i^t-x_j^t)+\nabla K*\mu(x_i^t),
\qquad i\in[N].
\end{equation}
\end{definition}

We first establish unique local well-posedness away from collisions in cases~\textup{(i)}--\textup{(ii)} of \Cref{thm:particle-global-noncollision}.  The existence-only case~\textup{(iii)} is constructed separately by Peano's theorem at the beginning of the proof of that theorem.

\begin{proposition}[Particle ODE up to first collision]\label{prop:particle-ode-local}
Let $0<q<2$.  Assume either that $d+q-2>0$ and $1<p<\infty$ satisfies $p>p_c$, or that $d=q=1$ and $p=p_c=\infty$.  Let $\mu\in\mathcal P_1(\R^d)\cap L^p(\R^d)$ be a probability density.  For $N\ge2$, set
\begin{equation}
\mathcal D_N:=\{\XN=(x_i)_{i\in[N]}\in(\R^d)^N:\ x_i\neq x_j\ \text{for every distinct }i,j\in[N]\}.
\end{equation}
For every initial configuration $\XN^0\in\mathcal D_N$, there is a unique maximal solution
\begin{equation}
\XN\in C^1([0,T_*);\mathcal D_N)
\end{equation}
of \eqref{eq:collision-free-particle-ode} with $\XN(0)=\XN^0$.  If $T_*<\infty$, then
\begin{equation}
\liminf_{t\uparrow T_*}\min_{\substack{i,j\in[N]\\i\neq j}}|x_i^t-x_j^t|=0.
\end{equation}
Consequently, for every $T<T_*$, the restriction of $\XN$ to $[0,T]$ is a collision-free particle solution in the sense of \Cref{def:collision-free-particles}.
\end{proposition}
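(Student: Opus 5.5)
The plan is to apply the Picard--Lindel\"of theorem on the open set $\mathcal D_N\subset(\R^d)^N$, with the vector field
\begin{equation}
F_i(\XN):=-\frac1N\sum_{j\in[N]\setminus\{i\}}\nabla K(x_i-x_j)+\nabla K*\mu(x_i),\qquad i\in[N].
\end{equation}
This requires checking that $F$ is continuous and locally Lipschitz on $\mathcal D_N$, then identifying the maximal solution and characterizing what happens at $T_*$.

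The pair-interaction part is handled directly. On $\mathcal D_N$ the map $x\mapsto\nabla K(x)=-q|x|^{q-2}x$ is smooth away from the origin. On any compact subset of $\mathcal D_N$ the pairwise distances are bounded below, so this part is $C^\infty$ there, and in particular locally Lipschitz.

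The target field $w:=\nabla K*\mu$ is treated in the two regimes separately.

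In the case $d+q-2>0$, $p>p_c$, I split $\nabla K=\chi\nabla K+(1-\chi)\nabla K$ with a cutoff $\chi$ near the origin.
\begin{itemize}
\item The near part is convolved against $\mu\in L^p$ using $\nabla K,\nabla^{\otimes2}K\in L^{p_*}_{\rm loc}$ from \eqref{eq:kernel-local-integrability}.
\item For the far part I use the bounds of \Cref{lem:regularized-kernel-bounds} at $\vep=0$. When $0<q\le1$, \eqref{eq:reg-gradient-conv} shows $w$ is bounded. When $1<q<2$, \eqref{eq:kernel-gradient-global-holder} together with $\mu\in\mathcal P_1$ shows $w$ is finite with at most linear growth.
\item For the Lipschitz bound, \Cref{lem:singular-convolution-bound} with $a=2-q$ gives $\|\nabla^{\otimes2}K*\mu\|_{L^\infty}\lesssim\|\mu\|_{L^p}^{\beta}$, so $w$ is globally Lipschitz, as in \eqref{eq:velbd} applied to $\mu$ alone.
\item Continuity of $w$ follows by dominated convergence, as in the velocity identification of \Cref{ssec:LPwp-limit}.
\end{itemize}

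In the case $d=q=1$, $\mu\in L^\infty$, we have $w(x)=-\int\sgn(x-y)\,\mu(y)\ud y$. This satisfies $w'=-2\mu$ in the distributional sense, so $|w(x)-w(y)|\le2\|\mu\|_{L^\infty}|x-y|$ and $w$ is globally Lipschitz.

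The main technical point is exactly this target-field regularity. I will make sure the sign and growth conventions match, since $\nabla K$ grows at infinity when $q>1$; the mass-cancellation trick from \eqref{eq:velocity-holder} handles that growth.

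With $F$ locally Lipschitz on the open set $\mathcal D_N$, Picard--Lindel\"of and the standard continuation argument give a unique maximal $C^1$ solution $\XN:[0,T_*)\to\mathcal D_N$. If $T_*<\infty$, the trajectory must leave every compact subset of $\mathcal D_N$. I rule out escape to infinity using the growth bound $|F(\XN)|\le C(1+|\XN|)$, with two sources:
\begin{itemize}
\item the pair terms satisfy $|\nabla K(z)|\le C|z|^{q-1}\le C(1+|z|)$ when $q\ge1$, and are bounded away from the diagonal when $q<1$ (only the far region matters here, since close pairs concern collisions rather than blow-up);
\item the target field $w$ has at most linear growth, as established above.
\end{itemize}
Gr\"onwall's inequality then bounds $|\XN^t|$ on $[0,T_*)$. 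With the trajectory bounded, failure to stay in a compact subset of $\mathcal D_N$ forces $\liminf_{t\uparrow T_*}\min_{i\ne j}|x_i^t-x_j^t|=0$.

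To make the no-blow-up step clean for $q<1$, I argue by contradiction. Suppose the minimal separation stayed bounded below by some $\delta>0$ on $[0,T_*)$. Then $F$ grows at most linearly along the trajectory, the trajectory stays bounded, and it therefore stays in a compact subset of $\mathcal D_N$, contradicting maximality.

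Finally, for any $T<T_*$ the restriction of $\XN$ to $[0,T]$ is $C^1$, collision-free, and satisfies \eqref{eq:collision-free-particle-ode} pointwise. It is therefore a collision-free particle solution in the sense of \Cref{def:collision-free-particles}.
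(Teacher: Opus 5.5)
Your proposal is correct and follows essentially the same route as the paper. Both arguments use the same ingredients: local Lipschitz continuity of the vector field on $\mathcal D_N$ (the pair part being smooth away from collisions, and the target field globally Lipschitz via \Cref{lem:singular-convolution-bound} with $a=2-q$, or via $(K'*\mu)'=-2\mu$ when $d=q=1$); Picard--Lindel\"of with the continuation alternative; and exclusion of escape to infinity by a linear growth bound and Gr\"onwall once the separation is bounded below. The one cosmetic point is that your first, unconditional Gr\"onwall step is only valid for $q\ge1$, but you repair it correctly with the contradiction argument assuming separation bounded below, which is exactly how the paper phrases that step.
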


\begin{proof}
Define on $\mathcal D_N$
\begin{equation}\label{eq:particle-vector-field-local}
b_i(\XN):=-\frac1N\sum_{j\in[N]\setminus\{i\}}\nabla K(x_i-x_j)+\nabla K*\mu(x_i),
\qquad i\in[N].
\end{equation}
The pair-interaction part is smooth away from the collision set.  If $d=q=1$, then $(K' *\mu)'=-2\mu$ distributionally, so $K'*\mu$ is globally Lipschitz.  Otherwise, \Cref{lem:singular-convolution-bound}, applied with $a=2-q$, gives
\begin{equation}\label{eq:target-field-global-lipschitz}
\|\nabla^{\otimes 2}K*\mu\|_{L^\infty}
\le C_{d,q,p}\|\mu\|_{L^1}^{1-\frac{(2-q)p_*}{d}}\|\mu\|_{L^p}^{\frac{(2-q)p_*}{d}}<\infty.
\end{equation}
Thus, in either case, $b$ is locally Lipschitz on $\mathcal D_N$.  Picard--Lindel\"of therefore gives a unique maximal solution and the usual continuation alternative.

It remains only to exclude finite-time escape to spatial infinity while the configuration stays away from the diagonal.  If $\min_{\substack{i,j\in[N]\\i\ne j}}|x_i^t-x_j^t|\ge\eta>0$, then the pair forces have at most linear growth.  The same is true of the target field: for $q\le1$ this follows by splitting the convolution into $|x-y|<1$ and its complement, and for $q>1$ from $|x-y|^{q-1}\le C(1+|x|+|y|)$ and $\mu\in\mathcal P_1$.  Hence, with $R(t):=\max_{i\in[N]}|x_i^t|$,
\begin{equation}\label{eq:particle-spatial-growth}
|\dot x_i^t|\le C_\eta(1+R(t)),
\qquad i\in[N].
\end{equation}
Gr\"onwall bounds $R$ on every finite interval.  Thus a finite maximal endpoint can occur only by approaching the collision set, which is the asserted alternative.
\end{proof}

We now exploit repulsion to rule out the collision alternative.

\begin{proof}[Proof of \Cref{thm:particle-global-noncollision}]
\emph{Global existence and noncollision.}

We first address the local-existence issue in case~\textup{(iii)}.  Let $p_*$ be the H\"older conjugate of $p$.  Since $p>1/q$ is equivalent to $(1-q)p_*<1$, \Cref{lem:singular-convolution-bound}, applied with $d=1$ and $a=1-q$, gives $K'*\mu\in L^\infty(\R)$.  Moreover, H\"older's inequality gives, for $0<r\le1$,
\begin{equation}\label{eq:one-dimensional-target-field-uniform-integrability}
\sup_{x\in\R}\int_{|x-y|<r}|x-y|^{q-1}\mu(y)\ud y
\le C_{p,q}\|\mu\|_{L^p}r^{q-1/p}.
\end{equation}
The right-hand side tends to zero as $r\downarrow0$.  Splitting the difference $(K'*\mu)(x+h)-(K'*\mu)(x)$ into neighborhoods of its two singularities and their complement, then using \eqref{eq:one-dimensional-target-field-uniform-integrability} and dominated convergence, yields $K'*\mu\in C_b(\R)$.
Thus $K*\mu\in C^1(\R)$ with $(K*\mu)'=K'*\mu$.

The pair-interaction field is smooth on $\mathcal D_N$, so the full vector field is continuous there.  Peano's theorem \cite[Theorem~2.19]{Teschl2012ODE} gives a local $C^1$ solution from every $X_N^0\in\mathcal D_N$, and standard continuation gives a maximal-lifespan solution.  Fix an arbitrary such solution in case~\textup{(iii)}.

In cases~\textup{(i)} and \textup{(ii)}, if $p=\infty$ and $d+q-2>0$, choose any finite $p_0>p_c$.  Since a probability density in $L^\infty$ belongs to $L^{p_0}$, it suffices to prove the result with $p=p_0$.  By \Cref{prop:particle-ode-local}, there is a unique maximal collision-free solution.  Thus, in all three cases, we may write the chosen maximal-lifespan solution as

\begin{equation}
\XN\in C^1([0,T_*);\mathcal D_N)
\end{equation}
with $\XN(0)=\XN^0$.  In cases~\textup{(i)} and \textup{(ii)} this solution is unique, whereas in case~\textup{(iii)} it is arbitrary.  It suffices to rule out $T_*<\infty$.

We first record a finite-energy consequence that prevents a finite-time endpoint from occurring at spatial infinity.  Abbreviate
\begin{equation}\label{eq:particle-energy-definition}
\mathcal E_N(\XN)
:=\frac12\int_{(\R^d)^2}K(x-y)\ud(\rho^N-\mu)(x)\ud(\rho^N-\mu)(y)
=\mmd_q^2(\rho^N,\mu)\ge0,
\end{equation}
The reader may check that \eqref{eq:collision-free-particle-ode} is equivalent to
\begin{equation}
\dot x_i=-N\nabla_{x_i}\mathcal E_N(\XN),
\qquad i\in[N].
\end{equation}
Consequently,
\begin{equation}
\frac{\ud}{\ud t}\mathcal E_N(\XN^t)
=-\frac1N\sum_{i=1}^N|\dot x_i^t|^2.
\end{equation}
For every $T<T_*$,
\begin{equation}
\frac1N\int_0^T\sum_{i=1}^N|\dot x_i^t|^2\ud t
\le \mathcal E_N(\XN^0).
\end{equation}
If $T_*<\infty$, it follows that each $x_i^t$ has a finite limit as $t\uparrow T_*$.

Assume now, for contradiction, that $T_*<\infty$.  In cases~\textup{(i)} and \textup{(ii)}, \Cref{prop:particle-ode-local} gives the collision alternative.  In case~\textup{(iii)}, let $\XN^*:=\lim_{t\uparrow T_*}\XN^t$, which exists by the preceding energy estimate.  If $\XN^*\in\mathcal D_N$, Peano's theorem gives a local solution starting from $\XN^*$, and concatenating it with $\XN$ yields a $C^1$ extension beyond $T_*$ because the vector field is continuous.  This contradicts maximality.  Hence, in every case,
\begin{equation}
	\lim_{t\uparrow T_*}\min_{\substack{i,j\in[N]\\i\neq j}}|x_i^t-x_j^t|=0.
\end{equation}
Since each particle has a finite limit as $t\uparrow T_*$, there is a point $x_*\in\mathbb R^d$ that is the common limiting position of at least two particles.  Set $C:=\{i\in[N]:\lim_{t\uparrow T_*}x_i^t=x_*\}$ and $m:=|C|\ge2$.  For $t$ sufficiently close to $T_*$, the particles indexed by $C$ are separated from the remaining particles:
\begin{equation}
|x_i^t-x_k^t|\ge \eta,
\qquad i\in C,\quad k\in[N]\setminus C,
\end{equation}
for some $\eta>0$, with the convention that this condition is void if $C$ is the whole index set.

To quantify the collapse of the particles indexed by $C$, define their centered quadratic spread by
\begin{equation}
\bar x_C^t:=\frac1m\sum_{i\in C}x_i^t,
\qquad
S_C(t):=\sum_{i\in C}|x_i^t-\bar x_C^t|^2.
\end{equation}
Then $S_C(t)>0$ for $t<T_*$ and $S_C(t)\to0$ as $t\uparrow T_*$.
For $i\in C$, decompose
\begin{equation}
\dot x_i^t=I_i(t)+E_i(t),
\end{equation}
where
\begin{equation}
I_i(t):=-\frac1N\sum_{\substack{j\in C\\ j\neq i}}\nabla K(x_i^t-x_j^t)
\end{equation}
and
\begin{equation}
E_i(t):=-\frac1N\sum_{k\notin C}\nabla K(x_i^t-x_k^t)+\nabla K*\mu(x_i^t).
\end{equation}
The internal forces have zero total sum.  Since $\sum_{i\in C}(x_i^t-\bar x_C^t)=0$,
\begin{equation}
\frac{\ud}{\ud t}S_C(t)
=2\sum_{i\in C}(x_i^t-\bar x_C^t)\cdot(I_i(t)+E_i(t)).
\end{equation}
Using $-\nabla K(z)=qz|z|^{q-2}$ for $z\neq0$, pair symmetrization gives the positive internal virial identity
\begin{equation}
2\sum_{i\in C}(x_i^t-\bar x_C^t)\cdot I_i(t)
=\frac{2q}{N}\sum_{\substack{i,j\in C\\ i<j}}|x_i^t-x_j^t|^q.
\end{equation}

	It remains to bound the external contribution.  Since the particles in $C$ remain separated from those outside $C$, the outside-particle field
\begin{equation}
x\mapsto -\frac1N\sum_{k\notin C}\nabla K(x-x_k^t)
\end{equation}
is Lipschitz on a fixed neighborhood of $x_*$, uniformly for all $t$ sufficiently close to $T_*$.  After subtracting its value at $\bar x_C^t$ and using $\sum_{i\in C}(x_i^t-\bar x_C^t)=0$, its contribution is bounded by $L S_C(t)$.

In cases~\textup{(i)} and \textup{(ii)}, the proof of \Cref{prop:particle-ode-local} shows that $\nabla K*\mu$ is globally Lipschitz, so its contribution has the same bound.  In case~\textup{(iii)}, we instead use $K'*\mu\in C_b(\R)$ and Cauchy--Schwarz to obtain
\begin{equation}
\left|\sum_{i\in C}(x_i^t-\bar x_C^t)(K'*\mu)(x_i^t)\right|
\le \sqrt m\,\|K'*\mu\|_{L^\infty}S_C(t)^{1/2}.
\end{equation}
Consequently, in all three cases there are constants $L,B<\infty$, with $B=0$ in cases~\textup{(i)} and \textup{(ii)}, such that
\begin{equation}\label{eq:particle-external-spread-bound}
\left|\sum_{i\in C}(x_i^t-\bar x_C^t)\cdot E_i(t)\right|
\le L S_C(t)+B S_C(t)^{1/2}
\end{equation}
for all $t$ sufficiently close to $T_*$.  Therefore

\begin{equation}
\frac{\ud}{\ud t}S_C(t)
\ge
\frac{2q}{N}\sum_{\substack{i,j\in C\\ i<j}}|x_i^t-x_j^t|^q
-2B S_C(t)^{1/2}-2L S_C(t).
\end{equation}
The identity
\begin{equation}
\sum_{\substack{i,j\in C\\ i<j}}|x_i^t-x_j^t|^2
=mS_C(t)
\end{equation}
identifies $S_C$ with the normalized pairwise quadratic spread (cf.~\cite[proof of Theorem~2.1]{carrillo2017sharp}). Since $0<q<2$, the monotonicity of $\ell^p$ norms gives
\begin{equation}
\sum_{\substack{i,j\in C\\ i<j}}|x_i^t-x_j^t|^q
\ge
\left(\sum_{\substack{i,j\in C\\ i<j}}|x_i^t-x_j^t|^2\right)^{q/2}
=(mS_C(t))^{q/2}.
\end{equation}
Thus, for all $t$ sufficiently close to $T_*$,
\begin{equation}
\frac{\ud}{\ud t}S_C(t)
\ge \frac{2q}{N}m^{q/2}S_C(t)^{q/2}-2B S_C(t)^{1/2}-2L S_C(t).
\end{equation}
In cases~\textup{(i)} and \textup{(ii)}, one has $B=0$ and $q/2<1$; in case~\textup{(iii)}, one has $q/2<1/2$.  Thus in every case the positive term dominates as $S_C(t)\downarrow0$.  Hence there exists $\delta>0$ such that
\begin{equation}
0<S_C(t)\le\delta
\quad\Longrightarrow\quad
\frac{\ud}{\ud t}S_C(t)>0
\end{equation}
for all $t$ sufficiently close to $T_*$.  But $S_C(t)\to0$ as $t\uparrow T_*$, so $S_C(t)\le\delta$ eventually, and the preceding strict monotonicity contradicts convergence to zero while $S_C(t)>0$ for $t<T_*$.  This contradiction rules out $T_*<\infty$.  Thus the unique solution is global in cases~\textup{(i)} and \textup{(ii)}, while every maximal-lifespan solution is global in case~\textup{(iii)}; since at least one such solution exists, the global-existence and noncollision assertions follow.

\emph{Long-time behavior.}
The energy calculation above gives
\begin{equation}\label{eq:particle-infinite-time-dissipation}
\mathcal E_N(X_N^t)\downarrow\mathcal E_N^\infty\ge0,
\qquad
\int_0^\infty\sum_{i=1}^N|\dot x_i^t|^2\ud t
=
N\left(\mathcal E_N(X_N^0)-\mathcal E_N^\infty\right)<\infty.
\end{equation}
Choose $0<r<q/2$ and set
\begin{equation}
\rho_t^N:=\frac1N\sum_{i=1}^N\delta_{x_i^t}.
\end{equation}
Since $\mathcal E_N(X_N^t)=\mmd_q^2(\rho_t^N,\mu)\le\mathcal E_N(X_N^0)$, \Cref{prop:mmdtomoment} yields
\begin{equation}
\sup_{t\ge0}\mathcal M_r(\rho_t^N)
\le
\mathcal M_r(\mu)
+C_{d,q,r}\mathcal E_N(X_N^0)^{r/q}<\infty.
\end{equation}
Consequently,
\begin{equation}
\sup_{t\ge0}\max_{i\in[N]}|x_i^t|^r
\le
N\sup_{t\ge0}\mathcal M_r(\rho_t^N)<\infty,
\end{equation}
which proves the first assertion in \eqref{eq:particle-uniform-boundedness-separation}.

We next rule out asymptotic collisions.  Suppose, to the contrary, that there are $t_n\to\infty$ for which the minimum interparticle distance tends to zero.  By the preceding boundedness, after passing to a subsequence,
\begin{equation}
X_N^{t_n}\longrightarrow X_N^*\in(\R^d)^N,
\end{equation}
where at least two components of $X_N^*$ coincide.  Let $C\subset[N]$ be the set of all indices whose components converge to one such common point, let $m:=|C|\ge2$, and define
\begin{equation}
\bar x_C^t:=\frac1m\sum_{i\in C}x_i^t,
\qquad
S_C(t):=\sum_{i\in C}|x_i^t-\bar x_C^t|^2.
\end{equation}
Thus $S_C(t_n)\to0$.  Since no component of $X_N^*$ indexed by $C$ coincides with a component indexed by $[N]\setminus C$, there exist $\varepsilon,\eta>0$ such that, with
\begin{equation}
\mathcal U:=B_\varepsilon(X_N^*)\subset(\R^d)^N,
\end{equation}
every $X_N=(x_i)_{i\in[N]}\in\mathcal U$ satisfies $|x_i-x_k|\ge\eta$ for $i\in C$ and $k\in[N]\setminus C$, with this condition understood as void if $C=[N]$.

The preceding cluster-spread computation therefore gives constants $a>0$, $L<\infty$, and $B<\infty$, with $B=0$ in cases~\textup{(i)}--\textup{(ii)}, such that
\begin{equation}\label{eq:particle-asymptotic-spread}
X_N^t\in\mathcal U
\quad\Longrightarrow\quad
\frac{\ud}{\ud t}S_C(t)
\ge
aS_C(t)^{q/2}-2B S_C(t)^{1/2}-2L S_C(t).
\end{equation}
In cases~\textup{(i)}--\textup{(ii)}, $q/2<1$ and $B=0$; in case~\textup{(iii)}, $q/2<1/2$.  Hence there exist $c,\delta>0$ such that
\begin{equation}\label{eq:particle-asymptotic-spread-positive}
X_N^t\in\mathcal U,\qquad 0<S_C(t)\le\delta
\quad\Longrightarrow\quad
\frac{\ud}{\ud t}S_C(t)\ge cS_C(t)^{q/2}.
\end{equation}

On the other hand, \eqref{eq:particle-infinite-time-dissipation} and Cauchy--Schwarz imply, for every fixed $T>0$,
\begin{equation}\label{eq:particle-tail-displacement}
\sup_{0\le h\le T}|X_N^{t_n+h}-X_N^{t_n}|
\le
\sqrt T
\left(
\int_{t_n}^\infty\sum_{i=1}^N|\dot x_i^t|^2\ud t
\right)^{1/2}
\longrightarrow0.
\end{equation}
Thus, for $n$ sufficiently large, $X_N^t\in\mathcal U$ for every $t\in[t_n,t_n+T]$.  Integrating \eqref{eq:particle-asymptotic-spread-positive} until either time $t_n+T$ or the first time at which $S_C=\delta$ shows that, for some $h_n\in[0,T]$,
\begin{equation}
S_C(t_n+h_n)
\ge
\min\left\{
\delta,\,
\left(c(1-q/2)T\right)^{1/(1-q/2)}
\right\}>0.
\end{equation}
This contradicts \eqref{eq:particle-tail-displacement}, since $X_N^{t_n}\to X_N^*$ and the components indexed by $C$ coincide in $X_N^*$.  Since collision-free continuity gives a positive minimum separation on every bounded time interval, the second assertion in \eqref{eq:particle-uniform-boundedness-separation} follows.

The orbit closure is therefore a compact subset of $\mathcal D_N$.  Let $b$ denote the particle vector field in \eqref{eq:particle-vector-field-local}.  Since $b$ is continuous on $\mathcal D_N$, it is bounded and uniformly continuous on the orbit closure.  Hence $X_N:[0,\infty)\to(\R^d)^N$ is globally Lipschitz, and the map $t\mapsto\dot X_N^t=b(X_N^t)$ is uniformly continuous on $[0,\infty)$.  The nonnegative function
\begin{equation}
t\longmapsto|\dot X_N^t|^2=\sum_{i=1}^N|\dot x_i^t|^2
\end{equation}
is therefore uniformly continuous and, by \eqref{eq:particle-infinite-time-dissipation}, integrable on $[0,\infty)$.  It follows that $\dot X_N^t\to0$ as $t\to\infty$.

Now let $t_n\to\infty$.  Compactness gives, after passage to a subsequence, $X_N^{t_{n_k}}\to X_N^\infty\in\mathcal D_N$.  By continuity of $b$ and the preceding velocity decay,
\begin{equation}
b(X_N^\infty)
=
\lim_{k\to\infty}b(X_N^{t_{n_k}})
=
\lim_{k\to\infty}\dot X_N^{t_{n_k}}
=0.
\end{equation}
Since $b=-N\nabla\mathcal E_N$ on $\mathcal D_N$, every subsequential limit is a collision-free critical configuration.

Finally, for each $T\ge0$, the tail orbit $\{X_N^t:t\ge T\}$ is the continuous image of the interval $[T,\infty)$ and is therefore path-connected; hence its closure is connected.  These closures,
\begin{equation}
\overline{\{X_N^t:t\ge T\}},
\qquad T\ge0,
\end{equation}
are nonempty, compact, and nested.  By \eqref{eq:particle-omega-limit}, their intersection is $\omega(X_N)$.  The nested-intersection theorem for compact connected sets shows that $\omega(X_N)$ is nonempty, compact, and connected.  Every one of its elements is critical by the preceding argument, and continuity of $\mathcal E_N$ places it on the limiting energy level $\mathcal E_N^\infty$.  This proves \eqref{eq:particle-omega-critical-level}.  The distance conclusion \eqref{eq:particle-distance-critical-set} follows by contradiction from compactness and \eqref{eq:particle-omega-critical-level}.

\end{proof}

We next give sufficient conditions under which a narrow limit of particle empirical measures with vanishing mean-square particle vector field is a Lagrangian critical point of the continuum/mean-field energy.
\begin{proposition}[Particle-to-continuum criticality]\label{prop:particle-critical-closure}\label{prop:sublinear-particle-critical-closure}
Assume the hypotheses on $d,q,p,\mu$ of \Cref{thm:particle-global-noncollision}.  For each $N\ge2$, let
\begin{equation}
X_N=(x_i^N)_{i\in[N]}\in\mathcal D_N,
\qquad
\rho^N:=\frac1N\sum_{i=1}^N\delta_{x_i^N},
\end{equation}
and let $b_i(X_N)$ be the  particle vector field in \eqref{eq:particle-vector-field-local}.  Suppose that, for some $\rho\in\mathcal P(\R^d)$,
\begin{equation}\label{eq:sublinear-particle-closure-narrow-residual}
\rho^N\rightharpoonup\rho
\quad\text{narrowly},
\qquad
\frac1N\sum_{i=1}^N|b_i(X_N)|^2\longrightarrow0.
\end{equation}
When $0<q<1$, suppose in addition that\footnote{No additional hypothesis is imposed when $q=1$.}
\begin{equation}\label{eq:sublinear-particle-riesz-bound}
\mathfrak R_q:=\sup_{N\ge2}\frac1{N^2}
\sum_{\substack{i,j\in[N]\\i\ne j}}
\frac1{|x_i^N-x_j^N|^{1-q}}<\infty.
\end{equation}
When $1<q<2$, suppose instead that, for some $r>q-1$,
\begin{equation}\label{eq:particle-closure-superlinear-moment}
\mathfrak M_r:=\sup_{N\ge2}\int_{\R^d}|x|^r\ud\rho^N(x)<\infty.
\end{equation}
Then $\rho$ is a Lagrangian critical point of the MMD energy with target $\mu$ in the sense of \Cref{critical:def:Lcrit}.
\end{proposition}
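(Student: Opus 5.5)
We will show that the continuum velocity $w:=\nabla K*(\rho-\mu)$ vanishes $\rho$-a.e. by testing against vector fields. Fix $\varphi\in C_c^\infty(\R^d;\R^d)$. By Cauchy--Schwarz and the residual hypothesis,
\begin{equation}
\frac1N\sum_{i=1}^N\varphi(x_i^N)\cdot b_i(X_N)\longrightarrow0.
\end{equation}
We split $b_i=-\frac1N\sum_{j\ne i}\nabla K(x_i-x_j)+\nabla K*\mu(x_i)$. The target term is harmless: $\nabla K*\mu$ is continuous (bounded and globally Lipschitz in cases (i)--(ii), bounded continuous in case (iii)), so $\frac1N\sum_i\varphi(x_i)\cdot\nabla K*\mu(x_i)\to\int\varphi\cdot\nabla K*\mu\ud\rho$ by narrow convergence. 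For the pair term, we symmetrize in $(i,j)$ to obtain
\begin{equation}
\frac1{2N^2}\sum_{i\ne j}\bigl(\varphi(x_i)-\varphi(x_j)\bigr)\cdot\nabla K(x_i-x_j)
=\frac12\iint_{x\ne y}\Phi(x,y)\ud\rho^N(x)\ud\rho^N(y),
\end{equation}
where $\Phi(x,y):=(\varphi(x)-\varphi(y))\cdot\nabla K(x-y)$, extended by $0$ on the diagonal. Since $|\Phi|\le \|\nabla\varphi\|_\infty|x-y|^{q}$ near the diagonal, $\Phi$ is continuous on $\R^{2d}$ for every $0<q<2$. The aim is $\iint\Phi\ud\rho^N\otimes\rho^N\to\iint\Phi\ud\rho\otimes\rho$. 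Unsymmetrizing the limit then gives $\int\varphi\cdot(\nabla K*\rho)\ud\rho$, which is meaningful because $\rho$ will have enough regularity. We then conclude that $\int\varphi\cdot w\ud\rho=0$ for all $\varphi$, hence $w=0$ $\rho$-a.e.

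The convergence of the two-point integral is treated by regime. For $q=1$, $\Phi$ is bounded and continuous, since $|\nabla K|\le1$, and $\rho^N\otimes\rho^N\rightharpoonup\rho\otimes\rho$, so the limit is immediate. For $1<q<2$, $\Phi$ is continuous with $|\Phi(x,y)|\le C(|x|^{q-1}+|y|^{q-1})$, because $\varphi$ has compact support: one of the two points lies in $\supp\varphi$. The moment bound \eqref{eq:particle-closure-superlinear-moment} with $r>q-1$ gives uniform integrability, and convergence follows.

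The case $0<q<1$ is the main obstacle. Here $\Phi$ is bounded near the diagonal but the unsymmetrized kernel is not, and we also need $\rho$ to be regular enough for $\nabla K*\rho$ to be defined $\rho$-a.e. with the unsymmetrized identity valid. The plan is to use the Riesz bound \eqref{eq:sublinear-particle-riesz-bound}. First, $\Phi$ is continuous and bounded on compacts, and off a large ball it decays like $|x-y|^{q-1}\le1$, so the bounded-continuous argument already gives convergence of the symmetrized term. Second, lower semicontinuity of $\iint_{x\ne y}|x-y|^{q-1}\ud\rho^N\ud\rho^N$ under narrow convergence (truncate the kernel at level $M$, use continuity, and let $M\to\infty$) gives $\iint|x-y|^{q-1}\ud\rho\ud\rho\le\mathfrak R_q$. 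In particular, $\rho$ has no atoms and $|\nabla K|*\rho\in L^1(\rho)$, which justifies Fubini when the limit is unsymmetrized. The same integrability makes the $\rho$-a.e. definition of $\nabla K*(\rho-\mu)$ meaningful, consistent with \Cref{critical:def:Lcrit}. For $1\le q<2$ no regularity of $\rho$ is needed, since $\nabla K$ is locally bounded. Finally, we check that the resulting identity matches the definition of Lagrangian criticality used there, with the diagonal convention $\nabla K(0)=0$ being irrelevant because $\rho$ is nonatomic or $\nabla K$ is bounded.
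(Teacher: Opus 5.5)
Your proposal is correct and follows essentially the same route as the paper. You test against a compactly supported $\varphi$, use Cauchy--Schwarz on the residual, pair-symmetrize, and pass to the limit in the two-point integral, by bounded continuity for $0<q\le1$ or by uniform integrability from $\mathfrak M_r$ for $1<q<2$; for $0<q<1$ you obtain $\iint|x-y|^{q-1}\ud\rho\ud\rho\le\mathfrak R_q$ by truncating the kernel, and you then desymmetrize.

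Two small corrections. First, for $1<q<2$, desymmetrizing and making $g_\rho$ absolutely convergent require $\rho\in\mathcal P_{q-1}(\R^d)$; local boundedness of $\nabla K$ is not enough. You get this from lower semicontinuity of the $r$-moment under narrow convergence, as the paper does. Relatedly, $\nabla K*\mu$ is not bounded when $q>1$, though only its continuity matters. Second, when $q\ge1$ and $\rho$ has atoms, the convention $\nabla K(0)=0$ is not irrelevant: it is precisely what makes the zero diagonal value of the symmetrized kernel agree with \Cref{critical:def:Lcrit}.
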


\begin{proof}
Set $F_\mu:=\nabla K*\mu$.  When $0<q<1$, the hypotheses $p>p_c$ for $d\ge2$ and $p>1/q$ for $d=1$ allow us to apply \Cref{lem:singular-convolution-bound} with $a=1-q$, giving
\begin{equation}\label{eq:sublinear-target-absolute-force-bound}
\sup_{x\in\R^d}\int_{\R^d}|\nabla K(x-y)|\ud\mu(y)<\infty.
\end{equation}
When $d\ge2$, \eqref{eq:target-field-global-lipschitz} gives $\nabla F_\mu=\nabla^{\otimes2}K*\mu\in L^\infty(\R^d)$, so together with \eqref{eq:sublinear-target-absolute-force-bound} we obtain $F_\mu\in W^{1,\infty}(\R^d;\R^d)$.  In dimension one, when $0<q<1$, set $\alpha:=q-1/p>0$.  The singular-convolution bound gives $F_\mu\in L^\infty(\R)$, while H\"older's inequality and the homogeneity of $K'$ give
\begin{equation}\label{eq:one-dimensional-target-field-holder}
\|F_\mu(\cdot+h)-F_\mu\|_{L^\infty}
\le C_{q,p}\|\mu\|_{L^p}|h|^\alpha.
\end{equation}
Thus $F_\mu\in C_b^\alpha(\R)$.

Suppose first that $0<q<1$.  For $L>0$, define the bounded continuous function
\begin{equation}
h_L(z):=
\begin{cases}
\min\{L,|z|^{-(1-q)}\},&z\ne0,\\
L,&z=0.
\end{cases}
\end{equation}
Then
\begin{equation}
\iint_{(\R^d)^2}h_L(x-y)\ud\rho^N(x)\ud\rho^N(y)
\le \mathfrak R_q+\frac LN.
\end{equation}
Letting $N\to\infty$ first and appealing to narrow convergence of product measures, then letting $L\to\infty$ and appealing to monotone convergence, we obtain
\begin{equation}\label{eq:sublinear-limit-riesz-energy}
\iint_{(\R^d)^2}\frac{\ud\rho(x)\ud\rho(y)}{|x-y|^{1-q}}
\le \mathfrak R_q.
\end{equation}
In particular, $\rho$ has no atoms.

Fix $\varphi\in C_c^1(\R^d;\R^d)$ and set
\begin{equation}
H_\varphi(x,y):=
\begin{cases}
(\varphi(x)-\varphi(y))\cdot\nabla K(x-y),&x\ne y,\\
0,&x=y.
\end{cases}
\end{equation}
For $0<q\le1$, the function $H_\varphi$ is bounded and continuous on $(\R^d)^2$: near the diagonal it is $O(|x-y|^q)$, while away from the diagonal this follows from the compact support of $\varphi$ and the boundedness of $|\nabla K(z)|=q|z|^{q-1}$ for $|z|\ge1$.  Pair symmetrization in \eqref{eq:particle-vector-field-local} yields
\begin{equation}\label{eq:sublinear-discrete-symmetrized-criticality}
\frac1N\sum_{i=1}^N\varphi(x_i^N)\cdot b_i(X_N)
=-\frac12\iint_{(\R^d)^2}H_\varphi(x,y)\ud\rho^N(x)\ud\rho^N(y)
+\int_{\R^d}\varphi\cdot F_\mu\ud\rho^N.
\end{equation}

If $q=1$, the field $F_\mu$ is bounded and uniformly continuous, since translation continuity in $L^1$ gives
\begin{equation}
\|F_\mu(\cdot+h)-F_\mu\|_{L^\infty}
\le\|\nabla K\|_{L^\infty}\|\mu(\cdot+h)-\mu\|_{L^1}
\longrightarrow0.
\end{equation}
If $1<q<2$, then $\nabla K$ is continuous and has growth of order $q-1$, so $F_\mu$ is continuous and
\begin{equation}
|H_\varphi(x,y)|
\le C_\varphi\bigl(1+|x|^{q-1}+|y|^{q-1}\bigr).
\end{equation}
The bound $\mathfrak M_r<\infty$, with $r>q-1$, makes the right-hand side uniformly integrable with respect to $\rho^N\otimes\rho^N$.  Truncation outside large balls and narrow convergence of the product measures therefore give
\begin{equation}\label{eq:particle-superlinear-symmetrized-limit}
\iint_{(\R^d)^2}H_\varphi\ud(\rho^N\otimes\rho^N)
\longrightarrow
\iint_{(\R^d)^2}H_\varphi\ud(\rho\otimes\rho).
\end{equation}
For $0<q\le1$, the same convergence follows directly from bounded continuity of $H_\varphi$.  In every case, $F_\mu$ is continuous and locally bounded, so the compact support of $\varphi$ gives
\begin{equation}
\int_{\R^d}\varphi\cdot F_\mu\ud\rho^N
\longrightarrow
\int_{\R^d}\varphi\cdot F_\mu\ud\rho.
\end{equation}
The left-hand side of \eqref{eq:sublinear-discrete-symmetrized-criticality} tends to zero by Cauchy--Schwarz and \eqref{eq:sublinear-particle-closure-narrow-residual}.  Passing to the limit, we obtain
\begin{equation}\label{eq:sublinear-limit-symmetrized-criticality}
\frac12\iint_{(\R^d)^2}H_\varphi(x,y)\ud\rho(x)\ud\rho(y)
-\int_{\R^d}\varphi\cdot F_\mu\ud\rho=0.
\end{equation}

When $0<q<1$, \eqref{eq:sublinear-limit-riesz-energy} gives
\begin{equation}
\iint_{(\R^d)^2}|\nabla K(x-y)|\ud\rho(x)\ud\rho(y)
=q\iint_{(\R^d)^2}\frac{\ud\rho(x)\ud\rho(y)}{|x-y|^{1-q}}<\infty.
\end{equation}
The target contribution is absolutely integrable by \eqref{eq:sublinear-target-absolute-force-bound}.  When $q=1$, both contributions are absolutely integrable because $|\nabla K|\le1$.  Finally, when $1<q<2$, lower semicontinuity of $r$-moments under narrow convergence and \eqref{eq:particle-closure-superlinear-moment} give $\rho\in\mathcal P_r(\R^d)$, while $\mu\in\mathcal P_q(\R^d)$; hence
\begin{equation}
\iint_{(\R^d)^2}|\nabla K(x-y)|\ud\rho(x)\ud(\rho+\mu)(y)<\infty.
\end{equation}
Thus, in every case, the field
\begin{equation}
g_\rho(x):=\int_{\R^d}\nabla K(x-y)\ud(\rho-\mu)(y)
\end{equation}
is absolutely convergent for $\rho$-a.e. $x$ and belongs to $L^1(\rho)$.  Desymmetrizing the first term in \eqref{eq:sublinear-limit-symmetrized-criticality} shows that
\begin{equation}
\int_{\R^d}\varphi\cdot g_\rho\ud\rho=0
\qquad\text{for every }\varphi\in C_c^1(\R^d;\R^d).
\end{equation}
Hence the finite vector measure $g_\rho\rho$ vanishes, so $g_\rho=0$ $\rho$-a.e.  This is precisely the Lagrangian criticality condition in \Cref{critical:def:Lcrit}.
\end{proof}

The next two criteria concern the additional discrete Riesz bound required only when $0<q<1$. Between them, we record a virial estimate that supplies tightness for compactly supported targets throughout the full range $0<q<2$.

\begin{lemma}[An averaged Frostman criterion]\label{lem:sublinear-frostman-riesz}
Let $0<q<1$, and let $X_N=(x_i^N)_{i\in[N]}\in\mathcal D_N$.  Suppose that there are $C<\infty$ and $\alpha>1-q$ such that, for every $N\ge2$ and $0<r\le1$,
\begin{equation}\label{eq:sublinear-averaged-frostman}
\frac1{N^2}
\#\left\{(i,j)\in[N]^2:i\ne j,\ |x_i^N-x_j^N|\le r\right\}
\le Cr^\alpha.
\end{equation}
Then \eqref{eq:sublinear-particle-riesz-bound} holds.
\end{lemma}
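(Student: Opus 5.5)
We bound the discrete Riesz sum by a layer-cake (dyadic shell) decomposition over pair distances. For $N\ge2$, split the off-diagonal pairs $(i,j)$, $i\ne j$, into far pairs with $|x_i^N-x_j^N|>1$ and near pairs with $|x_i^N-x_j^N|\le1$. Each far pair contributes at most $1$ to $|x_i^N-x_j^N|^{-(1-q)}$. There are at most $N^2$ of them, so the far part of $\frac1{N^2}\sum$ is at most $1$.

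For the near pairs, introduce the dyadic shells $A_k:=\{(i,j):i\ne j,\ 2^{-k-1}<|x_i^N-x_j^N|\le2^{-k}\}$ for $k\ge0$. Because $X_N\in\mathcal D_N$, every near pair has positive distance and so lies in exactly one shell. On $A_k$ the summand is at most $2^{(k+1)(1-q)}$. Hypothesis \eqref{eq:sublinear-averaged-frostman} with $r=2^{-k}$ gives $\#A_k\le CN^2 2^{-k\alpha}$. Hence the near part is bounded by
\begin{equation*}
\frac1{N^2}\sum_{k\ge0}\#A_k\,2^{(k+1)(1-q)}\le C\,2^{1-q}\sum_{k\ge0}2^{-k(\alpha-(1-q))}=\frac{C\,2^{1-q}}{1-2^{-(\alpha-1+q)}}.
\end{equation*}
This geometric series converges exactly because $\alpha>1-q$, and that is the only place the strict inequality enters. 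Equivalently, one could write $|z|^{-(1-q)}=\int_0^\infty\indic_{\{|z|<s^{-1/(1-q)}\}}\ud s$ and integrate the counting bound, but the dyadic version avoids endpoint bookkeeping.

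Adding the far and near bounds gives a constant depending only on $C$, $\alpha$ and $q$, and in particular not on $N$. Taking the supremum over $N\ge2$ yields $\mathfrak R_q<\infty$, which is \eqref{eq:sublinear-particle-riesz-bound}. There is no genuine obstacle here. The only points needing care are that the hypothesis is used only at the scales $r=2^{-k}\le1$, that the distance-$>1$ pairs are handled by the trivial bound rather than by the Frostman condition, and that noncoincidence ensures every near pair falls in some shell.
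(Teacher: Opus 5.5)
Your proof is correct and is essentially the paper's argument. The paper uses the continuous layer-cake formula $\int_0^1 r^{-(2-q)}\,\#\{|x_i^N-x_j^N|\le r\}\,\ud r$, with the trivial bound $1$ for pairs at distance at least $1$. Your dyadic shells are just its discretization, and both give an $N$-independent constant whose finiteness comes exactly from $\alpha>1-q$.
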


\begin{proof}
The layer-cake formula and the bound $|x_i^N-x_j^N|^{-(1-q)}\le1$ when $|x_i^N-x_j^N|\ge1$ give
\begin{align}
\frac1{N^2}\sum_{\substack{i,j\in[N]\\i\ne j}}
\frac1{|x_i^N-x_j^N|^{1-q}}
&\le
1+(1-q)\int_0^1 r^{-(1-q)-1}
\frac1{N^2}\#\left\{(i,j)\in[N]^2:i\ne j,\ |x_i^N-x_j^N|\le r\right\}\ud r\notag\\
&\le1+\frac{(1-q)C}{\alpha-(1-q)}.
\end{align}
Since $\alpha>1-q$, the last integral is finite, which completes the proof.
\end{proof}

\begin{lemma}[Uniform energy bound for particle critical configurations]\label{lem:particle-critical-virial}
Assume the hypotheses on $d,q,p,\mu$ of \Cref{thm:particle-global-noncollision}, and suppose that $\mu$ is compactly supported.  Then
\begin{equation}\label{eq:compact-target-critical-energy-bound}
\sup_{\substack{N\ge2\\X_N\in\operatorname{Crit}_{\mathcal D_N}\mathcal E_N}}
\mathcal E_N(X_N)<\infty.
\end{equation}
Consequently, the empirical measures associated with any sequence of particle critical configurations are tight.
\end{lemma}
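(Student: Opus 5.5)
The plan is a virial identity: test the criticality equations $b_i(X_N)=0$ against the particle positions themselves, then compare the result with the energy. This should express $\mathcal E_N(X_N)$ as a small error term. That error is controlled either by the compact support of $\mu$ alone or, for $1<q<2$, by a sublinear power of the energy itself.

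\emph{Step 1: virial identity.} Let $X_N\in\operatorname{Crit}_{\mathcal D_N}\mathcal E_N$, so $b_i(X_N)=0$ for all $i$, and put $\rho^N:=\frac1N\sum_i\delta_{x_i^N}$. We evaluate $0=\frac1N\sum_i x_i^N\cdot b_i(X_N)$ using the pair symmetrization behind \eqref{eq:sublinear-discrete-symmetrized-criticality}, but with $\varphi(x)=x$, which is legitimate here because the sums are finite. Since $(x-y)\cdot\nabla K(x-y)=-q|x-y|^q$ for $x\ne y$, and this quantity vanishes on the diagonal, the diagonal-free convention is harmless and the pair term equals $\frac q2\iint|x-y|^q\ud\rho^N\ud\rho^N$.

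For the target term we write $x=(x-y)+y$ inside $\int x\cdot(\nabla K*\mu)(x)\ud\rho^N(x)$, which gives
\begin{equation*}
\int x\cdot(\nabla K*\mu)\ud\rho^N
=-q\iint|x-y|^q\ud\rho^N(x)\ud\mu(y)+\mathcal R_N,
\qquad
\mathcal R_N:=\iint y\cdot\nabla K(x-y)\ud\mu(y)\ud\rho^N(x).
\end{equation*}
Fubini is justified because $|\nabla K(z)|=q|z|^{q-1}$ is integrable against $\mu$ uniformly in $x$ (see Step 3). Criticality therefore gives
\begin{equation*}
\tfrac12\iint|x-y|^q\ud\rho^N\ud\rho^N=\iint|x-y|^q\ud\rho^N\ud\mu+\mathcal R_N/q .
\end{equation*}

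\emph{Step 2: energy identity.} Both $\rho^N$ and $\mu$ lie in $\mathcal P_q$, so we may expand $\mathcal E_N=\mmd_q^2(\rho^N,\mu)$ via \eqref{def:mmdq}:
\begin{equation*}
\mathcal E_N(X_N)=-\tfrac12\iint|x-y|^q\ud\rho^N\ud\rho^N+\iint|x-y|^q\ud\rho^N\ud\mu-\tfrac12\iint|x-y|^q\ud\mu\ud\mu .
\end{equation*}
Substituting the virial identity yields
\begin{equation*}
\mathcal E_N(X_N)=\frac{\mathcal R_N}{q}-\frac12\iint|x-y|^q\ud\mu\ud\mu\le\frac{|\mathcal R_N|}{q}.
\end{equation*}

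\emph{Step 3: bounding $\mathcal R_N$.} This is the main obstacle, and it splits by the value of $q$. Let $\supp\mu\subset B_{R_\mu}$, so that
\begin{equation*}
|\mathcal R_N|\le qR_\mu\iint|x-y|^{q-1}\ud\mu(y)\ud\rho^N(x).
\end{equation*}
\begin{enumerate}[label=\textup{(\alph*)}]
\item If $q=1$, the integrand is $1$ and $|\mathcal R_N|\le R_\mu$.
\item If $0<q<1$, we use $\sup_x\int|x-y|^{q-1}\ud\mu(y)<\infty$. This follows from \Cref{lem:singular-convolution-bound} with $a=1-q$: in case (i) of \Cref{thm:particle-global-noncollision} this holds because $p>p_c$ gives $(1-q)p_*<(2-q)p_*<d$; in case (iii) it holds because $p>1/q$. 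This is exactly \eqref{eq:sublinear-target-absolute-force-bound}.
\item If $1<q<2$, the integrand grows, with $|x-y|^{q-1}\le(|x|+R_\mu)^{q-1}$, so $|\mathcal R_N|\le C(1+\mathcal M_{q-1}(\rho^N))$. The key observation is that $q-1<q/2$ exactly because $q<2$. Hence \Cref{prop:mmdtomoment} with $r=q-1$ gives $\mathcal M_{q-1}(\rho^N)\le\mathcal M_{q-1}(\mu)+C\mathcal E_N^{(q-1)/q}$. Combined with Step 2 this gives $\mathcal E_N\le C(1+\mathcal E_N^{(q-1)/q})$. Since $(q-1)/q<1$, this forces $\mathcal E_N\le C'$ with $C'$ independent of $N$ and of the critical configuration.
\end{enumerate}
In every case \eqref{eq:compact-target-critical-energy-bound} follows.

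\emph{Step 4: tightness.} Fix any $0<r<q/2$. \Cref{prop:mmdtomoment} together with the uniform energy bound gives $\sup\mathcal M_r(\rho^N)\le\mathcal M_r(\mu)+C\sup\mathcal E_N^{r/q}<\infty$, the supremum taken over all $N\ge2$ and all critical configurations. Markov's inequality then yields tightness of the empirical measures.
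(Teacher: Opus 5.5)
Your proposal is correct and is essentially the paper's proof. The paper likewise tests $b_i(X_N)=0$ against $x_i^N-a$ (you take $a=0$) and splits $x-a=(x-y)+(y-a)$, which expresses $\mathcal E_N(X_N)+\frac12\iint|x-y|^q\ud\mu\ud\mu$ as the target remainder. It then bounds that remainder via \Cref{lem:singular-convolution-bound} for $0<q\le1$, and via \Cref{prop:mmdtomoment} at exponent $q-1<q/2$ plus the sublinear self-improvement for $1<q<2$, with tightness coming from a uniform $r$-moment bound for $0<r<q/2$.

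There is one harmless sign slip. Your displayed virial identity should read $\frac12\iint|x-y|^q\ud\rho^N\ud\rho^N=\iint|x-y|^q\ud\rho^N\ud\mu-\mathcal R_N/q$, and this is what actually produces your correct formula $\mathcal E_N(X_N)=\mathcal R_N/q-\frac12\iint|x-y|^q\ud\mu\ud\mu$. Since only $|\mathcal R_N|$ is used afterwards, nothing downstream changes.
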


\begin{proof}
Choose $a\in\R^d$ and $R>0$ such that $\supp\mu\subset B_R(a)$, and set
\begin{equation}
C_\mu:=\iint_{(\R^d)^2}|x-y|^q\ud\mu(x)\ud\mu(y),
\qquad
\Psi_{\mu,a}(x):=\int_{\R^d}|y-a||x-y|^{q-1}\ud\mu(y).
\end{equation}
Fix $N\ge2$ and $X_N\in\operatorname{Crit}_{\mathcal D_N}\mathcal E_N$.
Since $b_i(X_N)=0$, multiplying the particle equations by $x_i^N-a$, summing in $i$, and symmetrizing the particle-particle term give
\begin{equation}
\frac12\iint_{(\R^d)^2}|x-y|^q\ud\rho^N(x)\ud\rho^N(y)
=\iint_{\R^d\times\R^d}
(x-a)\cdot(x-y)|x-y|^{q-2}\ud\rho^N(x)\ud\mu(y).
\end{equation}
Writing $x-a=(x-y)+(y-a)$ in the right-hand side and using
\begin{equation}
\mathcal E_N(X_N)
=\iint_{\R^d\times\R^d}|x-y|^q\ud\rho^N(x)\ud\mu(y)
-\frac12\iint_{(\R^d)^2}|x-y|^q\ud\rho^N(x)\ud\rho^N(y)
-\frac12C_\mu
\end{equation}
we find that
\begin{align}
0\le\mathcal E_N(X_N)+\frac12C_\mu
&=-\iint_{\R^d\times\R^d}
(y-a)\cdot(x-y)|x-y|^{q-2}\ud\rho^N(x)\ud\mu(y)
\le\int_{\R^d}\Psi_{\mu,a}\ud\rho^N.
\label{eq:particle-critical-virial-bound}
\end{align}

If $0<q\le1$, then
\begin{equation}
\Psi_{\mu,a}(x)
\le R\int_{\R^d}\frac{\ud\mu(y)}{|x-y|^{1-q}}.
\end{equation}
The right-hand side is uniformly bounded by \Cref{lem:singular-convolution-bound}: in dimensions $d\ge2$, the stronger inequality $(2-q)p_*<d$ follows from $p>p_c$, while in dimension one the needed inequality $(1-q)p_*<1$ is equivalent to $p>1/q$.  These observations prove \eqref{eq:compact-target-critical-energy-bound} for $0<q\le1$.

Now let $1<q<2$.  Since $\supp\mu\subset B_R(a)$,
\begin{equation}
\Psi_{\mu,a}(x)\le C_{\mu,a,R,q}\bigl(1+|x|^{q-1}\bigr).
\end{equation}
Using \eqref{eq:particle-critical-virial-bound}, \Cref{prop:mmdtomoment} with exponent $q-1<q/2$, and $\mathcal E_N(X_N)=\mmd_q^2(\rho^N,\mu)$ gives
\begin{equation}
\mathcal E_N(X_N)
\le C_{\mu,a,R,q}\left(1+\mathcal E_N(X_N)^{(q-1)/q}\right).
\end{equation}
Because $(q-1)/q<1$, this proves \eqref{eq:compact-target-critical-energy-bound} also for $1<q<2$.  Finally, for any $0<r<q/2$, \Cref{prop:mmdtomoment} and \eqref{eq:particle-energy-definition} give a uniform $r$-moment bound, hence tightness.
\end{proof}

\begin{proposition}[Discrete Riesz control from a Laplacian bound]\label{prop:particle-hessian-nonconcentration}
Let $d\ge2$, $0<q<1$, $p_c<p\le\infty$, and $\mu\in\mathcal P_1(\R^d)\cap L^p(\R^d)$.  Set
\begin{equation}
U_{\mu,2-q}(x):=\int_{\R^d}|x-y|^{q-2}\ud\mu(y).
\end{equation}
Then $U_{\mu,2-q}\in L^\infty(\R^d)$.  Writing $\Delta_{X_N}:=\sum_{i=1}^N\Delta_{x_i}$ for the configuration-space Laplacian, for every $X_N=(x_i^N)_{i\in[N]}\in\mathcal D_N$ one has
\begin{equation}\label{eq:particle-hessian-trace}
\Delta_{X_N}\mathcal E_N(X_N)
=q(d+q-2)\left(
\frac1N\sum_{i=1}^NU_{\mu,2-q}(x_i^N)
-\frac1{N^2}\sum_{\substack{i,j\in[N]\\i\ne j}}
\frac1{|x_i^N-x_j^N|^{2-q}}
\right).
\end{equation}
Consequently, if a sequence $(X_N)$ satisfies
\begin{equation}\label{eq:particle-hessian-uniform-lower-bound}
\inf_{N\ge2}\Delta_{X_N}\mathcal E_N(X_N)>-\infty,
\end{equation}
then it satisfies \eqref{eq:sublinear-particle-riesz-bound}.  In particular, this conclusion holds for any sequence of local minimizers of $\mathcal E_N$ in $\mathcal D_N$.  If, in addition, $\mu$ is compactly supported, their empirical measures are tight and every narrow accumulation point is a Lagrangian critical point.
\end{proposition}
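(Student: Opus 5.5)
First, $U_{\mu,2-q}\in L^\infty$ follows directly from \Cref{lem:singular-convolution-bound} with $a=2-q$: for $d\ge2$ the condition $p>p_c$ is equivalent to $(2-q)p_*<d$, and $\|\mu\|_{L^1}=1$. Next I compute the configuration-space Laplacian on $\mathcal D_N$. I write $\mathcal E_N$ via \eqref{eq:particle-energy-definition} with the diagonal terms vanishing because $K(0)=0$:
\begin{equation*}
\mathcal E_N(X_N)=\frac1{2N^2}\sum_{i\ne j}K(x_i-x_j)-\frac1N\sum_i (K*\mu)(x_i)+\frac12\iint K\ud\mu\ud\mu .
\end{equation*}
On $\mathcal D_N$ each pair term is smooth. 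Each unordered pair is hit by both $\Delta_{x_i}$ and $\Delta_{x_j}$, so the pair sum contributes $\frac1{N^2}\sum_{i\ne j}\Delta K(x_i-x_j)$. By \Cref{lem:regularized-kernel-bounds}, $\Delta K(z)=-q(d+q-2)|z|^{q-2}$. For the target term, $K*\mu\in C^{1,1}$ by \eqref{eq:target-field-global-lipschitz}, and its distributional Laplacian is $(\Delta K)*\mu=-q(d+q-2)U_{\mu,2-q}$, since $|z|^{q-2}$ is locally integrable. To make the pointwise identity \eqref{eq:particle-hessian-trace} honest, I would check that $U_{\mu,2-q}$ is continuous, or at least interpret the formula a.e.; continuity follows from the same near/far splitting used for $K'*\mu\in C_b$ in the proof of \Cref{thm:particle-global-noncollision}, using uniform integrability of $|z|^{(q-2)p_*}$ near $0$. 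Collecting signs gives \eqref{eq:particle-hessian-trace}. This regularity point for the target term is the only mildly delicate step.

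For the Riesz bound, if $\Delta_{X_N}\mathcal E_N\ge -A$, then, since $d+q-2>0$,
\begin{equation*}
\frac1{N^2}\sum_{i\ne j}|x_i-x_j|^{q-2}\le \|U_{\mu,2-q}\|_{L^\infty}+\frac{A}{q(d+q-2)}.
\end{equation*}
Because $1-q<2-q$, we have $|z|^{q-1}\le 1+|z|^{q-2}$, and adding the at most $1$ contribution from the far pairs yields \eqref{eq:sublinear-particle-riesz-bound}. For local minimizers in the open set $\mathcal D_N$ (where $\mathcal E_N$ is $C^{1,1}$, with the Hessian existing a.e.), the Hessian is positive semidefinite, so the trace is $\ge0$. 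If a.e.-only regularity is an issue, I would use instead the sub-mean-value characterization: at a local minimum, averages of $\mathcal E_N$ over small spheres exceed the center value, which forces the distributional Laplacian near the point to be nonnegative in the averaged sense. Since the formula is continuous, this gives $\Delta_{X_N}\mathcal E_N(X_N)\ge0$.

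Finally, local minimizers are critical, so for compactly supported $\mu$, \Cref{lem:particle-critical-virial} gives tightness. Along any narrowly convergent subsequence the residual $\frac1N\sum_i|b_i|^2$ is zero, and the Riesz bound was just established. Hence \Cref{prop:particle-critical-closure} (case $0<q<1$) applies, and every accumulation point is Lagrangian critical.
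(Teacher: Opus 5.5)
Your proposal is correct and is essentially the paper's proof: the bound on $U_{\mu,2-q}$ via \Cref{lem:singular-convolution-bound}, direct differentiation of $\mathcal E_N$ on $\mathcal D_N$, the inequality $r^{q-1}\le 1+r^{q-2}$, and then \Cref{lem:particle-critical-virial} and \Cref{prop:particle-critical-closure}. The one difference is the regularity step. The paper applies your near/far splitting to the full Hessian, using $|\nabla^{\otimes2}K(z)|\lesssim|z|^{q-2}$, to get $\nabla^{\otimes2}K*\mu\in C_b(\R^d)$ and hence $\mathcal E_N\in C^2(\mathcal D_N)$; this makes the identity classical and reduces the local-minimizer step to the standard second-order condition. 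In your $C^{1,1}$ setting, the claim that the a.e.\ Hessian is positive semidefinite at the minimizer is not justified on its own, but your sub-mean-value fallback using continuity of the Laplacian does close that step.
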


\begin{proof}
The bound on $U_{\mu,2-q}$ follows from \Cref{lem:singular-convolution-bound} and $(2-q)p_*<d$.  The same near--far decomposition, together with translation continuity in $L^{p_*}$ for the near-field kernel, gives $D^2K*\mu\in C_b(\R^d)$.  Thus $\mathcal E_N\in C^2(\mathcal D_N)$.  Differentiating \eqref{eq:particle-energy-definition} on $\mathcal D_N$ and using $\Delta K(z)=-q(d+q-2)|z|^{q-2}$ gives \eqref{eq:particle-hessian-trace}.  The pair-interaction term is negative, while the target term is positive.  Since $q(d+q-2)>0$, it follows that \eqref{eq:particle-hessian-uniform-lower-bound} and the uniform bound on $U_{\mu,2-q}$ control the discrete Riesz sum with inverse-power exponent $2-q$.  The elementary inequality
\begin{equation}
\frac1{r^{1-q}}\le1+\frac1{r^{2-q}},
\qquad r>0,
\end{equation}
then gives \eqref{eq:sublinear-particle-riesz-bound}.  A local minimizer is critical and has nonnegative configuration-space Laplacian.  Under compact support of the target, \Cref{lem:particle-critical-virial} supplies tightness, and \Cref{prop:particle-critical-closure} identifies every accumulation point.
\end{proof}

The preceding closure and compactness results yield the following conditional passage from fixed-$N$ particle $\omega$-limits to continuum/mean-field Lagrangian critical points.

\begin{corollary}[Mean-field limits of particle $\omega$-limits]\label{cor:particle-late-time-critical-limit}\label{cor:sublinear-particle-late-time-critical-limit}
Assume the hypotheses on $d,q,p,\mu$ of \Cref{prop:particle-critical-closure}.  For every $N\ge2$, let $X_N^0\in\mathcal D_N$, let $X_N^t$ be any corresponding global collision-free solution, and choose
\begin{equation}
Y_N=(y_i^N)_{i\in[N]}\in\omega(X_N),
\qquad
\bar\rho^N:=\frac1N\sum_{i=1}^N\delta_{y_i^N}.
\end{equation}
Suppose that
\begin{align}
\sup_{N\ge2}\mathcal E_N(Y_N)&<\infty,
\label{eq:sublinear-particle-omega-uniform-bounds}\\
\sup_{N\ge2}\frac1{N^2}
\sum_{\substack{i,j\in[N]\\i\ne j}}
\frac1{|y_i^N-y_j^N|^{1-q}}
&<\infty
\qquad (0<q<1).
\label{eq:sublinear-particle-omega-riesz-bound}
\end{align}
Then there are a subsequence $N_k\to\infty$, times $t_k\to\infty$, and a Lagrangian critical point $\bar\rho$ of the MMD energy with target $\mu$ such that
\begin{equation}\label{eq:sublinear-particle-late-time-convergence}
\rho_{t_k}^{N_k}\rightharpoonup\bar\rho
\qquad\text{narrowly}.
\end{equation}
The energy bound \eqref{eq:sublinear-particle-omega-uniform-bounds} is automatic if $\sup_{N\ge2}\mathcal E_N(X_N^0)<\infty$ or if $\mu$ is compactly supported.
\end{corollary}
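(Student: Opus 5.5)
The plan is to reduce \Cref{cor:particle-late-time-critical-limit} to \Cref{prop:particle-critical-closure}, applied to the critical configurations $Y_N$, and then to transfer the resulting limit back to the trajectories by choosing late times. First, by \Cref{thm:particle-global-noncollision}, each $Y_N\in\omega(X_N)$ lies in $\operatorname{Crit}_{\mathcal D_N}\mathcal E_N$. Hence $b_i(Y_N)=0$ for every $i$, and the residual in \eqref{eq:sublinear-particle-closure-narrow-residual} vanishes identically.

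Next comes tightness. By \eqref{eq:sublinear-particle-omega-uniform-bounds} and \Cref{prop:mmdtomoment}, for each $0<r<q/2$ we get $\sup_N\mathcal M_r(\bar\rho^N)\le\mathcal M_r(\mu)+C\sup_N\mathcal E_N(Y_N)^{r/q}<\infty$. Prokhorov then gives a subsequence $N_k$ and a limit $\bar\rho$ with $\bar\rho^{N_k}\rightharpoonup\bar\rho$. The remaining hypotheses of \Cref{prop:particle-critical-closure} are checked by regime:
\begin{itemize}
\item for $0<q<1$, \eqref{eq:sublinear-particle-riesz-bound} along the subsequence is exactly \eqref{eq:sublinear-particle-omega-riesz-bound};
\item for $q=1$, nothing further is needed;
\item for $1<q<2$, one needs a moment of order $r>q-1$.
\end{itemize}

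The case $1<q<2$ is the main obstacle. \Cref{prop:mmdtomoment} only yields moments of order below $q/2$, and $q/2<q-1$ when $q>... $ fails only for $q<2$, so in general this is insufficient. I would first try the virial identity from the proof of \Cref{lem:particle-critical-virial}, pairing criticality with $x_i-a$ for a fixed $a$ and without assuming compact support. This gives $\mathcal E_N(Y_N)+\tfrac12C_\mu\le\int\Psi_{\mu,a}\ud\bar\rho^N$, where $\Psi_{\mu,a}(x)\le C(1+|x|^{q-1})\mathcal M_1(\mu)$-type bounds hold using $\mu\in\mathcal P_q$. To extract a moment of order $>q-1$, I would instead test criticality with $\varphi(x)=x|x|^{s}$ for small $s>0$. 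The pair-symmetrized term $(\varphi(x)-\varphi(y))\cdot(x-y)|x-y|^{q-2}$ is nonnegative and controls $\iint|x-y|^{q+s}$-type quantities from below, while the target term is bounded by $\int|x|^{s+1}|x-y|^{q-1}\ud\mu\,\ud\bar\rho^N$. Absorbing via Young and the energy bound should give a uniform moment of order $q+s>q-1$. If this absorption fails, the fallback is to note that the proof of \Cref{prop:particle-critical-closure} only needs uniform integrability of $|x|^{q-1}$ under $\bar\rho^N$, and to get it from the virial bound by the same argument.

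Finally, for the trajectories: since $Y_N\in\omega(X_N)$, for each $k$ we choose $t_k\ge k$ with $|X_{N_k}^{t_k}-Y_{N_k}|\le 1/k$. Then $W_1(\rho^{N_k}_{t_k},\bar\rho^{N_k})\le N_k^{-1/2}|X_{N_k}^{t_k}-Y_{N_k}|\to0$, so $\rho^{N_k}_{t_k}\rightharpoonup\bar\rho$, which is a Lagrangian critical point by the closure proposition. For the last sentence of the statement, the energy bound holds in two ways:
\begin{itemize}
\item if $\sup_N\mathcal E_N(X_N^0)<\infty$, then $\mathcal E_N(Y_N)=\mathcal E_N^\infty\le\mathcal E_N(X_N^0)$ by monotonicity and \eqref{eq:particle-omega-critical-level};
\item if $\mu$ is compactly supported, it follows from \Cref{lem:particle-critical-virial}, since $Y_N$ is critical.
\end{itemize}
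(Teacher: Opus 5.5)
Your overall architecture matches the paper's: criticality of $Y_N$ from \Cref{thm:particle-global-noncollision}, tightness of $\bar\rho^N$ from \eqref{eq:sublinear-particle-omega-uniform-bounds} and \Cref{prop:mmdtomoment}, application of \Cref{prop:particle-critical-closure} along a subsequence, a diagonal choice $t_k\ge k$ with $|X_{N_k}^{t_k}-Y_{N_k}|\le1/k$, and the two sufficient conditions for the energy bound. Those parts are fine; your estimate $W_1\le N^{-1/2}|X-Y|$ is a clean way to transfer the limit.

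\textbf{The gap: the case $1<q<2$.} It comes from an arithmetic slip. You claim \Cref{prop:mmdtomoment} is insufficient because it only controls moments below $q/2$. But $q-1<q/2$ is equivalent to $q<2$, so the interval $(q-1,q/2)$ is nonempty throughout $1<q<2$. Choose any $r$ in that interval. Then \Cref{prop:mmdtomoment} and the uniform energy bound give
\[
\sup_N\mathcal M_r(\bar\rho^N)\le\mathcal M_r(\mu)+C_{d,q,r}\bigl(\sup_N\mathcal E_N(Y_N)\bigr)^{r/q}<\infty ,
\]
which is exactly \eqref{eq:particle-closure-superlinear-moment}. This is the paper's one-line argument. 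The same inequality $q-1<q/2$ is already used in the proof of \Cref{lem:particle-critical-virial}, which you cite.

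\textbf{Why your substitutes do not work.} The replacements you propose for this nonexistent obstacle would not close as written.
\begin{itemize}
\item Testing criticality with $\varphi(x)=x|x|^s$ gives a symmetrized pair term bounded below by $c\iint|x-y|^{q+s}\ud\bar\rho^N\ud\bar\rho^N$. It also gives a target term bounded by $C\int|x|^{1+s}\bigl(|x|^{q-1}+\mathcal M_{q-1}(\mu)\bigr)\ud\bar\rho^N$. Both have homogeneity $q+s$, so Young's inequality cannot absorb one into the other. Concretely, a tight cluster placed far away makes the pair term negligible while the target term is of order $|x|^{q+s}$.
\item The fallback fares no better. The virial inequality $\mathcal E_N(Y_N)+\frac12C_\mu\le\int\Psi_{\mu,a}\ud\bar\rho^N$ bounds the energy from above by a $(q-1)$-moment. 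It gives no upper bound on that moment, let alone uniform integrability.
\end{itemize}
As written, the $1<q<2$ case is therefore unproved. Replacing that paragraph with the choice $r\in(q-1,q/2)$ above completes the proof.
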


\begin{proof}
By \Cref{thm:particle-asymptotic-criticality}, each $Y_N$ is a collision-free critical configuration.  The energy bound \eqref{eq:sublinear-particle-omega-uniform-bounds}, \Cref{prop:mmdtomoment}, and \eqref{eq:particle-energy-definition} imply that $(\bar\rho^N)$ is tight.  After passing to a subsequence, $\bar\rho^{N_k}\rightharpoonup\bar\rho$ narrowly.

If $1<q<2$, choose $r$ such that $q-1<r<q/2$, which is possible because $q<2$.  Another application of \Cref{prop:mmdtomoment} gives
\begin{equation}
\sup_{N\ge2}\int_{\R^d}|x|^r\ud\bar\rho^N(x)
\le\mathcal M_r(\mu)
+C_{d,q,r}\left(\sup_{N\ge2}\mathcal E_N(Y_N)\right)^{r/q}
<\infty.
\end{equation}
Thus \eqref{eq:particle-closure-superlinear-moment} holds.  When $0<q<1$, \eqref{eq:sublinear-particle-omega-riesz-bound} is exactly \eqref{eq:sublinear-particle-riesz-bound}, while no additional condition is needed when $q=1$.  Since $Y_{N_k}$ is critical, $b_i(Y_{N_k})=0$ for every $i\in[N_k]$, so the corresponding condition in \eqref{eq:sublinear-particle-closure-narrow-residual} holds automatically.  Hence \Cref{prop:particle-critical-closure} shows that $\bar\rho$ is Lagrangian critical.

Since $Y_{N_k}\in\omega(X_{N_k})$, we may choose $t_k\ge k$ so that
\begin{equation}
|X_{N_k}^{t_k}-Y_{N_k}|\le\frac1k.
\end{equation}
The associated empirical measures therefore have the same narrow limit, which proves \eqref{eq:sublinear-particle-late-time-convergence}.  If the initial energies are uniformly bounded, energy dissipation gives \eqref{eq:sublinear-particle-omega-uniform-bounds}; if $\mu$ is compactly supported, the same bound follows from \Cref{lem:particle-critical-virial}.  Thus, for compactly supported targets, no additional hypothesis is needed when $1\le q<2$, whereas for $0<q<1$ only the discrete Riesz bound \eqref{eq:sublinear-particle-omega-riesz-bound} remains.
\end{proof}

\begin{remark}[Scope of the particle criticality results]\label{rem:sublinear-particle-dimension-one}
For $1\le q<2$, \Cref{prop:particle-critical-closure,cor:particle-late-time-critical-limit} require no discrete nonconcentration hypothesis.  Their conclusion is nevertheless only Lagrangian criticality, not identification of $\bar\rho$ with $\mu$: the limit may be singular, whereas the rigidity results below generally assume absolute continuity.  The collision-free saddle configurations in \Cref{prop:particle-saddle-configurations} likewise preclude an unconditional conclusion that every deterministic particle trajectory converges to a global empirical minimizer.

The $0<q<1$ cases of \Cref{prop:particle-critical-closure,lem:particle-critical-virial,cor:particle-late-time-critical-limit}, together with \Cref{lem:sublinear-frostman-riesz,prop:particle-hessian-nonconcentration}, include $d=1$ under the hypothesis $p>1/q$.  This is consistent with the nested-interval examples in \Cref{critical:thm:nested-intervals}: their conclusion is itself Lagrangian criticality, and their target densities belong only to $L^p$ for $p<2/(1+q)<1/q$.  Those examples show that unrestricted one-dimensional rigidity fails, but they do not settle target identification under the stronger regular-target hypothesis used here.

The restriction $d\ge2$ in \Cref{prop:particle-hessian-nonconcentration} is instead intrinsic to that particular result.  In dimension one, $d+q-2=q-1<0$, so the sign in \eqref{eq:particle-hessian-trace} reverses; moreover, the target potential with kernel $|x-y|^{q-2}$ is not controlled under $p>1/q$.  Thus a lower bound on the Laplacian does not provide the needed discrete Riesz estimate in that regime.
\end{remark}

\begin{corollary}[Uniform-in-time consistency for well-prepared particle data]\label{cor:well-prepared-particle-uniform-time}
Assume the hypotheses on $d,q,p,\mu$ of \Cref{thm:particle-global-noncollision}.  For each $N\ge2$, let $X_N^t$ be any global collision-free solution with $X_N^0\in\mathcal D_N$.
If
\begin{equation}\label{eq:well-prepared-particle-data}
\mathcal E_N(X_N^0)\longrightarrow0
\qquad\text{as }N\to\infty,
\end{equation}
then
\begin{equation}\label{eq:well-prepared-uniform-mmd}
\sup_{t\ge0}\mmd_q^2(\rho_t^N,\mu)\longrightarrow0.
\end{equation}
Consequently, for every sequence $t_N\ge0$,
\begin{equation}\label{eq:well-prepared-arbitrary-time-narrow}
\rho_{t_N}^N\rightharpoonup\mu
\qquad\text{narrowly as }N\to\infty.
\end{equation}
\end{corollary}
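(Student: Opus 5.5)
The plan is to combine the particle energy--dissipation identity with the fact that $\mmd_q$ metrizes narrow convergence on sets with bounded MMD to a fixed measure.

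\emph{Step 1: the uniform MMD bound.} In the proof of \Cref{thm:particle-global-noncollision} we showed that along any global collision-free solution $t\mapsto\mathcal E_N(X_N^t)$ is nonincreasing, since $\frac{\ud}{\ud t}\mathcal E_N(X_N^t)=-\frac1N\sum_i|\dot x_i^t|^2\le0$. This holds in case~\textup{(iii)} as well, because every $C^1$ solution valued in $\mathcal D_N$ satisfies the gradient-flow identity. By \eqref{eq:particle-energy-definition}, $\mathcal E_N(X_N^t)=\mmd_q^2(\rho_t^N,\mu)$. Hence
\begin{equation*}
\sup_{t\ge0}\mmd_q^2(\rho_t^N,\mu)=\mathcal E_N(X_N^0)\longrightarrow0,
\end{equation*}
which is \eqref{eq:well-prepared-uniform-mmd}.

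\emph{Step 2: from MMD convergence to narrow convergence.} Given $t_N\ge0$, set $\nu_N:=\rho_{t_N}^N$, so that $\mmd_q(\nu_N,\mu)\to0$. I would invoke the topology result \Cref{prop:mmd-topology}, which states that on $\mathcal P_{q/2}$ convergence in $\mmd_q$ implies convergence in every transport cost of order $0<r<q/2$, and hence narrow convergence. If that proposition requires $\mu\in\mathcal P_{q/2}$ and $\nu_N\in\mathcal P_{q/2}$, both hold: $\mu\in\mathcal P_{\max\{1,q\}}$, and the $\nu_N$ are finitely supported.

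If one prefers a self-contained route, the argument is as follows. First, \Cref{prop:mmdtomoment} gives $\sup_N\mathcal M_r(\nu_N)\le\mathcal M_r(\mu)+C\,\mathcal E_N(X_N^0)^{r/q}$, which is bounded, so $(\nu_N)$ is tight. Second, by \Cref{lem:genSob}, $\int|2\pi\xi|^{-(d+q)}|\widehat\nu_N-\widehat\mu|^2\ud\xi\to0$, so along any subsequence $\widehat\nu_N\to\widehat\mu$ almost everywhere after a further extraction. Third, any narrow subsequential limit $\nu$ then satisfies $\widehat\nu=\widehat\mu$ almost everywhere, and by continuity of characteristic functions everywhere, so $\nu=\mu$. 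Tightness together with uniqueness of the limit gives convergence of the whole sequence, which is \eqref{eq:well-prepared-arbitrary-time-narrow}.

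The only point needing care is the passage in Step 2, namely making sure the cited topology result applies with uniform moments. The moment bound from \Cref{prop:mmdtomoment} handles this; otherwise the argument is routine.
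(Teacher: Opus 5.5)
Your proposal is correct and follows essentially the paper's proof: the paper bounds $\mmd_q^2(\rho_t^N,\mu)=\mathcal E_N(X_N^t)\le\mathcal E_N(X_N^0)$ by energy dissipation and then applies \Cref{prop:mmd-topology} to $\rho_{t_N}^N,\mu\in\mathcal P_{q/2}(\R^d)$. Your self-contained alternative (moment bound, tightness, a.e. Fourier convergence, identification of the limit) is just an unpacking of the proof of that proposition.
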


\begin{proof}
By \eqref{eq:particle-energy-definition} and \eqref{eq:particle-infinite-time-dissipation},
\begin{equation}
0\le \mmd_q^2(\rho_t^N,\mu)
=\mathcal E_N(X_N^t)
\le\mathcal E_N(X_N^0),
\qquad t\ge0.
\end{equation}
This proves \eqref{eq:well-prepared-uniform-mmd}.  For any sequence $t_N\ge0$, it follows that $\mmd_q(\rho_{t_N}^N,\mu)\to0$.  Since $\rho_{t_N}^N,\mu\in\mathcal P_{q/2}(\R^d)$, \Cref{prop:mmd-topology} gives
\begin{equation}
\mathsf T_s(\rho_{t_N}^N,\mu)\longrightarrow0
\qquad\text{for every }0<s<q/2,
\end{equation}
and hence \eqref{eq:well-prepared-arbitrary-time-narrow}.  In fact, the same argument by contradiction over the choice of $t_N$ shows that
\begin{equation}
\sup_{t\ge0}\mathsf T_s(\rho_t^N,\mu)\longrightarrow0
\qquad\text{for every }0<s<q/2.
\end{equation}
\end{proof}

\begin{remark}[Convergence to one critical configuration]\label{rem:particle-point-convergence}
\Cref{thm:particle-asymptotic-criticality} does not by itself imply that $X_N^t$ converges as $t\to\infty$: compactness gives subsequential limits, but these limits need not a priori coincide.  If
\begin{equation}
\operatorname{Crit}_{\mathcal D_N}\mathcal E_N
\cap\{\mathcal E_N=\mathcal E_N^\infty\}
\cap\overline{\{X_N^t:t\ge0\}}
\end{equation}
is totally disconnected (in particular, if it is finite), then the connectedness of $\omega(X_N)$ forces it to be a singleton, and $X_N^t$ converges to one collision-free critical configuration.  The same conclusion follows if $\mathcal E_N$ satisfies a Kurdyka--\L ojasiewicz inequality in a neighborhood of $\omega(X_N)$: such an inequality yields finite trajectory length.\footnote{Schematically, set $e(X):=\mathcal E_N(X)-\mathcal E_N^\infty$.  The condition asks for $\eta>0$ and an increasing $C^1$ desingularizing function $\varphi$, with $\varphi(0)=0$, such that $\left|\nabla(\varphi\circ e)(X)\right|=\varphi'(e(X))|\nabla\mathcal E_N(X)|\ge1$ for $X$ near $\omega(X_N)$ with $0<e(X)<\eta$.  Along $\dot X_N=-N\nabla\mathcal E_N(X_N)$, this implies $\int_t^\infty|\dot X_N^s|\ud s\le\varphi(e(X_N^t))$. See \cite[Theorems~1 and~2]{Kurdyka1998Gradients}.}
\end{remark}

With global collision-free particle dynamics and its fixed-$N$ asymptotics established, we next exhibit collision-free saddle equilibria before turning to the mean-field estimate.

\begin{proof}[Proof of \Cref{prop:particle-saddle-configurations}]
Write
\begin{equation}\label{eq:particle-saddle-target-potential}
W(x):=\int_{B_1}|x-y|\ud\mu(y).
\end{equation}
Up to a constant depending only on $\mu$, the particle energy is
\begin{equation}\label{eq:particle-saddle-energy-expansion}
\mathcal E_N(X_N)
=
\frac1N\sum_{i=1}^N W(x_i)
-\frac1{N^2}\sum_{1\le i<j\le N}|x_i-x_j|.
\end{equation}
The function $W$ is radial and belongs to $C^2(\R^d)$ when $d\ge2$.

We first prove part~\textup{(i)}.  Define
\begin{equation}\label{eq:particle-line-force-profile}
\phi(s):=\partial_s W(se_1)
=
\int_{B_1}\frac{s-y_1}{|se_1-y|}\ud\mu(y).
\end{equation}
This function is odd, and differentiation under the integral gives
\begin{equation}\label{eq:particle-line-force-monotonicity}
\phi'(s)
=
\int_{B_1}
\frac{|y-(y\cdot e_1)e_1|^2}{|se_1-y|^3}\ud\mu(y)>0.
\end{equation}
Moreover, $\phi(s)\to\pm1$ as $s\to\pm\infty$.  Thus $\phi:\R\to(-1,1)$ is an increasing homeomorphism.  Set
\begin{equation}\label{eq:particle-line-saddle-points}
\xi_i:=\phi^{-1}\left(\frac{2i-N-1}{N}\right),
\qquad
x_i:=\xi_i e_1,
\qquad i\in[N].
\end{equation}
The $\xi_i$ are strictly increasing, and hence the resulting configuration $X_N^{\mathrm{line}}$ belongs to $\mathcal D_N$.  Since
\begin{equation}
\sum_{j\ne i}\frac{x_i-x_j}{|x_i-x_j|}
=\sum_{j\ne i}\sgn(\xi_i-\xi_j)e_1
=(2i-N-1)e_1
=N\phi(\xi_i)e_1,
\end{equation}
differentiating \eqref{eq:particle-saddle-energy-expansion} shows that
\begin{equation}
\nabla\mathcal E_N(X_N^{\mathrm{line}})=0.
\end{equation}

Let $e_2$ be any unit vector orthogonal to $e_1$, and set
\begin{equation}
h(s):=
\begin{cases}
\phi(s)/s,&s\ne0,\\
\phi'(0),&s=0.
\end{cases}
\end{equation}
For a transverse variation $Z=(z_i e_2)_{i\in[N]}$, one has
\begin{equation}\label{eq:particle-line-transverse-hessian}
D^2\mathcal E_N(X_N^{\mathrm{line}})[Z,Z]
=
\frac1N\sum_{i=1}^N h(\xi_i)z_i^2
-\frac1{N^2}\sum_{1\le i<j\le N}
\frac{(z_i-z_j)^2}{|\xi_i-\xi_j|}.
\end{equation}
Since $\phi$ is odd, so is $\phi^{-1}$.  If $N=2M$, set $a:=\phi^{-1}(1/N)>0$.  Then $\xi_M=-a$, $\xi_{M+1}=a$, and $\phi(a)=1/N$.  Choose
\begin{equation}
z_i
:=
\begin{cases}
1,&i=M,\\
-1,&i=M+1,\\
0,&\text{otherwise}.
\end{cases}
\end{equation}
The target contribution in \eqref{eq:particle-line-transverse-hessian} cancels exactly with the interaction of the central pair.  The interactions of that pair with every remaining particle are strictly negative.  Hence
\begin{equation}
D^2\mathcal E_N(X_N^{\mathrm{line}})[Z,Z]<0.
\end{equation}
If $N=2M+1\ge5$, set $a:=\phi^{-1}(2/N)>0$.  By the oddness of $\phi^{-1}$, the central triple is $(-a,0,a)$, and $\phi(a)=2/N$.  Taking transverse coefficients $(1,0,-1)$ on this triple and zero elsewhere again gives exact cancellation between the target term and the interactions internal to the triple, while its interactions with the remaining particles are strictly negative.  This proves the existence of a negative direction for every $N\ge4$.

On the other hand, for an axial variation $H=(\eta_i e_1)_{i\in[N]}$, the ordering of the particles is unchanged for small variations, so the pair-distance terms are affine.  Therefore
\begin{equation}\label{eq:particle-line-longitudinal-hessian}
D^2\mathcal E_N(X_N^{\mathrm{line}})[H,H]
=
\frac1N\sum_{i=1}^N\phi'(\xi_i)\eta_i^2>0
\end{equation}
whenever $H\ne0$.  Thus the Hessian is indefinite.

The points in \eqref{eq:particle-line-saddle-points} are the images under $\phi^{-1}$ of the midpoint grid on $(-1,1)$.  Consequently,
\begin{equation}\label{eq:particle-line-saddle-limit}
\frac1N\sum_{i=1}^N\delta_{\xi_i e_1}
\rightharpoonup
\left(s\mapsto\phi^{-1}(s)e_1\right)_\#
\left(\frac12\indic_{(-1,1)}(s)\ud s\right).
\end{equation}
The limiting measure is nonatomic and supported on the line $\R e_1$.

We next prove part~\textup{(ii)}.  Let $\Pi:=\operatorname{span}\{e_1,e_2\}$, and let $R_\theta$ denote rotation through angle $\theta$ in $\Pi$, extended by the identity on $\Pi^\perp$.  For $m\ge3$, consider the phase-aligned regular-polygon class
\begin{equation}\label{eq:aligned-polygonal-class}
\mathscr A_m
:=
\left\{
x_{\ell,j}
=
R_{2\pi j/m}(r_\ell e_1+z_\ell):
\ 1\le\ell\le m,\quad
j\in\mathbb Z/m\mathbb Z,\quad
r_\ell\ge0,\quad z_\ell\in\Pi^\perp
\right\}.
\end{equation}
Thus each configuration consists of $m$ regular $m$-gons with a common angular phase.

The restriction of $\mathcal E_{m^2}$ to $\mathscr A_m$ attains its minimum.  Indeed, \Cref{prop:mmdtomoment}, with any $0<r<1/2$, shows that a bounded-energy sequence has uniformly bounded empirical $r$-moment.  Since $m^2$ is fixed and all particles have the same weight, every individual particle, and hence every parameter $(r_\ell,z_\ell)$, is bounded.  Existence then follows from continuity.

Every minimizing configuration in $\mathscr A_m$ is collision-free.  A collision in this class occurs only if either $r_\ell=0$ for some $\ell$, or two seeds $(r_\ell,z_\ell)$ coincide.  Write
\begin{equation}
u_j:=R_{2\pi j/m}e_1,
\qquad j=0,\ldots,m-1,
\qquad
\sum_{j=0}^{m-1}u_j=0.
\end{equation}

Suppose first that $r_\ell=0$.  For $\varepsilon>0$, define $X_\varepsilon$ by
\begin{equation}
x_{\ell',k}^\varepsilon
:=
\begin{cases}
z_\ell+\varepsilon u_k,&\ell'=\ell,\\
x_{\ell',k},&\ell'\ne\ell.
\end{cases}
\end{equation}
The change in the target term is
\begin{equation}
\frac1{m^2}\sum_{j=0}^{m-1}
\bigl(W(z_\ell+\varepsilon u_j)-W(z_\ell)\bigr)
=
\frac{\varepsilon}{m^2}\nabla W(z_\ell)\cdot
\sum_{j=0}^{m-1}u_j
+O(\varepsilon^2)
=
O(\varepsilon^2).
\end{equation}
The interactions among the particles in the perturbed orbit contribute
\begin{equation}
-\frac1{m^4}
\sum_{0\le j<k\le m-1}
\left|x_{\ell,j}^\varepsilon-x_{\ell,k}^\varepsilon\right|
=
-\frac{\varepsilon}{m^4}
\sum_{0\le j<k\le m-1}|u_j-u_k|
=:-c_m\varepsilon,
\end{equation}
where $c_m>0$.  For any unchanged particle $y\ne z_\ell$,
\begin{equation}
-\frac1{m^4}\sum_{j=0}^{m-1}
\bigl(|z_\ell+\varepsilon u_j-y|-|z_\ell-y|\bigr)
=
-\frac{\varepsilon}{m^4}
\frac{z_\ell-y}{|z_\ell-y|}
\cdot\sum_{j=0}^{m-1}u_j
+O(\varepsilon^2)
=
O(\varepsilon^2).
\end{equation}
If instead $y=z_\ell$, the corresponding change is
\begin{equation}
-\frac1{m^4}\sum_{j=0}^{m-1}
|\varepsilon u_j|
=
-\frac{\varepsilon}{m^3}\le0.
\end{equation}
Consequently,
\begin{equation}
\mathcal E_{m^2}(X_\varepsilon)
-\mathcal E_{m^2}(X_0)
\le
-c_m\varepsilon+O(\varepsilon^2)<0
\end{equation}
for all sufficiently small $\varepsilon>0$, contradicting minimality.

Suppose next that two nonzero seeds coincide, say
\begin{equation}
(r_\ell,z_\ell)=(r_{\ell'},z_{\ell'})=(r,z),
\qquad r>0.
\end{equation}
For $0<\varepsilon<r$, define $X_\varepsilon$ by
\begin{equation}
x_{\ell'',j}^\varepsilon
:=
\begin{cases}
z+(r+\varepsilon)u_j,&\ell''=\ell,\\
z+(r-\varepsilon)u_j,&\ell''=\ell',\\
x_{\ell'',j},&\ell''\notin\{\ell,\ell'\}.
\end{cases}
\end{equation}
Writing $X_\varepsilon=(x_a^\varepsilon)_{a\in[m^2]}$, decompose the configuration-dependent part of the energy as
\begin{equation}
\begin{aligned}
\mathcal E_{\mathrm{sm}}(\varepsilon)
&:=
\frac1{m^2}\sum_{a=1}^{m^2}W(x_a^\varepsilon)
-\frac1{m^4}
\sum_{\substack{1\le a<b\le m^2\\x_a^0\ne x_b^0}}
|x_a^\varepsilon-x_b^\varepsilon|,
\\
\mathcal E_{\mathrm{coll}}(\varepsilon)
&:=
-\frac1{m^4}
\sum_{\substack{1\le a<b\le m^2\\x_a^0=x_b^0}}
|x_a^\varepsilon-x_b^\varepsilon|.
\end{aligned}
\end{equation}
The first function is $C^2$ near $\varepsilon=0$.  Since $\varepsilon\mapsto-\varepsilon$ exchanges the $\ell$-th and $\ell'$-th polygonal orbits,
\begin{equation}
\mathcal E_{\mathrm{sm}}(\varepsilon)
=
\mathcal E_{\mathrm{sm}}(-\varepsilon),
\qquad
\mathcal E_{\mathrm{sm}}(\varepsilon)
-\mathcal E_{\mathrm{sm}}(0)
=
O(\varepsilon^2).
\end{equation}
Moreover, $\mathcal E_{\mathrm{coll}}(0)=0$, and the interactions between corresponding vertices give
\begin{equation}
\mathcal E_{\mathrm{coll}}(\varepsilon)
\le
-\frac1{m^4}\sum_{j=0}^{m-1}
\left|x_{\ell,j}^\varepsilon
      -x_{\ell',j}^\varepsilon\right|
=
-\frac1{m^4}\sum_{j=0}^{m-1}2\varepsilon
=
-\frac{2\varepsilon}{m^3}.
\end{equation}
Consequently,
\begin{equation}
\mathcal E_{m^2}(X_\varepsilon)
-\mathcal E_{m^2}(X_0)
=
\mathcal E_{\mathrm{sm}}(\varepsilon)
-\mathcal E_{\mathrm{sm}}(0)
+\mathcal E_{\mathrm{coll}}(\varepsilon)
\le
-\frac{2\varepsilon}{m^3}
+O(\varepsilon^2)
<0
\end{equation}
for all sufficiently small $\varepsilon>0$, again contradicting minimality.

Fix a collision-free minimizer $X\in\mathscr A_m$ and write
\begin{equation}
G_{\ell,j}:=\nabla_{x_{\ell,j}}\mathcal E_{m^2}(X).
\end{equation}
Cyclic symmetry gives
\begin{equation}
G_{\ell,j}=R_{2\pi j/m}G_{\ell,0}.
\end{equation}
Let $S$ denote reflection across
$\operatorname{span}\{e_1\}\oplus\Pi^\perp$.  Since
$Su_j=u_{-j}$ and $Sz_\ell=z_\ell$, one has
\begin{equation}
Sx_{\ell,j}
=
S(r_\ell u_j+z_\ell)
=
r_\ell u_{-j}+z_\ell
=
x_{\ell,-j}.
\end{equation}
The target measure $\mu$ is invariant under $S$, and the particle
energy is invariant under particle relabeling.  Since $X$ is
collision-free, equivariance of the gradient therefore gives
$SG_{\ell,j}=G_{\ell,-j}$.
Taking $j=0$ yields
\begin{equation}
SG_{\ell,0}=G_{\ell,0} \Longrightarrow
G_{\ell,0}
\in\operatorname{span}\{e_1\}\oplus\Pi^\perp.
\end{equation}

Stationarity with respect to $r_\ell>0$, together with cyclic symmetry,
gives
\begin{equation}
0
=
\partial_{r_\ell}\mathcal E_{m^2}(X)
=
\sum_{j=0}^{m-1}G_{\ell,j}\cdot u_j
=
mG_{\ell,0}\cdot e_1.
\end{equation}
Likewise, for every $z\in\Pi^\perp$, stationarity with respect to
$z_\ell$ gives
\begin{equation}
0
=
D_{z_\ell}\mathcal E_{m^2}(X)[z]
=
\sum_{j=0}^{m-1}G_{\ell,j}\cdot z
=
mG_{\ell,0}\cdot z.
\end{equation}
Thus
$G_{\ell,0}\in
(\operatorname{span}\{e_1\}\oplus\Pi^\perp)
\cap
(\operatorname{span}\{e_1\}\oplus\Pi^\perp)^\perp
=\{0\}$,
and cyclic symmetry yields
$G_{\ell,j}=0$ for every $\ell,j$.  The constrained minimizer is
therefore a critical point of the unrestricted particle energy.

It remains to prove that its Hessian is indefinite.  Rotate one polygon, say the $\ell$-th, through an angle $\theta$ while keeping all other polygons fixed.  The target term and the interactions within that polygon are unchanged.  Writing
$x_{\ell,j}(\theta):=z_\ell+r_\ell R_{\theta+2\pi j/m}e_1$
for the rotated vertices, the sum of their cross-distances from the
$k$-th polygon, for $k\ne\ell$, is
\begin{equation}
D_{\ell k}(\theta)
:=
\sum_{j,j'=0}^{m-1}
\left|x_{\ell,j}(\theta)-x_{k,j'}\right|
=
\sum_{j,j'=0}^{m-1}
f_{\ell k}\left(
\theta+\frac{2\pi(j-j')}{m}
\right)
=
m\sum_{a=0}^{m-1}
f_{\ell k}\left(\theta+\frac{2\pi a}{m}\right).
\end{equation}
Here
$f_{\ell k}(\alpha):=
\sqrt{A_{\ell k}-2c_{\ell k}\cos\alpha}$,
where
\begin{equation}
A_{\ell k}
=
r_\ell^2+r_k^2+|z_\ell-z_k|^2,
\qquad
c_{\ell k}=r_\ell r_k.
\end{equation}
The last equality in the preceding display follows because, for every
$a\in\mathbb Z/m\mathbb Z$, there are exactly $m$ pairs $(j,j')$ such
that $j-j'=a$.
Collision-freeness implies $A_{\ell k}>2c_{\ell k}>0$.  In the cosine expansion
\begin{equation}
f_{\ell k}(\theta)
=
\widehat f_0
+2\sum_{n\ge1}\widehat f_n\cos(n\theta),
\end{equation}
we claim that every coefficient $\widehat f_n$ with $n\ge1$ is strictly negative.  Indeed, for brevity, set $A:=A_{\ell k}$ and $c:=c_{\ell k}$.  The convergent binomial expansion
\begin{equation}
\sqrt{A-2c\cos\theta}
=
A^{1/2}\sum_{s=0}^\infty
\binom{1/2}{s}\left(-\frac{2c}{A}\right)^s\cos^s\theta
\end{equation}
has a strictly negative coefficient in front of $\cos^s\theta$ for every $s\ge1$.  Set
\begin{equation}
\beta_s
:=
\binom{1/2}{s}\left(-\frac{2c}{A}\right)^s.
\end{equation}
Thus $\beta_s<0$ for every $s\ge1$.  For fixed $n\ge1$, the coefficient of $\cos(n\theta)$ in $\cos^s\theta$ vanishes unless $s=n+2r$ for some $r\ge0$; when $s=n+2r$, that coefficient is $2^{1-n-2r}\binom{n+2r}{r}>0$.  Comparing the coefficients of $\cos(n\theta)$ in the two expansions therefore gives
\begin{equation}
2\widehat f_n
=
A^{1/2}
\sum_{r=0}^\infty
\beta_{n+2r}
2^{1-n-2r}
\binom{n+2r}{r}
<0.
\end{equation}

Next, substituting the Fourier expansion of $f_{\ell k}$ into the definition of $D_{\ell k}$ and using the root-of-unity sum identity $\sum_{a=0}^{m-1}e^{2\pi i n a/m} = m \indic_{m|n}$  gives
\begin{equation}
D_{\ell k}(\theta)
=
m^2\widehat f_0
+
2m^2\sum_{p\ge1}
\widehat f_{pm}\cos(pm\theta).
\end{equation}
Since $A>2c$, the function $f_{\ell k}$ is real analytic, so the Fourier series may be differentiated termwise to give
\begin{equation}
D_{\ell k}''(0)
=
-2m^2\sum_{p\ge1}(pm)^2\widehat f_{pm}
>0,
\end{equation}
where the positivity follows from the fact that $\widehat f_{pm}<0$ for every $p\ge1$.
Because the cross-distance sum occurs with a negative sign in \eqref{eq:particle-saddle-energy-expansion}, rotating one polygon gives
\begin{equation}
\left.\frac{\ud^2}{\ud\theta^2}
\mathcal E_{m^2}(X(\theta))\right|_{\theta=0}
=
-\frac1{m^4}\sum_{k\ne\ell}D_{\ell k}''(0)<0.
\end{equation}

For a positive direction, dilate the whole configuration about the origin.  The pair-distance part of the energy is affine under dilation, whereas
\begin{equation}\label{eq:uniform-ball-dilation-hessian}
D^2W(x)[x,x]
=
\int_{B_1}
\frac{|x|^2|y|^2-(x\cdot y)^2}{|x-y|^3}\ud\mu(y)>0
\qquad (x\ne0).
\end{equation}
Every particle has nonzero planar radius, and hence
\begin{equation}
D^2\mathcal E_{m^2}(X)[X,X]
=
\frac1{m^2}\sum_{\ell,j}
D^2W(x_{\ell,j})[x_{\ell,j},x_{\ell,j}]>0.
\end{equation}
Thus the critical configuration is a Hessian-indefinite saddle.

Finally, let $\lambda$ be the pushforward of $\mu$ under
\begin{equation}
x\longmapsto\left(|P_\Pi x|,P_{\Pi^\perp}x\right).
\end{equation}
Choose $m$ seeds in $\supp\lambda$ whose empirical measures converge narrowly to $\lambda$, and place an aligned regular $m$-gon above each seed.  The resulting competitor measures $\widetilde\rho_m$, all supported in $\overline B_1$, converge narrowly to $\mu$: the angular sums are Riemann sums on $\mathbb S^1$, while the seed measures converge to $\lambda$.  Since $|x-y|$ is bounded and continuous on $\overline B_1\times\overline B_1$,
\begin{equation}
\mmd_1^2(\widetilde\rho_m,\mu)\longrightarrow0.
\end{equation}
If $\rho_m^{\mathrm{bulk}}$ is the empirical measure of the minimizing configuration constructed above, its restricted minimality gives
\begin{equation}
0\le
\mmd_1^2(\rho_m^{\mathrm{bulk}},\mu)
\le
\mmd_1^2(\widetilde\rho_m,\mu)
\longrightarrow0.
\end{equation}
The topological implication in \Cref{prop:mmd-topology} now yields $\rho_m^{\mathrm{bulk}}\rightharpoonup\mu$, proving \eqref{eq:bulk-saddle-empirical-convergence}.

Finally, the same fixed-$N$ moment coercivity used above shows that the set of global minimizers of $\mathcal E_N$ is nonempty and compact.  Since each saddle constructed above has a negative Hessian direction, it is not a global minimizer and therefore has positive distance from this compact set.  Its constant particle trajectory consequently does not approach the set of global minimizers.
\end{proof}

\begin{remark}[The one-dimensional energy-distance case]
When $d=q=1$, the target term in \eqref{eq:particle-saddle-energy-expansion} is convex and the pair-interaction term is affine on every ordered chamber.  Hence $\mathcal E_N$ is convex on each ordered chamber, so every collision-free critical configuration minimizes on its chamber.  By permutation symmetry and continuity on the chamber closures, it is a global minimizer.  Thus the restriction $d\ge2$ in \Cref{prop:particle-saddle-configurations} is essential.
\end{remark}

We now turn to the mean-field estimate.  The first step is an exact evolution identity for the modulated MMD energy.

\begin{lemma}[Modulated-energy identity]\label{lem:mf-modulated-energy-identity}
Let $0<q<2$.  Assume either that $d+q-2>0$ and $1<p<\infty$ satisfies $p>p_c$, or that $d=q=1$ and $p=p_c=\infty$.  Let $\rho_t$ be the corresponding solution of \eqref{eq:wgfmmd} on $[0,T]$, and assume that $\sup_{0\le t\le T}\mathcal M_q(\rho_t)<\infty$ and $v\in L^\infty([0,T];W^{1,\infty})$.  Let $\XN^t$ be a collision-free particle solution on $[0,T]$ in the sense of \Cref{def:collision-free-particles}.  Define
\begin{equation}
\rho_t^N:=\frac1N\sum_{i=1}^N\delta_{x_i^t},
\qquad
\nu_t:=\rho_t^N-\rho_t.
\end{equation}
Set
\begin{equation}\label{eq:Et-particle-value}
E_t:=\nabla K*(\rho_t^N-\rho_t),
\end{equation}
where the convolution is understood in the principal-value sense.
Then $t\mapsto\mmd_q^2(\rho_t^N,\rho_t)$ is absolutely continuous on $[0,T]$, and for $\mathcal L^1$-a.e. $t\in[0,T]$,
\begin{equation}\label{eq:mf-modulated-energy-identity}
\frac{\ud}{\ud t}\mmd_q^2(\rho_t^N,\rho_t)
=-\int_{\R^d}|E_t|^2\ud\rho_t^N
+\mathcal C_t^N,
\end{equation}
where
\begin{equation}\label{eq:mf-commutator-CN}
\mathcal C_t^N
:=\frac12\iint_{(\R^d)^2\setminus\Delta}
\bigl(v_t(x)-v_t(y)\bigr)\cdot\nabla K(x-y)\ud\nu_t(x)\ud\nu_t(y),
\qquad
\Delta:=\{(x,x):x\in\R^d\}.
\end{equation}
Since $\rho_t$ is absolutely continuous, the removal of $\Delta$ only removes the empirical self-interaction terms.
\end{lemma}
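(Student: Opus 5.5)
We expand the modulated energy using the translation-invariant representation \eqref{eq:mf-modulated-energy-static}, which is available because $\rho_t^N$, $\rho_t$ and $\mu$ have finite $q$-moments. Since $K(0)=0$, we may write
\begin{equation*}
\mmd_q^2(\rho_t^N,\rho_t)=\frac1{2N^2}\sum_{i\ne j}K(x_i^t-x_j^t)-\frac1N\sum_{i}(K*\rho_t)(x_i^t)+\frac12\iint K(x-y)\ud\rho_t(x)\ud\rho_t(y).
\end{equation*}
We then differentiate each of the three terms separately.

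\emph{Particle-particle term.} On $[0,T]$ the configuration stays in a compact subset of $\mathcal D_N$, so this term is $C^1$. Its derivative is $\frac1N\sum_i\dot x_i\cdot\frac1N\sum_{j\ne i}\nabla K(x_i-x_j)$.

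\emph{Cross term.} The function $K*\rho_t$ is $C^1$ in space with gradient $-v_t-\nabla K*\mu$, which is bounded and Lipschitz. In time, the weak equation (extended to Lipschitz tests of linear growth, as in the proof of \Cref{prop:energy-dissipation}) gives $\partial_t (K*\rho_t)(x)=\int \nabla K(x-y)\cdot v_t(y)\ud\rho_t(y)$. The chain rule along the absolutely continuous path $x_i^t$ then yields
\begin{equation*}
-\frac1N\sum_i\Bigl[\nabla K*\rho_t(x_i)\cdot\dot x_i+\int\nabla K(x_i-y)\cdot v_t(y)\ud\rho_t(y)\Bigr]
\end{equation*}
for a.e.\ $t$. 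Justifying this requires joint regularity of $(t,x)\mapsto K*\rho_t(x)$. I would obtain it from the polarization/difference-quotient argument of \Cref{prop:energy-dissipation}, applied to the bilinear form with one slot empirical.

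\emph{Continuum term.} This is exactly the computation in \Cref{prop:energy-dissipation}, and it gives $\int(\nabla K*\rho_t)\cdot v_t\ud\rho_t$.

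\emph{Regrouping.} Substituting $\dot x_i=-\frac1N\sum_{j\ne i}\nabla K(x_i-x_j)+\nabla K*\mu(x_i)$, the velocity terms combine into $\int \dot x\cdot E_t\ud\rho_t^N$, where $E_t$ is the principal-value field (diagonal excluded). We write $\dot x_i=-E_t(x_i)+w_t(x_i)$, with $w_t:=-\nabla K*(\rho_t-\mu)=v_t$ evaluated at the particles. This produces $-\int|E_t|^2\ud\rho_t^N+\int v_t\cdot E_t\ud\rho_t^N$. The remaining continuum contributions are $-\int v_t\cdot\nabla K*\rho^N_t\ud\rho_t+\int v_t\cdot\nabla K*\rho_t\ud\rho_t$. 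Here I have used the oddness of $\nabla K$ to swap variables in the cross-term integral, and the sign must be checked carefully. Together these give $\int v\cdot\nabla K*\nu\,\ud\nu$ over the off-diagonal set, and symmetrizing with the oddness of $\nabla K$ produces $\mathcal C_t^N$ in \eqref{eq:mf-commutator-CN}.

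\emph{Integrability.} All integrals converge absolutely. For $(x,y)$ with at least one variable distributed by $\rho_t$, the bound $|(v(x)-v(y))\cdot\nabla K(x-y)|\le\|\nabla v\|_{L^\infty}|x-y|^q$ is integrable by the $q$-moment assumption. The particle-particle part has no diagonal. Removing $\Delta$ only deletes empirical self-terms, since $\rho_t\ll\mathcal L^d$ charges no diagonal in the cross products.

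\emph{Main obstacle.} The hardest step is the rigorous time differentiation of the cross term, since $K*\rho_t$ evaluated along moving particles needs joint regularity. I would handle it through the integral identity $\int_a^b$ rather than pointwise derivatives, using $\rho\in C([0,T];\mathcal P_1)$, the boundedness and Lipschitz continuity of $v$, and the fact that $\nabla K*\rho_t$ is continuous and bounded (or of sublinear growth) uniformly in $t$. Absolute continuity of $t\mapsto\mmd_q^2(\rho_t^N,\rho_t)$ then follows directly from this integral form.
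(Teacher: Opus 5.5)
Your proposal is essentially the paper's argument: expand $\mmd_q^2(\rho_t^N,\rho_t)$ into its three terms, differentiate by testing the weak equation against truncated and mollified versions of $K(x_i^t-\cdot)$, substitute $\dot x_i^t=v_t(x_i^t)-E_t(x_i^t)$ to obtain $-\int|E_t|^2\ud\rho_t^N+\int E_t\cdot v_t\ud\nu_t$, and then symmetrize using the oddness of $\nabla K$. Two intermediate signs are wrong: $\partial_t(K*\rho_t)(x)=-\int\nabla K(x-y)\cdot v_t(y)\ud\rho_t(y)$, and $\nabla K*\rho_t=-v_t+\nabla K*\mu$, which is unbounded when $1<q<2$; your regrouped expression nevertheless already uses the correct signs. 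Also, $K(x-\cdot)$ is not a Lipschitz test of linear growth when $q\neq1$, so extending the weak formulation has to rely, as the paper does, on local integrability of the kernel derivatives against the $L^p$ density and on the uniform $q$-moment to control the far field.
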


\begin{proof}
This is the standard modulated-energy computation; compare \cite[\S 6.2.3]{SerfatyLN}.  We record the details needed to justify it under the present hypotheses.  Write $F_N(t):=\mmd_q^2(\rho_t^N,\rho_t)$.  By the definitions of the particle and continuum velocities,
\begin{equation}\label{eq:particle-velocity-v-minus-E}
\dot x_i^t=v_t(x_i^t)-E_t(x_i^t).
\end{equation}
Collision-freeness and compactness of $[0,T]$ give a positive minimum interparticle distance, so the off-diagonal particle terms are absolutely continuous.  Moreover, $|\nabla^{\otimes 2}K(z)|\lesssim|z|^{q-2}$ and $(2-q)p_*<d$ give the required local integrability against $\rho_t$, while the finite $q$-moment controls the far field.  Testing the weak equation with smooth truncations and mollifications of $K(x_i^t-\cdot)$ and passing to the limits therefore justifies the following differentiation for $\mathcal L^1$-a.e. $t$.  Expanding the three terms of \eqref{eq:mf-modulated-energy-static}, using the oddness of $\nabla K$, and then substituting \eqref{eq:particle-velocity-v-minus-E} give
\begin{align}
F_N'(t)
&=\frac1N\sum_{i\in[N]}\dot x_i^t\cdot E_t(x_i^t)
   -\int_{\R^d} E_t(y)\cdot v_t(y)\rho_t(y)\ud y\notag\\
&=-\int_{\R^d}|E_t|^2\ud\rho_t^N+\int_{\R^d} E_t\cdot v_t\ud\nu_t.\label{eq:identity-before-symmetrization}
\end{align}
By the diagonal-free definition at the atoms, the last term is the off-diagonal double integral of $v_t(x)\cdot\nabla K(x-y)$ against $\nu_t\otimes\nu_t$.  Exchanging $x$ and $y$ and using the oddness of $\nabla K$ identifies it with \eqref{eq:mf-commutator-CN}, proving \eqref{eq:mf-modulated-energy-identity}.
\end{proof}

We next prove the commutator estimate which controls the last term in \eqref{eq:mf-modulated-energy-identity}.

At a schematic level, the Coulomb/super-Coulomb regime $d+q\le2$ is handled directly by the unlocalized commutator estimate of the first author and Serfaty.  In the remaining regime, repeated integration by parts transfers derivatives from the zero-mean inputs to the kernel until the terminal differentiated kernel has \mbox{Riesz exponent $s_m\in[d-2,d)$}. The estimate of Nguyen, the first author, and Serfaty controls this terminal term, while the lower-order terms are handled by the homogeneous Kato--Ponce and Hardy--Littlewood--Sobolev inequalities.

The only endpoint obstruction in the iterative physical-space integration-by-parts proof occurs when $q=1$ and $d$ is odd.  We avoid the gap by choosing the number of integrations by parts so that the terminal kernel is in the Coulomb/super-Coulomb range and then invoking the Calder\'on commutator argument of Nguyen, the first author, and Serfaty \cite[Proposition~3.1]{NRS2021} for the terminal term.

\begin{proposition}[Transport commutator and empirical extension]\label{prop:comm}
Let $d\ge1$ and $0<q<2$.  Set
\begin{equation}
\alpha:=\frac{d+q}{2}.
\end{equation}
There exists $C=C(d,q)>0$ such that the following holds.  If $v:\R^d\to\R^d$ satisfies
\begin{equation}
\|\nabla v\|_{L^\infty}
+\|\dm^\alpha v\|_{L^{\frac{2d}{d+q-2}}}\indic_{d+q>2}<\infty,
\end{equation}
then for all real-valued $f,g\in\Sc(\R^d)$ with zero mean,
\begin{multline}\label{eq:comm-smooth}
\left|\iint_{(\R^d)^2}(v(x)-v(y))\cdot\nabla K(x-y)f(x)g(y)\ud x\ud y\right| \\
\le C\left(\|\nabla v\|_{L^\infty}+\|\dm^\alpha v\|_{L^{\frac{2d}{d+q-2}}}\indic_{d+q>2}\right)
\|f\|_{\dot H^{-\alpha}}\|g\|_{\dot H^{-\alpha}}.
\end{multline}
Moreover, the left-hand side extends uniquely to a bounded bilinear form $B_v$ on the zero-mean homogeneous space $\dot H^{-\alpha}(\R^d)$, with the same bound.

Let $\rho\in\mathcal P_q(\R^d)$ and let $X_N=(x_i)_{i\in[N]}$ be a pairwise distinct configuration.  With $\rho^N$ denoting the associated empirical measure, set $\nu:=\rho^N-\rho$.  Then $\nu\in\dot H^{-\alpha}(\R^d)$ and
\begin{equation}\label{eq:comm-measure-extension}
B_v(\nu,\nu)
=\iint_{(\R^d)^2\setminus\Delta}(v(x)-v(y))\cdot\nabla K(x-y)\ud\nu(x)\ud\nu(y),
\end{equation}
where the diagonal value is understood to be zero.\footnote{In the intended application below, $\rho$ is absolutely continuous, so the only removed diagonal mass is the empirical self-interaction.}  Consequently,
\begin{equation}\label{eq:comm-measure-bound}
\left|\iint_{(\R^d)^2\setminus\Delta}(v(x)-v(y))\cdot\nabla K(x-y)\ud\nu(x)\ud\nu(y)\right|
\le C\left(\|\nabla v\|_{L^\infty}+\|\dm^\alpha v\|_{L^{\frac{2d}{d+q-2}}}\indic_{d+q>2}\right)
\|\nu\|_{\dot H^{-\alpha}}^2.
\end{equation}
\end{proposition}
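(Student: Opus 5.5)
The plan has three parts: rewrite the commutator bilinear form in Fourier-friendly terms, split into the two regimes $d+q\le2$ and $d+q>2$, and then extend from Schwartz functions to empirical measures by density.

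\emph{Reformulation and the regime $d+q\le 2$.} Write $\nabla K(x-y)=-q(x-y)|x-y|^{q-2}$ and note that $-|x|^q$ is, up to a positive constant and modulo its conditionally positive character, the Riesz-type kernel whose Fourier symbol is $c_{d,q}|\xi|^{-(d+q)}=c|\xi|^{-2\alpha}$ (by \eqref{eq:fourierkernel} and \Cref{lem:genSob}). The left side of \eqref{eq:comm-smooth} is then the classical transport commutator
\[
\int v\cdot\bigl(f\,\nabla \mathsf g*g + g\,\nabla\mathsf g*f\bigr)\ud x
\]
with $\mathsf g=K$ (up to constants). When $d+q\le2$ (i.e.\ $d=1$, $q\le1$), $K$ is a Coulomb/super-Coulomb Riesz kernel with exponent $s=-q\in[d-2,d)$ shifted appropriately. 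I would invoke the unlocalized commutator estimate of the first author and Serfaty (equivalently \cite[Proposition~3.1]{NRS2021}) directly, which bounds this by $C\|\nabla v\|_{L^\infty}\|f\|_{\dot H^{-\alpha}}\|g\|_{\dot H^{-\alpha}}$. The zero-mean hypothesis is needed here to make the $\dot H^{-\alpha}$ norms finite, since $\alpha\ge d/2$.

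\emph{The regime $d+q>2$.} Here I would iterate integration by parts. Write $f=\nabla\cdot F$ and $g=\nabla\cdot G$, with $F=\nabla\Delta^{-1}f$, and similarly for $G$; this is possible since $f,g$ have zero mean. Moving each derivative onto the kernel produces terms of two types. The first is a commutator with the differentiated kernel $\nabla^{\otimes 2}K$, which has Riesz exponent lowered by one, acting on $F,G$, whose homogeneous negative norms are raised by one. The second is a lower-order term in which derivatives fall on $v$. After $m$ steps, with $m$ chosen so that the terminal kernel exponent $s_m\in[d-2,d)$, I would apply \cite[Proposition~3.1]{NRS2021} to the terminal term. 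This gives $\|\nabla v\|_{L^\infty}$ times the $\dot H^{-\alpha+m}$ norms of the $m$-fold primitives, and these equal $\|f\|_{\dot H^{-\alpha}}$ and $\|g\|_{\dot H^{-\alpha}}$. When $q=1$ and $d$ is odd, I would choose $m$ so that $s_m$ lands in the Coulomb/super-Coulomb range rather than at the forbidden endpoint. The lower-order terms have the form $\int (\partial^{\beta}v)\,\cdot\,(\text{Riesz potential of }F)\,G$. I would bound these by the fractional Leibniz (Kato--Ponce) inequality together with Hardy--Littlewood--Sobolev, interpolating between $\|\nabla v\|_{L^\infty}$ and $\||\nabla|^\alpha v\|_{L^{2d/(d+q-2)}}$. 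The exponent matches by scaling: $\tfrac12+\tfrac12+\tfrac{d+q-2}{2d}$ reflects one $L^{2d/(d+q-2)}$ factor against two $L^2$-type factors with the HLS gain.

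\emph{Extension to empirical measures.} First, by density of zero-mean Schwartz functions in $\dot H^{-\alpha}$, the form extends to a bounded bilinear $B_v$. For $\nu=\rho^N-\rho$, \Cref{lem:genSob} together with finiteness of $\mmd_q(\rho^N,\rho)$ gives $\nu\in\dot H^{-\alpha}$. To prove \eqref{eq:comm-measure-extension}, mollify and set $\nu_\vep=\nu*\eta_\vep$; after a harmless zero-mean correction for tails, this is Schwartz-approximable and converges to $\nu$ in $\dot H^{-\alpha}$, since the Fourier multiplier $\widehat\eta(\vep\xi)\to1$ boundedly. It then remains to show that the physical-space double integrals converge to the diagonal-free integral. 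The integrand $(v(x)-v(y))\cdot\nabla K(x-y)$ is bounded by $\|\nabla v\|_{L^\infty}|x-y|^q$, so it is continuous, vanishes on the diagonal, and grows like $|x-y|^q$. The self-interaction of each smeared atom therefore contributes $O(\vep^q)\to0$, and the remaining terms converge by the continuity of the integrand and the $q$-moment bounds.

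The main obstacle will be the $d+q>2$ case: keeping track of the many lower-order terms and verifying that every one closes with exactly the norm $\||\nabla|^\alpha v\|_{L^{2d/(d+q-2)}}$.
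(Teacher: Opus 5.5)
Your proposal is correct and follows the paper's proof step for step. In particular, you match it on:
\begin{itemize}
\item the direct first-order commutator estimate with $s=-q$ when $d+q\le 2$;
\item iterated paired integrations by parts down to a terminal kernel of Riesz exponent $s_m=2m-q\in[d-2,d)$, handled by \cite[Proposition~3.1]{NRS2021} and landing at the Coulomb exponent $d-2$ when $d$ is odd and $q=1$;
\item Kato--Ponce plus Hardy--Littlewood--Sobolev for the lower-order terms;
\item a cutoff--mollification argument for the empirical extension, based on $|(v(x)-v(y))\cdot\nabla K(x-y)|\lesssim\|\nabla v\|_{L^\infty}|x-y|^q$.
\end{itemize}
A few minor points of comparison:
\begin{itemize}
\item For the case $d+q\le 2$, where $s=-q<0$, the paper cites \cite[Theorem~4.1]{RosenzweigSerfaty2026SharpCommutators}, not \cite{NRS2021}; your ``equivalently'' is not justified.
\item Only first derivatives of $v$ appear in the lower-order terms, so Kato--Ponce gives $\|\nabla v\|_{L^\infty}$ and $\|\dm^\alpha v\|_{L^{2d/(d+q-2)}}$ directly, with no interpolation needed.
\item The integrations by parts are justified by first reducing, via density, to $\widehat f$ and $\widehat g$ vanishing near the origin. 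This makes all the primitives Schwartz; zero mean alone does not.
\end{itemize}
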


\begin{proof}
We first prove \eqref{eq:comm-smooth} for smooth $v$; the extension to general $v$ satisfying the displayed norm assumptions follows at the end by mollification.  By density in $\dot H^{-\alpha}$, we may assume that $\widehat f$ and $\widehat g$ vanish in a neighborhood of the origin.  If $d+q\le2$, then necessarily $d=1$ and $0<q\le1$.  Setting $s=-q\in[d-2,d)$ and accounting for the normalization $K(x)=q s^{-1}|x|^{-s}$, the estimate and bounded bilinear extension follow from the unlocalized first-order commutator estimate \cite[Theorem~4.1]{RosenzweigSerfaty2026SharpCommutators}.  We may therefore assume below that $d+q>2$.

For $j\ge1$, write
\begin{equation}
f=\partial_{i_1}\cdots\partial_{i_j} f_j^{i_1\cdots i_j},\qquad
g=\partial_{\ell_1}\cdots\partial_{\ell_j} g_j^{\ell_1\cdots \ell_j},
\end{equation}
where
\begin{equation}
f_j^{i_1\cdots i_j}:=(-1)^j\partial_{i_1}\cdots\partial_{i_j}(-\Delta)^{-j}f,\qquad
g_j^{\ell_1\cdots\ell_j}:=(-1)^j\partial_{\ell_1}\cdots\partial_{\ell_j}(-\Delta)^{-j}g.
\end{equation}
The Fourier-support assumption ensures that these tensor fields are Schwartz, and
\begin{equation}\label{eq:tensor-sobolev-shift}
\|f_j\|_{\dot H^{j-\alpha}}\le C_{d,j}\|f\|_{\dot H^{-\alpha}},\qquad
\|g_j\|_{\dot H^{j-\alpha}}\le C_{d,j}\|g\|_{\dot H^{-\alpha}}.
\end{equation}
Choose the integer $m=m(d,q)$ by
\begin{equation}
 m=\begin{cases}
 d/2, & d\ \text{even},\\[2mm]
 (d-1)/2, & d\ \text{odd and }0<q\le1,\\[2mm]
 (d+1)/2, & d\ \text{odd and }1<q<2.
 \end{cases}
\end{equation}
Then
\begin{equation}\label{eq:sm-choice}
s_m:=2m-q\in[d-2,d),\qquad m<\alpha.
\end{equation}
\begin{samepage}
For $1\le j\le m$, define, with summation over repeated indices,
\begin{align}
T_j^g
&:=\int_{\R^d}\partial_{i_j}v\cdot\nabla\bigl(\partial_{i_1}\cdots\partial_{i_{j-1}}K*g\bigr)
\, f_j^{i_1\cdots i_j}\ud x,\notag\\
T_j^f
&:=\int_{\R^d}\partial_{\ell_j}v\cdot\nabla\bigl(\partial_{\ell_1}\cdots\partial_{\ell_{j-1}}K*f\bigr)
\, g_j^{\ell_1\cdots \ell_j}\ud x.
\label{eq:comm-lower-order-terms}
\end{align}
\end{samepage}
For $1\le j\le m$, let $I_j=(i_1,\ldots,i_j)$ and $L_j=(\ell_1,\ldots,\ell_j)$, set
$K_{I_j,L_j}:=\partial_{i_1}\cdots\partial_{i_j}\partial_{\ell_1}\cdots\partial_{\ell_j}K$, and define
\begin{equation}\label{eq:comm-ibp-remainder-definition}
R_j:=\iint_{(\R^d)^2}(v(x)-v(y))\cdot\nabla K_{I_j,L_j}(x-y)
 f_j^{I_j}(x)g_j^{L_j}(y)\ud x\ud y.
\end{equation}
We adopt the conventions $I_0=L_0=\emptyset$, $f_0:=f$, $g_0:=g$, and $R_0:=B_v(f,g)$.  A paired integration by parts in $x$ and $y$ gives, for every $1\le j\le m$,
\begin{equation}\label{eq:comm-ibp-recurrence}
R_{j-1}=-\bigl(T_j^f+T_j^g\bigr)-R_j.
\end{equation}
Indeed, the $x$ integration by parts, using $f_{j-1}^{I_{j-1}}=\partial_{i_j}f_j^{I_j}$, produces $-T_j^g$; the subsequent $y$ integration by parts, using $g_{j-1}^{L_{j-1}}=\partial_{\ell_j}g_j^{L_j}$, produces $-T_j^f-R_j$.  Iterating \eqref{eq:comm-ibp-recurrence} yields
\begin{equation}\label{eq:comm-ibp-decomposition}
B_v(f,g)=\sum_{j=1}^m(-1)^j\bigl(T_j^f+T_j^g\bigr)+(-1)^mR_m.
\end{equation}
For $j=m$, abbreviate $I:=I_m$, $L:=L_m$, and $K_{I,L}:=K_{I_m,L_m}$.
Rigorously speaking, identity \eqref{eq:comm-ibp-decomposition} is first obtained with a truncation $|x-y|>\varepsilon$; the boundary terms vanish as $\varepsilon\downarrow0$ because the differentiated kernels have the homogeneities recorded below and the tensor fields are Schwartz.  This is the same integration-by-parts step as in the proof of \cite[Proposition~3.1]{NRS2021}.

We estimate the remainder $R_m$ first.  The kernel $K_{I,L}$ is even and homogeneous of degree $q-2m=-s_m$, and its derivatives satisfy
\begin{equation}
|\nabla^{\otimes k}K_{I,L}(x)|\le C_{d,q,k}|x|^{-s_m-k},\qquad x\neq0.
\end{equation}
Its Fourier transform is a smooth angular multiplier times $|\xi|^{s_m-d}$.  Hence $K_{I,L}$ satisfies the hypotheses of \cite[Proposition~3.1]{NRS2021} with Riesz exponent $s_m$.  If $s_m=d-2$, which occurs only when $d$ is odd and $q=1$, the additional Calder\'on--Zygmund hypothesis in that proposition is satisfied by the standard Calder\'on kernel associated with constant-coefficient derivatives of the Coulomb kernel.  Applying \cite[Proposition~3.1]{NRS2021} and using $s_m\ge d-2$, so that no sub-Coulomb auxiliary Sobolev norm of $v$ appears, gives
\begin{align}
|R_m|
&\le C_{d,q}\|\nabla v\|_{L^\infty}
\|f_m\|_{\dot H^{\frac{s_m-d}{2}}}
\|g_m\|_{\dot H^{\frac{s_m-d}{2}}} \notag\\
&\le C_{d,q}\|\nabla v\|_{L^\infty}
\|f\|_{\dot H^{-\alpha}}
\|g\|_{\dot H^{-\alpha}},\label{eq:terminal-comm-bound}
\end{align}
where the last line follows from $(s_m-d)/2=m-\alpha$ and \eqref{eq:tensor-sobolev-shift}.

It remains to estimate $T_j^f,T_j^g$.  We treat $T_j^g$; the other term is identical.  Since $j\le m<\alpha$, duality and Cauchy--Schwarz give
\begin{align}
|T_j^g|
&\le
\|\dm^{j-\alpha}(\partial_{i_j}v\, f_j^{i_1\cdots i_j})\|_{L^2}
\|\dm^{\alpha-j}\nabla(\partial_{i_1}\cdots\partial_{i_{j-1}}K*g)\|_{L^2} \notag\\
&\le C_{d,q}\|\dm^{j-\alpha}(\partial_{i_j}v\, f_j^{i_1\cdots i_j})\|_{L^2}
\|g\|_{\dot H^{-\alpha}}.\label{eq:Tjg-first-bound}
\end{align}
For the first factor, by duality,
\begin{align}
\|\dm^{j-\alpha}(\partial_{i_j}v\, f_j)\|_{L^2}
\le \sup_{\|h\|_{L^2}\le1}
\left|\left\langle
\dm^{\alpha-j}(\partial_{i_j}v\,\dm^{j-\alpha}h),
\dm^{j-\alpha}f_j
\right\rangle\right|.
\end{align}
The homogeneous Kato--Ponce inequality \cite[Corollary~5.2]{li_kato-ponce_2019} with
\begin{equation}
(p_2,q_2)=(\infty,2),\qquad
(p_1,q_1)=\left(\frac{2d}{d+q-2j},\frac{2d}{2j-q}\right)
\end{equation}
combined with the Hardy--Littlewood--Sobolev inequality \cite[Theorem~4.3]{LiebLoss} and boundedness of Riesz transforms \cite[Chapter~III, \S1]{Stein1970singular} yields
\begin{equation}\label{eq:product-factor-bound}
\|\dm^{j-\alpha}(\partial_{i_j}v\, f_j)\|_{L^2}
\le C_{d,q}\left(\|\nabla v\|_{L^\infty}+\|\dm^\alpha v\|_{L^{\frac{2d}{d+q-2}}}\right)
\|f\|_{\dot H^{-\alpha}}.
\end{equation}
Here is the exponent bookkeeping.  The choice \eqref{eq:sm-choice} gives
\begin{equation}
0<\alpha-j<\frac d2,
\qquad
\frac1{q_1}=\frac12-\frac{\alpha-j}{d}=\frac{2j-q}{2d},
\end{equation}
so the Hardy--Littlewood--Sobolev inequality gives
\begin{equation}
\|\dm^{j-\alpha}h\|_{L^{q_1}}\le C\|h\|_{L^2}.
\end{equation}
Moreover,
\begin{equation}
\dm^{\alpha-j}(\partial_{i_j}v)=\mathcal R_{i_j}\dm^{\alpha-j+1}v,
\qquad
\frac1{p_1}=\frac{d+q-2}{2d}-\frac{j-1}{d},
\end{equation}
where $\mathcal R_{i_j}$ denotes the $i_j$-th Riesz transform.  Its $L^{p_1}$-boundedness and Sobolev embedding therefore give
\begin{equation}
\|\dm^{\alpha-j}(\partial_{i_j}v)\|_{L^{p_1}}
\le C\|\dm^\alpha v\|_{L^{\frac{2d}{d+q-2}}}.
\end{equation}
The second Kato--Ponce product term is bounded by
\begin{equation}
\|\partial_{i_j}v\|_{L^\infty}
\|\dm^{\alpha-j}\dm^{j-\alpha}h\|_{L^2}
\le \|\nabla v\|_{L^\infty}\|h\|_{L^2}.
\end{equation}
Finally, \eqref{eq:tensor-sobolev-shift} controls $\|\dm^{j-\alpha}f_j\|_{L^2}$ by $\|f\|_{\dot H^{-\alpha}}$.  This proves \eqref{eq:product-factor-bound}.  Combining \eqref{eq:Tjg-first-bound} and \eqref{eq:product-factor-bound}, and summing over the finitely many tensor indices and over $j$, gives
\begin{equation}
\sum_{j=1}^m(|T_j^f|+|T_j^g|)
\le C_{d,q}\left(\|\nabla v\|_{L^\infty}+\|\dm^\alpha v\|_{L^{\frac{2d}{d+q-2}}}\right)
\|f\|_{\dot H^{-\alpha}}\|g\|_{\dot H^{-\alpha}}.
\end{equation}
Together with \eqref{eq:terminal-comm-bound}, this proves \eqref{eq:comm-smooth} for smooth $v$.

For a general vector field satisfying the displayed assumptions, let $v_\varepsilon=\eta_\varepsilon*v$.  Then $v_\varepsilon\to v$ locally uniformly,
\begin{equation}
\|\nabla v_\varepsilon\|_{L^\infty}\le\|\nabla v\|_{L^\infty},\qquad
\|\dm^\alpha v_\varepsilon\|_{L^{\frac{2d}{d+q-2}}}\indic_{d+q>2}
\le\|\dm^\alpha v\|_{L^{\frac{2d}{d+q-2}}}\indic_{d+q>2}.
\end{equation}
Passing to the limit in the smooth estimate gives \eqref{eq:comm-smooth} for $v$ and the bounded extension $B_v$.

We finally identify the extension on empirical signed measures.  Since $\nu(\R^d)=0$ and $\nu$ has finite $q$-moment, the bounds
\begin{equation}\label{eq:empirical-Hminus-membership}
|\widehat\nu(\xi)|\le C_\beta|\xi|^\beta\int(1+|x|^\beta)\ud|\nu|(x)
\quad\left(\frac q2<\beta\le\min\{1,q\}\right),
\qquad
|\widehat\nu(\xi)|\le|\nu|(\R^d),
\end{equation}
near zero and at infinity show that $\nu\in\dot H^{-\alpha}$.  Let $\theta_R\in C_c^\infty(\R^d)$ be equal to one on $B_R$, let $\zeta$ be a fixed compactly supported probability density, and set
\begin{equation}\label{eq:empirical-approximation}
\nu_R:=\theta_R\nu-\nu(\theta_R)\zeta\ud x,
\qquad
\nu_{R,\varepsilon}:=\eta_\varepsilon*\nu_R.
\end{equation}
Then $\nu_R$ is a compactly supported zero-mass signed measure, while $\nu_{R,\varepsilon}\in C_c^\infty(\R^d)$ has zero mass.  The feature-map representation from \Cref{lem:genSob} gives
\begin{align}
\|\nu-\nu_R\|_{\dot H^{-\alpha}}
&\le C_{d,q}\int_{|x|>R}|x|^{q/2}\ud|\nu|(x)
+|\nu(\theta_R)|\left\|\int_{\R^d}\Phi_x\zeta(x)\ud x\right\|_H,
\label{eq:empirical-cutoff-tail-estimate}
\end{align}
where $|\nu(\theta_R)|=|\nu(1-\theta_R)|\to0$ because $\nu(\R^d)=0$.  The right-hand side tends to zero by the finite $q$-moment.  Standard mollification gives the first convergence below, and the same moment tail gives the stated weighted weak convergence:
\begin{equation}\label{eq:empirical-approximation-convergence}
\nu_{R,\varepsilon}\to\nu_R\quad(\varepsilon\downarrow0),
\qquad
\nu_R\to\nu\quad(R\uparrow\infty)
\end{equation}
in $\dot H^{-\alpha}$ and against continuous functions of growth at most $1+|x|^q$.

The kernel $b_v(x,y):=(v(x)-v(y))\cdot\nabla K(x-y)$, with $b_v(x,x):=0$, is continuous and satisfies
\begin{equation}\label{eq:commutator-kernel-growth}
|b_v(x,y)|\le C\|\nabla v\|_{L^\infty}|x-y|^q
\le C\|\nabla v\|_{L^\infty}(1+|x|^q+|y|^q).
\end{equation}
Thus the boundedness of $B_v$ and the two convergences in \eqref{eq:empirical-approximation-convergence} identify $B_v(\nu,\nu)$ with the weighted weak limit of the kernel integrals.  The empirical self-interactions vanish in the mollification limit because $|b_v(x,y)|\lesssim|x-y|^q$ near the diagonal.  This proves \eqref{eq:comm-measure-extension}; \eqref{eq:comm-measure-bound} follows from the norm bound for $B_v$.
\end{proof}

\begin{proof}[Proof of \Cref{thm:wgfmmdpoc}]
In the case $d+q>2$, if $p=\infty$, choose a finite $p_0>p_c$.  The $p=\infty$ continuum solution coincides, by uniqueness, with the finite-$p_0$ solution, and the target satisfies the corresponding finite-$p_0$ hypotheses.  We may therefore argue below with $p=p_0$.  By \Cref{thm:particle-global-noncollision}, the particle trajectory is collision-free on $[0,T]$ in the sense of \Cref{def:collision-free-particles}.  Let $F_N(t):=\mmd_q^2(\rho_t^N,\rho_t)$.  The signed measure $\nu_t$ has zero total mass and finite $q$-moment, because $\rho_t$ has finite $q$-moment and $\rho_t^N$ is a finite empirical measure.  For $\mathcal L^1$-a.e. $t\in[0,T]$, \Cref{prop:comm} bounds the commutator in \eqref{eq:mf-commutator-CN} by
\begin{equation}
C\Big(\|\nabla v_t\|_{L^\infty}+\|\dm^{\frac{d+q}{2}}v_t\|_{L^{\frac{2d}{d+q-2}}}\indic_{d+q>2}\Big)
\|\nu_t\|_{\dot H^{-\frac{d+q}{2}}}^2.
\end{equation}
\Cref{lem:genSob} identifies this homogeneous Sobolev norm with $F_N(t)$ up to a normalization constant.  Thus, after absorbing that constant and the factor $1/2$ in \eqref{eq:mf-commutator-CN} into $C$, \Cref{lem:mf-modulated-energy-identity} gives
\begin{equation}
F_N'(t)
=-\int_{\R^d}\left|\nabla K*(\rho_t^N-\rho_t)\right|^2\ud\rho_t^N+\mathcal C_t^N
\le C\Big(\|\nabla v_t\|_{L^\infty}+\|\dm^{\frac{d+q}{2}}v_t\|_{L^{\frac{2d}{d+q-2}}}\indic_{d+q>2}\Big)F_N(t).
\end{equation}
The coefficient on the right-hand side belongs to $L^1(0,T)$ by \Cref{lem:mf-velocity-factor}.  Gr\"onwall's inequality therefore yields \eqref{eq:main}.
\end{proof}

The preceding Gr\"onwall argument used one remaining analytic input: the continuum velocity must lie in the commutator class of \Cref{prop:comm}.  We now verify this from the solution bounds by interpolation and fractional integration.

\begin{lemma}[Velocity bound for the mean-field estimate]\label{lem:mf-velocity-factor}
Assume the hypotheses of \Cref{thm:wgfmmdwp} with moment exponent $r\ge q$.  Let $v_t=-\nabla K*(\rho_t-\mu)$ be the corresponding velocity.  Set
\begin{equation}
\sigma:=\frac{d+q}{2}.
\end{equation}
Then, for every finite $T>0$,
\begin{equation}
\|\nabla v\|_{L^\infty(0,T;L^\infty)}
+
\|\dm^\sigma v\|_{L^\infty(0,T;L^{\frac{2d}{d+q-2}})}\indic_{d+q>2}<\infty.
\end{equation}
Here and above, the term multiplied by $\indic_{d+q>2}$ is omitted when $d+q\le2$.
Consequently, the exponential factor in \eqref{eq:main} is finite on every interval $[0,T]$.
\end{lemma}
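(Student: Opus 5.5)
The first bound, $\|\nabla v\|_{L^\infty(0,T;L^\infty)}<\infty$, is already contained in \Cref{thm:wgfmmdwp}: the solution class has $v\in L^\infty([0,T];W^{1,\infty})$. Quantitatively, \eqref{eq:velbd} bounds $\|\nabla v_t\|_{L^\infty}$ by $\|\rho_t-\mu\|_{L^1}^{1-\beta}\|\rho_t-\mu\|_{L^p}^{\beta}$, and this is uniformly bounded on $[0,T]$ by the $L^p$ estimate \eqref{eq:limit-rho-C-Lp}. At the endpoint $d=q=1$ the indicator term is absent, and $\partial_xv_t=2(\rho_t-\mu)$ together with the $L^\infty$ bound suffices. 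So only the fractional term remains, and we may assume $d+q>2$, $p<\infty$ (replacing $p=\infty$ by an auxiliary $p_0>p_c$ as elsewhere).

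The plan is to compute $\dm^\sigma v_t$ on the Fourier side. Since $K=-|x|^q$ has Fourier transform $c_{d,q}|\xi|^{-d-q}$ (away from the origin, in the sense of \Cref{lem:genSob}), we have $\widehat{\nabla K}(\xi)=c\,i\xi|\xi|^{-d-q}$. Hence, with $f_t:=\rho_t-\mu$,
\begin{equation*}
\widehat{\dm^\sigma v_t}(\xi)=c\,\frac{i\xi}{|\xi|}\,|\xi|^{1+\sigma-d-q}\widehat f_t(\xi)=c\,\frac{i\xi}{|\xi|}\,|\xi|^{-(\sigma-1)}\widehat f_t(\xi),
\end{equation*}
because $1+\sigma-d-q=1-\sigma$. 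Thus $\dm^\sigma v_t=c\,\mathcal R\,\dm^{-(\sigma-1)}f_t$, that is, a Riesz transform applied to a Riesz potential of order $\sigma-1=(d+q-2)/2\in(0,d/2)$. By Hardy--Littlewood--Sobolev, $\dm^{-(\sigma-1)}$ maps $L^{s}$ to $L^{2d/(d+q-2)}$ exactly when $1/s=(d+q-2)/(2d)+(\sigma-1)/d=(d+q-2)/d=1/p_c$. Since $p_c$ lies strictly between $1$ and $p$ (note $p_c>1$ because $q<2$), interpolation gives $\|f_t\|_{L^{p_c}}\le\|f_t\|_{L^1}^{1-\theta}\|f_t\|_{L^p}^{\theta}\le C_T$. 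Boundedness of Riesz transforms on $L^{2d/(d+q-2)}$ then gives the second bound uniformly on $[0,T]$.

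The main obstacle is rigor in the identification $\dm^\sigma v_t=c\,\mathcal R\dm^{-(\sigma-1)}f_t$ for a merely $L^1\cap L^p$ zero-mass $f_t$ with a growing kernel. The low-frequency singularity $|\xi|^{-d-q}$ of $\widehat K$ must be shown to be harmless after multiplication by $\xi|\xi|^\sigma$. I would prove the identity first for Schwartz $f$ whose Fourier transform vanishes near the origin, where it is an exact multiplier computation. I would then pass to general $f_t$ by density in $L^1\cap L^p$ with zero mass, using that both sides converge in $\mathcal S'$ modulo polynomials. On the left, $v$ is controlled by the moment bound (at most growth of order $q-1$); on the right, HLS gives continuity. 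The moment hypothesis $r\ge q$ ensures that the tempered-distribution pairings make sense and that no polynomial ambiguity survives, since both sides vanish at infinity in an averaged sense. Finiteness of the exponential factor in \eqref{eq:main} is then immediate.
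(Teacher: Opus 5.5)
Your proposal is correct and follows the paper's proof essentially line for line. The $\nabla v$ bound is read off from the solution class, with $\partial_x v_t=2(\rho_t-\mu)$ at $d=q=1$. For $d+q>2$ the paper likewise writes $|\nabla|^\sigma v_t$ as a Riesz transform of $|\nabla|^{-(\sigma-1)}(\rho_t-\mu)$, interpolates $\rho_t-\mu$ into $L^{p_c}$ between $L^1$ and $L^p$, and applies Hardy--Littlewood--Sobolev with $\tfrac{d+q-2}{2d}=\tfrac1{p_c}-\tfrac{\sigma-1}{d}$ together with boundedness of Riesz transforms.

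The paper simply asserts the multiplier identity on zero-mass functions, without the density argument you sketch, so your extra justification is a refinement rather than a departure. One small correction to that sketch: boundedness of $v$, and hence its membership in $\mathcal S'$, already follows from $r\ge 1$, not specifically from $r\ge q$.
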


\begin{proof}
If $d=q=1$, then $\partial_x v_t=2(\rho_t-\mu)$ distributionally, so the asserted bound follows from the $L^\infty$ bounds on the source and target; the second term is omitted.  We may therefore assume $d+q>2$.  The bound for $\nabla v$ is part of the solution class in \Cref{thm:wgfmmdwp}.  Let $\alpha:=\sigma-1=(d+q-2)/2$.  Since the subcritical hypothesis gives $p>p_c$, interpolation between $L^1$ and $L^p$ yields
\begin{equation}
\sup_{0\le t\le T}\|\rho_t-\mu\|_{L^{p_c}}<\infty.
\end{equation}
On zero-mass functions, the Fourier multiplier identity for $K(x)=-|x|^q$ reads $\dm^\sigma v_t=\mathcal R\dm^{-\alpha}(\rho_t-\mu)$, up to a dimensional constant, where $\mathcal R$ is a vector-valued Riesz transform.  Since
\begin{equation}
\left(\frac{2d}{d+q-2}\right)^{-1}=p_c^{-1}-\frac{\alpha}{d},
\end{equation}
the same Hardy--Littlewood--Sobolev inequality and the $L^{\frac{2d}{d+q-2}}$ boundedness of Riesz transforms give
\begin{equation}
\|\dm^\sigma v_t\|_{L^{\frac{2d}{d+q-2}}}
\lesssim_{d,q}\|\dm^{-\alpha}(\rho_t-\mu)\|_{L^{\frac{2d}{d+q-2}}}
\lesssim_{d,q}\|\rho_t-\mu\|_{L^{p_c}}.
\end{equation}
Taking the supremum in time proves the lemma.
\end{proof}

\section{Stationary solutions and critical points}\label{sec:stationary}
Critical points arise naturally from the question of stationary states for the Wasserstein gradient flow \eqref{eq:wgfmmd}.  A stationary solution is, in the weakest sense, a time-independent distributional solution of the continuity equation.  To relate this distributional notion of stationarity to variational criticality, we use intrinsic first-variation notions close to the terminology of Boufad\`ene and Vialard \cite[Definitions~3.1--3.2]{boufadene2023global}.  Two distinctions are important.  First, a Lagrangian condition says that the velocity field vanishes on the mass actually carried by the measure.  Second, a Wasserstein criticality condition says that all sufficiently small proximal/JKO steps are blocked.  The second notion is stronger in general and is not needed for the constructions below.

The stationary picture is rigid across almost the entire energy-kernel family.  We first treat the energy-distance kernel in odd dimensions using locality of the associated polyharmonic operator, and then obtain the near-complete natural-moment classification by a Riesz-potential argument.  We treat the remaining regime $d=3$, $0<q<1$ separately and contrast its conditional rigidity with explicit one-dimensional non-minimizing critical points.  Finally, we combine the energy--dissipation identity with compactness to obtain asymptotic criticality without additional long-time bounds for $1\le q<2$, and under uniform bounds for $0<q<1$; the corresponding rigidity results then yield conditional convergence to the target.

\begin{definition}[Lagrangian critical point]\label{critical:def:Lcrit}

		Let $\rho,\mu\in\mathcal P(\R^d)$, and let $E_\mu$ be the MMD energy with kernel $K$.  We interpret the first variation $K*(\rho-\mu)$ modulo additive constants.  In particular, for $K(z)=-|z|^q$ and a fixed base point $x_0\in\R^d$, we use the normalized representative
		\begin{equation}\label{critical:eq:normalized-potential}
		u_{\rho,x_0}(x):=\int_{\R^d}\bigl(K(x-y)-K(x_0-y)\bigr)\ud(\rho-\mu)(y).
		\end{equation}
		This integral is locally absolutely convergent for $0<q\le1$ and, for $1<q<2$, whenever $\rho,\mu\in\mathcal P_{q-1}(\R^d)$; these conditions hold in every application below.  If the ordinary convolution is defined, then \eqref{critical:eq:normalized-potential} differs from it only by a finite constant.  We write $u_\rho$ for any such locally integrable representative. 
		
		We use the diagonal convention for the Borel measurable force kernel $\nabla K$ stated following \eqref{eq:wgfmmdparticle}.
		At every point where it is absolutely convergent, set
		\begin{equation}\label{critical:eq:convolution-field}
		g_\rho(x):=\int_{\R^d}\nabla K(x-y)\ud(\rho-\mu)(y),
		\end{equation}
		and extend $g_\rho$ by zero elsewhere.  We require the integral in \eqref{critical:eq:convolution-field} to converge absolutely $\rho$-a.e.\ and $g_\rho\in L^1_{\mathrm{loc}}(\rho)$.  Under the preceding hypotheses, $g_\rho$ is a Borel representative of the weak gradient of $u_\rho$.  We use the following weak-gradient, almost-everywhere formulation (cf.\ \cite[Definition~3.2]{boufadene2023global}). 
		
		We say that $\rho$ is a \emph{Lagrangian critical point} of $E_\mu$ if
		\begin{equation}\label{critical:eq:lagcrit}
		g_\rho=0
		\qquad \rho\text{-a.e.}
		\end{equation}

\end{definition}

If $\rho\,(\nabla K*(\rho-\mu))$ is locally finite, the Lagrangian condition immediately implies stationarity.  Indeed, for every $\varphi\in C_c^\infty(\R^d)$,
\begin{equation}\label{critical:eq:lagrangian-stationarity}
\left\langle\div\bigl(\rho\,(\nabla K*(\rho-\mu))\bigr),\varphi\right\rangle
=-\int_{\R^d}\nabla\varphi\cdot\bigl(\nabla K*(\rho-\mu)\bigr)\ud\rho=0.
\end{equation}
Hence $\rho_t\equiv\rho$ is a stationary distributional solution of \eqref{eq:wgfmmd}.

\begin{remark}[Wasserstein versus Lagrangian criticality]
Boufad\`ene--Vialard call $\rho$ a \emph{Wasserstein critical point} of $E_\mu(\rho):=\mmd^2(\rho,\mu)$ if there is $\tau_0>0$ such that, for every $0<\tau\le\tau_0$,
\begin{equation}\label{critical:eq:Wcrit}
\rho\in\underset{\sigma\in\mathcal P(\R^d)}{\arg\min}\left(E_\mu(\sigma)+\frac1{2\tau}W_2^2(\rho,\sigma)\right);
\end{equation}
see \cite[Definition~3.1]{boufadene2023global}.  Thus all sufficiently small JKO steps started from $\rho$ are blocked.  The Wasserstein notion is stronger in general; throughout this section, we use only the weaker Lagrangian notion of \Cref{critical:def:Lcrit}.
\end{remark}

\subsection{\texorpdfstring{Odd-dimensional local rigidity for energy distance}{Odd-dimensional local rigidity for energy distance}}

We first specialize to the energy-distance kernel $K(z)=-|z|$.
Boufad\`ene--Vialard prove an interior rigidity statement for this kernel in odd dimensions: a Lagrangian critical point agrees with the target on the interior of its support; see \cite[Theorem~3.4]{boufadene2023global}.  If the critical point is absolutely continuous, the same local mechanism gives a global statement without any regularity assumption on the target.  The odd-dimensional restriction belongs to this local proof: for odd $d$, the function $|x|$ is, up to normalization, a fundamental solution for the polyharmonic operator $(-\Delta)^m$ with $m=(d+1)/2$, i.e., 
\begin{equation}\label{critical:eq:polyharmonic-fund}
(-\Delta)^m\big(C_d|x|\big)=\delta_0
\qquad\text{in }\mathcal D'(\R^d)
\end{equation}
for some $C_d\ne0$.

\begin{proposition}[Odd-dimensional energy-distance rigidity]\label{critical:prop:energy-Lp}
Let $d$ be odd and let $m=(d+1)/2$.  Assume $\mu,\rho\in\mathcal P_1(\R^d)$ and $\rho\ll\mathcal{L}^d$.  If $\rho$ is a Lagrangian critical point of $E_\mu$, then $\rho=\mu$.
\end{proposition}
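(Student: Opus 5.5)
The plan is to localize the problem through the polyharmonic identity \eqref{critical:eq:polyharmonic-fund}. The target statement is
\begin{equation*}
\rho=\mu \quad\text{a.e. on } A:=\{\rho>0\},
\end{equation*}
and it finishes the proof. Indeed, if it holds, then $\mu(A)=\rho(A)=1$, so $\mu$ has no mass on $A^c$. Hence $\mu=\mu\llcorner A=\rho\llcorner A=\rho$.

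\textbf{Step 1: regularity of the potential.} Set $u:=u_\rho$ as in \Cref{critical:def:Lcrit}. Since $|\nabla K|\le1$ and $\rho,\mu\in\mathcal P_1$, $u$ is globally Lipschitz and $\nabla u=g_\rho$ a.e. By \eqref{critical:eq:polyharmonic-fund},
\begin{equation*}
(-\Delta)^m u=-C_d^{-1}(\rho-\mu)\quad\text{in }\mathcal D'(\R^d).
\end{equation*}
I will record the regularity of the intermediate derivatives. For $1\le k\le d$, $\nabla^{\otimes k}u$ is the convolution of $\rho-\mu$ with a kernel homogeneous of degree $1-k$. This kernel is locally integrable and bounded away from the origin (after subtracting constants when $k=1$). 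Hence $\nabla^{\otimes k}u\in L^1_{\mathrm{loc}}$ for $k\le d$, and $\nabla^{\otimes(k-1)}u\in W^{1,1}_{\mathrm{loc}}$ for $k\le d$. In particular,
\begin{equation*}
w:=(-\Delta)^{m-1}u=c_d\,|x|^{2-d}*(\rho-\mu)
\end{equation*}
is, up to normalization, the Newtonian potential of $\rho-\mu$. It lies in $W^{1,1}_{\mathrm{loc}}$ and satisfies $-\Delta w=c(\rho-\mu)$ with $c\ne0$. For $d=1$ we have $m=1$, $w=u$, and $u''=c(\rho-\mu)$ directly.

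\textbf{Step 2: transfer criticality to higher derivatives on $A$.} Lagrangian criticality together with $\rho\ll\mathcal L^d$ gives $\nabla u=0$ $\mathcal L^d$-a.e. on $A$. I will apply Stampacchia's theorem repeatedly: for $f\in W^{1,1}_{\mathrm{loc}}$, $\nabla f=0$ a.e. on $\{f=0\}$. Applying it to each component of $\nabla^{\otimes(k-1)}u$ for $k=2,\dots,d$ shows inductively that $\nabla^{\otimes k}u=0$ a.e. on $A$ for $1\le k\le d$. In particular $w=0$ a.e. on $A$, since $w$ involves $d-1$ derivatives and $d-1\ge1$ when $d\ge3$; for $d=1$ we have $w=u$, constant a.e. on $A$ up to the normalization, with $u'=0$ on $A$ directly. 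Likewise $\nabla w=0$ a.e. on $A$.

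\textbf{Step 3 (main obstacle): pass from $\nabla w=0$ on $A$ to $\rho=\mu$ on $A$.} The difficulty is that $\Delta w$ is only a measure, and $\mu$ may have a singular part, so Stampacchia cannot be applied once more. My first attempt will use a level-set truncation argument for the Newtonian equation $-\Delta w=c(\rho-\mu)$. For $\varphi\in C_c^\infty$, $\varphi\ge0$, and a Lipschitz cutoff $\theta_\varepsilon$ with $\theta_\varepsilon(0)=1$ and support in $[-\varepsilon,\varepsilon]$, I would test the equation with $\varphi\,\theta_\varepsilon(\nabla w\cdot e)$ (or with $\varphi\,\theta_\varepsilon(|\nabla w|)$ after regularization). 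The term $\int\nabla w\cdot\nabla(\varphi\theta_\varepsilon)$ vanishes in the limit, because $\nabla w\,\theta_\varepsilon'(\cdot)$ is bounded by $\varepsilon\|\theta_\varepsilon'\|_\infty$ on a set shrinking to $\{\nabla w=0\}$. The right-hand side tends to
\begin{equation*}
c\int_{\{\nabla w=0\}}\varphi\,\ud(\rho-\mu).
\end{equation*}
This would give $\rho=\mu$ as measures on the set $\{\nabla w=0\}\supset A$ up to null sets, where $\mu$ is evaluated at the precise representative of $\nabla w$. Making this rigorous at the level of the precise representative, so that the $\mu$-null exceptional sets are controlled, is the step I expect to cost the most work.

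If this step fails, my fallback is the Boufad\`ene--Vialard interior statement \cite[Theorem~3.4]{boufadene2023global} applied on the Lebesgue-density points of $A$. Combined with the equality of total masses, this would again give $\rho=\mu$ on $A$, and the mass-counting observation at the start then concludes $\rho=\mu$.
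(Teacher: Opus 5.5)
Your Steps 1--2 are sound and match the paper. Iterated Sobolev locality gives that all derivatives of $u$ of orders $1,\dots,d$ vanish a.e.\ on $A$, so in particular $w=0$ and $\nabla w=0$ a.e.\ on $A$. (For $d=1$ you only get $u'=0$ on $A$, not that $u$ is constant there, but this is harmless.)

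The genuine gap is Step 3. To test $-\Delta w=c(\rho-\mu)$ with $\varphi\,\theta_\varepsilon(|\nabla w|)$ you need a chain rule for $\theta_\varepsilon(|\nabla w|)$, i.e.\ $\nabla w\in W^{1,1}_{\mathrm{loc}}$. The term you dismiss contains $\int\varphi\,\theta_\varepsilon'(|\nabla w|)\,\nabla w\cdot\nabla|\nabla w|$. Your bound on $|\nabla w|\,|\theta_\varepsilon'|$ would only finish the argument if $D^2w\in L^1_{\mathrm{loc}}$, via dominated convergence on $\{0<|\nabla w|<\varepsilon\}$. But $D^2w$ is a degree-$(-d)$ Calder\'on--Zygmund operator applied to $\rho-\mu$. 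It need not be in $L^1_{\mathrm{loc}}$ even when $\mu\ll\mathcal L^d$, since $L^1$ Calder\'on--Zygmund bounds fail. It is not a locally integrable function at all once $\mu$ has a singular part. The limit on the right-hand side also requires pointwise values of $\nabla w$ on the support of $\mu_{\mathrm s}$, where no precise representative need exist.

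So what is missing is a second-order Stampacchia theorem for functions whose Laplacian is merely a measure, and truncation does not supply it. Your fallback does not close the gap either. \cite[Theorem~3.4]{boufadene2023global} gives agreement only on the topological interior of $\supp\rho$. The set $A=\{\rho>0\}$ can have positive measure while $\supp\rho$ has empty interior (a fat Cantor-type set), and Lebesgue density points are not interior points.

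The paper avoids both difficulties in two ways.

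\textbf{Only the absolutely continuous part is needed.} Write $\mu=g\,\mathcal L^d+\mu_{\mathrm s}$ and $\rho=r\,\mathcal L^d$. It suffices to show $r=g$ a.e.\ on $A$. Then $\int_Ag=1$, and the mass count $1=\int_Ag+\int_{A^c}g+\mu_{\mathrm s}(\R^d)$ forces $\mu_{\mathrm s}=0$ and $g\indic_{A^c}=0$. No $\mu$-null exceptional sets ever need to be controlled.

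\textbf{The zeroth-order level set is used.} The paper works with $w=0$ a.e.\ on $A$, which you already have. Hence $A\subset\{w^*=0\}$ up to a null set. It then invokes \cite[Theorem~1.1]{AmbrosioPonceRodiac2020}: for $w\in W^{1,1}_{\mathrm{loc}}$ with $\Delta w$ a Radon measure, $(\Delta w)^{\mathrm a}=0$ a.e.\ on $\{w^*=0\}$. Since $(\Delta w)^{\mathrm a}=-c_d(r-g)\,\mathcal L^d$, this gives $r=g$ on $A$. For $d=1$ the analogous input is the $BV$ fact that, for $v=u'\in BV_{\mathrm{loc}}$, the absolutely continuous part of $Dv$ vanishes a.e.\ on $\{v=0\}$.
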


\begin{proof}
Let $u:=K*(\rho-\mu)$.
The finite-first-moment assumption makes $u$ a locally integrable distribution.  By \eqref{critical:eq:polyharmonic-fund},
\begin{equation}
(-\Delta)^m u = c_d(\rho-\mu)
\qquad\text{in }\mathcal D'(\R^d)
\end{equation}
for some nonzero constant $c_d$.

Let $r:\R^d\to[0,\infty)$ be a Borel measurable function such that $\rho=r\,\mathcal{L}^d$, and let
\begin{equation}
	\mu=g\,\mathcal{L}^d+\mu_{\mathrm{s}}
\end{equation}
be the Lebesgue decomposition of $\mu$, where $g\in L^1(\R^d)$ is nonnegative and $\mu_{\mathrm{s}}\perp\mathcal{L}^d$ is nonnegative.  Set
\begin{equation}
	\nu:=\rho-\mu=(r-g)\,\mathcal{L}^d-\mu_{\mathrm{s}}
\end{equation}
and set $A:=\{x:r(x)>0\}$.  The Lagrangian criticality condition gives $\nabla u=0$ $\rho$-a.e.  Because $r>0$ on $A$, this implies $\nabla u=0$ $\mathcal L^d$-a.e.\ on $A$.

Suppose first that $d=1$ and put $v:=u'$.  Since $(|\cdot|)'=\sgn$ distributionally and $\nu$ is a finite signed measure, we have $v=-\sgn*\nu\in L^\infty(\R)$.  Moreover,
\begin{equation}
	Dv=u''=-c_1\nu,
\end{equation}
so $v\in BV_{\mathrm{loc}}(\R)$.  Criticality gives $v=0$ $\mathcal L^1$-a.e.\ on $A$.  By the standard level-set locality property for $BV$ functions \cite[Theorems~6.3--6.4 and the following remark]{EvansGariepy2015}, the absolutely continuous part of $Dv$ vanishes on $A$.  Since
\begin{equation}
	(Dv)^{\mathrm{a}}=-c_1(r-g)\,\mathcal{L}^1,
\end{equation}
we obtain $r=g$ $\mathcal L^1$-a.e.\ on $A$.

Now suppose that $d\ge3$.  For every multi-index $\alpha$ with $1\le|\alpha|\le d$, the distributional derivative $D^\alpha|x|$ is represented by a homogeneous, locally integrable function of degree $1-|\alpha|$, which is bounded at infinity.  Convolution against the finite signed measure $\nu$ therefore gives
\begin{equation}
	u\in W^{d,1}_{\mathrm{loc}}(\R^d).
\end{equation}
Applying the same level-set property successively to the weak derivatives of $u$ shows that every derivative of $u$ of order between $1$ and $d-1$ vanishes $\mathcal L^d$-a.e.\ on $A$.  In particular, with $w:=(-\Delta)^{m-1}u$, we have $w=0$ $\mathcal L^d$-a.e.\ on $A$, while
\begin{equation}
	-\Delta w=c_d\nu
	\qquad\text{in }\mathcal D'(\R^d).
\end{equation}
	Let $w^*$ denote the precise (Lebesgue) representative of $w$.\footnote{We use the standard precise representative $w^*(x):=\lim_{r\downarrow0}\fint_{B_r(x)}w(y)\,\ud y$ whenever the limit exists, and set $w^*(x)=0$ otherwise; see \cite[p.~46]{EvansGariepy2015}.}  For $\mathcal L^d$-a.e. $x\in A$, the point $x$ is both a density point of $A$ and a Lebesgue point of $w$, and $w^*(x)=0$.  Hence $A\subset\{w^*=0\}$ up to a Lebesgue-null set.  Thus $\Delta w$ is a signed Radon measure, with
\begin{equation}
	(\Delta w)^{\mathrm{a}}=-c_d(r-g)\,\mathcal{L}^d.
\end{equation}
By \cite[Theorem~1.1]{AmbrosioPonceRodiac2020}, the absolutely continuous part of $\Delta w$ vanishes $\mathcal L^d$-a.e.\ on the level set $\{w^*=0\}$, and hence $\mathcal L^d$-a.e.\ on $A$.  It follows that $r=g$ $\mathcal L^d$-a.e.\ on $A$.

Since $r=0$ $\mathcal L^d$-a.e.\ on $A^c$ and $r=g$ $\mathcal L^d$-a.e.\ on $A$,
\begin{equation}
	1=\rho(A)=\int_A r\ud x=\int_A g\ud x.
\end{equation}
On the other hand,
\begin{equation}
	1=\mu(\R^d)=\int_A g\ud x+\int_{A^c}g\ud x+\mu_{\mathrm{s}}(\R^d).
\end{equation}
All terms on the right-hand side are nonnegative, and the first equals one.  Hence $g=0$ $\mathcal L^d$-a.e.\ on $A^c$ and $\mu_{\mathrm{s}}=0$.  Therefore $\mu=r\,\mathcal{L}^d=\rho$.
\end{proof}

\subsection{\texorpdfstring{Natural-moment rigidity for energy kernels}{Natural-moment rigidity for energy kernels}}\label{ssec:natural-moment-rigidity}

The preceding odd-dimensional energy-distance rigidity result is the local polyharmonic
case of a broader Riesz-potential mechanism.  The following result establishes
natural-moment rigidity except when $d\in\{1,3\}$ and $0<q<1$.
The residual three-dimensional regime is treated separately in
\Cref{critical:thm:residual-three-rigidity}.

\begin{theorem}[Rigidity under the $q$-moment condition]\label{critical:thm:natural-moment-rigidity}
Let $d\ge1$ and $0<q<2$.  Assume $\mu,\rho\in\mathcal P_q(\R^d)$,
$\rho\ll\mathcal{L}^d$, and that $\rho$ is a Lagrangian critical point of
$E_\mu$.  Then $\rho=\mu$ in each of the following
regimes:
\begin{enumerate}[label=\textup{(\roman*)}]
\item $d$ is even;
\item $d$ is odd and $1\le q<2$;
\item $d\ge5$ is odd and $0<q<1$.
\end{enumerate}
\end{theorem}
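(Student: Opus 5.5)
The plan is to run the argument of \Cref{critical:prop:energy-Lp} with the polyharmonic identity \eqref{critical:eq:polyharmonic-fund} replaced by a Riesz-potential identity. Set $u:=u_{\rho,x_0}$ as in \eqref{critical:eq:normalized-potential} and $\nu:=\rho-\mu=(r-g)\mathcal L^d-\mu_{\mathrm s}$, and let $A:=\{r>0\}$. As in that proof, Lagrangian criticality gives $\nabla u=0$ $\mathcal L^d$-a.e.\ on $A$. The final mass-counting step there (from $r=g$ a.e.\ on $A$ to $\rho=\mu$) applies verbatim. So the whole task is to prove $r=g$ a.e.\ on $A$.

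\textbf{Step 1: splitting the operator.} By \Cref{lem:genSob} and the Fourier multiplier of $|x|^q$, we have $(-\Delta)^{(d+q)/2}u=c_{d,q}\nu$ with $c_{d,q}\neq0$. Write $(d+q)/2=k+s$ with $k\in\mathbb N$ and $s\ge0$. Choose the local part $(-\Delta)^k$ as large as the integrability of the differentiated kernel allows. Each derivative $D^\beta|x|^q$ with $|\beta|<d+q$ is a homogeneous, locally integrable function of degree $q-|\beta|$. Hence convolution with $\nu$ gives $u\in W^{n,1}_{\mathrm{loc}}$ for every integer $n<d+q$. The far field needs care: for $|\beta|\le q$ the kernel grows, and there one uses the cancellation $\nu(\R^d)=0$ together with the natural moment $\mathcal M_q$. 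Applying the level-set locality for Sobolev functions \cite{EvansGariepy2015} successively shows that all derivatives of $u$ of orders $1,\dots,n-1$ vanish a.e.\ on $A$. In particular $w:=(-\Delta)^{k'}u$ and $\nabla w$ vanish a.e.\ on $A$, for the largest admissible $k'$.

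\textbf{Step 2: the terminal equation.} The terminal relation is $(-\Delta)^{s'}w=c\,\nu$ with $s'=(d+q)/2-k'$.

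When $s'=1$ (odd $d$, $q=1$), this is the Laplacian case, and \cite[Theorem~1.1]{AmbrosioPonceRodiac2020} applied at the precise representative finishes exactly as before. The other cases with $1\le q<2$ and odd $d$ should reduce the same way: one more integer derivative is available, since $d+q>d+1$, so the terminal step is $\Delta$ applied to a $W^{1,1}_{\mathrm{loc}}$-potential whose gradient vanishes on $A$. I would write $\nabla w=\mathcal R\,I_{\text{(order)}}\nu$ and use the distributional identity $\div\nabla w=c\nu$ together with the BV/level-set locality for divergence-measure fields.

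In the remaining regimes the nonlocal part is genuinely fractional. I would represent $\nu$ on $A$ through the singular-integral formula for $(-\Delta)^{s'}w$. The aim is to show that the absolutely continuous part of $(-\Delta)^{s'}w$ vanishes a.e.\ on $\{w^*=0,\ \nabla w^*=0\}$. Because $w$ and $\nabla w$ both vanish at density points of $A$, a Taylor expansion shows that the difference quotient $w(x+h)-w(x)$ is $o(|h|)$ in the averaged sense, which controls the singular part when $2s'<2$. The regimes listed in the theorem should be exactly those in which one can choose $k'$ with either $s'\le1/2$, or $s'=1$, or enough local derivatives (even $d$, and $d\ge5$) that this first-order Taylor control absorbs the kernel singularity.

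\textbf{Main obstacle.} The main difficulty is making the fractional locality step in Step 2 rigorous. It also has to be compatible with far-field integrability of $w$ under only $\mathcal M_q(\rho),\mathcal M_q(\mu)<\infty$, since the tail of $(-\Delta)^{s'}w$ requires $w$ to lie in the appropriate weighted $L^1$ space. I would first check the parity bookkeeping, namely that each listed case admits a split $(d+q)/2=k'+s'$ with the required derivative count and a tail decay guaranteed by the $q$-moment. The excluded cases $d\in\{1,3\}$, $0<q<1$ should appear as precisely those where no such split exists; this is consistent with \Cref{critical:thm:residual-three-rigidity} needing an extra moment.
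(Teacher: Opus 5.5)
Your Step~1 is fine and matches the paper: the $q$-moment gives local Sobolev regularity of $u$, and iterated Sobolev locality then applies on the positive-density set. The case $d$ odd, $q=1$ is also correct; it is \Cref{critical:prop:energy-Lp}. The gap is Step~2 in every other case. Your claimed fractional locality says that, for non-integer $s'$, the absolutely continuous part of $(-\Delta)^{s'}w$ vanishes a.e.\ on $\{w^*=0,\ \nabla w^*=0\}$. This is false, and no Taylor control can rescue it, because the obstruction is the far-field part of the singular integral, not the near-field singularity. For example, let $\gamma$ be the $\alpha$-Riesz equilibrium measure of a ball $B$, with $0<\alpha<\min\{2,d\}$. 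Then $w:=\mathcal I_\alpha\gamma-1$ vanishes identically on $B$, together with all its derivatives, yet $(-\Delta)^{\alpha/2}w=\gamma$ has strictly positive density on $B$. The paper's nested-interval example (\Cref{critical:thm:nested-intervals}) shows the same phenomenon for $u$ itself: $u$ is constant on the interval $(-r,r)$ carrying $\rho_r$, yet $\rho_r\ne\mu$ there. So when $(d+q)/2\notin\mathbb N$, local vanishing on the positive set can never yield the pointwise identity $r=g$, and reducing the theorem to that identity is a dead end. The same error affects your treatment of odd $d$ with $1<q<2$: after $(d+1)/2$ integer Laplacians you obtain $b\,\mathcal I_{q-1}\nu$, not $c\nu$, so the terminal operator is still fractional rather than $\Delta$.

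What is missing is a global, sign-based potential-theoretic argument.
\begin{itemize}
\item The paper stops at the last admissible integer Laplacian, $(-\Delta)^m u=b\,\mathcal I_\alpha\nu$ with $\alpha=d+q-2m$. Sobolev locality then gives only the finite Riesz contact $\mathcal I_\alpha\nu^+=\mathcal I_\alpha\nu^-$ holding $\nu^+$-a.e.
\item For $0<\alpha<\min\{2,d\}$, the complete maximum (domination) principle for Riesz kernels, applied to a finite-energy exhaustion of $\nu^+$, upgrades this to $\mathcal I_\alpha\nu\le0$ everywhere.
\item Averaging over $B_R$, using zero mass and a finite $\alpha$-moment, forces $\mathcal I_\alpha\nu=0$, and Riesz inversion gives $\nu=0$.
\end{itemize}
The regime restrictions come from this mechanism, not from a derivative count. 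Even $d$ uses $\alpha=q$, and odd $d$ with $1<q<2$ uses $\alpha=q-1$. Odd $d$ with $0<q<1$ would need $\alpha=q+1$, hence a $(q+1)$-moment. For $d\ge5$ the paper avoids this with a second contact $\mathcal I_{q+3}\nu=0$ on the positive set, admissible since $q+3<d$. It combines this with the identity $\mathcal I_2(\mathcal I_{q+1}\nu)=\mathcal I_{q+3}\nu$ and the sign $-\mathcal I_{q+1}\nu\ge0$. This step is unavailable in $d=3$ because $q+3>d$.
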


Write $\mathcal M(\R^d)$ for the space of finite signed Borel measures
on $\R^d$ and $\mathcal M_+(\R^d)$ for its positive cone.

Throughout the Riesz-potential argument, set $\nu:=\rho-\mu$ and
$u:=K*\nu$.
The local Sobolev regularity of $u$ follows from the moment assumptions
alone; it does not use absolute continuity of either measure.  The exact
source-dependent condition used below is
\begin{equation}\label{critical:eq:rigidity-positive-part-criticality}
 \nu^+\ll\mathcal{L}^d,
 \qquad
 \nabla u=0\quad\nu^+\text{-a.e.}
\end{equation}
The assumptions of \Cref{critical:thm:natural-moment-rigidity} imply this
condition because $\nu^+=(\rho-\mu)^+\le\rho$.
Thus the moment assumptions provide the regularity of $u$, while absolute
continuity of $\rho$ and Lagrangian criticality provide precisely the two
conditions in \eqref{critical:eq:rigidity-positive-part-criticality}.

When $s\notin\mathbb N$, set
\begin{equation}\label{critical:eq:rigidity-residual-parameters}
 s:=\frac{d+q}{2},\qquad
 m:=\floor{s},\qquad
 \alpha:=2(s-m)=d+q-2m\in(0,2).
\end{equation}
The equality $s\in\mathbb N$ occurs exactly when $d$ is odd and $q=1$;
that case has already been proved in
\Cref{critical:prop:energy-Lp}.  We now treat the rigidity
regimes by a common Riesz-potential reduction. We first derive from Lagrangian criticality an equality of Riesz
potentials on the positive part of the discrepancy.

For $0<\alpha<d$ and $\lambda\in\mathcal M(\R^d)$, denote the Riesz
potential of order $\alpha$ of $\lambda$ by
\begin{equation}\label{critical:eq:rigidity-riesz-potential}
 \mathcal I_\alpha\lambda(x)
 :=c_{d,\alpha}\int_{\R^d}|x-y|^{\alpha-d}\,\ud\lambda(y),
\end{equation}
where $c_{d,\alpha}>0$ is normalized so that
$\widehat{\mathcal I_\alpha f}(\xi)=|2\pi\xi|^{-\alpha}\widehat f(\xi)$ for
Schwartz functions. 
The Fourier multiplier, composition, and inversion properties used below
are standard; see \cite[Chapter~V, \S1]{Stein1970singular}.

We begin with the local regularity and measurable-set locality used in the reduction.

\begin{lemma}[Local Sobolev regularity]\label{critical:lem:power-regularity}
Let $\eta\in\mathcal M(\R^d)$ be such that
\begin{equation}
 \int_{\R^d}|y|^q\,\ud|\eta|(y)<\infty.
\end{equation}
If $\ell$ is a nonnegative integer satisfying $\ell<d+q$, then
$K*\eta\in W^{\ell,1}_{\mathrm{loc}}(\R^d)$.  For every multi-index $\beta$ with
$1\le|\beta|\le \ell$,
\begin{equation}\label{critical:eq:rigidity-derivative-convolution}
 D^\beta(K*\eta)=D^\beta K*\eta
 \qquad\text{in }\mathcal D'(\R^d).
\end{equation}
\end{lemma}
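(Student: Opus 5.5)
The plan is to prove the lemma by a near/far splitting of the kernel, reducing everything to pointwise bounds on the derivatives of $K(z)=-|z|^q$. For $z\ne0$ and $k\ge0$, homogeneity gives
\[
|D^\beta K(z)|\le C_{\beta}|z|^{q-|\beta|}.
\]
For $|\beta|=\ell<d+q$ the exponent satisfies $q-|\beta|>-d$, so $D^\beta K$, taken pointwise off the origin, belongs to $L^1_{\mathrm{loc}}(\R^d)$.

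\textbf{Step 1: distributional derivatives of $K$.} The first thing to establish is that the distributional derivative $D^\beta K$ coincides with this pointwise function for $1\le|\beta|\le\ell$. I would argue by induction on $|\beta|$, integrating by parts on $\R^d\setminus B_\varepsilon$. The boundary term is bounded by
\[
C\varepsilon^{d-1}\varepsilon^{q-|\beta|+1}=C\varepsilon^{d+q-|\beta|},
\]
which tends to zero because $|\beta|<d+q$. This is the step that uses the hypothesis $\ell<d+q$ sharply.

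\textbf{Step 2: well-definedness of the convolutions.} Fix a ball $B_R$ and split $D^\beta K=\chi D^\beta K+(1-\chi)D^\beta K$, where $\chi\in C_c^\infty(B_2)$ equals one on $B_1$.
\begin{enumerate}[label=\textup{(\alph*)}]
\item The near part is in $L^1$, so its convolution with the finite measure $\eta$ is in $L^1(\R^d)$ by Young's inequality for measures.
\item The far part is smooth and bounded by $C(1+|z|)^{q-|\beta|}\le C(1+|z|)^{q}$. For $x\in B_R$ this gives $|D^\beta K(x-y)|\lesssim_R 1+|y|^q$, so the $q$-moment of $|\eta|$ makes the convolution absolutely convergent and continuous, hence locally bounded.
\end{enumerate}
The case $\beta=0$ is handled the same way: $|K(x-y)|\le C(|x|^q+|y|^q)$, so $K*\eta\in L^1_{\mathrm{loc}}$. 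Together these show $D^\beta K*\eta\in L^1_{\mathrm{loc}}$ for every $|\beta|\le\ell$.

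\textbf{Step 3: identification and Fubini.} Finally, for $\varphi\in C_c^\infty$, I would compute
\[
\langle K*\eta,(-1)^{|\beta|}D^\beta\varphi\rangle
=\int\langle K(\cdot-y),(-1)^{|\beta|}D^\beta\varphi\rangle\,\ud\eta(y)
=\int\!\!\int D^\beta K(x-y)\varphi(x)\,\ud x\,\ud\eta(y).
\]
The second equality is Step 1 applied to the translate $K(\cdot-y)$. Fubini is justified by the absolute bound
\[
\iint|D^\beta K(x-y)||\varphi(x)|\,\ud x\,\ud|\eta|(y)<\infty,
\]
which comes from the same near/far estimate as in Step 2, since $\varphi$ has compact support. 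Exchanging the order of integration yields \eqref{critical:eq:rigidity-derivative-convolution}.

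The main obstacle is the boundary-term control in Step 1 at orders where $q-|\beta|+1\le 0$. Handling it requires the inductive distributional identification, with $\ell<d+q$ used at every stage, rather than a single integration by parts. Everything else is routine moment bookkeeping.
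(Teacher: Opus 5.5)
Your proposal is correct and follows essentially the same route as the paper: you identify the distributional derivatives $D^\beta K$ with the classical homogeneous ones by repeated integration by parts on $\R^d\setminus\overline B_\varepsilon$, with boundary terms of order $\varepsilon^{d+q-|\beta|}$, and then combine a near/far bound of the form $\int_B|D^\beta K(x-y)|\ud x\le C_B(1+|y|^q)$ with the $q$-moment hypothesis and Fubini. The step you flag as the main obstacle is not really one, since your boundary estimate already holds at every order $|\beta|<d+q$ regardless of the sign of $q-|\beta|+1$.
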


\begin{proof}
For $1\le|\beta|<d+q$, the classical derivative $D^\beta K$ on
$\R^d\setminus\{0\}$ is homogeneous of degree $q-|\beta|$ and hence locally
integrable.  Repeated integration by parts on
$\R^d\setminus\overline B_\varepsilon$ produces boundary terms of order
$\varepsilon^{d+q-|\beta|}$, which vanish as $\varepsilon\downarrow0$.
Thus the distributional derivative $D^\beta K$ equals this locally
integrable classical derivative.

For every $B\Subset\R^d$ and $1\le|\beta|\le\ell$, splitting according to
whether $y$ lies in a fixed enlargement of $B$ gives
\begin{equation}
 \int_B \bigl|D^\beta(|x-y|^q)\bigr|\,\ud x
 \le C_B(1+|y|^q).
\end{equation}
The moment assumption and Fubini's theorem now yield
\eqref{critical:eq:rigidity-derivative-convolution}.  The case $\ell=0$
follows from $|x-y|^q\le C_q(|x|^q+|y|^q)$.
\end{proof}

The first-order locality property follows, for example, from
\cite[Theorem~6.19]{LiebLoss}: if
$f\in W^{1,1}_{\mathrm{loc}}$ is constant $\mathcal L^d$-a.e.\ on a Lebesgue-measurable set,
then $\nabla f=0$ $\mathcal L^d$-a.e.\ on that set. Iterating this result over the
weak derivatives of $f$ gives the following higher-order form.

\begin{lemma}[Sobolev locality on a measurable set]\label{critical:lem:sobolev-locality}
Let $E\subset\R^d$ be Lebesgue measurable, let $j\ge1$ be an integer, and let
$f\in W^{j,1}_{\mathrm{loc}}(\R^d)$.  If $f=0$ $\mathcal L^d$-a.e.\ on $E$,
then
\begin{equation}
 D^\beta f=0
 \qquad\mathcal L^d\text{-a.e.\ on }E
\end{equation}
for every multi-index $\beta$ with $1\le|\beta|\le j$.
\end{lemma}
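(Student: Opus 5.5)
The plan is an induction on $j$, with the first-order locality statement quoted just before the lemma (from \cite[Theorem~6.19]{LiebLoss}) as the only analytic input. That statement says: if $h\in W^{1,1}_{\mathrm{loc}}(\R^d)$ is constant $\mathcal L^d$-a.e.\ on a measurable set, then $\nabla h=0$ $\mathcal L^d$-a.e.\ there. We prove by induction on $k=1,\dots,j$ that $D^\beta f=0$ $\mathcal L^d$-a.e.\ on $E$ for every multi-index $\beta$ with $|\beta|=k$.

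For the base case $k=1$, note that $f\in W^{j,1}_{\mathrm{loc}}\subset W^{1,1}_{\mathrm{loc}}$ and $f=0$ a.e.\ on $E$. The cited result then gives $\partial_i f=0$ a.e.\ on $E$ for every $i$.

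For the inductive step, suppose $1\le k<j$ and that $D^\beta f=0$ a.e.\ on $E$ whenever $|\beta|=k$. Since $k+1\le j$, each such $D^\beta f$ lies in $W^{1,1}_{\mathrm{loc}}(\R^d)$, and its weak first derivatives are $\partial_i D^\beta f=D^{\beta+e_i}f$. Applying the first-order result to $h=D^\beta f$ gives $D^{\beta+e_i}f=0$ a.e.\ on $E$. Every multi-index of order $k+1$ has the form $\beta+e_i$ with $|\beta|=k$, so the claim holds at order $k+1$.

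The multi-indices of order at most $j$ form a finite family, so we discard only a finite union of null sets, and the conclusion holds simultaneously for all $1\le|\beta|\le j$. Measurability of $E$ is all that is needed, since the cited result is stated for arbitrary Lebesgue-measurable sets and not just open ones.

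The one point to check is that the exceptional null set in the cited theorem does not depend on the representative chosen for $h$. This holds because all statements are made $\mathcal L^d$-a.e. There is no substantive obstacle beyond this bookkeeping.
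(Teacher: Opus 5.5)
Your proposal is correct and is essentially the paper's own argument: the paper states the first-order level-set fact from \cite[Theorem~6.19]{LiebLoss} and obtains the lemma by iterating it over the weak derivatives of $f$. That is exactly your induction on the order $k$, using $D^\beta f\in W^{1,1}_{\mathrm{loc}}$ for $|\beta|<j$ and $\partial_i D^\beta f=D^{\beta+e_i}f$.
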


We next record the elementary domination properties of the Jordan parts.

\begin{lemma}[Domination of the Jordan parts]\label{critical:lem:jordan-domination}
Let $\rho,\mu\in\mathcal M_+(\R^d)$.  Then
\begin{equation}\label{critical:eq:rigidity-jordan-domination}
 (\rho-\mu)^+\le\rho,
 \qquad
 (\rho-\mu)^-\le\mu.
\end{equation}
If $\rho(\R^d)=\mu(\R^d)$, then $(\rho-\mu)^+$ and $(\rho-\mu)^-$ have equal mass.
Consequently, $\rho\ll\mathcal{L}^d$ implies
$(\rho-\mu)^+\ll\mathcal{L}^d$, and every condition holding
$\rho$-a.e.\ also holds $(\rho-\mu)^+$-a.e.
\end{lemma}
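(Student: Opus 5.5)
The plan is to prove the three assertions of \Cref{critical:lem:jordan-domination} in order, working directly from the Hahn decomposition of the finite signed measure $\nu:=\rho-\mu$. Let $\R^d=P\cup N$ be a Hahn decomposition, so that $\nu^+=\nu\llcorner P$ and $\nu^-=-\nu\llcorner N$ are the Jordan parts, concentrated on the disjoint sets $P$ and $N$. No earlier result of the paper is needed beyond this standard decomposition.

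For the domination \eqref{critical:eq:rigidity-jordan-domination}, fix a Borel set $B$. Then
\begin{equation}
\nu^+(B)=\rho(B\cap P)-\mu(B\cap P)\le\rho(B\cap P)\le\rho(B),
\end{equation}
using only the nonnegativity of $\mu$ and $\rho$. The bound $\nu^-(B)=\mu(B\cap N)-\rho(B\cap N)\le\mu(B)$ follows symmetrically from the nonnegativity of $\rho$.

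For the equality of masses, I will use that $\rho(\R^d)=\mu(\R^d)<\infty$ gives
\begin{equation}
0=\nu(\R^d)=\nu^+(\R^d)-\nu^-(\R^d),
\end{equation}
and finiteness of both Jordan parts allows the two terms to be separated, so the masses agree.

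For the final two consequences: if $\rho\ll\mathcal L^d$ and $\mathcal L^d(B)=0$, then $\nu^+(B)\le\rho(B)=0$, hence $\nu^+\ll\mathcal L^d$. Likewise, if a property fails only on a $\rho$-null set $Z$, then $\nu^+(Z)\le\rho(Z)=0$, so the property holds $\nu^+$-a.e. There is no genuine obstacle here; the only point needing care is to work with the Hahn sets rather than with densities, since $\mu$ may have a singular part.
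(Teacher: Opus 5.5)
Your proposal is correct and follows essentially the paper's argument. The only cosmetic difference is that the paper bounds $(\rho-\mu)^+(E)=\sup_{F\subset E}(\rho-\mu)(F)\le\sup_{F\subset E}\rho(F)=\rho(E)$ instead of intersecting with a Hahn set. These are equivalent characterizations of the positive part, and your equal-mass step and absolute-continuity/a.e.\ consequences match the paper's.
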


\begin{proof}
For every Borel set $E$,
\begin{equation}
 (\rho-\mu)^+(E)=\sup_{F\subset E}(\rho-\mu)(F)
 \le\sup_{F\subset E}\rho(F)
 =\rho(E).
\end{equation}
The inequality for $(\rho-\mu)^-$ follows by interchanging $\rho$ and $\mu$.
The remaining assertions are immediate.
\end{proof}

These facts give the Riesz-potential reduction at the remaining
fractional order $\alpha$.

\begin{proposition}[Integer reduction to finite Riesz contact]\label{critical:prop:residual-contact}
Let $d\ge1$, let $0<q<2$, and let
$\rho,\mu\in\mathcal M_+(\R^d)$ be such that
\begin{equation}
 \int_{\R^d}|y|^q\,\ud(\rho+\mu)(y)<\infty.
\end{equation}
Set
\begin{equation}
 \nu:=\rho-\mu,
 \qquad
 u:=K*\nu.
\end{equation}
Let $m\ge1$ be an integer and set
\begin{equation}\label{critical:eq:rigidity-alpha-general}
 \alpha:=d+q-2m.
\end{equation}
Assume $0<\alpha<d$.  Then
\begin{equation}\label{critical:eq:rigidity-integer-reduction}
 (-\Delta)^m u
 =b_{d,q,m}\,\mathcal I_\alpha\nu
 \qquad\text{in }\mathcal D'(\R^d),
\end{equation}
where $b_{d,q,m}>0$.  If
\eqref{critical:eq:rigidity-positive-part-criticality} holds, write
$\nu^+=h\,\mathcal{L}^d$ and set
\begin{equation}\label{critical:eq:rigidity-positive-density-set}
 B:=\{x\in\R^d:h(x)>0\}.
\end{equation}
Then
\begin{equation}\label{critical:eq:rigidity-contact-on-B}
 \mathcal I_\alpha\nu=0
 \qquad\text{for $\mathcal L^d$-a.e. }x\in B,
\end{equation}
and consequently
\begin{equation}\label{critical:eq:rigidity-contact-sigma}
 \mathcal I_\alpha\nu^+=\mathcal I_\alpha\nu^-<\infty
 \qquad\nu^+\text{-a.e.}
\end{equation}
We introduce the term \emph{finite Riesz contact} for the relation
\eqref{critical:eq:rigidity-contact-sigma} between $\nu^+$ and $\nu^-$.
\end{proposition}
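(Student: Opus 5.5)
The proof has three steps: a Fourier identity giving \eqref{critical:eq:rigidity-integer-reduction}, a Sobolev-locality argument giving \eqref{critical:eq:rigidity-contact-on-B}, and a splitting of the potential giving \eqref{critical:eq:rigidity-contact-sigma}.

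\emph{Step 1: the distributional identity.} I would obtain \eqref{critical:eq:rigidity-integer-reduction} from Fourier multipliers. By the Schoenberg--L\'evy identity \eqref{eq:fourierkernel}, on functions of zero mean $K$ acts as the multiplier $c_{d,q}|2\pi\xi|^{-(d+q)}$ with $c_{d,q}>0$. Applying $(-\Delta)^m$ multiplies this by $|2\pi\xi|^{2m}$, which leaves $|2\pi\xi|^{-\alpha}$, the Riesz multiplier. This gives $b_{d,q,m}=c_{d,q}>0$ after normalization.

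To make this rigorous for measures, I would pair with $\varphi\in\mathcal S$ and set $\psi:=(-\Delta)^m\varphi$. Then $\widehat\psi$ vanishes to order $2m$ at the origin. So $\int u\,\psi=\int(K*\psi)\,\ud\nu$, with $K*\psi$ of growth $|x|^q$, which is integrable against $\nu$ by the $q$-moments. Expressing $K*\psi$ through its Fourier representation gives $c\,\mathcal I_\alpha\varphi$, since $|2\pi\xi|^{2m-d-q}\widehat\varphi=|2\pi\xi|^{-\alpha}\widehat\varphi$. Fubini then returns $\int\mathcal I_\alpha\nu\,\varphi$. Here $\mathcal I_\alpha\nu\in L^1_{\mathrm{loc}}$ because $0<\alpha<d$ and $\nu$ is finite.

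An alternative, closer to Lemma~\ref{critical:lem:power-regularity}, is a direct computation: for $x\ne0$ one has $(-\Delta)^m|x|^q=-b|x|^{q-2m}$, and the identity holds distributionally by the same excision argument, with boundary terms of order $\varepsilon^{d+q-2m+1}$ vanishing. This also fixes the sign. I expect the sign of $b_{d,q,m}$ and the justification of the pairing to be the main technical points.

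\emph{Step 2: the contact equation on $B$.} Since $\alpha>0$, we have $2m<d+q$. Lemma~\ref{critical:lem:power-regularity} with $\ell=2m$ gives $u\in W^{2m,1}_{\mathrm{loc}}$, and the distributional identity then holds pointwise a.e. Criticality gives $\nabla u=0$ $\nu^+$-a.e., hence $\mathcal L^d$-a.e. on $B$, because $h>0$ there. Applying Lemma~\ref{critical:lem:sobolev-locality} to each $\partial_i u\in W^{2m-1,1}_{\mathrm{loc}}$ shows that all derivatives of $u$ of orders $1$ through $2m$ vanish a.e. on $B$. In particular $(-\Delta)^m u=0$ a.e. on $B$, which is \eqref{critical:eq:rigidity-contact-on-B}.

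\emph{Step 3: finite Riesz contact.} Write $\mathcal I_\alpha\nu=\mathcal I_\alpha\nu^+-\mathcal I_\alpha\nu^-$. Both terms are finite a.e., since each is locally integrable. Hence the identity holds $\mathcal L^d$-a.e. on $B$ with both sides finite. Because $\nu^+=h\,\mathcal L^d$ is carried by $B$ and is absolutely continuous, a.e. on $B$ implies $\nu^+$-a.e. This gives \eqref{critical:eq:rigidity-contact-sigma}.
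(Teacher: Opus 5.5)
Your proposal is correct. Steps~2 and~3 are exactly the paper's argument: \Cref{critical:lem:power-regularity} with $\ell=2m$ gives $u\in W^{2m,1}_{\mathrm{loc}}$. Sobolev locality applied to the components of $\nabla u$ then kills all derivatives of order $1$ through $2m$ a.e.\ on $B$. Finally, local integrability of $\mathcal I_\alpha\nu^\pm$ upgrades the a.e.\ identity on $B$ to finite contact $\nu^+$-a.e.

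The genuine difference is Step~1.

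\emph{The paper's route.} It works in physical space. It iterates $\Delta|x|^a=a(a+d-2)|x|^{a-2}$ on $\R^d\setminus\{0\}$ and invokes \Cref{critical:lem:power-regularity} to rule out origin-supported terms. It then convolves with $\nu$ and gets $b_{d,q,m}>0$ by counting the $m-1$ negative factors $q-2j$ in $(-1)^{m+1}\prod_{j=0}^{m-1}(q-2j)(q+d-2-2j)$.

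\emph{Your route.} You move $(-\Delta)^m$ onto the test function and evaluate $K*(-\Delta)^m\varphi$ through \eqref{eq:fourierkernel}. This is rigorous as you sketch it:
\begin{itemize}
\item Fubini in $(x,\xi)$ is justified because the nonnegative factor $1-\cos(2\pi\xi\cdot(y-x))$ integrates against the weight to a multiple of $|x-y|^q$.
\item The constant term drops because $(-\Delta)^m\varphi$, not $\nu$, has zero mean. So, like the paper, you need no equal-mass assumption, which the proposition does not impose.
\item $|2\pi\xi|^{-\alpha}\widehat\varphi\in L^1$ because $0<\alpha<d$.
\end{itemize}

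\emph{What each buys.} Your route makes positivity of $b_{d,q,m}$ automatic from the conditional positive definiteness of $K$. In fact $b_{d,q,m}$ is the Schoenberg--L\'evy constant, independent of $m$. It also avoids any discussion of singular terms at the origin. The paper's route is more economical, since \Cref{critical:lem:power-regularity} is needed for Step~2 anyway, and it yields an explicit formula for the constant.

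One small slip in your alternative sketch: the last boundary term in the excision argument is of order $\varepsilon^{d+q-2m}=\varepsilon^{\alpha}$, not $\varepsilon^{\alpha+1}$. It still vanishes, but it is precisely the term that needs $\alpha>0$; at $\alpha=0$ it would produce a Dirac mass.
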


\begin{proof}
For $x\ne0$ and any real $a$,
\begin{equation}
 \Delta|x|^a=a(a+d-2)|x|^{a-2}.
\end{equation}
Since $2m=d+q-\alpha<d+q$, \Cref{critical:lem:power-regularity} shows that all
intermediate distributional derivatives are the classical homogeneous
ones and that no origin-supported term occurs.  Hence
\begin{align}
 (-\Delta)^mK(x)=(-\Delta)^m(-|x|^q)
 &=(-1)^{m+1}
   \prod_{j=0}^{m-1}(q-2j)(q+d-2-2j)|x|^{q-2m} \notag\\
 &=a_{d,q,m}|x|^{\alpha-d}.
 \label{critical:eq:rigidity-reduction-product}
\end{align}
Every factor $q+d-2-2j$ is positive, its smallest value being
$q+d-2m=\alpha$.  Among the factors $q-2j$, the factor with $j=0$ is
positive and the remaining $m-1$ factors are negative.  The sign
$(-1)^{m-1}$ of their product is canceled by $(-1)^{m+1}$, so
$a_{d,q,m}>0$.  Since
$|\cdot|^{\alpha-d}*\nu=c_{d,\alpha}^{-1}\mathcal I_\alpha\nu$,
convolution with $\nu$ proves
\eqref{critical:eq:rigidity-integer-reduction} with
$b_{d,q,m}:=a_{d,q,m}/c_{d,\alpha}>0$.

The regularity used here is supplied entirely by the moment hypothesis:
\Cref{critical:lem:power-regularity} gives
$u\in W^{2m,1}_{\mathrm{loc}}(\R^d)$.  Because
$\nu^+=h\,\mathcal{L}^d$, the second condition in
\eqref{critical:eq:rigidity-positive-part-criticality} implies that
$\nabla u=0$ $\mathcal L^d$-a.e.\ on $B$.  Each component of
$\nabla u$ belongs to $W^{2m-1,1}_{\mathrm{loc}}$.  Applying
\Cref{critical:lem:sobolev-locality} to these components shows that every
weak derivative of $u$ of order between $1$ and $2m$ vanishes $\mathcal L^d$-a.e.\ on
$B$.  In particular, $(-\Delta)^m u=0$ there, and
\eqref{critical:eq:rigidity-integer-reduction} proves
\eqref{critical:eq:rigidity-contact-on-B}.

The two positive Riesz potentials in
\eqref{critical:eq:rigidity-contact-sigma} are locally integrable and
hence finite $\mathcal L^d$-a.e.  Since
$\nu^+\ll\mathcal{L}^d$ and is concentrated on $B$, the locally integrable
identity \eqref{critical:eq:rigidity-contact-on-B} yields the finite
contact \eqref{critical:eq:rigidity-contact-sigma}.
\end{proof}

We next show that the resulting Riesz contact is rigid. The key potential-theoretic ingredient is the complete maximum principle for
Riesz kernels.  If $n\ge2$, $0<\beta\le2$, $\beta<n$, $\lambda$ is a
positive measure of finite $\beta$-Riesz energy, and
$\gamma\in\mathcal M_+(\R^n)$, then
\begin{equation}
 \mathcal I_\beta\lambda\le\mathcal I_\beta\gamma
 \quad\lambda\text{-a.e.}
 \quad\Longrightarrow\quad
 \mathcal I_\beta\lambda\le\mathcal I_\beta\gamma
 \quad\text{everywhere}.
\end{equation}
This is the zero-additive-constant case of the complete maximum principle
in \cite[Theorem~2.2]{Zorii2023Deny}.  The one-dimensional case needed below is
obtained by embedding $\R$ as a line in $\R^2$: an order-$\alpha$ kernel
on $\R$, $0<\alpha<1$, is the restriction of the order-$(\alpha+1)$ kernel
on $\R^2$.

A finite Riesz contact need not itself have finite energy, so we first
approximate its positive measure from below by finite-energy measures to
which the complete maximum principle applies.

\begin{lemma}[Exhaustion by finite-energy measures]\label{critical:lem:riesz-exhaustion}
Let $0<\alpha<d$ and let $\sigma\in\mathcal M_+(\R^d)$ satisfy
\begin{equation}\label{critical:eq:rigidity-self-potential-finite}
 \mathcal I_\alpha\sigma<\infty
 \qquad\sigma\text{-a.e.}
\end{equation}
Then there are $\sigma_j\in\mathcal M_+(\R^d)$ with
$\sigma_j\uparrow\sigma$ such that each $\sigma_j$ has finite
$\alpha$-Riesz energy.
\end{lemma}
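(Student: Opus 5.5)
The plan is to truncate $\sigma$ to the sublevel sets of its own potential, on which it has bounded potential, and then to cut off at infinity. For $k\in\mathbb N$ set
\begin{equation}
E_k:=\{x\in\R^d:\mathcal I_\alpha\sigma(x)\le k\}.
\end{equation}
Since $\mathcal I_\alpha\sigma$ is a lower semicontinuous function with values in $[0,\infty]$ (Fatou applied to the positive kernel $|x-y|^{\alpha-d}$), each $E_k$ is closed and in particular Borel. The sets $E_k$ increase in $k$, and by \eqref{critical:eq:rigidity-self-potential-finite} their union carries all of $\sigma$. We then define
\begin{equation}
\sigma_j:=\sigma\llcorner\bigl(E_j\cap \overline B_j\bigr).
\end{equation}
These measures are positive and increase with $j$. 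For every Borel set $F$, monotone convergence gives $\sigma_j(F)\uparrow\sigma\bigl(F\cap\bigcup_k E_k\bigr)=\sigma(F)$, so $\sigma_j\uparrow\sigma$ setwise, and also in total variation because $\sigma$ is finite.

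The key step is the energy bound. Because the kernel is positive and $\sigma_j\le\sigma$, we have pointwise $\mathcal I_\alpha\sigma_j\le\mathcal I_\alpha\sigma$. This gives
\begin{equation}
\int\mathcal I_\alpha\sigma_j\,\ud\sigma_j\le\int_{E_j}\mathcal I_\alpha\sigma\,\ud\sigma_j\le j\,\sigma(\R^d)<\infty.
\end{equation}
Hence each $\sigma_j$ has finite $\alpha$-Riesz energy. The restriction to $\overline B_j$ is not needed for this bound, but it makes each $\sigma_j$ compactly supported, which is convenient for the subsequent appeal to the complete maximum principle \cite[Theorem~2.2]{Zorii2023Deny}. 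If the energy is meant in the Fourier sense $\int|2\pi\xi|^{-\alpha}|\widehat{\sigma_j}|^2\ud\xi$, we would note that for positive measures this agrees, up to a constant, with the double-integral form by the standard positive-definiteness of Riesz kernels.

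The only point requiring care is measurability of $E_k$, that is, lower semicontinuity of the potential; the rest is routine. I expect no real obstacle. If $\sigma$ is only locally finite, the same construction works, now also using the ball cutoff to guarantee $\sigma_j(\R^d)<\infty$.
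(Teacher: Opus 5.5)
Your proposal is correct and is essentially the paper's proof. The paper also restricts $\sigma$ to $D_j:=\overline B_j\cap\{\mathcal I_\alpha\sigma\le j\}$, which is compact by lower semicontinuity of the potential. It then bounds the energy of $\sigma_j:=\sigma|_{D_j}$ by $\int_{D_j}\mathcal I_\alpha\sigma\,\ud\sigma\le j\,\sigma(D_j)$, using the pointwise domination $\mathcal I_\alpha\sigma_j\le\mathcal I_\alpha\sigma$.
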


\begin{proof}
The potential $\mathcal I_\alpha\sigma$ is lower semicontinuous. For each $j\ge 1$, let $\overline B_j$ denote the closed ball of radius $j$ centered at the origin, and define
\begin{equation}
 D_j:=\overline B_j\cap\{\mathcal I_\alpha\sigma\le j\},
 \qquad
 \sigma_j:=\sigma|_{D_j}.
\end{equation}
The sets $D_j$ are compact and increasing, and
\eqref{critical:eq:rigidity-self-potential-finite} implies
$\sigma_j\uparrow\sigma$.  Moreover,
$\mathcal I_\alpha\sigma_j\le\mathcal I_\alpha\sigma$, so
\begin{equation}
 \int_{\R^d}\mathcal I_\alpha\sigma_j\,\ud\sigma_j
 \le\int_{D_j}\mathcal I_\alpha\sigma\,\ud\sigma
 \le j\sigma(D_j)<\infty.
\end{equation}
Thus $\sigma_j$ has finite $\alpha$-Riesz energy.
\end{proof}

The following cancellation statement is the part of the argument that uses
the moment assumption quantitatively.

\begin{proposition}[One-sign cancellation below order two]\label{critical:prop:riesz-one-sign}
Let $d\ge1$ and $0<\alpha<\min\{2,d\}$.  Let
$\eta\in\mathcal M(\R^d)$ satisfy
\begin{equation}\label{critical:eq:rigidity-cancellation-assumptions}
 \eta(\R^d)=0,
 \qquad
 \int_{\R^d}|y|^\alpha\,\ud|\eta|(y)<\infty.
\end{equation}
If $\mathcal I_\alpha\eta\ge0$ $\mathcal L^d$-a.e., or if
$\mathcal I_\alpha\eta\le0$ $\mathcal L^d$-a.e., then $\eta=0$.
\end{proposition}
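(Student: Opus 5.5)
We plan to test the one-signed function $\mathcal I_\alpha\eta$ against dilates of a fixed radial bump, and to use the zero mass of $\eta$ together with $\alpha<2$ to show that these integrals tend to zero. By symmetry we may assume $\mathcal I_\alpha\eta\ge0$ $\mathcal L^d$-a.e.

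\emph{Setup.} Fix a radial $\varphi\in C_c^\infty(\R^d)$ with $0\le\varphi\le1$, $\varphi=1$ on $B_1$, and $\varphi$ radially nonincreasing, and set $\varphi_R:=\varphi(\cdot/R)$. Then $\varphi_R\uparrow1$ pointwise as $R\uparrow\infty$. Since $0<\alpha<d$ and $\int|y|^\alpha\ud|\eta|<\infty$, splitting near and far from $y$ shows that $\mathcal I_\alpha|\eta|\in L^1_{\mathrm{loc}}$. Hence Fubini applies and gives
\begin{equation*}
\int_{\R^d}\mathcal I_\alpha\eta\,\varphi_R\ud x=\int_{\R^d}\mathcal I_\alpha\varphi_R\ud\eta .
\end{equation*}
By homogeneity, $\mathcal I_\alpha\varphi_R(y)=R^\alpha\,\Phi(y/R)$ with $\Phi:=\mathcal I_\alpha\varphi$. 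Because $\eta(\R^d)=0$, we may subtract the constant $\Phi(0)$:
\begin{equation*}
\int\mathcal I_\alpha\eta\,\varphi_R\ud x=\int R^\alpha\bigl(\Phi(y/R)-\Phi(0)\bigr)\ud\eta(y).
\end{equation*}

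\emph{Key estimate.} The function $\Phi$ is smooth and radial, so $\nabla\Phi(0)=0$. Near the origin this gives $|\Phi(z)-\Phi(0)|\le C|z|^2$. Moreover $\Phi$ is bounded, decaying like $|z|^{\alpha-d}$, so $|\Phi(z)-\Phi(0)|\le C$ everywhere. Combining the two bounds with $\alpha<2$,
\begin{equation*}
|\Phi(z)-\Phi(0)|\le C\min\{|z|^2,1\}\le C|z|^\alpha .
\end{equation*}
Consequently the integrand is dominated by $C|y|^\alpha$, which is $|\eta|$-integrable by hypothesis. It also tends to zero pointwise, since for fixed $y$ and $R$ large it is at most $CR^{\alpha-2}|y|^2$. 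By dominated convergence, $\int\mathcal I_\alpha\eta\,\varphi_R\ud x\to0$.

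\emph{Conclusion.} Since $\mathcal I_\alpha\eta\ge0$ a.e. and $\varphi_R\uparrow1$, monotone convergence gives $\int\mathcal I_\alpha\eta\ud x=\lim_R\int\mathcal I_\alpha\eta\,\varphi_R\ud x=0$. Hence $\mathcal I_\alpha\eta=0$ a.e. To recover $\eta$, take any Schwartz $\psi$ whose Fourier transform vanishes near the origin. Then $\mathcal I_\alpha\psi$ is Schwartz, and Fubini (again using local integrability of $\mathcal I_\alpha|\eta|$) gives
\begin{equation*}
0=\int\mathcal I_\alpha\eta\,\psi\ud x=\int\mathcal I_\alpha\psi\ud\eta=\int|2\pi\xi|^{-\alpha}\widehat\psi(\xi)\,\overline{\widehat\eta(\xi)}\ud\xi .
\end{equation*}
Since $\widehat\psi$ ranges over all smooth functions compactly supported away from the origin, this forces $\widehat\eta=0$ on $\R^d\setminus\{0\}$. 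As $\widehat\eta$ is continuous, $\widehat\eta\equiv0$, and therefore $\eta=0$.

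The step I expect to need the most care is the domination. This is exactly where both hypotheses are used: radiality of $\varphi$ removes the first-order term, which is essential when $1<\alpha<2$, and the finite $\alpha$-moment supplies the integrable majorant. The Fubini justifications, namely local integrability of $\mathcal I_\alpha|\eta|$ and the growth bounds on $\mathcal I_\alpha\psi$, are routine.
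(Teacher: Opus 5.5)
Your proposal is correct and follows essentially the paper's proof: test $\mathcal I_\alpha\eta$ against dilated radial cutoffs, use the zero mass of $\eta$ to subtract the value at the origin, use radiality and $\alpha<2$ to get the uniform majorant $C|y|^\alpha$ together with pointwise decay, conclude $\mathcal I_\alpha\eta=0$ a.e.\ from the one-sign monotonicity in $R$, and then invert. The differences are technical. The paper uses the sharp cutoff $\indic_{B_R}$ and bounds $D^2F_R$ by a divergence-theorem computation, with a separate $d=1$ case. Your smooth bump and the scaling identity $\mathcal I_\alpha\varphi_R(y)=R^\alpha\Phi(y/R)$ reduce the uniform estimate to Taylor's theorem for one fixed function. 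Your explicit Fourier test against $\psi$ with $\widehat\psi$ supported away from the origin replaces the paper's citation of the Riesz inversion identity.
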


\begin{proof}
Suppose that $\mathcal I_\alpha\eta\le0$ $\mathcal L^d$-a.e.  The other sign is reduced to
this one by replacing $\eta$ with $-\eta$.  For $R>0$, define
\begin{equation}
 F_R(y):=c_{d,\alpha}\int_{B_R}|x-y|^{\alpha-d}\,\ud x,
 \qquad
 J_R:=\int_{B_R}\mathcal I_\alpha\eta(x)\,\ud x.
\end{equation}
The function $F_R$ is radial and nonincreasing in $|y|$: this follows, for
example, from the layer-cake representation of the radial decreasing
kernel and the fact that the intersection of two balls has maximal volume
when their centers coincide.  In particular,
\begin{equation}\label{critical:eq:rigidity-FR-bound}
 0\le F_R(y)\le F_R(0)
 =c_{d,\alpha}\frac{|\mathbb S^{d-1}|}{\alpha}R^\alpha.
\end{equation}
Fubini's theorem is therefore applicable, and the zero-mass condition gives
\begin{equation}\label{critical:eq:rigidity-JR-zero-mass}
 J_R=\int_{\R^d}\bigl(F_R(y)-F_R(0)\bigr)\,\ud\eta(y).
\end{equation}

We claim that, uniformly in $R>0$,
\begin{equation}\label{critical:eq:rigidity-FR-holder}
 |F_R(y)-F_R(0)|\le C_{d,\alpha}|y|^\alpha,
\end{equation}
and that the left-hand side tends to zero as $R\to\infty$ for every fixed
$y$.  If $|y|>R/2$, \eqref{critical:eq:rigidity-FR-bound} gives
\begin{equation}
 |F_R(y)-F_R(0)|\le2F_R(0)
 \le C R^\alpha\le C|y|^\alpha.
\end{equation}
		If $|y|\le R/2$, let $k(z):=c_{d,\alpha}|z|^{\alpha-d}$.  For $d\ge2$, differentiating
	inside the integral and applying the divergence theorem
	give
	\begin{equation}
	 \partial_{y_i y_j}F_R(y)
	 =\int_{\partial B_R}\partial_{x_j}k(x-y)n_i(x)\,\ud\mathcal H^{d-1}(x).
	\end{equation}
	Since $|x-y|\ge R/2$ for $x\in\partial B_R$,
\begin{equation}
 \norm{D^2F_R(y)}\le C R^{d-1}R^{\alpha-d-1}
 =C R^{\alpha-2}.
\end{equation}
	For $d=1$, necessarily $0<\alpha<1$, and direct differentiation at the
	two endpoints gives, for $|y|<R$,
	\begin{equation}
		 F_R'(y)=c_{1,\alpha}\bigl((R+y)^{\alpha-1}-(R-y)^{\alpha-1}\bigr),
	\end{equation}
	which yields the same bound $|F_R''(y)|\le C R^{\alpha-2}$ when
	$|y|\le R/2$.
	By symmetry, $\nabla F_R(0)=0$, so Taylor's theorem gives
\begin{equation}\label{critical:eq:rigidity-FR-sharp}
 |F_R(y)-F_R(0)|
 \le C|y|^2R^{\alpha-2}
 \le C|y|^\alpha,
\end{equation}
where the final inequality uses $R\ge2|y|$ and $\alpha<2$.  The sharper
	bound in \eqref{critical:eq:rigidity-FR-sharp} also tends to zero as $R\to\infty$ for fixed
	$y$.

The moment assumption and dominated convergence in
\eqref{critical:eq:rigidity-JR-zero-mass} now imply
\begin{equation}\label{critical:eq:rigidity-JR-limit}
 \lim_{R\to\infty}J_R=0.
\end{equation}
Since $\mathcal I_\alpha\eta\le0$ $\mathcal L^d$-a.e., the map $R\mapsto J_R$ is nonincreasing and nonpositive.
For $S>R$, one has $J_S\le J_R\le0$; letting $S\to\infty$ and using
\eqref{critical:eq:rigidity-JR-limit} gives $J_R=0$.  Thus
$\mathcal I_\alpha\eta=0$ $\mathcal L^d$-a.e.

Finally, the Riesz inversion identity gives
\begin{equation}\label{critical:eq:rigidity-riesz-inversion}
 (-\Delta)^{\alpha/2}\mathcal I_\alpha\eta=\eta
 \qquad\text{in }\mathcal S'(\R^d);
\end{equation}
see \cite[Chapter~V, \S1]{Stein1970singular}.
Since $\mathcal I_\alpha\eta=0$ as a locally integrable
distribution, \eqref{critical:eq:rigidity-riesz-inversion} yields $\eta=0$.
\end{proof}

Combining the finite-energy exhaustion and the complete maximum principle with
the preceding one-sign cancellation yields the following contact rigidity statement. No absolute-continuity hypothesis is required.

\begin{proposition}[Riesz contact rigidity]\label{critical:prop:riesz-contact}
Let $d\ge1$ and $0<\alpha<\min\{2,d\}$.  Let
$\sigma,\tau\in\mathcal M_+(\R^d)$ have equal mass and satisfy
\begin{equation}\label{critical:eq:rigidity-contact-moment}
 \int_{\R^d}|x|^\alpha\,\ud(\sigma+\tau)(x)<\infty.
\end{equation}
If the canonical Riesz potentials\footnote{Recall that, for
$\lambda\in\mathcal M_+(\R^d)$, the integral in
\eqref{critical:eq:rigidity-riesz-potential} defines
$\mathcal I_\alpha\lambda(x)\in[0,\infty]$ for every $x\in\R^d$.} satisfy
\begin{equation}\label{critical:eq:rigidity-abstract-contact}
 \mathcal I_\alpha\sigma=\mathcal I_\alpha\tau<\infty
 \qquad\sigma\text{-a.e.},
\end{equation}
then $\sigma=\tau$.  
\end{proposition}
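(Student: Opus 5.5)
Set $\eta:=\sigma-\tau$.  The plan is to show that $\mathcal I_\alpha\eta$ has one sign $\mathcal L^d$-a.e.; then \Cref{critical:prop:riesz-one-sign} gives $\eta=0$, because $\eta(\R^d)=0$ by the equal-mass assumption and $\int|y|^\alpha\ud|\eta|<\infty$ by \eqref{critical:eq:rigidity-contact-moment}.  The sign will come from the complete maximum principle applied with $\lambda$ a finite-energy piece of $\sigma$ and $\gamma=\tau$.

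First, by the contact relation \eqref{critical:eq:rigidity-abstract-contact}, $\mathcal I_\alpha\sigma=\mathcal I_\alpha\tau<\infty$ holds $\sigma$-a.e.  Hence \Cref{critical:lem:riesz-exhaustion} applies and gives $\sigma_j\uparrow\sigma$, each $\sigma_j$ of finite $\alpha$-Riesz energy.  Since $\sigma_j\le\sigma$, we have $\mathcal I_\alpha\sigma_j\le\mathcal I_\alpha\sigma=\mathcal I_\alpha\tau$ $\sigma$-a.e., and in particular $\sigma_j$-a.e.

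Second, we pass from $\sigma_j$-a.e.\ to everywhere.  When $d\ge2$ and $0<\alpha<\min\{2,d\}$, the complete maximum principle (zero-constant case of \cite[Theorem~2.2]{Zorii2023Deny}) with $\lambda=\sigma_j$ and $\gamma=\tau$ gives $\mathcal I_\alpha\sigma_j\le\mathcal I_\alpha\tau$ everywhere on $\R^d$.  When $d=1$, necessarily $0<\alpha<1$, and we embed $\R\hookrightarrow\R^2$ as a line.  The measures $\sigma_j,\tau$ are pushed forward to $\R^2$, where the order-$(\alpha+1)$ kernel $|x-y|^{\alpha+1-2}=|x-y|^{\alpha-1}$ restricts to the one-dimensional kernel up to constants.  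Finite energy is preserved, and $\alpha+1\le2$, $\alpha+1<2$, so the principle applies in $\R^2$.  Restricting the resulting inequality back to the line gives it on $\R$.  Letting $j\to\infty$, monotone convergence yields $\mathcal I_\alpha\sigma\le\mathcal I_\alpha\tau$ everywhere.

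Third, we convert this to a statement about $\eta$.  Both potentials are locally integrable under the moment assumption, since $\alpha<d$ and the tails are controlled by the $\alpha$-moment (only mass is needed there).  So both are finite $\mathcal L^d$-a.e., and $\mathcal I_\alpha\eta=\mathcal I_\alpha\sigma-\mathcal I_\alpha\tau\le0$ $\mathcal L^d$-a.e.  \Cref{critical:prop:riesz-one-sign} then gives $\eta=0$, i.e.\ $\sigma=\tau$.

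The main obstacle is the second step.  One must check the hypotheses of the complete maximum principle exactly: the finite energy of $\lambda$, and that only $\lambda$-a.e.\ domination is required.  One must also handle the one-dimensional embedding carefully, verifying that finite energy and the a.e.\ inequality transfer, and noting that the inequality is only needed on the line.  The final step is routine once the everywhere inequality is in hand.
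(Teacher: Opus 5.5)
Your proposal is correct and follows the paper's proof essentially step for step. You exhaust $\sigma$ by finite-energy measures via \Cref{critical:lem:riesz-exhaustion} and apply the complete maximum principle, using the embedding $\R\hookrightarrow\R^2$ at order $\alpha+1\in(1,2)$ when $d=1$; monotone convergence then gives $\mathcal I_\alpha\sigma\le\mathcal I_\alpha\tau$ everywhere, local integrability gives $\mathcal I_\alpha(\sigma-\tau)\le0$ $\mathcal L^d$-a.e., and \Cref{critical:prop:riesz-one-sign} concludes.
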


\begin{proof}
Let $\sigma_j\uparrow\sigma$ be the exhaustion from
\Cref{critical:lem:riesz-exhaustion}.  Since $\sigma_j\le\sigma$, the
contact equality implies
\begin{equation}
 \mathcal I_\alpha\sigma_j
 \le\mathcal I_\alpha\sigma
 =\mathcal I_\alpha\tau
 \qquad\sigma_j\text{-a.e.}
\end{equation}
For $d\ge2$, the complete maximum principle
\cite[Theorem~2.2]{Zorii2023Deny} yields
\begin{equation}
 \mathcal I_\alpha\sigma_j\le\mathcal I_\alpha\tau
 \qquad\text{everywhere on }\R^d.
\end{equation}
For $d=1$, push the measures forward by $x\mapsto(x,0)$ into $\R^2$ and
apply the same theorem at order $\alpha+1\in(1,2)$; the resulting kernel
restricted to the line is a positive multiple of the order-$\alpha$ kernel
on $\R$.  Monotone convergence gives
\begin{equation}\label{critical:eq:rigidity-global-domination}
 \mathcal I_\alpha\sigma\le\mathcal I_\alpha\tau
 \qquad\text{everywhere, in the extended-valued sense}.
\end{equation}
The two potentials are locally integrable and hence finite
$\mathcal L^d$-a.e.  Therefore
$\mathcal I_\alpha(\sigma-\tau)\le0$ $\mathcal L^d$-a.e.  The signed
measure $\sigma-\tau$ has zero mass and finite $\alpha$-moment, so
\Cref{critical:prop:riesz-one-sign} gives $\sigma=\tau$.
\end{proof}

The preceding results immediately yield the single-contact mechanism.

\begin{theorem}[Single-contact rigidity]\label{critical:thm:single-contact}
Let $d\ge1$, let $0<q<2$, let $m\ge1$ be an integer, and set
\begin{equation}
 \alpha=d+q-2m\in(0,\min\{2,d\}).
\end{equation}
Let $\rho,\mu\in\mathcal M_+(\R^d)$ have equal mass, and set
\begin{equation}
 \nu:=\rho-\mu,
 \qquad
 u:=K*\nu.
\end{equation}
Assume that $\nu^+\ll\mathcal{L}^d$ and that
\begin{equation}\label{critical:eq:rigidity-single-moments}
 \int_{\R^d}|x|^{\max\{q,\alpha\}}
 \ud(\rho+\mu)(x)<\infty.
\end{equation}
If $\nabla u=0$ $\nu^+$-a.e., then $\rho=\mu$.
\end{theorem}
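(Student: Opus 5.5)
The plan is to chain together the three results immediately preceding the statement. First I would apply \Cref{critical:prop:residual-contact}. Its hypotheses need $\int|y|^q\,\ud(\rho+\mu)<\infty$, which is contained in \eqref{critical:eq:rigidity-single-moments} since $|x|^q\le 1+|x|^{\max\{q,\alpha\}}$ and the measures are finite. They also need $0<\alpha<d$, which is part of the assumption $\alpha\in(0,\min\{2,d\})$. Finally, they need \eqref{critical:eq:rigidity-positive-part-criticality}, which is exactly the pair of assumptions $\nu^+\ll\mathcal L^d$ and $\nabla u=0$ $\nu^+$-a.e. The proposition then produces the finite Riesz contact
\begin{equation*}
\mathcal I_\alpha\nu^+=\mathcal I_\alpha\nu^-<\infty\qquad\nu^+\text{-a.e.}
\end{equation*}

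Second, I would apply \Cref{critical:prop:riesz-contact} with $\sigma:=\nu^+$ and $\tau:=\nu^-$. Because $\rho$ and $\mu$ have equal mass, \Cref{critical:lem:jordan-domination} shows that $\nu^+$ and $\nu^-$ have equal mass. It also gives $\nu^+\le\rho$ and $\nu^-\le\mu$, so
\begin{equation*}
\int|x|^\alpha\,\ud(\nu^++\nu^-)\le\int|x|^\alpha\,\ud(\rho+\mu)<\infty,
\end{equation*}
again by \eqref{critical:eq:rigidity-single-moments} and finiteness of the measures. With the range $0<\alpha<\min\{2,d\}$ and the contact relation from the first step, all hypotheses of \Cref{critical:prop:riesz-contact} hold. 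It yields $\nu^+=\nu^-$. Since the Jordan parts are mutually singular, this forces $\nu^+=\nu^-=0$, so $\nu=0$ and $\rho=\mu$.

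There is no genuine obstacle, because the statement is a bookkeeping combination of earlier results. The points that need care are checking that the single moment exponent $\max\{q,\alpha\}$ covers both moment requirements, namely the $q$-moment in the reduction step and the $\alpha$-moment in the contact-rigidity step, and using the Jordan domination to transfer moments from $\rho,\mu$ to $\nu^\pm$. If $\rho,\mu$ are allowed to be general finite measures, I would also note that finiteness lets lower moments be absorbed by higher ones.
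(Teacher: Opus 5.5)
Your proposal is correct and follows the paper's proof: \Cref{critical:prop:residual-contact} gives the finite Riesz contact, and \Cref{critical:lem:jordan-domination} supplies the equal mass of $\nu^\pm$ and transfers the $\alpha$-moment to them. \Cref{critical:prop:riesz-contact} then gives $\nu^+=\nu^-$, hence $\rho=\mu$; the final appeal to mutual singularity is harmless but unnecessary, since $\nu=\nu^+-\nu^-=0$ directly.
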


\begin{proof}
By \Cref{critical:prop:residual-contact}, the Jordan parts $\nu^+$ and
$\nu^-$ have finite contact
$\mathcal I_\alpha\nu^+=\mathcal I_\alpha\nu^-$
$\nu^+$-a.e.  They have equal mass by
\Cref{critical:lem:jordan-domination}, and the moment assumption gives the
finite $\alpha$-moment required by \Cref{critical:prop:riesz-contact}.
Hence $\nu^+=\nu^-$, and therefore $\rho=\mu$.
\end{proof}

\begin{corollary}[Regimes for single-contact rigidity]\label{critical:cor:single-contact}
In the setting of \Cref{critical:thm:natural-moment-rigidity}, single-contact rigidity
applies in each of the following cases:
\begin{enumerate}[label=\textup{(\roman*)}]
 \item if $d=2k$ is even, take $m=k$ and $\alpha=q$;
 \item if $d=2k+1$ is odd and $1<q<2$, take $m=k+1$ and
       $\alpha=q-1$;
 \item if $d=2k+1\ge3$ is odd and $0<q<1$, take $m=k$ and
       $\alpha=q+1$, provided in this last case that
       $\rho,\mu\in\mathcal P_{q+1}(\R^d)$.
\end{enumerate}
\end{corollary}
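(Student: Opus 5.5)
The corollary is a bookkeeping check: in each listed case we verify the hypotheses of \Cref{critical:thm:single-contact}. These are that $m\ge1$ is an integer, that $\alpha=d+q-2m$ lies in $(0,\min\{2,d\})$, and that the moment condition \eqref{critical:eq:rigidity-single-moments} holds with exponent $\max\{q,\alpha\}$. The remaining hypotheses are supplied by the setting of \Cref{critical:thm:natural-moment-rigidity}. Equal mass holds because $\rho,\mu$ are probability measures. By \Cref{critical:lem:jordan-domination}, $\rho\ll\mathcal L^d$ gives $\nu^+\ll\mathcal L^d$, and Lagrangian criticality ($g_\rho=0$ $\rho$-a.e.) gives $\nabla u=0$ $\nu^+$-a.e. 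We would also note that, by \Cref{critical:lem:power-regularity}, the weak gradient of $u$ is $\nabla K*\nu=g_\rho$ (up to the normalization of \Cref{critical:def:Lcrit}), so the two notions of gradient agree.

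The three cases are then arithmetic.
\begin{enumerate}[label=\textup{(\roman*)}]
\item For $d=2k$ and $m=k\ge1$, we get $\alpha=q\in(0,2)$, and $q<2\le d$, so $\alpha<\min\{2,d\}$. The required moment is $\max\{q,\alpha\}=q$, which is exactly $\rho,\mu\in\mathcal P_q$.
\item For $d=2k+1$, $1<q<2$ and $m=k+1\ge1$, we get $\alpha=2k+1+q-2k-2=q-1\in(0,1)$. This is below both $2$ and $d\ge1$. The required moment is $\max\{q,q-1\}=q$.
\item For $d=2k+1\ge3$, $0<q<1$ and $m=k\ge1$, we get $\alpha=q+1\in(1,2)$. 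This is below $2\le\min\{2,d\}$ because $d\ge3$. The required moment is $\max\{q,q+1\}=q+1$, which is the stated extra hypothesis $\rho,\mu\in\mathcal P_{q+1}$.
\end{enumerate}
Applying \Cref{critical:thm:single-contact} then gives $\rho=\mu$ in each case.

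The only point needing care is the agreement between the weak gradient of $u=K*\nu$ and the field $g_\rho$ of \Cref{critical:def:Lcrit}. This follows from \Cref{critical:lem:power-regularity} with $\ell=1$ under the $q$-moment hypothesis. I expect no real obstacle.

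The corollary also shows how it yields \Cref{critical:thm:natural-moment-rigidity}. Regimes (i) and (ii) of that theorem with $q\neq1$ follow directly from cases (i) and (ii) above. The case $d$ odd, $q=1$ is \Cref{critical:prop:energy-Lp}. Regime (iii), $d\ge5$ odd with $0<q<1$ at only the natural $q$-moment, is not covered by case (iii) here and must be handled separately, presumably by a different choice of $m$ allowed by the larger dimension.
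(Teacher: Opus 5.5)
Your proposal is correct and follows the paper's proof. In both, \Cref{critical:lem:jordan-domination} supplies the positive-part criticality condition, $\alpha=d+q-2m$ gives the listed orders, and the moment bookkeeping is the same; you additionally spell out the range check $0<\alpha<\min\{2,d\}$ and the weak-gradient identification via \Cref{critical:lem:power-regularity}, which the paper leaves implicit. In your closing aside, even $d$ with $q=1$ is already covered by case (i), since $\alpha=1\in(0,2)$, so only odd $d$ with $q=1$ needs \Cref{critical:prop:energy-Lp}.
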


\begin{proof}
By \Cref{critical:lem:jordan-domination}, the assumptions of
\Cref{critical:thm:natural-moment-rigidity} imply
\eqref{critical:eq:rigidity-positive-part-criticality}.  The displayed values of
$\alpha$ follow from $\alpha=d+q-2m$.  In case (i), the required residual moment is exactly the
natural $q$-moment.  In case (ii), a finite $q$-moment implies a finite
$(q-1)$-moment.  Case (iii) is the additional-moment statement.
\end{proof}

Only case (iii) fails to close at the natural moment.  In odd dimensions at
least five, the Lagrangian condition supplies a second contact that removes
this loss.

\begin{theorem}[Two-contact rigidity]\label{critical:thm:two-contact}
Let $d=2k+1\ge5$ and $0<q<1$.  Let
$\rho,\mu\in\mathcal P_q(\R^d)$ be probability measures, and set
\begin{equation}
 \nu:=\rho-\mu,
 \qquad
 u:=K*\nu.
\end{equation}
Assume
\begin{equation}
 \nu^+\ll\mathcal{L}^d,
 \qquad
 \nabla u=0\quad\nu^+\text{-a.e.}
\end{equation}
Then $\rho=\mu$.
\end{theorem}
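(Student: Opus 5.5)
The plan is to use \Cref{critical:prop:residual-contact} twice, at the two integers $m=k$ and $m=k-1$, and to let the second, higher-order contact replace the missing $(q+1)$-moment. With $m=k$ we get $\alpha_1:=d+q-2k=q+1\in(1,2)$. With $m=k-1\ge1$ we get $\alpha_2:=q+3$, and $0<\alpha_2<d$ precisely because $d\ge5$; this is where the dimensional restriction enters. Write $B=\{h>0\}$ for the density set of $\nu^+$. The proof of \Cref{critical:prop:residual-contact} actually shows that every weak derivative of $u$ of order $1,\dots,2k$ vanishes a.e.\ on $B$, since $u\in W^{2k,1}_{\mathrm{loc}}$ by \Cref{critical:lem:power-regularity}. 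Set
\begin{equation}
 w:=\mathcal I_{q+3}\nu .
\end{equation}
Then $(-\Delta)^{k-1}u=b\,w$ gives $w=0$ a.e.\ on $B$, and $(-\Delta)^k u=b'\,\mathcal I_{q+1}\nu$ gives $\mathcal I_{q+1}\nu=0$ a.e.\ on $B$. These are the two contacts. Only the natural $q$-moment is needed for either identity: the exponent $q+3-d$ is negative, so $w$ is an honest locally integrable potential of the finite measure $\nu$, with no growth at infinity.

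The first step is a global one-sign statement at order $\alpha_1$. The finite contact $\mathcal I_{q+1}\nu^+=\mathcal I_{q+1}\nu^-<\infty$ holds $\nu^+$-a.e. The argument in the proof of \Cref{critical:prop:riesz-contact}, namely the exhaustion \Cref{critical:lem:riesz-exhaustion}, the complete maximum principle, and monotone convergence, uses no moments. It therefore gives
\begin{equation}
 \mathcal I_{q+1}\nu^+\le\mathcal I_{q+1}\nu^-\quad\text{everywhere.}
\end{equation}
What fails without the $(q+1)$-moment is only \Cref{critical:prop:riesz-one-sign}, so the plan is to avoid that step. Since $-\Delta\mathcal I_{q+3}=c\,\mathcal I_{q+1}$ with $c>0$ (multiplier identity), we get $\Delta w=-c\,\mathcal I_{q+1}\nu\ge0$ in $\mathcal D'$, so $w$ is subharmonic.

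Next I would show that $w\le0$. Write $w=\mathcal I_{q+3}\nu^+-\mathcal I_{q+3}\nu^-$, a difference of superharmonic potentials that are finite a.e. Take the subharmonic (upper semicontinuous) representative of $w$, given by limits of ball averages. For fixed $x$, the spherical means $M_x(R)$ of $w$ are nondecreasing in $R$. They tend to $0$ as $R\to\infty$: the kernel $|\cdot|^{q+3-d}$ is bounded away from the origin and decays, and $\nu$ is finite, so dominated convergence applies to the averaged kernel. Hence $w(x)\le M_x(R)\le 0$ for every $x$.

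The last step uses the second contact. At a.e.\ $x\in B$ that is a Lebesgue point of $w$ we have $w(x)=0$. The sub-mean-value property gives $0=w(x)\le M_x(R)\le0$ for all $R$, so the spherical means of the nonpositive function $w$ vanish identically around $x$, and therefore $w=0$ a.e.\ on $\R^d$. Riesz inversion $(-\Delta)^{(q+3)/2}\mathcal I_{q+3}\nu=\nu$ in $\mathcal S'$, as in the proof of \Cref{critical:prop:riesz-one-sign}, then gives $\nu=0$, that is, $\rho=\mu$. (If $B$ is null then $\nu^+=0$, and equal masses force $\nu=0$ directly.)

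I expect the main obstacle to be the pointwise potential theory in the two middle steps. First, one must identify the distributional subharmonicity of $w$ with a genuine subharmonic representative that agrees with the a.e.\ values on $B$; I would use the precise representative together with the fact that $\mathcal I_{q+3}\nu^\pm$ are superharmonic, since $q+3>2$ makes their Laplacians negative multiples of $\mathcal I_{q+1}\nu^\pm\ge0$. Second, one must justify $M_x(R)\to0$ and the inversion using only the $q$-moment and finite mass of $\nu$. If the representative issue causes trouble, I would instead mollify $w$: mollification preserves subharmonicity, the nonpositivity, and the decay of spherical means, and I would pass to the limit at density points of $B$.
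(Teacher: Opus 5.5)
Your argument is correct, and its overall architecture is the paper's. Both proofs extract the two contacts $\mathcal I_{q+1}\nu=0$ and $\mathcal I_{q+3}\nu=0$ a.e.\ on $B$ from the integer reduction at $m=k$ and $m=k-1$. Both then upgrade the first contact, via the finite-energy exhaustion and the complete maximum principle, to $\mathcal I_{q+1}\nu^+\le\mathcal I_{q+1}\nu^-$ everywhere, using only the natural $q$-moment.

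The routes differ in how the second contact is exploited.
\begin{itemize}
\item The paper sets $V:=-\mathcal I_{q+1}\nu\ge0$. It applies the composition identity $\mathcal I_2(\mathcal I_{q+1}\lambda)=\mathcal I_{q+3}\lambda$ separately to $\lambda=\nu^\pm$ by Tonelli, at one point of $B$ where both $\mathcal I_{q+3}\nu^\pm$ are finite. This gives $\mathcal I_2(V\mathcal L^d)(x)=0$, which contradicts strict positivity of the Newtonian kernel unless $V=0$.
\item You use the differential form of the same fact. The function $w=\mathcal I_{q+3}\nu$ satisfies $\Delta w=cV\ge0$ and decays in mean at infinity, so $w\le0$. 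The strong maximum principle at a contact point then forces $w\equiv0$.
\end{itemize}
Up to a positive constant, $-w$ is exactly $\mathcal I_2(V\mathcal L^d)$, so the two arguments express one fact. The paper's version is more economical: it is purely pointwise and needs neither a subharmonic representative nor spherical means. Yours avoids the composition identity but pays with that bookkeeping.

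The representative issue you flagged disappears if you use ball averages $\fint_{B_R(x)}w$ instead of spherical means. These averages have the following properties:
\begin{itemize}
\item they do not depend on the representative of $w$;
\item they are nondecreasing in $R$ for subharmonic functions;
\item as $R\downarrow0$ they converge to the upper semicontinuous representative, which equals $0$ at almost every point of $B$;
\item they are bounded by $CR^{q+3-d}|\nu|(\R^d)$ uniformly in $x$.
\end{itemize}

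Finally, you do not need to invert at order $q+3>2$, where the fractional Laplacian on $\mathcal S'$ is delicate. Once $w=0$ a.e., you get $\mathcal I_{q+1}\nu=-c^{-1}\Delta w=0$, and the order-$(q+1)$ inversion applies exactly as in the paper.
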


\begin{proof}
Set
\begin{equation}
 \alpha:=q+1\in(1,2),
 \qquad
 \nu^+=h\,\mathcal{L}^d,
 \qquad
 B:=\{h>0\}.
\end{equation}
Applying \Cref{critical:prop:residual-contact} first with $m=k$ and then
with $m=k-1$ gives
\begin{equation}\label{critical:eq:rigidity-two-contacts}
 \mathcal I_\alpha\nu=0,
 \qquad
 \mathcal I_{\alpha+2}\nu=0
 \qquad\mathcal L^d\text{-a.e.\ on }B.
\end{equation}
The second reduction lies in the Riesz range because
\begin{equation}
 \alpha+2=q+3<d.
\end{equation}
The first finite contact, the exhaustion from
\Cref{critical:lem:riesz-exhaustion}, and the complete maximum principle
give, exactly as in the proof of \Cref{critical:prop:riesz-contact},
\begin{equation}\label{critical:eq:rigidity-first-contact-domination}
 \mathcal I_\alpha\nu^+\le\mathcal I_\alpha\nu^-
 \qquad\text{everywhere}.
\end{equation}
Define
\begin{equation}\label{critical:eq:rigidity-def-V}
 V:=-\mathcal I_\alpha\nu\ge0
 \qquad\mathcal L^d\text{-a.e.}
\end{equation}
Assume for the sake of a contradiction that $\nu\ne0$.  Riesz inversion implies that $V$ does not vanish
$\mathcal L^d$-a.e.
Thus $V>0$ on a set of positive Lebesgue measure.  Moreover,
$\nu^+\ne0$: otherwise $\nu\le0$ and $\nu(\R^d)=0$ would imply
$\nu=0$.  Since $\nu^+=h\,\mathcal{L}^d$, the set $B$ has positive
Lebesgue measure.

Choose $x\in B$ such that the second equality in
\eqref{critical:eq:rigidity-two-contacts} holds and both positive
potentials $\mathcal I_{\alpha+2}\nu^+(x)$ and
$\mathcal I_{\alpha+2}\nu^-(x)$ are finite.  Such points form a
full-measure subset of $B$, because potentials of finite measures are
locally integrable.  The Riesz composition identity
\begin{equation}\label{critical:eq:rigidity-riesz-composition}
 \mathcal I_2\bigl(\mathcal I_\alpha\lambda\,\mathcal{L}^d\bigr)
 =\mathcal I_{\alpha+2}\lambda,
 \qquad \alpha+2<d,
\end{equation}
holds for every positive measure $\lambda$ by Fubini-Tonelli and the
classical convolution identity; see \cite[Chapter~V, \S1]{Stein1970singular}.
Applying it separately to $\nu^{\pm}$ at the selected point, where
both sides are finite, gives
\begin{equation}
 \mathcal I_2(V\,\mathcal{L}^d)(x)
 =\mathcal I_{\alpha+2}\nu^-(x)-\mathcal I_{\alpha+2}\nu^+(x)=0.
\end{equation}
On the other hand, the order-two Riesz kernel is strictly positive and
$V\ge0$ is positive on a set of positive measure.  Hence
$\mathcal I_2(V\,\mathcal{L}^d)(x)>0$, which is a
contradiction.  Therefore $\nu=0$.
\end{proof}

\begin{proof}[Proof of \Cref{critical:thm:natural-moment-rigidity}]
By \Cref{critical:lem:jordan-domination}, the absolute continuity of $\rho$
and the Lagrangian condition imply
\eqref{critical:eq:rigidity-positive-part-criticality}.
Let $d=2k$ be even.  Then the residual parameters in
\eqref{critical:eq:rigidity-residual-parameters} satisfy $m=k$ and
$\alpha=q$, so \Cref{critical:cor:single-contact}(i) applies under exactly
the natural $q$-moment.

Let $d=2k+1$ be odd.  If $1<q<2$, then $m=k+1$ and $\alpha=q-1$, so
\Cref{critical:cor:single-contact}(ii) applies.  If $q=1$, use
\Cref{critical:prop:energy-Lp}.  If $0<q<1$ and $d\ge5$, use
\Cref{critical:thm:two-contact}.  This proves the theorem.
\end{proof}

\begin{remark}[Role of absolute continuity]
The moment assumptions alone give all local Sobolev regularity used in the
integer reduction.  Absolute continuity of $\rho$ enters the theorem only
through the implications
\begin{equation}
 (\rho-\mu)^+\ll\mathcal{L}^d,
 \qquad
 \nabla u=0\quad(\rho-\mu)^+\text{-a.e.},
\end{equation}
which follow from $(\rho-\mu)^+\le\rho$.  Indeed, writing
$\nu^+=h\mathcal L^d$ and $B:=\{h>0\}$, the Lagrangian condition implies
$\nabla u=0$ $\mathcal L^d$-a.e.\ on $B$.  Sobolev locality can therefore
be applied on $B$, and the resulting finite Riesz contact holds $\nu^+$-a.e.\ because
$\nu^+$ is concentrated on $B$.  From that point on,
\Cref{critical:prop:riesz-contact} uses no absolute continuity.

No hypothesis of compact support, positivity on the support, bounded
density, or absolute continuity of the target enters the proof.  The relevant
positive-density set is merely measurable, and the singular part of
$\mu$ is permitted.
\end{remark}

\subsection{\texorpdfstring{The residual three-dimensional regime}{The residual three-dimensional regime}}\label{ssec:residual-three-rigidity}

Among the regimes with $d+q>2$ and $(d+q)/2\notin\mathbb N$, the only regime not covered by
\Cref{critical:thm:natural-moment-rigidity} is $d=3$ and $0<q<1$.
Finite moments of order $q+1$ close the single-contact argument directly.
At the natural $q$-th moment level, it suffices that
$\supp(\rho-\mu)^+$ be compact.  If $\rho-\mu$ is invariant under rotations
about some point, the radial flux identity together with the Lagrangian
criticality condition forces $(\rho-\mu)^+$ to be compactly supported,
reducing the radial case to the compact-support case.
We first record the equilibrium-potential facts used by the compactness
argument and then consolidate the three alternatives.

\begin{lemma}[Compact Riesz equilibrium potentials]\label{critical:lem:compact-riesz-equilibrium}
Let $d\ge1$, let $0<\beta<\min\{2,d\}$, and let $F\subset\R^{d}$ be
compact with positive $\beta$-Riesz capacity.\footnote{Equivalently, $F$
supports a nonzero finite positive measure $\gamma$ of finite
$\beta$-Riesz energy $\int_{\R^d}\mathcal I_\beta\gamma\,\ud\gamma$.
A property holds quasi-everywhere on $F$ if it fails only on a subset of
$F$ of $\beta$-Riesz capacity zero.  Since every such subset is
Lebesgue-null, quasi-everywhere is stronger than $\mathcal L^d$-almost
everywhere; see
\cite[Chapter~II, \S~1]{Landkof1972Foundations}.}  There is a finite positive
measure $\gamma_F$ supported on $F$ which can be normalized such that
\begin{equation}\label{critical:eq:compact-equilibrium-potential}
 0\le \mathcal I_\beta\gamma_F\le1\quad\text{on }\R^d,
 \qquad
 \mathcal I_\beta\gamma_F=1\quad\text{quasi-everywhere on }F,
\end{equation}
and
\begin{equation}\label{critical:eq:compact-equilibrium-strict}
 \mathcal I_\beta\gamma_F(x)<1
 \qquad\text{for every }x\in\R^d\setminus F.
\end{equation}
\end{lemma}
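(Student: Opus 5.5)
The plan is to run the classical Frostman--Landkof construction of the equilibrium measure for the Riesz kernel of order $\beta$, using the complete maximum principle already invoked above (via \cite[Theorem~2.2]{Zorii2023Deny}, with the line-into-plane embedding when $d=1$).

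First, minimize the Riesz energy $\int\mathcal I_\beta\lambda\,\ud\lambda$ over probability measures $\lambda$ supported on $F$. Positive capacity makes the infimum $W$ finite and positive. Since $F$ is compact, minimizing sequences are tight. The energy is lower semicontinuous under narrow convergence, because the kernel is lower semicontinuous and positive. Hence a minimizer $\lambda_F$ exists, and it is unique by strict positive definiteness of the kernel, since $0<\beta<d$. Next, a first-variation argument using competitors $(1-t)\lambda_F+t\lambda$, where $\lambda$ ranges over finite-energy probability measures on $F$, gives two facts:
\begin{equation}
\mathcal I_\beta\lambda_F\ge W \text{ quasi-everywhere on } F,
\qquad
\mathcal I_\beta\lambda_F\le W \text{ on } \supp\lambda_F .
\end{equation}
The second inequality holds everywhere on the support by lower semicontinuity. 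Combining them, $\mathcal I_\beta\lambda_F=W$ holds $\lambda_F$-a.e. Set $\gamma_F:=\lambda_F/W$. The maximum principle (Frostman's, which is the special case $\gamma=$ constant potential of the complete maximum principle; or apply the complete version with $\tau$ a multiple of the potential comparison) gives $\mathcal I_\beta\gamma_F\le1$ everywhere. For the comparison, use that a constant is the limit of potentials of large uniform measures on balls, or directly cite Landkof's Frostman theorem \cite[Chapter~II]{Landkof1972Foundations}. This proves \eqref{critical:eq:compact-equilibrium-potential}.

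The strict inequality \eqref{critical:eq:compact-equilibrium-strict} off $F$ is the main obstacle. Fix $x_0\notin F$ and suppose $\mathcal I_\beta\gamma_F(x_0)=1$. Since $F$ is closed, $\mathcal I_\beta\gamma_F$ is $\beta$-superharmonic in the open set $\R^d\setminus F$. Indeed, for $\beta<2$ it equals its $\beta$-harmonic measure (Poisson-kernel) average over the exterior of any ball $B\Subset\R^d\setminus F$, because $\gamma_F$ charges no point of $B$. This mean-value identity expresses $\mathcal I_\beta\gamma_F(x_0)$ as an average, against a strictly positive kernel on $\R^d\setminus \overline B$, of values that are at most $1$. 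I would take this identity from Landkof, Chapter~I, via the balayage/Kelvin formulas. Equality at $x_0$ then forces $\mathcal I_\beta\gamma_F=1$ a.e. on $\R^d\setminus\overline B$, hence near infinity. That contradicts the decay $\mathcal I_\beta\gamma_F(x)\to0$ as $|x|\to\infty$, which follows from compact support and $\beta<d$.

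In $d=1$, where the order must satisfy $\beta<1$, I would run the same argument with the one-dimensional Riesz Poisson kernel. Alternatively, I would lift to $\R^2$ at order $\beta+1$ as in \Cref{critical:prop:riesz-contact}, noting that points of the line off $F$ lie off the lifted compact set.
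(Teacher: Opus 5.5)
Your proposal is correct and follows the paper's route. The equilibrium properties \eqref{critical:eq:compact-equilibrium-potential} are the classical Frostman--Landkof facts: the paper simply cites them, whereas you sketch the energy minimization. The strict inequality \eqref{critical:eq:compact-equilibrium-strict} comes, exactly as in the paper, from the exterior-ball Riesz balayage (mean-value) identity with strictly positive density, combined with $0\le\mathcal I_\beta\gamma_F\le1$ and decay at infinity.

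One tidy-up: the zero-additive-constant complete maximum principle does not compare $\mathcal I_\beta\gamma_F$ with a constant by itself. Either cite Frostman's maximum principle for $\beta\le2$ directly, or make your ball approximation precise: the normalized potentials $\mathcal I_\beta(\mathbf 1_{B_R}\,\ud x)/\mathcal I_\beta(\mathbf 1_{B_R}\,\ud x)(0)$ are $\le1$ everywhere and tend to $1$ uniformly on $F$.
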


\begin{proof}
Set $V_F:=\mathcal I_\beta\gamma_F$.
The equilibrium-measure properties in
\eqref{critical:eq:compact-equilibrium-potential} are classical; see
\cite[Theorem~4.1]{Fuglede1960Potentials} and
\cite[Chapter~II, \S~1, especially p.~137]{Landkof1972Foundations}.  We verify the strict inequality
because it is used below.

Fix $x_0\notin F$ and choose
$0<r<\dist(x_0,F)$.  The exterior-ball Riesz balayage identity provides a
probability measure $\varepsilon_r^{x_0}$ on
$\R^d\setminus\overline B_r(x_0)$, with strictly positive density, such
that
\begin{equation}
 |x_0-y|^{\beta-d}
 =\int_{\R^d\setminus\overline B_r(x_0)}
 |z-y|^{\beta-d}\,\ud\varepsilon_r^{x_0}(z),
 \qquad y\in F.
\end{equation}
See \cite[\S~4, (4.2)]{Kwasnicki2017FractionalLaplacian}
for the explicit strictly positive density and
\cite[\S~3, (3.2)]{DragnevOriveSaffWielonsky2021}
for the exterior-ball balayage identity.
Integrating against $\gamma_F$ gives
\begin{equation}
 V_F(x_0)
 =\int_{\R^d\setminus\overline B_r(x_0)}
 V_F(z)\,\ud\varepsilon_r^{x_0}(z).
\end{equation}
If $V_F(x_0)=1$, then $0\le V_F\le1$ and strict positivity of the
balayage density would imply that $V_F=1$ $\mathcal L^d$-a.e.\ on
the exterior of the ball.  This is impossible because $\gamma_F$ is
finite and compactly supported, while $\beta<d$ implies
$V_F(x)\to0$ as $|x|\to\infty$.  This proves
\eqref{critical:eq:compact-equilibrium-strict}.
\end{proof}

We now use the preceding lemma to treat the compact-support case and
consolidate the three rigidity criteria for the residual regime.

\begin{theorem}[Three-dimensional rigidity for $0<q<1$]
\label{critical:thm:residual-three-rigidity}
\label{critical:thm:support-confinement}
Let $d=3$ and $0<q<1$.  Assume $\mu,\rho\in\mathcal P_q(\R^3)$,
$\rho\ll\mathcal{L}^3$, and that $\rho$ is a Lagrangian critical point
of $E_\mu$.  Set
\begin{equation}
 \nu:=\rho-\mu.
\end{equation}
Then $\rho=\mu$ if at least one of the following conditions holds:
\begin{enumerate}[label=\textup{(\roman*)}]
\item $\rho,\mu\in\mathcal P_{q+1}(\R^3)$;
\item $\supp\nu^+$ is compact;
\item $\nu$ is invariant under rotations about some point.
\end{enumerate}
\end{theorem}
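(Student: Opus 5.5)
The three alternatives are handled separately, all starting from the reduction of \Cref{critical:prop:residual-contact}. For $d=3$, $0<q<1$ the admissible choice there is $m=1$, $\alpha=q+1\in(1,2)$, and $\alpha<3=d$. Since $\rho\ll\mathcal L^3$ and $\rho$ is Lagrangian critical, \Cref{critical:lem:jordan-domination} gives \eqref{critical:eq:rigidity-positive-part-criticality}. Hence, with $\nu^+=h\,\mathcal L^3$ and $B=\{h>0\}$, we have $\mathcal I_\alpha\nu=0$ $\mathcal L^3$-a.e.\ on $B$, and the finite contact $\mathcal I_\alpha\nu^+=\mathcal I_\alpha\nu^-<\infty$ holds $\nu^+$-a.e. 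The exhaustion \Cref{critical:lem:riesz-exhaustion} and the complete maximum principle, used exactly as in the proof of \Cref{critical:prop:riesz-contact}, then give
\begin{equation}
 \mathcal I_\alpha\nu^+\le\mathcal I_\alpha\nu^-\quad\text{everywhere},
 \qquad
 V:=-\mathcal I_\alpha\nu\ge0\quad\mathcal L^3\text{-a.e.}
\end{equation}
Neither step uses an $\alpha$-moment.

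\emph{Case (i).} Here the finite $(q+1)$-moment is exactly what \Cref{critical:cor:single-contact}(iii) requires, with $k=1$, so \Cref{critical:thm:single-contact} applies directly.

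\emph{Case (ii).} Assume $\nu\neq0$. Then $\nu^+\ne0$, so $F:=\supp\nu^+$ is compact and has positive Lebesgue measure. In particular it has positive $\alpha$-capacity. Let $\gamma_F$ be the equilibrium measure of \Cref{critical:lem:compact-riesz-equilibrium} with $\beta=\alpha$, and integrate the everywhere domination against $\gamma_F$. By Tonelli and symmetry of the kernel,
\begin{equation}
 \int\mathcal I_\alpha\gamma_F\,\ud\nu^+\le\int\mathcal I_\alpha\gamma_F\,\ud\nu^-.
\end{equation}
Since $\nu^+\ll\mathcal L^3$, it does not charge capacity-zero sets, so the left-hand side equals $M:=\nu^+(\R^3)=\nu^-(\R^3)$. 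The right-hand side is at most $M$ because $\mathcal I_\alpha\gamma_F\le1$. Equality therefore holds, and the strict bound \eqref{critical:eq:compact-equilibrium-strict} forces $\nu^-(\R^3\setminus F)=0$. Both $\nu^\pm$ are now compactly supported, so they have finite $\alpha$-moments. Equivalently, $\rho,\mu$ agree outside a compact set and $\rho-\mu$ has finite $(q+1)$-moment, so \Cref{critical:prop:riesz-contact} gives $\nu^+=\nu^-$. The one point to check is that Tonelli applies to each positive part separately; this holds because $\mathcal I_\alpha\gamma_F$ is bounded.

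\emph{Case (iii).} Translate so that $\nu$ is radial about $0$; then $u=K*\nu\in W^{2,1}_{\mathrm{loc}}$ is radial, $u(x)=U(|x|)$. From \eqref{critical:eq:rigidity-integer-reduction} with $m=1$ we get $\Delta u=c\,V$ with $c>0$, so $\Delta u\ge0$. The radial flux identity (divergence theorem on balls) then gives, for a.e.\ $r$,
\begin{equation}
 4\pi r^2U'(r)=c\int_{B_r}V\,\ud x .
\end{equation}
The right-hand side is nondecreasing and nonnegative. Criticality gives $U'(r)=0$ for a.e.\ radius $r$ in the radial projection of $B$, which has positive one-dimensional measure since $B$ is radial and $|B|>0$. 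If $B$ were unbounded, these radii would be unbounded, forcing $\int_{B_r}V=0$ for all $r$. Then $V=0$ a.e., and Riesz inversion \eqref{critical:eq:rigidity-riesz-inversion} gives $\nu=0$. Otherwise $B$ is bounded, so $\nu^+$ is compactly supported and case (ii) applies.

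I expect the main obstacle to be the justification in case (iii): the flux identity for a merely $W^{2,1}_{\mathrm{loc}}$ radial function, including how the a.e.\ vanishing of $\nabla u$ on $B$ transfers to $U'$ on a positive-measure set of radii. Secondarily, in case (ii) the measure-theoretic step showing that $\nu^+$ does not charge the capacity-zero exceptional set needs care.
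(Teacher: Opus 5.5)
Your proposal is correct and follows the paper's proof essentially step for step. Case (i) goes through the single-contact corollary; case (ii) uses the everywhere domination together with the compact equilibrium measure and its strict exterior inequality to confine $\nu^-$ to $F$; case (iii) uses the radial flux identity to reduce to (ii). The one minor difference, which also settles the obstacle you flagged, is in (iii): you get the flux identity from the divergence theorem on a.e.\ ball for the field $\nabla u\in W^{1,1}_{\mathrm{loc}}(\R^3)$, and this automatically rules out the point-source constant that the paper eliminates separately using continuity of $u$ at the origin.
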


\begin{proof}
Alternative~\textup{(i)} is
\Cref{critical:cor:single-contact}(iii).  We prove
alternative~\textup{(ii)} next.  Set
\begin{equation}
 \alpha:=q+1\in(1,2),
 \qquad
 u:=K*\nu.
\end{equation}
By \Cref{critical:lem:jordan-domination}, $\nu^+$ and $\nu^-$ have equal
mass, $\nu^+\ll\mathcal L^3$, and criticality gives
$\nabla u=0$ $\nu^+$-a.e.  Taking $m=1$ in
\Cref{critical:prop:residual-contact} therefore gives the finite contact
\begin{equation}\label{critical:eq:residual-three-contact}
 \mathcal I_\alpha\nu^+=\mathcal I_\alpha\nu^-<\infty
 \qquad\nu^+\text{-a.e.}
\end{equation}
Applying \Cref{critical:lem:riesz-exhaustion} to $\nu^+$, then applying
the complete maximum principle to the resulting finite-energy exhaustion
and passing to the monotone limit, yields
\begin{equation}\label{critical:eq:residual-three-domination}
 \mathcal I_\alpha\nu^+\le\mathcal I_\alpha\nu^-
 \qquad\text{everywhere on }\R^3
\end{equation}
in the extended-valued sense, exactly as in the proof of
\Cref{critical:prop:riesz-contact}.

Assume now that $F:=\supp\nu^+$ is compact.  If
$\nu^+(\R^3)=\nu^-(\R^3)=0$, then $\nu=0$.  Otherwise, $F$ has positive
Lebesgue measure, and therefore positive $\alpha$-Riesz capacity, since a set of zero
$\alpha$-Riesz capacity is Lebesgue-null. 
Let $\gamma_F$ be the equilibrium measure from
\Cref{critical:lem:compact-riesz-equilibrium}.  Then
$\mathcal I_\alpha\gamma_F=1$ $\nu^+$-a.e.  Writing
$m:=\nu^+(\R^3)=\nu^-(\R^3)$, Fubini-Tonelli and
\eqref{critical:eq:residual-three-domination} give
\begin{equation}
 m
 =\int_{\R^3}\mathcal I_\alpha\gamma_F\,\ud\nu^+
 =\int_{\R^3}\mathcal I_\alpha\nu^+\,\ud\gamma_F
 \le\int_{\R^3}\mathcal I_\alpha\nu^-\,\ud\gamma_F
 =\int_{\R^3}\mathcal I_\alpha\gamma_F\,\ud\nu^-
 \le m.
\end{equation}
Consequently, $\mathcal I_\alpha\gamma_F=1$ $\nu^-$-a.e.  The strict exterior inequality
\eqref{critical:eq:compact-equilibrium-strict} implies that $\nu^-$ is
supported on $F$.  Thus $\nu^+$ and $\nu^-$ are both compactly supported
and in particular have finite $\alpha$-moments.  By
\eqref{critical:eq:residual-three-domination},
$\mathcal I_\alpha\nu\le0$ $\mathcal L^d$-a.e., so
\Cref{critical:prop:riesz-one-sign} gives $\nu=0$.  This proves
alternative~\textup{(ii)}.

It remains to prove alternative~\textup{(iii)}.  Without loss of generality, we may assume that
the center of
rotation is the origin.  The Jordan parts $\nu^\pm$, the
potential $u$, and the locally integrable Riesz potential
$V:=\mathcal I_\alpha(\nu^--\nu^+)$
are radial.  Moreover, \eqref{critical:eq:residual-three-domination}
gives a representative of $V$ which is nonnegative
$\mathcal L^d$-a.e.  Write
\begin{equation}
 u(x)=U(r),
 \qquad
 V(x)=\mathsf V(r),
 \qquad
 r:=|x|.
\end{equation}
The integer reduction and the definition of $V$ give
\begin{equation}
 \Delta u=b_{3,q,1}V
 \qquad\text{in }\mathcal D'(\R^3),
\end{equation}
with $b_{3,q,1}>0$.  Hence, distributionally on $(0,\infty)$,
\begin{equation}
 \bigl(r^2U'(r)\bigr)'=b_{3,q,1}r^2\mathsf V(r).
\end{equation}
Since $r^2\mathsf V(r)\in L^1(0,R)$ for every $R>0$, the
distribution $r^2U'$ has an absolutely continuous representative with a
finite trace $C$ at $0$.  Hence, for almost every $r>0$,
\begin{equation}\label{critical:eq:residual-three-radial-flux}
 r^2U'(r)
 =C+b_{3,q,1}\int_0^r t^2\mathsf V(t)\,\ud t.
\end{equation}
We claim the constant $C=0$.  Indeed, $V\in L^1_{\mathrm{loc}}$, so the integral
on the right tends to zero as $r\downarrow0$.  On the other hand,
\begin{equation}
 |u(x)-u(z)|
 \le|\nu|(\R^3)|x-z|^q,
\end{equation}
so $U$ is continuous at the origin.  If $C\ne0$, then for all sufficiently
small $r$ the right-hand side of
\eqref{critical:eq:residual-three-radial-flux} has a fixed sign and
absolute value at least $|C|/2$.  Integrating
$|U'(r)|\ge |C|/(2r^2)$ toward the origin contradicts this continuity.

If $V=0$ $\mathcal L^d$-a.e., Riesz inversion
\eqref{critical:eq:rigidity-riesz-inversion} gives $\nu=0$.
Otherwise, since $\mathsf V\ge0$ $\mathcal L^d$-a.e., there is an $R>0$ such that
\begin{equation}
 \int_0^R t^2\mathsf V(t)\,\ud t>0.
\end{equation}
The radial flux identity then gives $U'(r)>0$ for $\mathcal L^1$-a.e. $r>R$.
Because $\nu^+$ is radial and absolutely continuous, write
\begin{equation}
 \nu^+=H(|x|)\,\mathcal L^3.
\end{equation}
The radial Sobolev identity
$\nabla u(x)=U'(|x|)x/|x|$ holds for $\mathcal L^d$-a.e. $x\ne0$.
Criticality and polar Fubini therefore give
\begin{equation}
 4\pi\int_R^\infty
 \indic_{\{U'(r)\ne0\}}H(r)r^2\,\ud r=0.
\end{equation}
Since $U'(r)>0$ for $\mathcal L^1$-a.e. $r>R$, it follows that
$H=0$ $\mathcal L^1$-a.e.\ on
$(R,\infty)$.  Thus $\supp\nu^+\subseteq\overline B_R$, and
alternative~\textup{(ii)} completes the proof.
\end{proof}

\begin{remark}[Necessary features of a counterexample]
Any nontrivial counterexample to natural-moment rigidity in the residual
three-dimensional regime must have nonradial discrepancy and unbounded
support of its positive part $(\rho-\mu)^+$.
\end{remark}

\begin{corollary}[Compact-support rigidity at noninteger order]\label{critical:cor:nonlocal-rigidity}
Let $d\ge1$ and $0<q<2$ satisfy
\begin{equation}
 d+q>2,
 \qquad
 \frac{d+q}{2}\notin\mathbb N.
\end{equation}
Assume $\mu,\rho\in\mathcal P_q(\R^d)$, $\rho\ll\mathcal{L}^d$, and that
$\rho$ is a Lagrangian critical point of $E_\mu$.  If $\supp\rho$ is compact, then $\rho=\mu$.
\end{corollary}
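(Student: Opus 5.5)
The plan is to reduce to the results already proved, splitting by parity of $d$ and by the size of $q$.

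\emph{Reduction of the hypotheses.} Since $\supp\rho$ is compact and $(\rho-\mu)^+\le\rho$ by \Cref{critical:lem:jordan-domination}, the support of $\nu^+=(\rho-\mu)^+$ is contained in $\supp\rho$ and is therefore compact. In addition, $\nu^+\ll\mathcal L^d$, and $\nabla u=0$ holds $\nu^+$-a.e. These are exactly the conditions \eqref{critical:eq:rigidity-positive-part-criticality}.

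\emph{Cases already covered by \Cref{critical:thm:natural-moment-rigidity}.} That theorem settles even $d$, odd $d$ with $1<q<2$, and odd $d\ge5$ with $0<q<1$. The case $q=1$ with $d$ odd is excluded by the assumption $(d+q)/2\notin\mathbb N$, and in any event it is handled by \Cref{critical:prop:energy-Lp}.

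\emph{Case $d=3$, $0<q<1$.} This is \Cref{critical:thm:residual-three-rigidity}(ii), since $\supp\nu^+$ is compact.

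\emph{Case $d=1$, $0<q<1$.} This case is excluded by the hypothesis $d+q>2$.

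So every admissible pair $(d,q)$ lands in a regime that has already been proved, and the corollary follows by this bookkeeping.

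For a more uniform proof I would instead repeat the compact-support argument for general $(d,q)$, with
\[
m=\lfloor (d+q)/2\rfloor,\qquad \alpha=d+q-2m\in(0,2).
\]
The condition $d+q>2$ gives $m\ge1$. The condition $\alpha<d$ holds because $d\ge2$ in these cases, and for $d=1$ one has $d+q<2$, so $d=1$ never occurs. The argument would run as follows.
\begin{itemize}
\item \Cref{critical:prop:residual-contact} gives finite Riesz contact of order $\alpha$.
\item The exhaustion lemma together with the complete maximum principle gives the global domination $\mathcal I_\alpha\nu^+\le\mathcal I_\alpha\nu^-$.
\item Testing against the equilibrium measure of $F=\supp\nu^+$ from \Cref{critical:lem:compact-riesz-equilibrium} shows that $\nu^-$ is supported on $F$.
\item Both Jordan parts are then compactly supported, so they have finite $\alpha$-moments, and \Cref{critical:prop:riesz-one-sign} concludes $\nu=0$.
\end{itemize}

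The only point needing care is the capacity condition $\alpha<\min\{2,d\}$ in the equilibrium lemma. It holds here because $\alpha\in(0,2)$ and $d\ge2$. Verifying this excluded-case bookkeeping is the main potential obstacle, and it is routine.
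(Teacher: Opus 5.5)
Your bookkeeping argument is correct and is essentially the paper's proof. Every admissible pair other than $d=3$, $0<q<1$ is covered by \Cref{critical:thm:natural-moment-rigidity}, and in that residual case $\supp(\rho-\mu)^+\subseteq\supp\rho$ lets the compact-positive-part alternative of \Cref{critical:thm:residual-three-rigidity} finish; one slip in your optional uniform sketch is that $d=1$ is admissible when $1<q<2$ (since $1+q>2$), but there $\alpha=q-1<1=d$, so $\alpha<\min\{2,d\}$ still holds and the sketch goes through.
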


\begin{proof}
All parameter regimes covered by the corollary fall under
\Cref{critical:thm:natural-moment-rigidity}, except
$d=3$ and $0<q<1$.  In the latter case,
the domination $(\rho-\mu)^+\le\rho$ from
\Cref{critical:lem:jordan-domination} gives
\begin{equation}
 \supp\bigl((\rho-\mu)^+\bigr)\subseteq\supp\rho.
\end{equation}
The compact-positive-part alternative in
\Cref{critical:thm:residual-three-rigidity} therefore gives $\rho=\mu$.
\end{proof}

\begin{remark}
In particular, in the range of
\Cref{critical:cor:nonlocal-rigidity}, if the target $\mu$ has unbounded
support, then it admits no compactly supported absolutely continuous
Lagrangian critical point.
\end{remark}

\subsection{Flexibility below the energy-distance endpoint}

The preceding results leave two regimes with $0<q<1$ outside unconditional natural-moment rigidity, but only one is known to be flexible.  For $d=3$ and $0<q<1$, any unresolved natural-moment case must have nonradial discrepancy and unbounded support of its positive part; rigidity holds under finite moments of order $q+1$, when the positive part is compactly supported, or when the discrepancy is radial.  In contrast, for $d=1$ and $0<q<1$, compactly supported non-minimizing critical points exist.  For the energy-kernel interaction profile
\begin{equation}
	K(z)=-|z|^q,
	\qquad 0<q<1,
\end{equation}
there are explicit absolutely continuous Lagrangian critical points which are not minimizers.  The construction is a nested-interval version of the classical equilibrium-measure mechanism.  We record the normalization and the short variational consequence of the explicit interval equilibrium formula that are used below.

\begin{lemma}[Constant-potential density on an interval]\label{critical:lem:interval-eqm}
Let $0<q<1$ and $R>0$.  Define
\begin{equation}\label{critical:eq:interval-density}
\eta_{q,R}(x)
:= a_q R^q (R^2-x^2)^{-\frac{1+q}{2}}\indic_{(-R,R)}(x),
\qquad
 a_q:=\frac{\Gamma(1-q/2)}{\sqrt{\pi}\,\Gamma((1-q)/2)}.
\end{equation}
Then $\eta_{q,R}\ud x$ is a probability measure, $\eta_{q,R}\in L^p(\R)$ for every $1\le p<2/(1+q)$, and the potential
\begin{equation}
U_R(x):=\int_{-R}^R |x-y|^q\eta_{q,R}(y)\ud y
\end{equation}
is constant for $x\in(-R,R)$.
\end{lemma}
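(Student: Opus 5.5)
By scaling, the three assertions reduce to the case $R=1$. Under $x\mapsto Rx$ one has $\eta_{q,R}(x)=R^{-1}\eta_{q,1}(x/R)$, because $R^q(R^2-x^2)^{-(1+q)/2}=R^{-1}(1-(x/R)^2)^{-(1+q)/2}$. Likewise $U_R(Rx)=R^qU_1(x)$. So mass, integrability and constancy of the potential all transfer from $R=1$, and I will assume $R=1$ from now on.

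For the mass, substitute $y=\sin\theta$, or equivalently use the Beta integral, to get
\begin{equation*}
\int_{-1}^1(1-y^2)^{-\frac{1+q}{2}}\ud y=B\Bigl(\tfrac12,\tfrac{1-q}{2}\Bigr)=\frac{\sqrt\pi\,\Gamma((1-q)/2)}{\Gamma(1-q/2)}.
\end{equation*}
This is exactly $a_q^{-1}$, so $\eta_{q,1}$ is a probability density. For integrability, the only singularities are at $y=\pm1$, where $\eta_{q,1}\sim c\,(1\mp y)^{-(1+q)/2}$. Hence $\eta^p$ is integrable if and only if $p(1+q)/2<1$, which gives the range $1\le p<2/(1+q)$.

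The main step is showing that $U_1$ is constant on $(-1,1)$. My approach is to show that the derivative
\begin{equation*}
U_1'(x)=q\int_{-1}^1\sgn(x-y)|x-y|^{q-1}\eta_{q,1}(y)\ud y
\end{equation*}
vanishes for $x\in(-1,1)$. Differentiation under the integral is legitimate because $q-1>-1$ and $\eta$ is integrable near $x$. This is the classical Riesz equilibrium problem on an interval, with Riesz exponent $s=-q\in(-1,0)$, and the density $(1-y^2)^{(s-1)/2}=(1-y^2)^{-(1+q)/2}$ is the known equilibrium density (Riesz; Landkof, Ch.~I/IV; Dyda's fractional Laplacian formulas). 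I would cite the identity
\begin{equation*}
\int_{-1}^1|x-y|^{-s}(1-y^2)^{\frac{s-1}{2}}\ud y=\text{const},\qquad |x|<1,
\end{equation*}
valid by analytic continuation in $s$ from the range $0<s<1$ to $-1<s<0$.

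If a self-contained argument is wanted, I would verify the identity as follows. Expand $|x-y|^{q}$ using the Gegenbauer/Jacobi generating function, or apply the substitution $y=\frac{x+t}{1+xt}$, which maps $(-1,1)$ to itself. Under this substitution,
\begin{align*}
(1-y^2)&=\frac{(1-x^2)(1-t^2)}{(1+xt)^2},\\
|x-y|&=\frac{(1-x^2)|t|}{1+xt},\\
\ud y&=\frac{1-x^2}{(1+xt)^2}\ud t.
\end{align*}
The integrand then becomes $(1-x^2)^{q+1-(1+q)/2-\dots}\,|t|^q(1-t^2)^{-(1+q)/2}(1+xt)^{\gamma}\ud t$. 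The exponents are arranged so that the powers of $(1-x^2)$ cancel and $\gamma=0$: one checks $-2q+(1+q)-2+\dots$, and this exponent bookkeeping is exactly where the choice of the power $-(1+q)/2$ matters. The integral is then independent of $x$.

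I expect that exponent check to be the delicate point. If it fails to give $\gamma=0$, I would fall back on the Fourier/Mellin proof: on $(-1,1)$ compute $(-\partial_{xx})^{(1+q)/2}$-type identities via Dyda's formula $(-\Delta)^{\beta/2}(1-x^2)_+^{\beta/2-\dots}$. Alternatively, I would simply cite the classical result, since the lemma is explicitly described in the paper as recording a known formula.
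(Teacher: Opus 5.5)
Your main line is correct and is essentially the paper's argument. You reduce to $R=1$ by the scaling $\eta_{q,R}(x)=R^{-1}\eta_{q,1}(x/R)$ and $U_R(Rx)=R^qU_1(x)$, read off the normalization and the $L^p$ range from the Beta integral and the endpoint singularity, and take constancy of the potential from the classical Riesz equilibrium formula on $[-1,1]$. The paper instead cites Borodachov--Hardin--Saff and Clark--Laugesen for the $(-q)$-equilibrium measure and then uses the Euler--Lagrange condition plus continuity. Your analytic-continuation remark is a legitimate small improvement. For fixed $|x|<1$, the difference between $\int_{-1}^1|x-y|^{-s}(1-y^2)^{(s-1)/2}\ud y$ and the same integral at $x=0$ is holomorphic in $s$ on $-1<\Real s<1$ and vanishes for $0<s<1$. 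So you only need the formula in the classical positive-definite range.

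Your optional M\"obius verification is wrong as written: with $y=(x+t)/(1+xt)$ the powers do not cancel. Collecting exponents gives $(1-x^2)^{q-\frac{1+q}{2}+1}=(1-x^2)^{\frac{1+q}{2}}$ and $(1+xt)^{-q+(1+q)-2}=(1+xt)^{-1}$. Hence
\begin{equation*}
\int_{-1}^1|x-y|^q(1-y^2)^{-\frac{1+q}{2}}\ud y=(1-x^2)^{\frac{1+q}{2}}\int_{-1}^1\frac{|t|^q(1-t^2)^{-\frac{1+q}{2}}}{1+xt}\ud t,
\end{equation*}
which is not manifestly independent of $x$. The route can still be completed, as follows.
\begin{enumerate}[label=\textup{(\arabic*)}]
\item Symmetrize in $t$; since the weight is even, this replaces $(1+xt)^{-1}$ by $(1-x^2t^2)^{-1}$.
\item Substitute $u=t^2$ and expand the geometric series, justified by monotone convergence. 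This gives $\sum_{n\ge0}x^{2n}B(n+b,1-b)$ with $b=\frac{1+q}{2}$.
\item Use $B(n+b,1-b)=\Gamma(b)\Gamma(1-b)\,(b)_n/n!$ (Pochhammer symbol). The series then sums to $\frac{\pi}{\sin(\pi b)}(1-x^2)^{-b}$.
\end{enumerate}
This cancels the prefactor, so $U_1\equiv a_q\pi/\cos(\pi q/2)$ on $(-1,1)$. As a check, this equals $1$ at $q=0$. Completed this way, your M\"obius route would give a fully self-contained proof in place of the citation.
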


\begin{proof}
For $R=1$, the asserted density is the $(-q)$-equilibrium measure of $[-1,1]$; see \cite[Proposition~4.6.1]{BorodachovHardinSaff2019} and \cite[Appendix~A]{ClarkLaugesen2025}.  The equilibrium Euler--Lagrange condition and continuity of the potential give its constancy on $(-1,1)$.  Scaling gives the result for general $R$, while the endpoint behavior yields the stated $L^p$ range.
\end{proof}

\begin{proposition}[Nonminimizing critical points on nested intervals]\label{critical:thm:nested-intervals}
Let $0<q<1$ and let $0<r<R$.  Let
\begin{equation}
\mu:=\eta_{q,R}\ud x,
\qquad
\rho_r:=\eta_{q,r}\ud x,
\end{equation}
where $\eta_{q,R}$ is defined by \eqref{critical:eq:interval-density}.  Then $\rho_r$ is a Lagrangian critical point of the MMD energy with target $\mu$, but $\rho_r$ is not a minimizer.
\end{proposition}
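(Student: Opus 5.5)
The plan is to verify criticality directly from \Cref{critical:lem:interval-eqm}, and then to exhibit an explicit competitor with strictly smaller energy. Both measures are absolutely continuous, compactly supported, and in $\mathcal P_q(\R)$, so the normalized potential from \Cref{critical:def:Lcrit} is available. Since $\nu=\rho_r-\mu$ has compact support and $q<1$, we may use the ordinary convolution
\begin{equation}
u(x)=-U_r(x)+U_R(x),\qquad U_\lambda(x):=\int|x-y|^q\eta_{q,\lambda}(y)\ud y .
\end{equation}

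The first step is criticality. By \Cref{critical:lem:interval-eqm}, $U_R$ is constant on $(-R,R)$ and $U_r$ is constant on $(-r,r)$. Hence $u$ is constant on $(-r,r)\subset(-R,R)$.

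It remains to identify $g_\rho=\int K'(x-y)\ud\nu(y)$ with $u'$ at every $x\in(-r,r)$, including absolute convergence of the integral. For this I would split $\nu$ near and far from $x$. The far part gives a smooth potential, whose derivative is obtained by differentiating under the integral. The near part is controlled because $\eta_{q,\lambda}$ is bounded near $x$, and $|x-y|^{q-1}$ is integrable against a bounded density. So the integral defining $g_\rho(x)$ converges absolutely and equals $u'(x)$, which is $0$. This holds for every $x$ in $(-r,r)$, hence $\rho_r$-a.e., and $g_\rho\in L^1_{\mathrm{loc}}(\rho_r)$ trivially. Therefore $\rho_r$ is Lagrangian critical.

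The second step is non-minimality. The global minimum value of $E_\mu=\mmd_q^2(\cdot,\mu)$ is $0$, attained at $\mu$. Since $\rho_r\ne\mu$ (their supports differ), \Cref{lem:genSob} gives $\mmd_q^2(\rho_r,\mu)>0$. Indeed, the Fourier transforms of two distinct measures cannot agree a.e., because both are continuous and would then be identical. So $\rho_r$ is not a global minimizer.

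If the statement is meant to exclude local minimality as well, I would test the convex perturbation $\rho_\varepsilon=(1-\varepsilon)\rho_r+\varepsilon\mu$. By bilinearity, $\mmd_q^2(\rho_\varepsilon,\mu)=(1-\varepsilon)^2\mmd_q^2(\rho_r,\mu)$, which is strictly decreasing in $\varepsilon$. Thus $\rho_r$ is not even a local minimizer along this segment, which converges to $\rho_r$ in every reasonable topology.

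The main obstacle is the regularity issue in the first step. One must justify that the pointwise constancy of $U_r$ and $U_R$ from the equilibrium formula really yields $g_\rho(x)=0$, including absolute convergence of the singular integral $\int|x-y|^{q-1}\ud|\nu|(y)$ at interior points. Local boundedness of both densities on compact subsets of $(-r,r)$ handles this. The endpoint singularities of $\eta_{q,r}$ sit on a $\rho_r$-null set and are irrelevant.
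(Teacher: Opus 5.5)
Your proposal is correct and follows the paper's proof. Criticality comes from the constancy of $-U_r+U_R$ on $(-r,r)$ given by \Cref{critical:lem:interval-eqm}, with the endpoints $\pm r$ being $\rho_r$-null, and non-minimality comes from strict positivity of $\mmd_q^2$ for distinct measures via \Cref{lem:genSob}. Your near/far check, at interior points, that $g_{\rho_r}(x)$ converges absolutely and equals $u'(x)$ fills in a step the paper leaves implicit. The convex-segment identity $\mmd_q^2\bigl((1-\varepsilon)\rho_r+\varepsilon\mu,\mu\bigr)=(1-\varepsilon)^2\mmd_q^2(\rho_r,\mu)$ is a correct extra, showing $\rho_r$ is not even a local minimizer.
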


\begin{proof}
By \Cref{critical:lem:interval-eqm}, the potentials
\begin{equation}
U_R(x)=\int |x-y|^q\ud\mu(y),
\qquad
U_r(x)=\int |x-y|^q\ud\rho_r(y)
\end{equation}
are constant on $(-R,R)$ and $(-r,r)$, respectively.  Since $(-r,r)\subset(-R,R)$,
\begin{equation}
K*(\rho_r-\mu)=-U_r+U_R
\end{equation}
is constant on $(-r,r)$, and its weak derivative is zero there.  The boundary points have zero $\rho_r$-mass, so
\begin{equation}
\nabla K*(\rho_r-\mu)=0
\qquad \rho_r\text{-a.e.}
\end{equation}
Thus $\rho_r$ is a Lagrangian critical point.

The measures $\mu$ and $\rho_r$ are obviously distinct, compactly supported probability measures.  By the Fourier--Sobolev representation in \Cref{lem:genSob}, the MMD energy is strictly positive unless the two measures agree.  Hence $E_\mu(\rho_r)>E_\mu(\mu)=0$, so $\rho_r$ is not a minimizer.
\end{proof}

\begin{remark}
The preceding proposition is deliberately stated only in the one-dimensional energy-kernel MMD range $0<q<1$.  Classical Riesz equilibrium measures yield analogous constant-potential constructions in other super-Coulomb regimes; for background, see \cite[Chapter~4]{BorodachovHardinSaff2019}.  Those kernels, however, do not coincide with the positive-order energy-kernel MMD family studied in the rest of the paper.  The Coulomb endpoint $q=1$ is also excluded.  As $q\uparrow1$, the measures $\eta_{q,R}\ud x$ converge weakly to $\frac12(\delta_{-R}+\delta_R)$.  Thus the nested-interval construction leaves the absolutely continuous class at the endpoint and does not contradict the rigidity result above.
\end{remark}

\subsection{\texorpdfstring{Asymptotic criticality and conditional target convergence}{Asymptotic criticality and conditional target convergence}}

The following is a LaSalle-type consequence of the energy--dissipation identity.  For related abstract results, see \cite{CarrilloGvalaniWu2023}. 
For $1\le q<2$, the energy itself supplies the moment bound needed for asymptotic compactness.  When $0<q<1$, the uniform moment and $L^p$ hypotheses are used below to pass to the singular critical-point equation.

\begin{theorem}[Asymptotic criticality]\label{thm:conditional-critical-convergence}
Assume the hypotheses of \Cref{thm:wgfmmdwp}, and let $\rho_t$ be the corresponding solution.  Suppose either that $1\le q<2$, or that the additional uniform bound
\begin{equation}\label{eq:conditional-uniform-bounds}
\sup_{t\ge0}\left(\mathcal M_r(\rho_t)+\|\rho_t\|_{L^p}\right)<\infty
\end{equation}
holds.
Let $\mathcal C_\mu$ be the set of Lagrangian critical points of $E_\mu$ in the sense of \Cref{critical:def:Lcrit}, and define the narrow $\omega$-limit set
\begin{equation}\label{eq:conditional-omega-limit}
\omega(\rho_0):=\left\{\bar\rho\in\mathcal P(\R^d):
\rho_{t_n}\rightharpoonup\bar\rho\text{ narrowly for some }t_n\to\infty\right\}.
\end{equation}
Then the orbit $\{\rho_t:t\ge0\}$ is narrowly relatively compact, $\omega(\rho_0)$ is nonempty and narrowly compact, and
\begin{equation}\label{eq:conditional-omega-critical}
\omega(\rho_0)\subset\mathcal C_\mu.
\end{equation}
Whenever \eqref{eq:conditional-uniform-bounds} holds,
\begin{equation}\label{eq:conditional-limit-inheritance}
\omega(\rho_0)\subset\mathcal P_r(\R^d)\cap L^p(\R^d).
\end{equation}
In particular, if $d_{\mathrm{BL}}$ denotes the bounded--Lipschitz metric for narrow convergence, then
\begin{equation}\label{eq:conditional-distance-critical-set}
\lim_{t\to\infty}d_{\mathrm{BL}}(\rho_t,\mathcal C_\mu)=0,
\qquad
d_{\mathrm{BL}}(\rho,\mathcal C_\mu):=\inf_{\nu\in\mathcal C_\mu}d_{\mathrm{BL}}(\rho,\nu).
\end{equation}
\end{theorem}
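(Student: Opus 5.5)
The plan is a LaSalle-type argument that combines compactness, the energy--dissipation identity of \Cref{prop:energy-dissipation}, and a closure step for the critical-point equation; the closure step mirrors the proof of \Cref{prop:particle-critical-closure}.

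\emph{Compactness.} By \Cref{prop:energy-dissipation}, $\mmd_q^2(\rho_t,\mu)\le\mmd_q^2(\rho_0,\mu)$ for all $t\ge0$. Fix any $0<s<q/2$. Then \Cref{prop:mmdtomoment} gives $\sup_t\mathcal M_s(\rho_t)<\infty$, so the orbit is tight and hence narrowly relatively compact. It follows that $\omega(\rho_0)$ is nonempty; it is closed as an intersection of tail closures, hence narrowly compact. For $1<q<2$ we will choose $s\in(q-1,q/2)$, which is nonempty because $q<2$, so that the uniform $s$-moment exceeds the growth order $q-1$ of $\nabla K$. Under \eqref{eq:conditional-uniform-bounds}, lower semicontinuity of moments and weak compactness in $L^p$ (weak-$*$ if $p=\infty$) give \eqref{eq:conditional-limit-inheritance}.

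\emph{Vanishing dissipation along time windows.} The identity gives $\int_0^\infty\mathscr D_q(\rho_\tau\mid\mu)\ud\tau<\infty$. Fix $t_n\to\infty$ with $\rho_{t_n}\rightharpoonup\bar\rho$, and set $\rho^n_\tau:=\rho_{t_n+\tau}$ for $\tau\in[0,1]$. Then $\int_0^1\mathscr D_q(\rho^n_\tau\mid\mu)\ud\tau\to0$. The velocity $v=-\nabla K*(\rho-\mu)$ is bounded uniformly in time: for $q\le1$ this is \Cref{lem:singular-convolution-bound} when $q<1$ combined with the uniform $L^p$ bound (and $|\nabla K|\le1$ when $q=1$), while for $1<q<2$ it is the H\"older bound \eqref{eq:velocity-holder} together with the uniform $s$-moment. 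Hence $W_1(\rho^n_\tau,\rho^n_0)\le C\tau$ (or the bounded-Lipschitz distance, if $W_1$ is unavailable), so $\rho^n_\tau\rightharpoonup\bar\rho$ uniformly in $\tau\in[0,1]$. By Fubini we may then pick $\tau_n\in[0,1]$ with $\mathscr D_q(\rho^n_{\tau_n}\mid\mu)\to0$ and $\rho^n_{\tau_n}\rightharpoonup\bar\rho$.

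\emph{Closure, the main obstacle.} Write $\sigma_n:=\rho^n_{\tau_n}$. For $\varphi\in C^1_c(\R^d;\R^d)$, pair symmetrization gives
\begin{equation*}
\int\varphi\cdot g_{\sigma_n}\ud\sigma_n=\frac12\iint H_\varphi\ud(\sigma_n\otimes\sigma_n)-\int\varphi\cdot F_\mu\ud\sigma_n,
\end{equation*}
and by Cauchy--Schwarz the left side is bounded by $\|\varphi\|_\infty\mathscr D_q(\sigma_n\mid\mu)^{1/2}\to0$. For $q=1$, $H_\varphi$ is bounded and continuous, and $F_\mu$ is bounded and uniformly continuous. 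For $1<q<2$, $H_\varphi$ has growth of order $q-1<s$, so we pass to the limit using the uniform $s$-moment, exactly as in \eqref{eq:particle-superlinear-symmetrized-limit}. For $0<q<1$ the kernel $H_\varphi$ is still bounded and continuous, but the limit $\bar\rho$ must also make $g_{\bar\rho}$ absolutely convergent. Here the uniform $L^p$ bound replaces the discrete Riesz bound: $\bar\rho\in L^p$, so \Cref{lem:singular-convolution-bound} gives $\iint|x-y|^{q-1}\ud\bar\rho\ud\bar\rho<\infty$ (for $d=1$ this needs $p>1/q$; in that case I would try to use the moment bound together with $p>p_c$, and check this point carefully). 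We then desymmetrize exactly as in the particle proof to obtain $g_{\bar\rho}\bar\rho=0$, which is \eqref{critical:eq:lagcrit}; the integrability requirements of \Cref{critical:def:Lcrit} follow from the same bounds.

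\emph{Distance to the critical set.} If \eqref{eq:conditional-distance-critical-set} failed, there would be $t_n\to\infty$ with $d_{\mathrm{BL}}(\rho_{t_n},\mathcal C_\mu)\ge\varepsilon$. By tightness a subsequence converges narrowly to some element of $\omega(\rho_0)\subset\mathcal C_\mu$, and since $d_{\mathrm{BL}}$ metrizes narrow convergence this contradicts the $\varepsilon$ lower bound.
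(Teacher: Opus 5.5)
There is a gap in your window step. The uniform velocity bound gives $W_1(\rho^n_\tau,\rho^n_0)\le C\tau$, but this is only an equicontinuity bound of size $O(1)$ on $[0,1]$. It does not decay as $n\to\infty$, so it does not imply $\rho^n_\tau\rightharpoonup\bar\rho$ uniformly in $\tau\in[0,1]$. The Fubini selection tells you nothing about $\tau_n$ beyond $\tau_n\in[0,1]$. So all you can conclude is that $\sigma_n=\rho^n_{\tau_n}$ converges along a subsequence to some $\bar\rho'$ with $W_1(\bar\rho',\bar\rho)\le C$. Your closure argument then shows that $\bar\rho'$, not $\bar\rho$, is Lagrangian critical.

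The missing idea is to control the displacement over the window by the dissipation rather than by $\|v\|_{L^\infty}$. Coupling through the characteristic flow and applying Cauchy--Schwarz in space and time gives
\[
W_1(\rho^n_\tau,\rho^n_0)\le\int_0^\tau\!\!\int_{\R^d}|v_{t_n+\tau'}|\ud\rho^n_{\tau'}\ud\tau'
\le\Bigl(\int_0^1\mathscr D_q(\rho^n_{\tau'}\mid\mu)\ud\tau'\Bigr)^{1/2}\longrightarrow0
\]
uniformly in $\tau\in[0,1]$. This is exactly the paper's argument, and it makes your uniform-in-time velocity bound unnecessary. Alternatively, you can keep the velocity bound and shrink the window. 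Set $\varepsilon_n:=\int_{t_n}^\infty\mathscr D_q(\rho_t\mid\mu)\ud t$ and $\delta_n:=\max\{\varepsilon_n^{1/2},1/n\}$. Then some $\tau_n\in[0,\delta_n]$ satisfies $\mathscr D_q(\rho^n_{\tau_n}\mid\mu)\le\varepsilon_n^{1/2}$ and $W_1(\rho^n_{\tau_n},\rho^n_0)\le C\delta_n\to0$.

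With this repair the rest of your proof is sound, and your closure step takes a genuinely different route from the paper's. You use pair symmetrization against test fields $\varphi\in C^1_c$ for every $q$, transplanting the proof of \Cref{prop:particle-critical-closure}. The paper uses symmetrization only at $q=1$, where $\nabla K$ is discontinuous at the origin. For $q\ne1$ it instead proves locally uniform convergence $\nabla K*(\rho_{s_n}-\mu)\to\nabla K*(\bar\rho-\mu)$, using near-field $L^p$ control for $q<1$ and the H\"older bound with a moment tail for $1<q<2$. It then passes to the limit in $\int\chi|F_n|^2\ud\rho_{s_n}$. Your route needs only narrow convergence of $\sigma_n\otimes\sigma_n$ plus uniform integrability, and it treats all $q$ at once. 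The paper's route yields the stronger force convergence and avoids desymmetrization.

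Two smaller points. Your use of \Cref{prop:mmdtomoment} for tightness when $0<q<1$ is also fine. Your $d=1$ concern is vacuous: \Cref{thm:wgfmmdwp} requires $d+q-2\ge0$, so $0<q<1$ forces $d\ge2$. Then $p>p_c$ gives $(1-q)p_*<(2-q)p_*<d$, so \Cref{lem:singular-convolution-bound} applies with $a=1-q$.
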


\begin{proof}
Suppose first that $1\le q<2$, and choose
\begin{equation}\label{eq:conditional-sigma-choice}
q-1<\sigma<\frac q2,
\end{equation}
which is possible because $q<2$.  Since $q/2<1\le r$, \Cref{prop:mmdtomoment} applies to $\rho_t$ and $\mu$ at every time.  Together with \Cref{prop:energy-dissipation}, it gives
\begin{equation}\label{eq:conditional-energy-moment-bound}
\sup_{t\ge0}\mathcal M_\sigma(\rho_t)
\le \mathcal M_\sigma(\mu)
+C_{d,q,\sigma}\mmd_q(\rho_0,\mu)^{2\sigma/q}<\infty.
\end{equation}
Thus the orbit is tight.  If $0<q<1$, tightness instead follows from the moment bound in \eqref{eq:conditional-uniform-bounds}.  Hence in either case the orbit is narrowly relatively compact, and its $\omega$-limit set is nonempty and narrowly compact.
By \Cref{prop:energy-dissipation},
\begin{equation}\label{eq:conditional-integrable-dissipation}
\int_0^\infty\mathscr D_q(\rho_t\mid\mu)\ud t
\le\mmd_q^2(\rho_0,\mu)<\infty.
\end{equation}

Let $\bar\rho\in\omega(\rho_0)$ and choose $t_n\to\infty$ such that $\rho_{t_n}\rightharpoonup\bar\rho$ narrowly.  There are $s_n\in[t_n,t_n+1]$ such that
\begin{equation}\label{eq:conditional-low-dissipation-times}
\mathscr D_q(\rho_{s_n}\mid\mu)
\le\int_{t_n}^{t_n+1}\mathscr D_q(\rho_t\mid\mu)\ud t
\longrightarrow0.
\end{equation}
Coupling $\rho_{t_n}$ and $\rho_{s_n}$ by the characteristic flow and
applying Cauchy--Schwarz in space and time give
\begin{equation}\label{eq:conditional-nearby-times}
d_{\mathrm{BL}}(\rho_{t_n},\rho_{s_n})
\le\int_{t_n}^{s_n}\int_{\R^d}|v_t|\ud\rho_t\ud t
\le\left(\int_{t_n}^{t_n+1}\mathscr D_q(\rho_t\mid\mu)\ud t\right)^{1/2}
\longrightarrow0.
\end{equation}
Thus $\rho_{s_n}\rightharpoonup\bar\rho$ narrowly as well.  Whenever \eqref{eq:conditional-uniform-bounds} holds, the uniform $L^p$ bound, weak compactness when $p<\infty$, weak-star compactness when $p=\infty$, and lower semicontinuity of the $r$-th moment show that
\begin{equation}
\bar\rho\in\mathcal P_r(\R^d)\cap L^p(\R^d).
\end{equation}
This proves \eqref{eq:conditional-limit-inheritance}.  When $1\le q<2$, \eqref{eq:conditional-energy-moment-bound} and lower semicontinuity likewise give $\bar\rho\in\mathcal P_\sigma(\R^d)$.

Set
\begin{equation}
F_n:=\nabla K*(\rho_{s_n}-\mu),
\qquad
F:=\nabla K*(\bar\rho-\mu).
\end{equation}
We first consider $q\ne1$.  If $0<q<1$, then \eqref{eq:conditional-uniform-bounds} holds.  H\"older's inequality gives, for every $R>0$ and $0<\delta<1$,
\begin{equation}\label{eq:conditional-near-field}
\sup_n\sup_{x\in B_R}\int_{|x-y|<\delta}|\nabla K(x-y)|\bigl(\rho_{s_n}(y)+\bar\rho(y)\bigr)\ud y\le C\delta^{q-1+d/p_*}\left(\sup_n\|\rho_{s_n}\|_{L^p}+\|\bar\rho\|_{L^p}\right),
\end{equation}
and the exponent is positive under \eqref{eq:admissible-p-range}.  After inserting a smooth cutoff that removes the $\delta$-neighborhood of the diagonal, the force kernel is bounded and continuous and tends uniformly to zero in the far field.  Narrow convergence and a finite-net argument in $x$, followed by letting $\delta\downarrow0$, therefore yield local uniform convergence of the forces.

For a finite Borel measure $\eta$ with $\eta(\R^d)=0$, one has
\begin{equation}\label{eq:conditional-zero-mass-cancellation}
\nabla K*\eta(x)
=\int_{\R^d}\bigl(\nabla K(x-y)-\nabla K(x)\bigr)\ud\eta(y).
\end{equation}
For $\eta=\rho_{s_n}-\bar\rho$, the left-hand side is $F_n-F$.  If $1<q<2$, then for every $M\ge1$, \eqref{eq:kernel-gradient-global-holder} implies
\begin{align}
&\sup_n\sup_{x\in\R^d}
\int_{|y|>M}\left|\nabla K(x-y)-\nabla K(x)\right|
\ud(\rho_{s_n}+\bar\rho)(y)\notag\\
&\hspace{7em}\le
CM^{q-1-\sigma}
\left(\sup_n\mathcal M_\sigma(\rho_{s_n})
+\mathcal M_\sigma(\bar\rho)\right)
\longrightarrow0
\qquad\text{as }M\to\infty.
\label{eq:conditional-far-field}
\end{align}
After inserting a cutoff $\chi_M\in C_c(\R^d)$ equal to one on $B_M$, narrow convergence gives pointwise convergence of the cutoff integrals, while \eqref{eq:kernel-gradient-global-holder} makes them equicontinuous in $x$.  The preceding tail estimate then permits $M\to\infty$.  Thus, for either $0<q<1$ or $1<q<2$,
\begin{equation}\label{eq:conditional-force-convergence}
F_n\longrightarrow F
\qquad\text{locally uniformly on }\R^d.
\end{equation}
Hence, for every nonnegative $\chi\in C_c(\R^d)$,
\begin{equation}
\int_{\R^d}\chi|F|^2\ud\bar\rho
=\lim_{n\to\infty}\int_{\R^d}\chi|F_n|^2\ud\rho_{s_n}
\le\|\chi\|_{L^\infty}\lim_{n\to\infty}\mathscr D_q(\rho_{s_n}\mid\mu)=0.
\end{equation}
It follows that $F=0$ $\bar\rho$-a.e.
When $0<q<1$, \Cref{lem:singular-convolution-bound} gives $F\in L^\infty(\R^d)$.  When $1<q<2$, applying \eqref{eq:conditional-zero-mass-cancellation} with $\eta=\bar\rho-\mu$ and using \eqref{eq:kernel-gradient-global-holder} gives
\begin{equation}
\|F\|_{L^\infty}
\le C\left(\mathcal M_{q-1}(\bar\rho)+\mathcal M_{q-1}(\mu)\right)<\infty.
\end{equation}
In either case, the normalized potential \eqref{critical:eq:normalized-potential} is locally integrable and has weak gradient $F\in L^1_{\mathrm{loc}}(\bar\rho)$.  Thus $\bar\rho\in\mathcal C_\mu$.

It remains to treat $q=1$.  We use throughout the odd representative of $\nabla K$ fixed after \eqref{eq:wgfmmdparticle}.  For $\varphi\in C_c^1(\R^d;\R^d)$, set
\begin{equation}
H_\varphi(x,y):=
\begin{cases}
(\varphi(x)-\varphi(y))\cdot\nabla K(x-y),&x\ne y,\\
0,&x=y.
\end{cases}
\end{equation}
Since $|\nabla K(z)|=1$ for $z\ne0$ and $\varphi$ is Lipschitz, $H_\varphi$ is bounded and continuous on $(\R^d)^2$.  Pair symmetrization gives
\begin{equation}\label{eq:conditional-symmetrized-force}
\int_{\R^d}\varphi\cdot F_n\ud\rho_{s_n}
=\frac12\iint_{(\R^d)^2}H_\varphi(x,y)
\ud\rho_{s_n}(x)\ud\rho_{s_n}(y)
-\int_{\R^d}\varphi\cdot(\nabla K*\mu)\ud\rho_{s_n}.
\end{equation}
The target field is bounded and uniformly continuous: translation continuity in $L^1$ gives
\begin{equation}
\|(\nabla K*\mu)(\cdot+h)-\nabla K*\mu\|_{L^\infty}
\le\|\nabla K\|_{L^\infty}\|\mu(\cdot+h)-\mu\|_{L^1}
\longrightarrow0.
\end{equation}
Narrow convergence of $\rho_{s_n}$ and of the product measures therefore yields
\begin{equation}
\lim_{n\to\infty}\int_{\R^d}\varphi\cdot F_n\ud\rho_{s_n}
=\int_{\R^d}\varphi\cdot g_{\bar\rho}\ud\bar\rho.
\end{equation}
On the other hand,
\begin{equation}
\left|\int_{\R^d}\varphi\cdot F_n\ud\rho_{s_n}\right|
\le\|\varphi\|_{L^\infty}
\mathscr D_1(\rho_{s_n}\mid\mu)^{1/2}
\longrightarrow0.
\end{equation}
Since $\nabla K(0)=0$ and $|\nabla K(z)|\le1$ for every $z$, the integral
defining $g_{\bar\rho}$ is absolutely convergent for every $x$, with
$|g_{\bar\rho}(x)|\le2$.  Combining the preceding limits gives
\begin{equation}
\int_{\R^d}\varphi\cdot g_{\bar\rho}\ud\bar\rho=0,
\qquad \forall \varphi\in C_c^1(\R^d;\R^d).
\end{equation}
Hence the vector measure $g_{\bar\rho}\bar\rho$ vanishes, equivalently
$g_{\bar\rho}=0$ $\bar\rho$-a.e.  Therefore all the requirements of
\Cref{critical:def:Lcrit} hold, so $\bar\rho\in\mathcal C_\mu$.
This proves \eqref{eq:conditional-omega-critical}.

The distance conclusion \eqref{eq:conditional-distance-critical-set} follows by contradiction from relative compactness: any sequence of times along which the bounded--Lipschitz distance from $\mathcal C_\mu$ stays bounded below by a positive constant would have a subsequential limit in $\omega(\rho_0)\subset\mathcal C_\mu$.
\end{proof}

Without \eqref{eq:conditional-uniform-bounds}, the $\omega$-limit points produced for $1\le q<2$ need not be absolutely continuous or satisfy the moment hypotheses required by the rigidity results.  Under that bound, combining \Cref{thm:conditional-critical-convergence} with
\Cref{critical:thm:natural-moment-rigidity,critical:thm:residual-three-rigidity}
yields convergence to the target under the following hypotheses.

\begin{corollary}[Conditional convergence for energy kernels]\label{cor:conditional-energy-convergence}

Let $d\ge1$ and let $0<q<2$ satisfy $d+q-2\ge0$.  Let
\begin{equation}
r\ge\max\{1,q\},
\qquad
p_c\le p\le\infty,
\qquad
p>p_c\quad\text{if }d+q-2>0.
\end{equation}

If $d=3$ and $0<q<1$, assume in addition either that $r\ge q+1$ or
that $\rho_0$ and $\mu$ are invariant under rotations about a common
point.
Suppose that $\mu,\rho_0\in\mathcal P_r(\R^d)\cap L^p(\R^d)$ are
probability densities, and let $\rho_t$ be the solution of
\eqref{eq:wgfmmd}.  If
\eqref{eq:conditional-uniform-bounds} holds, then
\begin{equation}
\rho_t\rightharpoonup\mu\text{ narrowly},
\qquad
\mmd_q(\rho_t,\mu)\longrightarrow0.
\end{equation}
\end{corollary}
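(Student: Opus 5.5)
The plan is to combine \Cref{thm:conditional-critical-convergence} with the rigidity results, and then upgrade subsequential narrow convergence to convergence of the full trajectory in narrow topology and in $\mmd_q$.

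\emph{Step 1: identify every $\omega$-limit point with $\mu$.} Under \eqref{eq:conditional-uniform-bounds}, \Cref{thm:conditional-critical-convergence} shows that the orbit is narrowly relatively compact. It also gives $\omega(\rho_0)\subset\mathcal C_\mu\cap\mathcal P_r(\R^d)\cap L^p(\R^d)$. Let $\bar\rho\in\omega(\rho_0)$. Then $\bar\rho\ll\mathcal L^d$, and since $r\ge\max\{1,q\}$ we have $\bar\rho,\mu\in\mathcal P_q(\R^d)$.
\begin{itemize}
\item Outside the regime $d\in\{1,3\}$, $0<q<1$, \Cref{critical:thm:natural-moment-rigidity} gives $\bar\rho=\mu$ directly.
\item The case $d=1$, $0<q<1$ cannot occur here, because $d+q-2\ge0$ rules it out.
\item If $d=3$ and $0<q<1$ with $r\ge q+1$, alternative (i) of \Cref{critical:thm:residual-three-rigidity} applies.
\item In the radial case, uniqueness in \Cref{thm:wgfmmdwp} and rotation invariance of the equation give $\rho_t\circ R=\rho_t$ for every rotation $R$ about the common center. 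Radiality then passes to narrow limits, so $\bar\rho-\mu$ is radial, and alternative (iii) of \Cref{critical:thm:residual-three-rigidity} applies.
\end{itemize}
Thus $\omega(\rho_0)=\{\mu\}$. By relative compactness, every sequence $t_n\to\infty$ has a subsequence along which $\rho_{t_n}\rightharpoonup\mu$, so the full trajectory converges narrowly.

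\emph{Step 2: upgrade to $\mmd_q$ convergence.} Narrow convergence alone does not control the unbounded kernel, so I will use uniform integrability of moments. The uniform bound $\sup_t\mathcal M_r(\rho_t)<\infty$ with $r\ge q>q/2$ gives uniformly integrable $q/2$-moments. I will then use the feature-map representation of \Cref{lem:genSob}, writing
\[
\mmd_q^2(\rho_t,\mu)=C\Big\|\int\Phi_x\,\ud(\rho_t-\mu)(x)\Big\|_H^2 .
\]
Expanding the square, the right-hand side equals
\[
\iint k_q\,\ud(\rho_t-\mu)^{\otimes2}.
\]
The integrand $k_q$ is continuous and bounded by $C(|x|^{q/2}|y|^{q/2})$, by Cauchy--Schwarz in $H$. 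Hence the product measures $\rho_t\otimes\rho_t$, $\rho_t\otimes\mu$ and $\mu\otimes\mu$ converge narrowly, and their integrals against $k_q$ converge because $|x|^{q/2}|y|^{q/2}$ is uniformly integrable, being dominated through the $r$-moments. Each cross term therefore tends to its value with $\rho_t$ replaced by $\mu$, and the whole expression tends to $0$. As an alternative route, one could cite the converse direction in \Cref{prop:mmd-topology} or the appendix under $q/2$-moment uniform integrability.

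\emph{Main obstacle.} I expect the delicate point to be the radial alternative in $d=3$. It requires checking two things. First, the symmetry is preserved by the flow, which needs uniqueness in the solution class together with invariance of the kernel and of $\mu$. Second, the hypothesis "$\nu=\bar\rho-\mu$ invariant under rotations" survives the narrow limit. Both should follow by testing against rotated test functions. The remaining steps are direct applications of results stated earlier in the paper.
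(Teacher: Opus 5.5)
Your proposal is correct and follows the paper's proof essentially step for step. You identify every $\omega$-limit point with $\mu$ via \Cref{thm:conditional-critical-convergence} and the rigidity theorems, use the same uniqueness/rotation argument in the radial $d=3$ case, and then use relative compactness to get narrow convergence of the whole orbit. The only difference is cosmetic and lies in the final upgrade to $\mmd_q$ convergence: the paper deduces $\mathsf T_{q/2}(\rho_t,\mu)\to0$ from narrow convergence plus the uniform $r$-moment bound and then invokes \eqref{eq:wassmmd}, whereas you pass to the limit directly in $\iint k_q\,\ud(\rho_t-\mu)^{\otimes2}$ using $|k_q(x,y)|\le C|x|^{q/2}|y|^{q/2}$ and uniform integrability, which rests on the same fact $r>q/2$.
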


\begin{proof}
Every element $\bar\rho\in\omega(\rho_0)$ is an $L^p$ probability
density in $\mathcal P_r(\R^d)$ and a Lagrangian critical point by
\Cref{thm:conditional-critical-convergence}.  Suppose first that either
$d\ne3$ or $q\ge1$.  Since
$r\ge\max\{1,q\}$, both $\bar\rho$ and $\mu$ belong to
$\mathcal P_q(\R^d)$.  The admissible pairs $(d,q)$ lie in one of
the following regimes of \Cref{critical:thm:natural-moment-rigidity}:
$d$ is even; $d$ is odd and $q\ge1$; or $d\ge5$ is odd and $0<q<1$.  Hence
$\bar\rho=\mu$.

Suppose now that $d=3$ and $0<q<1$.  If $r\ge q+1$, then
$\bar\rho,\mu\in\mathcal P_{q+1}(\R^3)$, so
\Cref{critical:thm:residual-three-rigidity}(i) gives $\bar\rho=\mu$.
Otherwise, $\rho_0$ and $\mu$ are invariant under rotations about a
common point.  For any such rotation $T$, rotation invariance of $K$
implies that $T_\#\rho_t$ solves \eqref{eq:wgfmmd} with target
$T_\#\mu=\mu$ and initial datum $T_\#\rho_0=\rho_0$.  Uniqueness in
\Cref{thm:wgfmmdwp} therefore gives $T_\#\rho_t=\rho_t$ for every $t\ge0$.
Pushforward by $T$ is continuous for narrow convergence, so every
$\bar\rho\in\omega(\rho_0)$ is invariant under the same rotations.
Consequently, $\bar\rho-\mu$ is radial about the common center, and
\Cref{critical:thm:residual-three-rigidity}(iii) gives $\bar\rho=\mu$.
Thus $\omega(\rho_0)=\{\mu\}$.  Since the orbit is narrowly relatively
compact, this singleton $\omega$-limit set forces
$\rho_t\rightharpoonup\mu$ narrowly.  Indeed, otherwise there would be
$\varepsilon>0$ and $t_n\to\infty$ such that
$d_{\mathrm{BL}}(\rho_{t_n},\mu)\ge\varepsilon$; relative compactness
would give a subsequence converging narrowly to an element of
$\omega(\rho_0)$, necessarily $\mu$, which is a contradiction.  Finally,
since $q/2<1\le r$, narrow convergence and the uniform $r$-th moment
bound imply $\mathsf T_{q/2}(\rho_t,\mu)\to0$, and
\Cref{prop:mmdwass} gives convergence in $\mmd_q$.
\end{proof}

\begin{remark}
The only one-dimensional range not covered by the corollary is $0<q<1$, for which $d+q-2<0$.  In that range, \Cref{critical:thm:nested-intervals} exhibits distinct absolutely continuous Lagrangian critical points.  The endpoint $d=q=1$ is covered by the corollary.
\end{remark}

\section{PL inequalities}\label{sec:PL}
We record several static obstructions to global Polyak--\L ojasiewicz inequalities for the MMD energy.  The common mechanism is that the energy may be carried either far from the target or in high-frequency directions where the Wasserstein dissipation is too weak.  Throughout the section, constants depending only on fixed kernels, the dimension, the target, and the fixed bump functions may change from line to line.

For an interaction profile $K$, set, whenever the expressions are finite,
\begin{equation}\label{eq:PL-energy-diss-def}
\mathcal E_K(\sigma):=\frac12\iint K(x-y)\ud\sigma(x)\ud\sigma(y),
\qquad
\mathcal D_K(\rho\mid\mu):=\int_{\R^d}|\nabla K*(\rho-\mu)|^2\ud\rho,
\end{equation}
where $\sigma$ is a zero-mass signed measure.

For the whole-space statements we use the Riesz family
\begin{equation}\label{eq:PL-riesz-kernel}
K_q(x)=
\begin{cases}
-\dfrac1q |x|^q, & q\in(-d,2)\setminus\{0\},\\[0.5em]
-\log |x|, & q=0.
\end{cases}
\end{equation}
For $K=K_q$, we abbreviate
\begin{equation}
\mathcal E_q(\sigma):=\mathcal E_{K_q}(\sigma),
\qquad
\mathcal D_q(\rho\mid\mu):=\mathcal D_{K_q}(\rho\mid\mu).
\end{equation}
For $0<q<2$, this normalization gives $\mathcal D_q=q^{-2}\mathscr D_q$.

For $-d<q<0$ and $\rho\in C_c^1(\R^d)$, the source field $\nabla K_q*\rho$ is understood as the ordinary convolution when $q>1-d$ and as the Cauchy principal value when $q\le1-d$.  Indeed, $\nabla K_q(z)=-z|z|^{q-2}$, so the ordinary kernel is locally integrable exactly when $q>1-d$.  In the principal-value range, oddness gives
\begin{equation}\label{eq:PL-negative-pv-definition}
\begin{aligned}
\operatorname{p.v.}(\nabla K_q*\rho)(x)
&:=\lim_{\varepsilon\downarrow0}
\int_{|z|>\varepsilon}\nabla K_q(z)\rho(x-z)\ud z\\
&=\int_{|z|<1}\nabla K_q(z)\bigl(\rho(x-z)-\rho(x)\bigr)\ud z
+\int_{|z|\ge1}\nabla K_q(z)\rho(x-z)\ud z.
\end{aligned}
\end{equation}
The first integral is absolutely convergent because its integrand is $O(|z|^q)$ and $q>-d$, while the second is absolutely convergent by compact support.  When $q>1-d$, \eqref{eq:PL-negative-pv-definition} agrees with the ordinary convolution.

When $0<q<2$ and $\rho,\mu\in\mathcal P_q(\R^d)$, $\mathcal E_q(\rho-\mu)=q^{-1}\mmd_q^2(\rho,\mu)$ with the convention \eqref{def:mmdq}.  Thus replacing $\mathcal E_q$ by $\mmd_q^2$ only changes constants in this range.

\subsection{\texorpdfstring{Preliminary observations}{Preliminary observations}}\label{ssec:PL-equiv}

The only dynamical input used below is elementary. Suppose that a sufficiently regular Wasserstein gradient flow satisfies
\begin{equation}\label{eq:PL-energy-identity-abstract}
\frac{\ud}{\ud t}\mathcal E_K(\rho_t-\mu)=-\mathcal D_K(\rho_t\mid\mu).
\end{equation}
If
\begin{equation}\label{eq:PL-abstract}
\mathcal E_K(\rho-\mu)\le C_{\mathrm{PL}}\mathcal D_K(\rho\mid\mu)
\end{equation}
holds on a flow-invariant class, then
\begin{equation}\label{eq:PL-decay-abstract}
\mathcal E_K(\rho_t-\mu)\le e^{-t/C_{\mathrm{PL}}}\mathcal E_K(\rho_0-\mu)
\end{equation}
by Gr\"onwall's lemma. Conversely, if every regular initial datum in a class satisfies
\begin{equation}\label{eq:PL-rate-implies}
\mathcal E_K(\rho_t-\mu)\le \gamma(t)\mathcal E_K(\rho_0-\mu),
\qquad \gamma(0)=1,\quad\gamma'_+(0)<0,
\end{equation}
then differentiating at $t=0$ gives
\begin{equation}\label{eq:PL-converse-derivative}
0\le\gamma'_+(0)\mathcal E_K(\rho_0-\mu)+\mathcal D_K(\rho_0\mid\mu),
\end{equation}
so \eqref{eq:PL-abstract} holds with $C_{\mathrm{PL}}=-1/\gamma'_+(0)$.

We also record once the far-field MMD estimate used in both the static PL obstruction and the dynamical no-rate construction.

\begin{lemma}[Far-field MMD estimate]\label{lem:noconv-separated-mmd}
Let $0<q<2$ and let $\mu\in\mathcal P_{q/2}(\R^d)$ be a probability measure.  There are constants $R_0\ge1$ and $0<c<C<\infty$, depending only on $d,q,\mu$, such that if $R\ge R_0$, $|x_R|=R$, and $\rho$ is a probability measure with $\supp\rho\subset B_{R/4}(x_R)$, then
\begin{equation}\label{eq:noconv-separated-mmd}
cR^q\le \mmd_q^2(\rho,\mu)\le CR^q.
\end{equation}
\end{lemma}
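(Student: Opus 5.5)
The plan is to use the centered representation \eqref{eq:defmmd} for the upper bound and the Fourier--Sobolev representation of \Cref{lem:genSob} for the lower bound.

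\emph{Upper bound.} Since $\mmd_q$ is a Hilbert (semi)norm difference, we have
\begin{equation}
\mmd_q(\rho,\mu)\le\mmd_q(\rho,\delta_0)+\mmd_q(\delta_0,\mu).
\end{equation}
The second term is a constant depending only on $\mu\in\mathcal P_{q/2}$. Indeed, by the feature map of \Cref{lem:genSob},
\begin{equation}
\mmd_q(\delta_0,\lambda)=\Bigl\|\int\Phi_x\ud\lambda(x)\Bigr\|_H\le C\int|x|^{q/2}\ud\lambda(x).
\end{equation}
Applied to $\lambda=\rho$, whose support lies in $B_{R/4}(x_R)$, this gives $\mmd_q(\rho,\delta_0)\le CR^{q/2}$. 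Squaring yields the upper bound $CR^q$ once $R\ge R_0\ge1$.

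\emph{Lower bound.} I would use the reverse triangle inequality
\begin{equation}
\mmd_q(\rho,\mu)\ge\mmd_q(\delta_{x_R},\mu)-\mmd_q(\rho,\delta_{x_R}).
\end{equation}
For the subtracted term, the kernel distance is translation invariant, so $\mmd_q(\rho,\delta_{x_R})=\mmd_q((\tau_{-x_R})_\#\rho,\delta_0)\le C_1(R/4)^{q/2}$ by the same feature-map bound. Here $\|\Phi_x\|_H=C^{1/2}|x|^{q/2}$ exactly, so $C_1$ is the sharp constant $C_{d,q}^{1/2}$.

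For the main term, write
\begin{equation}
\mmd_q(\delta_{x_R},\mu)=\Bigl\|\Phi_{x_R}-\int\Phi_y\ud\mu(y)\Bigr\|_H\ge C_1R^{q/2}-C_\mu,
\end{equation}
where $C_\mu=C_1\int|y|^{q/2}\ud\mu$. Combining the two estimates,
\begin{equation}
\mmd_q(\rho,\mu)\ge C_1R^{q/2}\bigl(1-4^{-q/2}\bigr)-C_\mu.
\end{equation}
Since $4^{-q/2}<1$ for $q>0$, choosing $R_0$ large makes the right side at least $\tfrac12C_1(1-4^{-q/2})R^{q/2}$, and squaring gives $cR^q$.

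The only delicate point is that both the main term and the error term use the same sharp constant $C_1$, coming from the exact identity $\|\Phi_x-\Phi_y\|_H^2=C_{d,q}|x-y|^q$. The radius ratio $1/4$ is below $1$, so the gap $1-4^{-q/2}$ is positive. No use of the $\mathcal P_q$ representation is needed, so $\mu\in\mathcal P_{q/2}$ suffices throughout.
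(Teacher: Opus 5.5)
Your proposal is correct and is essentially the paper's argument: Minkowski's inequality for the Bochner mean embedding gives $\mmd_q(\rho,\delta_{x_R})\le (R/4)^{q/2}$, and the triangle inequality through $\delta_{x_R}$ and $\delta_0$, using the exact identity $\mmd_q(\delta_x,\delta_y)=|x-y|^{q/2}$, then gives the two-sided bound with the positive gap $1-2^{-q}$. The only cosmetic difference is that you route the upper bound directly through $\delta_0$ rather than through $\delta_{x_R}$, and you bound $\mmd_q(\mu,\delta_0)$ by the $q/2$-moment of $\mu$ instead of keeping it as is.
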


\begin{proof}
Since $\mmd_q$ is a Hilbert-space distance, Minkowski's inequality for the Bochner mean embedding of $\rho$ gives
\begin{equation}
\mmd_q(\rho,\delta_{x_R})
\le \int_{\R^d}\mmd_q(\delta_x,\delta_{x_R})\ud\rho(x)
\le 2^{-q}R^{q/2},
\end{equation}
where we used $\mmd_q(\delta_x,\delta_y)=|x-y|^{q/2}$ and $\supp\rho\subset B_{R/4}(x_R)$.  The triangle inequality and $\mmd_q(\delta_{x_R},\delta_0)=R^{q/2}$ then give
\begin{equation}\label{eq:noconv-separated-mmd-bounds}
(1-2^{-q})R^{q/2}-\mmd_q(\mu,\delta_0)\le \mmd_q(\rho,\mu)\le (1+2^{-q})R^{q/2}+\mmd_q(\mu,\delta_0).
\end{equation}
Since $1-2^{-q}>0$ and $\mmd_q(\mu,\delta_0)<\infty$, these bounds are comparable to $R^{q/2}$ for all sufficiently large $R$.  Squaring proves \eqref{eq:noconv-separated-mmd}.
\end{proof}

\subsection{Whole-space Riesz kernels}\label{ssec:PL-whole-space}

The next proposition gives a static proof that no global PL inequality can hold in the whole space.  We separate the positive-order kernels, which include the active MMD range of the paper, from the negative-order Riesz kernels to avoid imposing an artificial common hypothesis at infinity.

\begin{proposition}[Whole-space failure of global PL]\label{prop:nogPL}
Let $q\in(-d,2)\setminus\{0\}$.
\begin{enumerate}[label=\textup{(\roman*)}]
\item If $0<q<2$ and $\mu\in\mathcal P(\R^d)$ is compactly supported, then there exists a sequence $\rho_n\in C_c^\infty(\R^d)$ of probability densities such that
\begin{equation}\label{eq:PL-whole-positive-ratio}
\frac{\mathcal D_q(\rho_n\mid\mu)}{\mathcal E_q(\rho_n-\mu)}\longrightarrow0.
\end{equation}
Equivalently, in the notation \eqref{def:mmdq},
\begin{equation}\label{eq:PL-whole-positive-ratio-mmd}
\frac{\mathcal D_q(\rho_n\mid\mu)}{\mmd_q^2(\rho_n,\mu)}\longrightarrow0.
\end{equation}
\item
If $-d<q<0$, assume that $\mu\in\mathcal P(\R^d)$ and
\begin{equation}\label{eq:PL-negative-target-energy}
0<\iint |x-y|^q\ud\mu(x)\ud\mu(y)<\infty.
\end{equation}
Assume moreover that there exist $R_0<\infty$ and a locally integrable Borel vector field
$F_\mu:\R^d\setminus\overline{B}_{R_0}\to\R^d$ such that $K_q*\mu$ is finite on this exterior region,
\begin{equation}\label{eq:PL-negative-target-field-distribution}
F_\mu=\nabla(K_q*\mu)
\qquad\text{in }\mathcal D'(\R^d\setminus\overline{B}_{R_0}),
\end{equation}
and
\begin{equation}\label{eq:PL-negative-target-assumptions}
\lim_{|x|\to\infty}\left(|K_q*\mu(x)|+|F_\mu(x)|\right)=0.
\end{equation}
Then there exists a sequence $\rho_n\in C_c^\infty(\R^d)$ of probability densities satisfying \eqref{eq:PL-whole-positive-ratio}.
\end{enumerate}
Consequently, under either set of hypotheses there is no finite constant $C_{\mathrm{PL}}(\mu)$ such that
\begin{equation}\label{eq:gPL}
\mathcal E_q(\rho-\mu)\le C_{\mathrm{PL}}(\mu)\mathcal D_q(\rho\mid\mu)
\end{equation}
for all admissible probability densities $\rho$.
\end{proposition}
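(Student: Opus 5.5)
The plan is to exhibit explicit test densities built from one fixed bump $\varphi\in C_c^\infty(B_1)$, $\varphi\ge0$, $\int\varphi=1$. We place the bump far from the target and dilate it, so that both the source field and the target field on its support become small while the energy stays bounded below. Write $\varphi_L(x):=L^{-d}\varphi(x/L)$. By homogeneity of $\nabla K_q$ (or of its principal-value version \eqref{eq:PL-negative-pv-definition}), we have $(\nabla K_q*\varphi_L)(x)=L^{q-1}(\nabla K_q*\varphi)(x/L)$. Since $\nabla K_q*\varphi$ is bounded on $\overline B_1$ for $q>-d$, this gives
\[
\sup_{\supp\varphi_L(\cdot-x_0)}\bigl|\nabla K_q*\varphi_L(\cdot-x_0)\bigr|\le C_\varphi L^{q-1}.
\]
We then bound the dissipation by $\mathcal D_q(\rho\mid\mu)\le 2\int|\nabla K_q*\rho|^2\ud\rho+2\sup_{\supp\rho}|\nabla K_q*\mu|^2$.

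\emph{Case (i), $0<q<2$.} Take $\supp\mu\subset B_{R_1}$, $|x_R|=R$, $L=R/8$, and $\rho_R:=\varphi_{L}(\cdot-x_R)$, so that $\supp\rho_R\subset B_{R/4}(x_R)$. \Cref{lem:noconv-separated-mmd} together with $\mathcal E_q=q^{-1}\mmd_q^2$ gives $\mathcal E_q(\rho_R-\mu)\ge cR^q$ for large $R$. On $\supp\rho_R$ every point is at distance $\ge R/2$ from $\supp\mu$, so $|\nabla K_q*\mu|\le C R^{q-1}$; here we use $|\nabla K_q(z)|=|z|^{q-1}$, which is monotone in $|z|$ for either sign of $q-1$, applied with $R/2\le|z|\le 2R$. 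The source field contributes $C L^{2q-2}\sim CR^{2q-2}$. Hence the ratio is $\lesssim R^{2q-2}/R^q=R^{q-2}\to0$, since $q<2$. Taking $R=R_n\to\infty$ proves \eqref{eq:PL-whole-positive-ratio}, and \eqref{eq:PL-whole-positive-ratio-mmd} follows by the constant $q$.

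\emph{Case (ii), $-d<q<0$.} Now $K_q=|x|^q/|q|>0$, and we expand
\[
\mathcal E_q(\rho-\mu)=\mathcal E_q(\rho)+\mathcal E_q(\mu)-\int K_q*\mu\ud\rho .
\]
Take $L_n\to\infty$ and centers $x_n$ with $|x_n|\ge L_n+n+R_0$, and set $\rho_n:=\varphi_{L_n}(\cdot-x_n)$. Scaling gives $\mathcal E_q(\rho_n)=L_n^q\mathcal E_q(\varphi)\to0$. Because $\supp\rho_n$ lies outside $B_{n+R_0}$, hypothesis \eqref{eq:PL-negative-target-assumptions} gives $\sup_{\supp\rho_n}(|K_q*\mu|+|F_\mu|)\to0$. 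Therefore the cross term vanishes and $\mathcal E_q(\rho_n-\mu)\to\mathcal E_q(\mu)>0$ by \eqref{eq:PL-negative-target-energy}. For the dissipation, the target part is bounded by $\sup_{\supp\rho_n}|F_\mu|^2\to0$, and the source part is $\le C L_n^{2q-2}\to0$ because $q<1$. So the ratio tends to $0$.

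The final assertion follows immediately: a constant $C_{\mathrm{PL}}$ in \eqref{eq:gPL} would force the ratio to be at least $1/C_{\mathrm{PL}}$ along these sequences.

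The step I expect to need the most care is case (ii). There one must justify that, on the exterior region, the velocity target term $\nabla K_q*\mu$ entering $\mathcal D_q$ is exactly the field $F_\mu$ of \eqref{eq:PL-negative-target-field-distribution}. This holds because $\rho_n$ is smooth and supported where $F_\mu$ is the distributional gradient; I would test against $\rho_n$ after mollification. One must also check that the principal-value source field obeys the claimed scaling. The remaining steps are direct bookkeeping.
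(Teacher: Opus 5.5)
Your proposal is correct and is essentially the paper's proof: for $0<q<2$, a bump translated to distance $R$ together with the far-field MMD lemma; for $-d<q<0$, a bump both dilated ($L_n\to\infty$) and translated into the exterior region, so that self-energy, self-dissipation and cross terms vanish while $\mathcal E_q(\rho_n-\mu)\to\mathcal E_q(\mu)>0$. The differences are minor: in case (i) you also dilate the bump to scale $R/8$, which gives the single bound $\mathcal D_q\lesssim R^{2q-2}$ where the paper uses a fixed bump and splits $q\le1$ versus $q>1$; and in case (ii) the paper simply takes $F_\mu$ as the meaning of $\nabla(K_q*\mu)$ in $\mathcal D_q$, so no mollification is needed beyond noting that $\rho_n\ll\mathcal L^d$ only sees the a.e.\ class of the field.
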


\begin{proof}
We first treat $0<q<2$.  Fix a nonnegative $\theta\in C_c^\infty(B_1(0))$ with $\int_{\R^d}\theta(x)\ud x=1$, choose $e\in\mathbb S^{d-1}$, and set
\begin{equation}
\rho_R(x):=\theta(x-Re),\qquad R\ge1.
\end{equation}
Choose $R_\mu\ge1$ with $\supp\mu\subset B_{R_\mu}(0)$. For all sufficiently large $R$, $\supp\rho_R\subset B_{R/4}(Re)$, and \Cref{lem:noconv-separated-mmd} gives
\begin{equation}\label{eq:PL-positive-energy-lower}
\mathcal E_q(\rho_R-\mu)=q^{-1}\mmd_q^2(\rho_R,\mu)\ge cR^q.
\end{equation}

We next bound the dissipation.  For $x\in\supp\rho_R$, the self-field
\begin{equation}
\nabla K_q*\rho_R(x)=\int \nabla K_q(x-y)\theta(y-Re)\ud y
\end{equation}
is a translate of the fixed smooth function $\nabla K_q*\theta$ on $\supp\theta$; it is bounded because $|\nabla K_q(z)|\le C|z|^{q-1}$ is locally integrable for $q>0$.  The target field satisfies
\begin{equation}\label{eq:PL-positive-target-field}
|\nabla K_q*\mu(x)|\le
\begin{cases}
C, & 0<q\le1,\\
CR^{q-1}, & 1<q<2,
\end{cases}
\qquad x\in\supp\rho_R.
\end{equation}
Therefore
\begin{equation}\label{eq:PL-positive-diss-upper}
\mathcal D_q(\rho_R\mid\mu)\le C\left(1+R^{2(q-1)_+}\right),
\qquad (q-1)_+:=\max\{q-1,0\}.
\end{equation}
Combining \eqref{eq:PL-positive-energy-lower} and \eqref{eq:PL-positive-diss-upper},
\begin{equation}
\frac{\mathcal D_q(\rho_R\mid\mu)}{\mathcal E_q(\rho_R-\mu)}
\le C\left(R^{-q}+R^{q-2}\right)\longrightarrow0,
\end{equation}
because $0<q<2$.

It remains to treat $-d<q<0$.  With a slight abuse of notation, throughout this part of the proof we write $\nabla(K_q\ast\mu)$ for the locally integrable Borel representative specified in part~\textup{(ii)}.  In this regime, dilation makes the source self-energy and self-dissipation small, while translation makes its interaction with the fixed target negligible.  Let $\theta$ be as above and define
\begin{equation}
\rho_{a,r}(x):=r^{-d}\theta\left(\frac{x-a}{r}\right).
\end{equation}
Choose $r_n\to\infty$ and then choose $a_n\in\R^d$ so that $|a_n|\ge4r_n$ and
\begin{equation}\label{eq:PL-negative-field-small}
\supp\rho_{a_n,r_n}\subset\R^d\setminus\overline{B}_{R_0},
\qquad
\sup_{x\in\supp\rho_{a_n,r_n}}\left(|K_q*\mu(x)|+|\nabla(K_q\ast\mu)(x)|\right)\le r_n^{-1}.
\end{equation}
This is possible by the decay assumption in \eqref{eq:PL-negative-target-assumptions}.
By scaling,
\begin{equation}\label{eq:PL-negative-self-scaling}
\iint K_q(x-y)\ud\rho_{a_n,r_n}(x)\ud\rho_{a_n,r_n}(y)=O(r_n^q).
\end{equation}
The identity \eqref{eq:PL-negative-pv-definition} shows that $\nabla K_q*\theta$ is continuous, and hence bounded, on $\supp\theta$.  Homogeneity gives, for $u\in\supp\theta$,
\begin{equation}\label{eq:PL-negative-field-scaling}
(\nabla K_q*\rho_{a,r})(a+ru)
=r^{q-1}(\nabla K_q*\theta)(u).
\end{equation}
Consequently,
\begin{equation}\label{eq:PL-negative-diss-self-scaling}
\begin{aligned}
\int |\nabla K_q*\rho_{a_n,r_n}|^2\ud\rho_{a_n,r_n}
&=r_n^{2(q-1)}\int_{\R^d}|\nabla K_q*\theta|^2\theta\ud u=O(r_n^{2(q-1)}).
\end{aligned}
\end{equation}
Since $q<0$, both quantities tend to zero.  The source--target energy term tends to zero by the potential bound in \eqref{eq:PL-negative-field-small}.  Consequently,
\begin{equation}\label{eq:PL-negative-energy-limit}
\mathcal E_q(\rho_{a_n,r_n}-\mu)
\longrightarrow \frac12\iint K_q(x-y)\ud\mu(x)\ud\mu(y)>0,
\end{equation}
For the dissipation, \eqref{eq:PL-negative-field-small} gives
\begin{align}
\mathcal D_q(\rho_{a_n,r_n}\mid\mu)
&=\int\left|\nabla K_q*\rho_{a_n,r_n}-\nabla(K_q\ast\mu)\right|^2\ud\rho_{a_n,r_n}\notag\\
&\le2\int|\nabla K_q*\rho_{a_n,r_n}|^2\ud\rho_{a_n,r_n}
+2\sup_{\supp\rho_{a_n,r_n}}|\nabla(K_q\ast\mu)|^2
\longrightarrow0.\label{eq:PL-negative-diss-limit}
\end{align}
This proves the ratio convergence and hence the failure of \eqref{eq:gPL}.
\end{proof}

\begin{remark}\label{rem:PL-log-case}
For compactly supported bounded targets, the logarithmic endpoint $q=0$ in dimension two follows from the same translated-bump argument used in \cite[proof of Proposition~1.15]{ChodronDeCourcelRosenzweigCoulombDiscrepancies}: translating a fixed smooth bump to distance $R$ gives energy of order $\log R$ and bounded dissipation.  We omit the repetition.
\end{remark}

\subsection{Periodic sub-Coulomb kernels}\label{ssec:PL-periodic-subcoulomb}

On the torus, spatial escape is unavailable, so the translated-bump mechanism is replaced by a high-frequency oscillation whose dissipation is small relative to its energy.

Throughout this subsection, let $K_q$ denote the zero-average periodic Riesz kernel on $\mathbb T^d$ whose nonzero Fourier coefficients satisfy
\begin{equation}\label{eq:periodic-riesz-fourier}
\widehat{K_q}(k)=c_{d,q}|2\pi k|^{-d-q},
\qquad k\in\mathbb Z^d\setminus\{0\},
\end{equation}
with $c_{d,q}>0$.  This is the periodic analogue of the homogeneous kernel of order $q$.

\begin{proposition}[Periodic sub-Coulomb failure]\label{prop:noPLsubCo}
Assume $2-d<q<2$ and let $\mu\in L^\infty(\mathbb T^d)$ be a probability density satisfying $\operatorname*{ess\,inf}_{\mathbb T^d}\mu>0$.  Then there exists a sequence of probability densities $\rho_n$ on $\mathbb T^d$, with the same regularity as $\mu$ and in particular smooth if $\mu$ is smooth, such that
\begin{equation}\label{eq:PL-torus-subcoulomb-ratio}
\frac{\int_{\mathbb T^d}|\nabla K_q*(\rho_n-\mu)|^2\rho_n\ud x}{\frac12\iint_{(\mathbb T^d)^2}K_q(x-y)(\rho_n-\mu)(x)(\rho_n-\mu)(y)\ud x\ud y}
\longrightarrow0.
\end{equation}
In particular, no global PL inequality holds for this periodic kernel.
\end{proposition}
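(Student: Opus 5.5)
We will take a small high-frequency perturbation of the target,
\begin{equation*}
\rho_n:=\mu\bigl(1+\varepsilon\cos(2\pi k_n\cdot x)\bigr)\Big/Z_n ,
\qquad |k_n|\to\infty,
\end{equation*}
where $Z_n$ is the normalizing constant.  A cleaner variant, which we will use, is
\begin{equation*}
\rho_n:=\mu+\varepsilon_n\phi_n,
\qquad
\phi_n(x):=\cos(2\pi k_n\cdot x)-c_n,
\end{equation*}
with $c_n$ chosen so that $\int_{\mathbb T^d}\phi_n\ud x=0$ (here in fact $c_n=0$).  Fix $\varepsilon_n=\varepsilon\le\frac12\operatorname*{ess\,inf}\mu$.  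Then $\rho_n\ge\frac12\operatorname*{ess\,inf}\mu>0$, so $\rho_n$ is a probability density, and it is smooth whenever $\mu$ is.  The discrepancy is $\rho_n-\mu=\varepsilon\phi_n$, a single pair of Fourier modes $\pm k_n$.  The plan is to compute the energy and the dissipation exactly in Fourier and compare their scalings in $|k_n|$.

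For the energy, \eqref{eq:periodic-riesz-fourier} gives
\begin{equation*}
\tfrac12\iint K_q(x-y)\,\varepsilon^2\phi_n(x)\phi_n(y)\ud x\ud y=c\,\varepsilon^2|2\pi k_n|^{-d-q}
\end{equation*}
up to a constant, since only the two modes $\pm k_n$ contribute, each with coefficient $1/2$.  For the dissipation, the field is
\begin{equation*}
\nabla K_q*(\rho_n-\mu)=-\varepsilon\,c_{d,q}|2\pi k_n|^{-d-q}\,2\pi k_n\sin(2\pi k_n\cdot x),
\end{equation*}
whose modulus is of order $\varepsilon|k_n|^{1-d-q}$.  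Bounding $\rho_n\le\|\mu\|_{L^\infty}+\varepsilon$, we get that the dissipation is at most $C\varepsilon^2|k_n|^{2-2d-2q}$.  The ratio in \eqref{eq:PL-torus-subcoulomb-ratio} is therefore bounded by
\begin{equation*}
C|k_n|^{2-2d-2q+d+q}=C|k_n|^{2-d-q},
\end{equation*}
which tends to zero exactly when $q>2-d$.  This explains the sharpness of the stated range.

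The remaining points are routine.  The regularity claim holds because $\phi_n$ is a trigonometric polynomial, so $\rho_n$ inherits the regularity of $\mu$.  Positivity of the denominator follows from $c_{d,q}>0$.  The failure of a global PL inequality then follows from the discussion in \Cref{ssec:PL-equiv}.

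The only point needing care is the dissipation bound, because $\rho_n$ is a weight rather than a constant.  Bounding it by $\|\rho_n\|_{L^\infty}$ already suffices, and no cancellation from $\mu$ is required.  So I expect the main obstacle is merely bookkeeping the Fourier normalization of the periodic kernel, in particular making sure that the gradient carries exactly one extra factor of $|2\pi k|$.
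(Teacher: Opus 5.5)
Your proposal is correct and matches the paper's proof: the same single-mode perturbation $\rho_k=\mu+\varepsilon\cos(2\pi k\cdot x)$ with $\varepsilon<\operatorname*{ess\,inf}\mu$, the same Plancherel computation giving energy $\frac{\varepsilon^2}{4}\widehat{K_q}(k)$ and dissipation $O(\varepsilon^2|k|^2\widehat{K_q}(k)^2)$, hence a ratio $O(|k|^{2-d-q})\to0$ for $q>2-d$. The only difference is cosmetic: you bound the weight by $\|\rho_n\|_{L^\infty}$, whereas the paper uses frequency orthogonality together with $\|\mu\|_{L^1}=1$; either bound works, as does simply $\|\nabla K_q*\sigma_k\|_{L^\infty}^2\int\rho_k$.
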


\begin{proof}
Choose $0<\varepsilon<\operatorname*{ess\,inf}\mu$ and, for $k\in\mathbb Z^d\setminus\{0\}$, set
\begin{equation}
\rho_k(x):=\mu(x)+\varepsilon\cos(2\pi k\cdot x).
\end{equation}
Then $\rho_k$ is a probability density and is nonnegative.  Let $\sigma_k:=\rho_k-\mu$.  Since $\widehat{\sigma_k}(\pm k)=\varepsilon/2$ and all other Fourier coefficients vanish, Plancherel gives
\begin{equation}\label{eq:PL-torus-energy}
\mathcal E_q(\sigma_k)
=\frac12\iint K_q(x-y)\sigma_k(x)\sigma_k(y)\ud x\ud y
=\frac{\varepsilon^2}{4}\widehat{K_q}(k).
\end{equation}
Moreover $\nabla K_q*\sigma_k$ has only the frequencies $\pm k$.  Hence $|\nabla K_q*\sigma_k|^2$ has only the frequencies $0$ and $\pm2k$, and
\begin{equation}\label{eq:PL-torus-diss}
\int_{\mathbb T^d}|\nabla K_q*\sigma_k|^2\rho_k\ud x
\le C\varepsilon^2 |2\pi k|^2\widehat{K_q}(k)^2.
\end{equation}
Indeed, the term containing the additional factor $\varepsilon\cos(2\pi k\cdot x)$ integrates to zero by frequency orthogonality, and the term weighted by $\mu$ is bounded using $\|\mu\|_{L^1}=1$ and $|\widehat\mu(\pm2k)|\le1$.  Dividing \eqref{eq:PL-torus-diss} by \eqref{eq:PL-torus-energy} and using \eqref{eq:periodic-riesz-fourier},
\begin{equation}
\frac{\int_{\mathbb T^d}|\nabla K_q*\sigma_k|^2\rho_k\ud x}{\mathcal E_q(\sigma_k)}
\le C |k|^2\widehat{K_q}(k)
\le C |k|^{2-d-q}\longrightarrow0,
\end{equation}
because $q>2-d$.
\end{proof}

\begin{remark}[Periodic Coulomb endpoint]\label{rem:PL-periodic-coulomb}\label{ssec:PL-periodic-coulomb}\label{prop:PL-coulomb-1d}\label{rem:PL-coulomb-lipschitz-note}\label{rem:PL-coulomb-lift}
The high-frequency construction in \Cref{prop:noPLsubCo} does not cover the Coulomb endpoint. In one dimension, periodic integration by parts gives an exact weighted electric-field identity, so every uniformly positive target satisfies a PL inequality, whereas the inequality may fail when the target vanishes at one point. In higher dimensions, targets that are sufficiently small perturbations of the uniform density satisfy a PL inequality; however, even a prescribed positive lower bound does not yield a PL constant that is uniform over the full target class. We refer to \cite{ChodronDeCourcelRosenzweigCoulombDiscrepancies} for these results and the associated convergence theory.
\end{remark}

\section{Long-time convergence}\label{sec:LT}
Long-time convergence asks whether the flow approaches the target rather than a spurious stationary state, and at what rate.  The asymptotic-criticality result \Cref{thm:conditional-critical-convergence} is unconditional for $1\le q<2$, while its $0<q<1$ branch and the subsequent target-identification result use uniform moment and $L^p$ bounds.  Here we prove the complementary negative statement \Cref{thm:noconvmmd}: on $\R^d$, no MMD decay modulus can be uniform over the full admissible class of initial data.  The argument is the dynamical counterpart of the far-field PL obstruction: a translated source retains discrepancy of order $R^q$, while a waiting-time estimate keeps it in a far-field neighborhood of its initial location for a time that diverges with $R$.

For the proof, fix a nonnegative probability density $\theta\in C_c^\infty(B_1(0))$. For $R\ge 8$ choose a point $x_R\in\R^d$ with $|x_R|=R$ and set
	\begin{equation}\label{eq:noconv-translated-initial-datum}
	\rho_{0,R}(x)\coloneqq \theta(x-x_R).
	\end{equation}
	Then $\rho_{0,R}$ is a smooth compactly supported probability density, its $L^p$ norm is independent of $R$, and it has finite moments of every order.  Let $\rho_{t,R}$ be the solution of \eqref{eq:wgfmmd} with initial datum $\rho_{0,R}$ and velocity $v_{t,R}=-\nabla K\ast(\rho_{t,R}-\mu)$.  Let $X_R(t,s,a)$ denote the characteristic flow generated by $v_{t,R}$.

By \Cref{lem:noconv-separated-mmd}, whenever $\supp\rho_{t,R}\subset B_{R/4}(x_R)$ and $R$ is sufficiently large,
\begin{equation}\label{eq:noconv-separated-mmd-dynamic}
cR^q\le\mmd_q^2(\rho_{t,R},\mu)\le CR^q.
\end{equation}

Next we prove that the translated bump cannot leave $B_{R/4}(x_R)$ before a time which diverges with $R$.
	\begin{lemma}[Support waiting time]\label{lem:noconv-waiting-time}
		There are $R_0\ge8$ and times $T_R\to\infty$ as $R\to\infty$, depending on $\theta,\mu,d,q,p$ but not otherwise on $R$, such that for every $R\ge R_0$,
		\begin{equation}\label{eq:noconv-support-contained}
		\supp\rho_{t,R}\subset B_{R/4}(x_R)
		\qquad\text{for all }0\le t\le T_R.
		\end{equation}
	\end{lemma}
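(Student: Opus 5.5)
We propose a bootstrap (continuity) argument on the maximal time during which the support stays in $B_{R/4}(x_R)$. Since $\rho_{t,R}=X_R(t,0,\cdot)_\#\rho_{0,R}$ and $\supp\rho_{0,R}\subset B_1(x_R)$, it suffices to bound $|X_R(t,0,a)-a|$ for $a\in B_1(x_R)$. This displacement is controlled by $\int_0^t\|v_{s,R}\|_{L^\infty(\supp\rho_{s,R})}\ud s$. Define
\[
\tau_R:=\sup\{t\ge0:\supp\rho_{s,R}\subset B_{R/4}(x_R)\text{ for all }s\le t\}.
\]
This quantity is positive by continuity of the characteristics, since $v$ is bounded on bounded time intervals by \Cref{thm:wgfmmdwp}. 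On $[0,\tau_R]$ we derive a velocity bound $V(R)$ on the support that is uniform in time. The flow then keeps the support in $B_{1+V(R)t}(x_R)$, and we may take $T_R:=(R/4-2)/V(R)$. We will need $V(R)=o(R)$, and indeed we expect $V(R)\lesssim R^{q-1}+C$, so that $T_R\gtrsim R^{\min\{1,2-q\}}\to\infty$.

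We split $v_{t,R}=-\nabla K*\rho_{t,R}+\nabla K*\mu$ on the support and bound the two terms separately.

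For the target field at $x\in B_{R/4}(x_R)$: if $0<q\le1$, \Cref{lem:singular-convolution-bound} with $a=1-q$ (for $d\ge2$, via $p>p_c$; when $q=1$ via $|\nabla K|\le1$) gives a bound independent of $R$. If $1<q<2$, we use $|\nabla K(z)|\le C|z|^{q-1}$ and $|x-y|^{q-1}\le C(|x|^{q-1}+|y|^{q-1})$, together with $\mu\in\mathcal P_r$, $r\ge1>q-1$. This gives $\le C(1+R^{q-1})$, because $|x|\le 5R/4$.

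For the self-field we cannot use translation invariance, since $\rho_{t,R}$ evolves. In the case $1<q<2$ we use the crude bound that the support has diameter $\le R/2$, giving $|\nabla K*\rho_{t,R}(x)|\le C R^{q-1}$. When $0<q<1$ this crude bound fails, and we must instead control $\|\rho_{t,R}\|_{L^p}$. For this we invoke the $L^p$ estimate from the proof of \Cref{lem:uniform-estimates}, namely $\|\rho_t\|_{L^p}\le e^{Ct}\|\theta\|_{L^p}$ with $C$ depending only on $\mu$. By \Cref{lem:singular-convolution-bound} with $a=1-q$, this yields a self-field bound $C e^{Ct}$ that is independent of $R$. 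For $q=1$ the self-field bound is trivially $1$.

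The main obstacle is precisely this case $0<q<1$, where the velocity bound grows exponentially in time. The bootstrap then gives displacement $\le C\int_0^t e^{Cs}\ud s$, which must stay below $R/4-1$. This forces only $T_R\sim C^{-1}\log R$, which still tends to infinity and is enough for the lemma. If the $d=1$ endpoint cases require it, we would instead run the same bootstrap with the $L^\infty$ maximum principle \eqref{eq:linfty-endpoint-bound}, which has the same exponential form. Combining the cases gives $T_R\to\infty$ depending only on $\theta,\mu,d,q,p$, which proves \eqref{eq:noconv-support-contained}.
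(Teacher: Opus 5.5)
Your proposal is correct and is essentially the paper's argument: a first-exit (bootstrap) argument on $\supp\rho_{t,R}\subset B_{R/4}(x_R)$. When $0<q\le1$, the $R$-independent velocity bound $Ce^{At}$ comes from $L^p$ propagation plus the singular-convolution bound, giving $T_R\sim\log R$; when $1<q<2$, the crude bound $CR^{q-1}$ comes from the support diameter and the $(q-1)$-moment of $\mu$, giving $T_R\sim R^{2-q}$. The differences are only cosmetic: you split the self and target fields explicitly, and you cite the $L^p$ estimate from the regularized construction rather than from the well-posedness theorem.
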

	\begin{proof}
Let
\begin{equation}\label{eq:noconv-first-exit}
\tau_R:=\inf\{t\ge0:\supp\rho_{t,R}\not\subset B_{R/4}(x_R)\}.
\end{equation}
For $t<\tau_R$ and $x\in B_{R/4}(x_R)$, the velocity satisfies one of two bounds.  If $0<q\le1$, translation invariance of the initial $L^p$ norm and the estimate from \Cref{thm:wgfmmdwp} give $\|\rho_{t,R}\|_{L^p}\le Be^{At}$, uniformly in $R$.  Applying \Cref{lem:singular-convolution-bound} with $a=1-q$ (and the mass bound when $q=1$) yields
\begin{equation}\label{eq:noconv-velocity-small-q}
\|v_{t,R}\|_{L^\infty}
\le C\left(1+\|\rho_{t,R}\|_{L^p}^{(1-q)p_*/d}\right)
\le Ce^{At},
\end{equation}
where $A$ and $C$ have been enlarged if necessary.
If $1<q<2$, the support assumption gives $|x-y|\le R/2$ for $y\in\supp\rho_{t,R}$.  Since $q-1<1\le r$, one also has $\mathcal M_{q-1}(\mu)<\infty$ and, for $x\in B_{R/4}(x_R)$,
\begin{equation}
\int_{\R^d}|x-z|^{q-1}\ud\mu(z)
\le |x|^{q-1}+\mathcal M_{q-1}(\mu)
\le C_\mu R^{q-1}.
\end{equation}
Consequently,
\begin{equation}\label{eq:noconv-velocity-large-q}
|v_{t,R}(x)|\le CR^{q-1}.
\end{equation}
Choose $T_R$ by
\begin{equation}\label{eq:noconv-waiting-time-choice}
\int_0^{T_R}Ce^{As}\ud s=\frac R{16}\quad(0<q\le1),
\qquad
T_R=\frac{R}{16CR^{q-1}}\quad(1<q<2),
\end{equation}
with $CT_R=R/16$ if $A=0$.  In both cases $T_R\to\infty$.

The same first-exit argument now applies in both regimes.  For $a\in\supp\rho_{0,R}\subset B_1(x_R)$, a characteristic remaining in $B_{R/4}(x_R)$ satisfies, for $t\le T_R$,
\begin{equation}\label{eq:noconv-characteristic-displacement}
|X_R(t,0,a)-x_R|
\le1+\int_0^t\|v_{s,R}\|_{L^\infty(B_{R/4}(x_R))}\ud s
\le1+\frac R{16}<\frac R4.
\end{equation}
Thus no characteristic can reach the boundary at a time $\tau_R\le T_R$, and \eqref{eq:noconv-support-contained} follows.
\end{proof}

\begin{proof}[Proof of \Cref{thm:noconvmmd}]
	With the notation fixed above, suppose, toward a contradiction, that a function $\gamma$ satisfying \eqref{eq:impossibledecay} exists.  Increase $R_0$ so that \Cref{lem:noconv-separated-mmd,lem:noconv-waiting-time} apply for every $R\ge R_0$.  For such $R$, \Cref{lem:noconv-waiting-time} and \Cref{lem:noconv-separated-mmd} imply
	\begin{equation}
	cR^q\le \mmd_q^2(\rho_{T_R,R},\mu)\le \gamma(T_R)\mmd_q^2(\rho_{0,R},\mu)\le C R^q\gamma(T_R).
	\end{equation}
	Thus $\gamma(T_R)\ge c/C$ for all sufficiently large $R$.  Since $T_R\to\infty$, this contradicts $\lim_{t\to\infty}\gamma(t)=0$.
\end{proof}

\begin{remark}
	\Cref{thm:noconvmmd} does not rule out qualitative convergence, nor does it rule out local convergence estimates whose constants or waiting times depend on the initial datum.  It only excludes a global multiplicative rate in MMD with no dependence on the initial support radius or moment size. For compactly supported targets in the overlapping case $d=q=1$, a compatible Coulomb persistence result is proved in \cite[Theorem~1.12 and Corollary~1.13]{ChodronDeCourcelRosenzweigCoulombDiscrepancies}.
\end{remark}

\begin{remark}[Radial Coulomb PL inequality]\label{ssec:LT-radial}
Coulomb-specific radial reductions lead to substantially stronger conclusions than are presently available for the general energy kernels considered here. If the source and target are radial bounded densities, the target has connected support and is bounded above and below there, and the source support is contained in the target support, Chodron de Courcel and the first author prove a PL inequality and exponential convergence \cite[Theorem~1.8 and Corollary~1.11]{ChodronDeCourcelRosenzweigCoulombDiscrepancies}. The structural assumptions are shown to be sharp within the radial class; see \cite[Remarks~1.9--1.10]{ChodronDeCourcelRosenzweigCoulombDiscrepancies}.
\end{remark}

\section{Two-time-scale phenomenon: a one-dimensional example}\label{sec:1dexample}
The obstruction in \Cref{thm:noconvmmd} concerns uniformity over the initial support scale and does not rule out rapid relaxation after a datum-dependent waiting time.  In one dimension, the Coulomb/energy-distance particle flow makes this two-stage behavior explicit: for a uniformly positive target, particles outside its support first travel to it and then relax exponentially; if the target density vanishes, the post-entry relaxation can instead be polynomial.

\subsection{Uniformly positive target}\label{ssec:1dexampleunifpos}
Assume that $\mu$ is supported on $[0,1]$ and has density $\mu\ge\lambda>0$ there.  Let
\begin{equation}\label{eq:one-dimensional-cdf}
F(x):=\mu((-\infty,x]).
\end{equation}
For the diagonal-free Coulomb particle system
\begin{equation}\label{eq:Npart}
\dot x_i^t=(K'*\mu)(x_i^t)-\frac1N\sum_{j\in[N]\setminus\{i\}}K'(x_i^t-x_j^t),
\qquad i\in[N],
\qquad K(x)=-|x|,
\end{equation}
the self-interaction is omitted, in accordance with the convention in \eqref{eq:wgfmmdparticle}.  Start with $(x_i^0)_{i\in[N]}$ satisfying $x_1^0<\cdots<x_N^0$.  Since $-K'*\mu=2F-1$ and, as long as the ordering holds,
\begin{equation}\label{eq:one-dimensional-diagonal-free-count}
-\frac1N\sum_{j\in[N]\setminus\{i\}}K'(x_i^t-x_j^t)=\frac{2i-1-N}{N},
\end{equation}
one obtains the uncoupled equations
\begin{equation}\label{eq:dotxi}
\dot x_i^t=\frac{2i-1}{N}-2F(x_i^t).
\end{equation}
At a first contact of adjacent particles, the derivative of their gap would equal $2/N>0$; hence the ordering is preserved.

For each $i\in[N]$, there is a unique equilibrium $x_i^\infty\in(0,1)$ determined by
\begin{equation}\label{eq:xiinfty-general}
F(x_i^\infty)=\frac{2i-1}{2N}.
\end{equation}
A particle outside $[0,1]$ moves with constant velocity until its entrance time
\begin{equation}\label{eq:taui-general}
\tau_i:=
\begin{cases}
-\dfrac{Nx_i^0}{2i-1},&x_i^0<0,\\[0.4em]
0,&x_i^0\in[0,1],\\[0.4em]
\dfrac{N(x_i^0-1)}{2N-2i+1},&x_i^0>1.
\end{cases}
\end{equation}
Indeed, before entrance,
\begin{equation}\label{eq:one-dimensional-pre-entrance-trajectories}
x_i^t=
\begin{cases}
x_i^0+\dfrac{2i-1}{N}t,&x_i^0<0,\\[0.4em]
x_i^0-\dfrac{2N-2i+1}{N}t,&x_i^0>1.
\end{cases}
\end{equation}
After entrance, \eqref{eq:dotxi} and \eqref{eq:xiinfty-general} give
\begin{equation}\label{eq:one-dimensional-equilibrium-sign}
\dot x_i^t=2\bigl(F(x_i^\infty)-F(x_i^t)\bigr),
\end{equation}
so the velocity has the sign opposite to that of $x_i^t-x_i^\infty$.  Thus $x_i^t$ moves monotonically toward $x_i^\infty$.  If $I_i^t$ is the interval between these points, then
\begin{equation}\label{eq:one-dimensional-contraction}
\frac{\ud}{\ud t}|x_i^t-x_i^\infty|=-2|F(x_i^t)-F(x_i^\infty)|=-2\mu(I_i^t)
\le-2\lambda|x_i^t-x_i^\infty|,
\end{equation}
and therefore
\begin{equation}\label{eq:expconv-general}
|x_i^t-x_i^\infty|
\le|x_i^{\tau_i}-x_i^\infty|e^{-2\lambda(t-\tau_i)},
\qquad t\ge\tau_i.
\end{equation}
Thus the entrance time $\tau_{\mathrm{ent}}:=\max_{i\in[N]}\tau_i$ separates a transport phase from exponential relaxation at rate at least $2\lambda$.  For the uniform target, $x_i^\infty=(2i-1)/(2N)$ for $i\in[N]$, and
\begin{equation}\label{eq:minmmdunif}
\mmd_1^2\left(\frac1N\sum_{i=1}^N\delta_{\frac{2i-1}{2N}},\indic_{[0,1]}\right)=\frac1{12N^2}.
\end{equation}

\begin{remark}[Support normalization and noncompact targets]
The choice of $[0,1]$ is only a normalization.  If $\mu$ is supported on an interval $[a,b]$ and its density satisfies $\mu\ge\lambda>0$ there, the preceding argument applies with $0$ and $1$ replaced by $a$ and $b$.  Compact support is not needed for the cumulative distribution function (CDF) reduction \eqref{eq:dotxi}.  Subject to the standing moment assumptions, if $F$ is continuous and strictly increasing, each particle moves monotonically toward its quantile equilibrium.  For a noncompact target, however, a probability density cannot be bounded below uniformly on $\R$.  Quantitative estimates therefore combine tail control of the time needed to reach a neighborhood of equilibrium with local lower bounds for the density there; the local behavior of the density near the equilibrium determines the subsequent relaxation rate.
\end{remark}

\subsection{A degenerate target}\label{ssec:1dexampledegenerate}
The positivity assumption is essential for exponential relaxation.  Let
\begin{equation}\label{eq:degenerate-target}
\mu(x):=12\left(x-\frac12\right)^2\indic_{[0,1]}(x),
\end{equation}
so that
\begin{equation}\label{eq:Fdegenerate}
F(x)=\frac12+4\left(x-\frac12\right)^3,
\qquad x\in[0,1].
\end{equation}
Take $N\ge3$ odd, $m=(N+1)/2$, place every particle except the middle one at its equilibrium, and set $x_m^0=1/2+\varepsilon$ with $\varepsilon>0$ small enough to preserve the ordering.  Then $x_i^t\equiv x_i^\infty$ for $i\in[N]\setminus\{m\}$, while $z(t):=x_m^t-1/2$ solves
\begin{equation}\label{eq:degenerate-middle-particle}
\dot z=-8z^3,
\qquad z(0)=\varepsilon,
\end{equation}
and hence
\begin{equation}\label{eq:polyz}
z(t)=(\varepsilon^{-2}+16t)^{-1/2}\sim(16t)^{-1/2}.
\end{equation}
On the ordered chamber, the particle energy $\mathcal E_N$ defined in \eqref{eq:particle-energy-definition} satisfies $\partial_{x_i}\mathcal E_N=-\dot x_i/N$ for $i\in[N]$.  Integrating along the middle coordinate therefore gives
\begin{equation}\label{eq:polyenergy}
\mathcal E_N(X_N^t)-\mathcal E_N(X_N^\infty)
=\frac2N z(t)^4
=\frac2N(\varepsilon^{-2}+16t)^{-2}
\sim\frac1{128N}t^{-2}.
\end{equation}
Thus even particles initially inside the target support need not relax exponentially when the target density vanishes.

\begin{remark}[Continuum quantile dynamics]
Let $\rho_t$ be a solution of the one-dimensional continuum Coulomb flow, and let $Q_t,Q_\mu:(0,1)\to\R$ denote the quantile functions of $\rho_t$ and $\mu$, respectively.  The characteristic formulation gives, for almost every $\alpha\in(0,1)$,
\begin{equation}\label{eq:one-dimensional-continuum-quantile}
\partial_tQ_t(\alpha)
=2\bigl(\alpha-F(Q_t(\alpha))\bigr).
\end{equation}
This is the quantile equation studied in \cite{duong_steidl26}.  In particular, the particle equations \eqref{eq:dotxi} are obtained by sampling \eqref{eq:one-dimensional-continuum-quantile} at the midpoint ranks
\begin{equation}\label{eq:one-dimensional-continuum-midpoint-ranks}
\alpha_i=\frac{2i-1}{2N},
\qquad i\in[N].
\end{equation}

For the uniformly positive target above, every fixed quantile initially outside $[0,1]$ moves at constant velocity until it enters the target support and thereafter satisfies
\begin{equation}\label{eq:one-dimensional-continuum-contraction}
|Q_t(\alpha)-Q_\mu(\alpha)|
\le
|Q_{\tau(\alpha)}(\alpha)-Q_\mu(\alpha)|
e^{-2\lambda(t-\tau(\alpha))},
\qquad t\ge\tau(\alpha),
\end{equation}
where $\tau(\alpha)$ is its entrance time.  Unlike in the finite-particle system, however, these entrance times are generally not uniformly bounded in $\alpha$: if $\rho_0$ assigns positive mass to $\R\setminus[0,1]$, then the velocities of the extreme ranks approach zero and
\begin{equation}\label{eq:one-dimensional-continuum-unbounded-entrance}
\operatorname*{ess\,sup}_{\alpha\in(0,1)}\tau(\alpha)=\infty.
\end{equation}
Thus the continuum flow has the same two-stage behavior quantile by quantile, but generally has no finite time by which its entire support has entered $[0,1]$.  If instead $\supp\rho_0\subset[0,1]$, then no waiting time is needed.  Writing $G_t$ for the CDF of $\rho_t$ and $h_t:=G_t-F$, one has $h_t'=\rho_t-\mu$ almost everywhere and $h_t(\pm\infty)=0$.  Hence 
\begin{equation}\label{eq:one-dimensional-continuum-weight-identity}
\int_{\R}h_t^2(\rho_t-\mu)\ud x
=\int_{\R}h_t^2h_t'\ud x
=\frac13\bigl[h_t^3\bigr]_{-\infty}^{\infty}
=0.
\end{equation}
Thus the usual dissipation weighted by $\rho_t$ may equivalently be weighted by $\mu$, and the energy--dissipation identity reads
\begin{equation}\label{eq:one-dimensional-continuum-cdf-energy}
\begin{aligned}
\mmd_1^2(\rho_t,\mu)&=\int_{\R}|G_t-F|^2\ud x,\\
\frac{\ud}{\ud t}\mmd_1^2(\rho_t,\mu)
&=-4\int_0^1|G_t-F|^2\mu\,\ud x,
\end{aligned}
\end{equation}
gives
\begin{equation}\label{eq:one-dimensional-continuum-exponential-mmd}
\mmd_1^2(\rho_t,\mu)
\le e^{-4\lambda t}\mmd_1^2(\rho_0,\mu).
\end{equation}

The degeneracy mechanism also persists at the continuum level, although the middle-particle example does not transfer verbatim.  Indeed, for the target \eqref{eq:degenerate-target}, set
\begin{equation}\label{eq:one-dimensional-continuum-degenerate-errors}
a_\alpha:=Q_\mu(\alpha)-\frac12,
\qquad
e_\alpha(t):=Q_t(\alpha)-Q_\mu(\alpha).
\end{equation}
Since $\alpha=\frac12+4a_\alpha^3$, equation \eqref{eq:one-dimensional-continuum-quantile} yields, near the target,
\begin{equation}\label{eq:one-dimensional-continuum-degenerate-error-ode}
\dot e_\alpha
=-8e_\alpha\bigl(3a_\alpha^2+3a_\alpha e_\alpha+e_\alpha^2\bigr).
\end{equation}
The linearized relaxation rate is therefore $24a_\alpha^2$, which vanishes as $\alpha\to\frac12$; at the median label one recovers $\dot e=-8e^3$.  This explains the loss of a relaxation rate uniform over the quantiles.  Since changing a single quantile does not change the underlying measure, obtaining a continuum decay law requires perturbing a positive-measure family of nearby quantiles, which we do not pursue here.
\end{remark}

\section{Further questions and directions}\label{sec:further-questions}
The preceding sections establish global continuum and particle dynamics, a fixed-time mean-field limit, and several obstructions to uniform convergence.  The questions below concern what remains beyond this framework: long-time selection and rates, well-posedness at the finite critical exponent, the remaining rigidity and critical-point questions, and uniform-in-time mean-field approximation.

For $1\le q<2$, the energy--dissipation identity and \Cref{thm:conditional-critical-convergence} already imply that $d_{\mathrm{BL}}(\rho_t,\mathcal C_\mu)\to0$ as $t\to\infty$, but without the uniform bounds the $\omega$-limit points are not known to be absolutely continuous.  A first target-selection question is therefore when the moment and subcritical $L^p$ bounds propagate uniformly for all $t\ge0$; when $0<q<1$, the same bounds are also used here to pass to the critical-point equation.  Whenever the available uniform moment order meets the hypotheses of the rigidity theorem, every $\omega$-limit point equals the target; in the residual three-dimensional regime, the same conclusion holds at the natural moment level for radial source and target data.  Within this uniformly controlled class, the remaining target-identification problem concerns $d=3$ and $0<q<1$ with moment order below $q+1$ and without radial symmetry or a compact-positive-part condition on the limiting critical points.  Outside the present well-posedness range, when $d=1$ and $0<q<1$, non-minimizing critical points exist; one may instead ask which of them are dynamically accessible and how their basins of attraction are organized.

The failures of a global data-independent rate do not rule out local or data-dependent convergence estimates.  More precisely, one may seek a flow-invariant class, described by some combination of support, tail, moment, and two-sided density controls, on which a \L ojasiewicz-type dissipation inequality
\begin{equation}
\mathcal D_K(\rho\mid\mu)
\ge c\,\mathcal E_K(\rho-\mu)^{1+\alpha}
\end{equation}
holds.  Here $\alpha=0$ would yield exponential decay, while $\alpha>0$ would yield polynomial decay whenever the energy identity is available.  Small MMD alone need not preserve the geometric or density information required for such an estimate, so a central issue is to find natural conditions that propagate along the flow.  The compact-geometry results of \cite{chizat2026quantitativeconvergencewassersteingradient} provide one model for this local theory.  The radial Coulomb theory of \cite{ChodronDeCourcelRosenzweigCoulombDiscrepancies} further suggests asking how the order and geometry of target degeneracy determine the rate and whether source-support-inclusion estimates survive without radial symmetry.

A separate analytic problem occurs at the critical endpoint $p=p_c$ of the Euclidean well-posedness theory.
The critical Lorentz refinement $L^{p_c,1}$ should follow from adapting the well-posedness argument of \cite{chizat2026quantitativeconvergencewassersteingradient}, whose mechanism is not essentially periodic.  The sharper question is whether the flow is well posed when the source and target belong only to the critical Lebesgue space $L^p$.  At this endpoint the natural velocity control is Osgood rather than Lipschitz, while the target field $\nabla K*\mu$ can have unbounded negative divergence, so the present $L^p$ estimate does not close.  Does there exist a unique global solution in an appropriate critical solution class, or are stronger assumptions on the target necessary?

For the Euclidean energy kernel $K(z)=-|z|^q$, the principal remaining rigidity question is whether the natural $q$-moment assumption suffices when $d=3$ and $0<q<1$, or whether this regime admits an absolutely continuous non-minimizing Lagrangian critical point.  Any such counterexample must have nonradial discrepancy and unbounded support of its positive part.  More generally, one would like to determine under which regularity and support assumptions Lagrangian criticality in the sense of \Cref{critical:def:Lcrit} agrees with stationarity or with the stronger Wasserstein and JKO notions discussed above.  Beyond rigidity, one would like stability criteria and a description of the corresponding basins of attraction.  The examples in \Cref{prop:particle-saddle-configurations} give exact collision-free particle equilibria with either a singular line-supported limit or the full-dimensional uniform-ball limit.  More generally, which continuum Lagrangian critical points arise as limits of exact critical configurations of $\mathcal E_N$?  In particular, does every sufficiently regular continuum critical point admit such a recovery sequence, and what stability or nondegeneracy assumptions are necessary?

Finally, beyond the well-prepared regime of \Cref{cor:well-prepared-particle-uniform-time}, suppose that $\mmd_q(\rho_0^N,\rho_0)\to0$.  The estimate \eqref{eq:main} propagates the mean-field approximation on intervals $[0,T_N]$ with $T_N\to\infty$ chosen sufficiently slowly.  Particle energy dissipation then selects times $t_N\in[T_N/2,T_N]$ at which the mean-square particle velocity tends to zero.  This growing-window argument applies throughout the present mean-field range; the range $0<q<1$ enters only in the closure step.  For $1\le q<2$, \Cref{prop:particle-critical-closure} requires no additional microscopic nonconcentration hypothesis, whereas for $0<q<1$ it additionally requires a uniform discrete Riesz bound with inverse-power exponent $1-q$ at those same selected times.  Under the corresponding closure hypotheses, this gives particle and continuum subsequences with the same Lagrangian-critical limit.  Can the required nonconcentration bound when $0<q<1$ be propagated from natural initial data, or can the mean-field comparison be made uniform in time on a stable class?

Any such comparison must account for the positive optimal $N$-point quantization error of the target, whose sharp asymptotics are studied in \cite{HessChildsRosenzweigSerfaty2026Quantization}, rather than assuming that the particle dynamics can reach zero discrepancy at fixed $N$.  After separating this unavoidable floor, one may ask whether the mean-field and long-time limits commute and whether the empirical-measure images of the particle $\omega$-limit sets converge to the $\omega$-limit sets of the continuum flow.  Such a result would also require control of discrete metastability and an understanding of which continuum critical points admit recovery sequences of finite-particle equilibria.

\appendix
\section{Additional properties of the energy-kernel MMD}\label{app:completion-proof}
This appendix collects the metric properties of the energy MMD that are not needed before the PDE analysis.  We first record a Monte Carlo estimate, then compare MMD with moments and transport costs, identify two borderline failures, and finally determine the concrete Hilbert completion.

\subsection{Monte Carlo approximation}\label{ssec:app-monte-carlo-fourier}
Let $\mu\in\mathcal P_q(\R^d)$, let $(x_i)_{i\in[N]}$ be independent with law $\mu$, and set $\mu_N=N^{-1}\sum_{i=1}^N\delta_{x_i}$.

\begin{proposition}[Monte Carlo MMD scaling]\label{prop:empirical_mmd_bound}\label{cor:mmd_highprob}
For $0<q<2$,
\begin{equation}\label{eq:mmdmontecarlo}
\mathbb E\big[\mmd_q^2(\mu_N,\mu)\big]
=\frac1{2N}\iint_{(\R^d)^2}|x-y|^q\ud\mu(x)\ud\mu(y).
\end{equation}
Consequently, for every $\delta\in(0,1)$,
\begin{equation}\label{eq:mmd-high-probability}
\mmd_q^2(\mu_N,\mu)
\le\frac1{2\delta N}\iint_{(\R^d)^2}|x-y|^q\ud\mu(x)\ud\mu(y)
\end{equation}
with probability at least $1-\delta$.
\end{proposition}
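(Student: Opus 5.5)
The plan is to compute the expectation directly from the translation-invariant representation \eqref{def:mmdq}, which is available because $\mu\in\mathcal P_q(\R^d)$ and $\mu_N$ is a finite empirical measure. Writing $\kappa(x,y):=|x-y|^q$ and $\nu:=\mu_N-\mu$, we have
\begin{equation*}
\mmd_q^2(\mu_N,\mu)=-\frac12\Bigl(\iint\kappa\ud\mu_N\ud\mu_N-2\iint\kappa\ud\mu_N\ud\mu+\iint\kappa\ud\mu\ud\mu\Bigr).
\end{equation*}
Set $\mathcal K:=\iint|x-y|^q\ud\mu(x)\ud\mu(y)$, which is finite because $|x-y|^q\le C_q(|x|^q+|y|^q)$ and $\mu\in\mathcal P_q$. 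This integrability also makes every expectation below finite, so Fubini applies.

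Next I take expectations term by term, using independence.

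\begin{itemize}
\item In the empirical self-interaction $N^{-2}\sum_{i,j}|x_i-x_j|^q$, the diagonal terms vanish because $K(0)=0$. Each of the $N(N-1)$ off-diagonal pairs has expectation $\mathcal K$. So this term has expectation $(1-1/N)\mathcal K$.
\item The cross term $N^{-1}\sum_i\int|x_i-y|^q\ud\mu(y)$ has expectation $\mathcal K$.
\item The target term is the constant $\mathcal K$.
\end{itemize}

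Combining these gives
\begin{equation*}
\mathbb E\bigl[\mmd_q^2(\mu_N,\mu)\bigr]=-\tfrac12\bigl((1-1/N)\mathcal K-2\mathcal K+\mathcal K\bigr)=\tfrac1{2N}\mathcal K,
\end{equation*}
which is \eqref{eq:mmdmontecarlo}.

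For the high-probability bound \eqref{eq:mmd-high-probability}, I apply Markov's inequality to the nonnegative random variable $\mmd_q^2(\mu_N,\mu)$. Its nonnegativity comes from \Cref{lem:genSob}. Markov then gives $\mathbb P\bigl(\mmd_q^2>\mathcal K/(2\delta N)\bigr)\le\delta$.

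The only delicate point is justifying the representation and the exchange of expectation with integration. This uses the $\mathcal P_q$ hypothesis together with the fact that $\mu_N$ is almost surely in $\mathcal P_q$, since it is finitely supported. The same identity can alternatively be derived from the centered form \eqref{eq:defmmd}: the one-body terms cancel against zero total mass. I do not expect any real obstacle.
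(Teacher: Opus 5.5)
Your proposal is correct and follows the same route as the paper. The paper's proof is exactly the diagonal/off-diagonal expansion of the three terms of the energy, with the empirical diagonal vanishing because $K(0)=0$, followed by Markov's inequality applied to the nonnegative quantity $\mmd_q^2(\mu_N,\mu)$; the degenerate case in which $\iint|x-y|^q\ud\mu\ud\mu=0$ is trivial, since then $\mu$ is a Dirac mass and $\mu_N=\mu$ almost surely.
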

The expectation identity is the standard diagonal/off-diagonal expansion of the three terms in the MMD energy; \eqref{eq:mmd-high-probability} is Markov's inequality.

\subsection{Moment and transport comparisons}\label{ssec:mmdmoment}\label{ssec:mmdwass}
We first show that MMD controls moments below order $q/2$, then compare MMD with transport costs in both directions, and finally extract the resulting topological consequences.

\begin{proposition}[Moment control by MMD]\label{prop:mmdtomoment}
Let $q\in(0,2)$ and $0<r<q/2$.  There is $C_{d,q,r}>0$ such that, for every $\mu,\rho\in\mathcal P_{q/2}(\R^d)$ and $x_0\in\R^d$,
\begin{equation}\label{eq:mmdtomoment}
\left|\int_{\R^d}|x-x_0|^r\ud(\mu-\rho)(x)\right|
\le C_{d,q,r}\mmd_q(\mu,\rho)^{2r/q}.
\end{equation}
In particular,
\begin{equation}\label{eq:mmdmomentcontrol}
\mathcal M_r(\rho)\le\mathcal M_r(\mu)+C_{d,q,r}\mmd_q(\mu,\rho)^{2r/q}.
\end{equation}
\end{proposition}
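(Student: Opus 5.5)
The plan is to realize $|x-x_0|^r$, for $0<r<q/2$, as a function that pairs boundedly against zero-mass measures in the feature space $H$ from the proof of \Cref{lem:genSob}, after truncating in frequency at a scale to be optimized. By translation invariance of $\mmd_q$ we may take $x_0=0$. The Schoenberg--L\'evy identity \eqref{eq:fourierkernel} with exponent $r$ gives
\begin{equation*}
|x|^r=c_{d,r}\int_{\R^d}\frac{1-\cos(2\pi\xi\cdot x)}{|2\pi\xi|^{d+r}}\ud\xi .
\end{equation*}
Integrating against $\nu:=\mu-\rho$, which has zero mass and finite $q/2$-moment, and using Fubini (justified since $|1-\cos|\le\min\{2,|2\pi\xi\cdot x|^2\}$ and $r<q/2$), we obtain
\begin{equation*}
\int|x|^r\ud\nu=-c_{d,r}\int_{\R^d}\frac{\Real\widehat\nu(\xi)}{|2\pi\xi|^{d+r}}\ud\xi .
\end{equation*}

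Next we split this integral at $|\xi|=\lambda$. On $|\xi|\ge\lambda$ we use $|\widehat\nu|\le 2$, so the contribution is at most $C\lambda^{-r}$. On $|\xi|<\lambda$ we apply Cauchy--Schwarz with weight $|2\pi\xi|^{-(d+q)}$:
\begin{equation*}
\int_{|\xi|<\lambda}\frac{|\widehat\nu|}{|2\pi\xi|^{d+r}}\ud\xi\le\Bigl(\int\frac{|\widehat\nu|^2}{|2\pi\xi|^{d+q}}\Bigr)^{1/2}\Bigl(\int_{|\xi|<\lambda}|2\pi\xi|^{q-d-2r}\ud\xi\Bigr)^{1/2}.
\end{equation*}
The last integral converges at the origin exactly because $q-2r>0$, and it equals $C\lambda^{(q-2r)/2}$ after taking the square root. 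By \Cref{lem:genSob} the first factor is $C\,\mmd_q(\mu,\rho)$. This gives
\begin{equation*}
\Bigl|\int|x|^r\ud\nu\Bigr|\le C\bigl(\lambda^{-r}+\lambda^{q/2-r}M\bigr),\qquad M:=\mmd_q(\mu,\rho).
\end{equation*}
Choosing $\lambda=M^{-2/q}$ makes both terms equal to $M^{2r/q}$, which proves \eqref{eq:mmdtomoment}. If $M=0$, then $\nu=0$ and there is nothing to prove.

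Finally, \eqref{eq:mmdmomentcontrol} follows by taking $x_0=0$ and rearranging; when $\mathcal M_r(\mu)$ is infinite the bound is trivial.

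The main obstacle is justifying the Fubini interchange, since the integrand is not absolutely integrable near $\xi=0$ uniformly in $x$ without the moment hypothesis. The bound $\int\min\{2,|2\pi\xi\cdot x|^2\}|\xi|^{-d-r}\ud\xi=C|x|^r$ gives absolute integrability against $|\nu|$ because $r<q/2$ and $\nu$ has finite $q/2$-moment. If this direct argument fails for some reason, the fallback is a truncation $|x|^r\wedge R$ followed by monotone convergence.
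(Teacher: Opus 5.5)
Your proposal is correct and matches the paper's proof essentially step for step. Both arguments use the Schoenberg--L\'evy representation at exponent $r$ and split in frequency at $\lambda$, bounding the high frequencies with $|\widehat\nu|\le 2$ and the low frequencies by Cauchy--Schwarz against the $\dot H^{-(d+q)/2}$ weight (Lemma~\ref{lem:genSob}), then choose $\lambda=\mmd_q(\mu,\rho)^{-2/q}$. Your Fubini justification and the reduction to $x_0=0$ by translation invariance fill in details the paper leaves implicit; the paper instead keeps $x_0$ throughout, since the phase $e^{2\pi i\xi\cdot x_0}$ does not change $|\widehat\nu|$.
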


\begin{proof}
Set $\nu:=\mu-\rho$.  The Schoenberg--L\'evy formula with exponent $r$, followed by a split at frequency $R>0$, Cauchy--Schwarz, \Cref{lem:genSob}, and $|\widehat\nu|\le2$, gives
\begin{equation}\label{eq:moment-frequency-split}
\left|\int_{\R^d}|x-x_0|^r\ud\nu(x)\right|
\le C_{d,q,r}\left(R^{(q-2r)/2}\mmd_q(\mu,\rho)+R^{-r}\right).
\end{equation}
If the MMD vanishes, let $R\to\infty$; otherwise choose $R=\mmd_q(\mu,\rho)^{-2/q}$.  This proves \eqref{eq:mmdtomoment}, and \eqref{eq:mmdmomentcontrol} follows by taking $x_0=0$.
\end{proof}

For $r>0$, write
\begin{equation}\label{eq:transport-cost-definition}
\mathsf T_r(\rho,\mu):=\inf_{\pi\in\Gamma(\rho,\mu)}\int_{\R^d\times\R^d}|x-y|^r\ud\pi(x,y).
\end{equation}
For $r\ge1$, $W_r:=\mathsf T_r^{1/r}$ is the Wasserstein distance of order $r$; for $0<r<1$, $\mathsf T_r$ is the natural transport cost.

\begin{proposition}[Quantitative MMD/transport comparisons]\label{prop:mmdwass}
Let $q\in(0,2)$, set $s=(d+q)/2$, and let $\rho,\mu\in\mathcal P_{q/2}(\R^d)$.
\begin{enumerate}[label=\textup{(\arabic*)}]
\item One has
\begin{equation}\label{eq:wassmmd}
\mmd_q(\rho,\mu)\le C_{d,q}\mathsf T_{q/2}(\rho,\mu).
\end{equation}
\item If $\gamma>1$ and, for some $S>0$, $\mathcal M_\gamma(\rho)+\mathcal M_\gamma(\mu)\le S$, then
\begin{equation}\label{eq:mmd-to-W1}
W_1(\rho,\mu)\le C_{d,q,\gamma,S}\mmd_q(\rho,\mu)^{\alpha_{d,q,\gamma}},
\qquad
\alpha_{d,q,\gamma}:=\frac{2(\gamma-1)}{\gamma(d+q+2)-q}.
\end{equation}
Consequently, for $0<r\le1$,
\begin{equation}\label{eq:mmd-to-Tr}
\mathsf T_r(\rho,\mu)\le C_{d,q,r,\gamma,S}\mmd_q(\rho,\mu)^{r\alpha_{d,q,\gamma}}.
\end{equation}
\item If $p\ge1$, $\gamma>p$, and, for some $S>0$, $\mathcal M_\gamma(\rho)+\mathcal M_\gamma(\mu)\le S$, then
\begin{equation}\label{eq:mmd-to-Wp}
W_p(\rho,\mu)
\le C_{d,q,p,\gamma,S}\mmd_q(\rho,\mu)^{\frac{\alpha_{d,q,\gamma}(\gamma-p)}{p(\gamma-1)}}.
\end{equation}
\end{enumerate}
\end{proposition}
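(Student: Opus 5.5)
Part (1) will come from the Hilbert-space structure in \Cref{lem:genSob}. Writing $\mmd_q(\rho,\mu)=C_{d,q}^{1/2}\|\int\Phi_x\ud\rho-\int\Phi_y\ud\mu\|_H$, take any coupling $\pi\in\Gamma(\rho,\mu)$. Then
\[
\int\Phi_x\ud\rho-\int\Phi_y\ud\mu=\iint(\Phi_x-\Phi_y)\ud\pi(x,y),
\]
and Minkowski's inequality for Bochner integrals, together with $\|\Phi_x-\Phi_y\|_H=C|x-y|^{q/2}$, bounds this norm by $C\iint|x-y|^{q/2}\ud\pi$. Minimizing over $\pi$ then gives \eqref{eq:wassmmd}.

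For part (2) I plan to use Kantorovich--Rubinstein duality. Fix a $1$-Lipschitz $f$ with $f(0)=0$, and regularize in two stages.
\begin{itemize}
\item \emph{Spatial cutoff.} Replace $f$ by $f\chi_R$ with $\chi_R$ a smooth cutoff at scale $R$. The tail error is at most $C\int_{|x|>R}|x|\ud(\rho+\mu)\le CSR^{1-\gamma}$.
\item \emph{Mollification.} Replace $f\chi_R$ by $f_\varepsilon=(f\chi_R)*\eta_\varepsilon$. Since $f\chi_R$ is $O(1)$-Lipschitz, this costs $O(\varepsilon)$.
\end{itemize}
The remaining term is paired in Fourier:
\[
\int f_\varepsilon\ud(\rho-\mu)\le \|f_\varepsilon\|_{\dot H^{s}}\,\|\rho-\mu\|_{\dot H^{-s}}\lesssim\|f_\varepsilon\|_{\dot H^s}\,\mmd_q(\rho,\mu).
\]
This pairing is legitimate because $\rho-\mu$ has zero mass, so constants do not matter. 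Next, estimate $\|f_\varepsilon\|_{\dot H^s}\le\|\nabla(f\chi_R)\|_{L^2}\,\|\eta_\varepsilon\|_{\dot H^{s-1}}\lesssim R^{d/2}\varepsilon^{-(s-1)-d/2}$. Using $s-1+d/2=(2d+q-2)/2$, this gives
\[
W_1\lesssim R^{1-\gamma}+\varepsilon+R^{d/2}\varepsilon^{1-d-q/2}\,\mmd_q.
\]
I will then optimize in $R$ and $\varepsilon$, choosing $\varepsilon\sim R^{1-\gamma}$. The bookkeeping of exponents is the main obstacle: the stated exponent $\alpha=2(\gamma-1)/(\gamma(d+q+2)-q)$ suggests a slightly sharper balance, for example using a Fourier-side splitting of $\widehat{f\chi_R}$ with the decay bound $|\widehat{f\chi_R}(\xi)|\lesssim R^{d}\min(1,(R|\xi|)^{-1})\cdot|\xi|^{-1}$ in place of mollification. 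I would try both routes and tune until the exponent matches. Once \eqref{eq:mmd-to-W1} is in hand, \eqref{eq:mmd-to-Tr} follows from Jensen's inequality: $\mathsf T_r\le W_1^r$ for $r\le1$, using an optimal $W_1$ coupling.

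Part (3) is an interpolation. For an optimal $W_1$ coupling $\pi$ and a threshold $L$, split
\[
\int|x-y|^p\ud\pi\le L^{p-1}\int|x-y|\ud\pi+\int_{|x-y|>L}|x-y|^p\ud\pi\le L^{p-1}W_1+CSL^{p-\gamma}.
\]
The tail term uses the uniform bound on the $\gamma$-moments of both marginals. Choosing $L=W_1^{-1/(\gamma-1)}$ gives $W_p^p\lesssim W_1^{(\gamma-p)/(\gamma-1)}$. Combining this with (2) yields \eqref{eq:mmd-to-Wp} with the stated exponent.
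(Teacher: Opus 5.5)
Your route is the paper's: the feature-map coupling bound for (1), Kantorovich--Rubinstein duality with a cutoff $\chi_R$, a mollifier $\eta_\varepsilon$ and $\dot H^{s}$--$\dot H^{-s}$ duality for (2), and the same threshold interpolation for (3). Parts (1) and (3), and the Jensen step $\mathsf T_r\le W_1^r$, are fine.

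The gap is in the one estimate that fixes the exponent in \eqref{eq:mmd-to-W1}. On the Fourier side, $\|f_\varepsilon\|_{\dot H^s}\le\|\nabla(f\chi_R)\|_{L^2}\|\eta_\varepsilon\|_{\dot H^{s-1}}$ reads $\|ab\|_{L^2}\le\|a\|_{L^2}\|b\|_{L^2}$, which is false in general. The multiplier $|2\pi\xi|^{s-1}\widehat\eta(\varepsilon\xi)$ has to be bounded in $L^\infty$, not in $L^2$.

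Your conclusion $R^{d/2}\varepsilon^{1-d-q/2}$ even fails for $d=1$, $0<q<1$. There $s<1$, and for $f(x)=|x|$ the frequencies $|\xi|\lesssim R^{-1}$ alone give $\|f_\varepsilon\|_{\dot H^s}\gtrsim R^{1-q/2}$ for all $\varepsilon\le1$. This exceeds $R^{1/2}\varepsilon^{-q/2}$ at $\varepsilon=1$ for large $R$.

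In the other regimes the bound holds only because it is lossy by a factor $\varepsilon^{-d/2}$, and that loss matters. Set $D:=\mmd_q(\rho,\mu)$. Optimizing $R^{1-\gamma}+\varepsilon+R^{d/2}\varepsilon^{1-d-q/2}D$ gives the exponent $2(\gamma-1)/(\gamma(2d+q)-d-q)$. This is strictly smaller than $\alpha_{d,q,\gamma}$ whenever $\gamma(d-2)>d$, for instance $d=3$, $\gamma>3$. So for small $D$ your argument does not reach the stated bound, and ``tuning until the exponent matches'' leaves exactly the needed step open.

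The missing idea is where to put the derivatives. Since $|f(x)|\le|x|$, one has $\|f\chi_R\|_{L^2}\lesssim R^{d/2+1}$. Apply Young's inequality $\|\nabla^k(\eta_\varepsilon*F)\|_{L^2}\le\|F\|_{L^2}\|\nabla^k\eta_\varepsilon\|_{L^1}$ at the two integer orders adjacent to $s$, then interpolate. This gives $\|\eta_\varepsilon*(f\chi_R)\|_{\dot H^s}\lesssim R^{d/2+1}\varepsilon^{-s}$ for every $s>0$, which is the paper's bound. The choices $\varepsilon=(R^{d/2+1}D)^{1/(s+1)}$ and $R=D^{-2/(\gamma(d+q+2)-q)}$ then produce exactly $\alpha_{d,q,\gamma}$.

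Alternatively, when $s\ge1$ your own splitting works once the multiplier is bounded in $L^\infty$. It gives $R^{d/2}\varepsilon^{1-s}$ and the larger exponent $2(\gamma-1)/(\gamma(d+q)-q)$. That implies the stated one, because by part (1) $D\le C_{d,q}\mathsf T_{q/2}(\rho,\mu)\le C_{d,q}(2+S)$ is bounded.
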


\begin{proof}
The feature map in \eqref{eq:feature-space-main} satisfies $\|\Phi_x-\Phi_y\|_H\le C_{d,q}|x-y|^{q/2}$.  Integrating against a coupling and taking the infimum proves \eqref{eq:wassmmd}.

To prove \eqref{eq:mmd-to-W1}, use Kantorovich--Rubinstein duality and test against a $1$-Lipschitz $\varphi$ with $\varphi(0)=0$.  If $\chi_R$ is a cutoff and $\eta_\varepsilon$ a mollifier, the moment tail, smoothing error, and $\dot H^s$--$\dot H^{-s}$ duality give
\begin{equation}\label{eq:transport-comparison-master}
\left|\int_{\R^d}\varphi\ud(\rho-\mu)\right|
\le C_{d,q,\gamma,S}\left(R^{1-\gamma}+\varepsilon+R^{d/2+1}\varepsilon^{-s}\mmd_q(\rho,\mu)\right).
\end{equation}
We have implicitly used that $\|\eta_\varepsilon*(\chi_R\varphi)\|_{\dot H^s}\le C R^{d/2+1}\varepsilon^{-s}$, which follows by differentiating at adjacent integer orders and interpolating.  Writing $D:=\mmd_q(\rho,\mu)$, if $D=0$, letting $\varepsilon\downarrow0$ and $R\to\infty$ in \eqref{eq:transport-comparison-master} gives $W_1(\rho,\mu)=0$.  Thus assume $D>0$ and balance the last two terms by taking
\begin{equation}\label{eq:transport-optimal-epsilon}
\varepsilon:=\bigl(R^{d/2+1}D\bigr)^{1/(s+1)}.
\end{equation}
This gives a contribution $R^{(d+2)/(d+q+2)}D^{2/(d+q+2)}$.  Balancing it with $R^{1-\gamma}$ amounts to choosing
\begin{equation}\label{eq:transport-optimal-radius}
R:=D^{-2/(\gamma(d+q+2)-q)},
\end{equation}
and yields the exponent $\alpha_{d,q,\gamma}$ in \eqref{eq:mmd-to-W1}; concavity gives \eqref{eq:mmd-to-Tr}.  Finally, for any coupling and $A>0$, H\"older's inequality and Young's product inequality give
\begin{equation}\label{eq:transport-moment-interpolation}
\int_{\R^d\times\R^d}|x-y|^p\ud\pi
\le A^{p-1}\int_{\R^d\times\R^d}|x-y|\ud\pi
+A^{p-\gamma}\int_{\R^d\times\R^d}|x-y|^\gamma\ud\pi.
\end{equation}
Taking an optimal coupling for $W_1$ and using the moment bound gives
\begin{equation}\label{eq:transport-Wp-intermediate}
W_p(\rho,\mu)^p\le C_{p,\gamma,S}\left(A^{p-1}W_1(\rho,\mu)+A^{p-\gamma}\right).
\end{equation}
The choice $A\asymp W_1(\rho,\mu)^{-1/(\gamma-1)}$ gives
\begin{equation}\label{eq:transport-Wp-W1-interpolation}
W_p(\rho,\mu)^p\le C_{p,\gamma,S}W_1(\rho,\mu)^{(\gamma-p)/(\gamma-1)},
\end{equation}
and \eqref{eq:mmd-to-Wp} follows from \eqref{eq:mmd-to-W1}.
\end{proof}

\begin{proposition}[MMD and transport convergence]\label{prop:mmd-topology}
Let $q\in(0,2)$ and $\mu_n,\mu\in\mathcal P_{q/2}(\R^d)$.
\begin{enumerate}[label=\textup{(\arabic*)}]
\item If $\mathsf T_{q/2}(\mu_n,\mu)\to0$, then $\mmd_q(\mu_n,\mu)\to0$.
\item If $\mmd_q(\mu_n,\mu)\to0$, then $\mathsf T_r(\mu_n,\mu)\to0$ for every $0<r<q/2$.
\end{enumerate}
\end{proposition}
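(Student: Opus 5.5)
Part (1) follows at once from \Cref{prop:mmdwass}(1): inequality \eqref{eq:wassmmd} gives $\mmd_q(\mu_n,\mu)\le C_{d,q}\mathsf T_{q/2}(\mu_n,\mu)\to0$, since $\mathsf T_{q/2}$ is itself the cost (with no power taken). So all the work is in part (2).

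For part (2), assume $D_n:=\mmd_q(\mu_n,\mu)\to0$ and fix $0<r<q/2$. The first step is to get uniform moment control from \Cref{prop:mmdtomoment}. For every $r<\gamma<q/2$ it gives
\[
\mathcal M_\gamma(\mu_n)\le\mathcal M_\gamma(\mu)+C_{d,q,\gamma}D_n^{2\gamma/q}.
\]
Because $\mu\in\mathcal P_{q/2}$, the right-hand side is bounded uniformly in $n$, so the sequence $(\mu_n)$ has uniformly bounded $\gamma$-moments for some $\gamma>r$. This makes $|x|^r$ uniformly integrable along the sequence, and in particular the sequence is tight.

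The next step identifies the narrow limit. Take any subsequence; by Prokhorov it has a further subsequence converging narrowly to some $\lambda$, and lower semicontinuity of moments gives $\lambda\in\mathcal P_\gamma$. I then need $\lambda=\mu$. I would argue on the Fourier side. Since the $\mu_n$ are probability measures, $\widehat\mu_{n_k}\to\widehat\lambda$ pointwise. By \Cref{lem:genSob}, $D_n\to0$ means $\int|2\pi\xi|^{-(d+q)}|\widehat\mu_n-\widehat\mu|^2\,\ud\xi\to0$. By Fatou,
\[
\int|2\pi\xi|^{-(d+q)}|\widehat\lambda-\widehat\mu|^2\,\ud\xi=0.
\]
Hence $\widehat\lambda=\widehat\mu$ almost everywhere, and by continuity everywhere, so $\lambda=\mu$. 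Since every subsequence has a further subsequence with limit $\mu$, the whole sequence satisfies $\mu_n\rightharpoonup\mu$ narrowly.

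The last step converts narrow convergence into transport convergence. For $0<r\le1$, the cost $\mathsf T_r$ is the Wasserstein-type distance for the metric $|x-y|^r$. Convergence in this cost is equivalent to narrow convergence together with convergence of $\int|x|^r$, or equivalently uniform integrability of $|x|^r$, which the $\gamma$-moment bound supplies. I would cite the standard characterization (Villani, Theorem 6.9) applied to the metric space $(\R^d,|x-y|^r)$. Alternatively I would argue directly: truncate to a large ball using the $\gamma$-moment tail, and on the ball use that $\mathsf T_r$ is controlled by the bounded-Lipschitz distance through Kantorovich duality for the bounded metric $\min\{|x-y|^r,1\}$.

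The main obstacle is the identification of the narrow limit with $\mu$, where one must be sure that zero MMD forces equality of measures. The Fourier/Fatou argument handles this cleanly. The moment bound is exactly why the statement requires $r<q/2$ strictly: \Cref{prop:mmdtomoment} gives nothing at $r=q/2$.
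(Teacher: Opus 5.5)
Your proof is correct and follows essentially the paper's route: it uses the moment control from \Cref{prop:mmdtomoment} for tightness, identifies the narrow limit through the Fourier representation of \Cref{lem:genSob}, and invokes the standard characterization of $\mathsf T_r$-convergence as narrow convergence plus uniform integrability (equivalently, convergence) of $r$-moments. The differences are cosmetic: you get uniform integrability from a bound on a slightly larger moment $\gamma\in(r,q/2)$ rather than from convergence of the $r$-moments, and you identify the limit by Fatou on the Fourier side rather than by $\dot H^{(d+q)/2}$--$\dot H^{-(d+q)/2}$ duality against Schwartz functions.
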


\begin{proof}
The first assertion is \eqref{eq:wassmmd}.  For the second, \Cref{prop:mmdtomoment} gives convergence and uniform boundedness of every $r$-moment with $0<r<q/2$, while \Cref{lem:genSob} gives distributional convergence against Schwartz functions.  Tightness upgrades this to weak convergence, and weak convergence together with convergence of the $r$-moments is equivalent to convergence in $\mathsf T_r$.
\end{proof}

\subsection{Borderline examples}\label{ssec:app-borderline-examples}
The preceding comparison is sharp at the endpoint $q/2$, and $\mathcal P_{q/2}(\R^d)$ is not complete for the MMD metric.  The following two examples exhibit these failures using the summable sequence
\begin{equation}\label{eq:dyadic-kernel-sequence}
a_m:=f_q(2^{m/2}),\qquad
f_q(s):=s^q+s^{-q}-(s-s^{-1})^q,\qquad m\ge0.
\end{equation}
 As $s\to\infty$,
\begin{equation}\label{eq:dyadic-kernel-asymptotic}
f_q(s)=s^{-q}+q s^{q-2}+O(s^{q-4}).
\end{equation}
Thus the leading term is $s^{-q}$ for $0<q<1$, $2s^{-1}$ for $q=1$, and $q s^{q-2}$ for $1<q<2$.  In particular,
\begin{equation}\label{eq:dyadic-kernel-geometric-bound}
a_m\le C_q2^{-m\min\{q,2-q\}/2},
\end{equation}
so $a\in\ell^1(\mathbb N_0)$.

\begin{example}[Failure at the endpoint transport exponent]\label{ex:MMDtoWass}
Let $d=1$, $\mu=\delta_0$, and choose a fixed $c_q>0$ sufficiently small that $c_q\sum_{k=1}^\infty2^{-kq/2}\le1$.  Set $C_n:=c_qn^{-3/4}$ and
\begin{equation}\label{eq:borderline-transport-example}
\mu_n:=\sum_{k=1}^n C_n2^{-kq/2}\delta_{2^k}
+\left(1-\sum_{k=1}^n C_n2^{-kq/2}\right)\delta_0.
\end{equation}
Then $\mu_n\rightharpoonup\delta_0$, and the centered-kernel calculation gives
\begin{equation}\label{eq:borderline-transport-scaling}
\mmd_q^2(\mu_n,\delta_0)
=\frac{C_n^2}{2}\sum_{j,k=1}^n a_{|j-k|}\asymp nC_n^2\longrightarrow0,
\qquad
\mathcal M_{q/2}(\mu_n)=nC_n\longrightarrow\infty.
\end{equation}
Thus MMD convergence does not imply convergence in the $q/2$-transport topology.
\end{example}

\begin{example}[MMD-Cauchy sequence with a weak limit outside $\mathcal P_{q/2}$]\label{ex:mmd-incomplete-example}
Again let $d=1$.
Let
\begin{equation}\label{eq:incomplete-limit-measure}
\mu:=A^{-1}\sum_{k=1}^\infty\frac{2^{-kq/2}}{k}\delta_{2^k},
\qquad
A:=\sum_{k=1}^\infty\frac{2^{-kq/2}}{k},
\end{equation}
and move the tail mass to the origin:
\begin{equation}\label{eq:incomplete-truncations}
\mu_n:=A^{-1}\left(\sum_{k=1}^n\frac{2^{-kq/2}}{k}\delta_{2^k}
+\sum_{k=n+1}^\infty\frac{2^{-kq/2}}{k}\delta_0\right).
\end{equation}
Each $\mu_n$ has finite support and hence belongs to $\mathcal P_q(\R)$.  Moreover, for every $\phi\in C_b(\R)$,
\begin{equation}\label{eq:incomplete-weak-convergence}
\left|\int_{\R}\phi\ud(\mu_n-\mu)\right|
\le\frac{2\|\phi\|_{L^\infty}}{A}
\sum_{k=n+1}^\infty\frac{2^{-kq/2}}{k}
\longrightarrow0,
\end{equation}
so $\mu_n\rightharpoonup\mu$.  The limit belongs to $\mathcal P_r$ for every $0<r<q/2$ but not to $\mathcal P_{q/2}$.  For $n>m$, the same dyadic-kernel identity and the $\ell^1$ convolution bound give
\begin{equation}\label{eq:incomplete-cauchy-bound}
\mmd_q^2(\mu_n,\mu_m)
\le C_{q,A}\sum_{k=m+1}^n\frac1{k^2}
\le\frac{C_{q,A}}{m}.
\end{equation}
Hence $(\mu_n)$ is MMD-Cauchy although its weak limit is outside $\mathcal P_{q/2}$.
\end{example}

\subsection{Concrete Hilbert completion}\label{ssec:mmdcompletion}
The preceding example shows that the completion must contain probability measures outside $\mathcal P_{q/2}(\R^d)$.  Guided by \Cref{lem:genSob}, we identify the additional elements with probability measures $\mu$ for which $\widehat\mu-1$ belongs to the weighted complex Hilbert space $H_q$.

Retaining the feature map $\Phi_x$ from \eqref{eq:feature-space-main}, set
\begin{equation}\label{eq:completion-Hq-definition}
H_q:=L^2\bigl(\R^d,C_{d,q}|2\pi\xi|^{-(d+q)}\ud\xi;\mathbb C\bigr),
\end{equation}
and
\begin{equation}\label{eq:completion-space-definition}
\widetilde{\mathcal P}_q(\R^d):=\{\mu\in\mathcal P(\R^d):\widehat\mu-1\in H_q\}.
\end{equation}

\begin{proposition}[Characterization of the MMD completion]\label{prop:completion-full-range}\label{cor:completion-full-range}
The MMD completion of $\mathcal P_{q/2}(\R^d)$ is identified, through the isometric embedding $\mu\mapsto\widehat\mu-1\in H_q$, with $\widetilde{\mathcal P}_q(\R^d)$.  Moreover, for every $\mu\in\mathcal P(\R^d)$, the compactly supported truncations
\begin{equation}\label{eq:completion-truncations}
\mu_R:=\mu\lfloor_{B_R}+\mu(B_R^c)\delta_0
\end{equation}
belong to $\mathcal P_{q/2}(\R^d)$, and
\begin{equation}\label{eq:completion-truncation-convergence}
\mu\in\widetilde{\mathcal P}_q(\R^d)
\quad\Longleftrightarrow\quad
\|\widehat\mu_R-\widehat\mu\|_{H_q}\longrightarrow0.
\end{equation}
\end{proposition}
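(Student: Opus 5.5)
We want to show that the MMD completion of $\mathcal P_{q/2}$ is $\widetilde{\mathcal P}_q(\R^d)$, and we work throughout in the Fourier picture of \Cref{lem:genSob}. That lemma, via the feature map $\Phi_x=e^{-2\pi i\xi\cdot x}-1$, identifies $\mmd_q(\mu_1,\mu_2)$ with $\|(\widehat\mu_1-1)-(\widehat\mu_2-1)\|_{H_q}$ for $\mu_1,\mu_2\in\mathcal P_{q/2}$. So $\iota:\mu\mapsto\widehat\mu-1$ is an isometric embedding of $(\mathcal P_{q/2},\mmd_q)$ into $H_q$, and it is injective because Fourier transforms determine measures. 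The completion is then the closure $\overline{\iota(\mathcal P_{q/2})}$ in $H_q$. The task is to show that this closure equals $\iota(\widetilde{\mathcal P}_q)$, and that $\iota$ is injective on the larger set, which is again clear since $\widehat\mu$ determines $\mu$.

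For the inclusion ``closure $\subset\iota(\widetilde{\mathcal P}_q)$'', take an MMD-Cauchy sequence $\mu_n\in\mathcal P_{q/2}$ with $\widehat\mu_n-1\to h$ in $H_q$.
\begin{enumerate}
\item Tightness comes from \Cref{prop:mmdtomoment}: the $r$-moments, $0<r<q/2$, are bounded along the sequence by $\mathcal M_r(\mu_1)+C\,\mmd_q(\mu_1,\mu_n)^{2r/q}$.
\item Extract a narrow limit $\mu$. Then $\widehat\mu_n\to\widehat\mu$ pointwise, and a subsequence of $\widehat\mu_n-1$ converges a.e.\ to $h$ (using $L^2$ convergence on bounded frequency sets). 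Hence $h=\widehat\mu-1$ a.e., so $\mu\in\widetilde{\mathcal P}_q$.
\end{enumerate}
This also shows that the limit point is uniquely labeled by $\mu$.

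For the reverse inclusion, and for \eqref{eq:completion-truncation-convergence}, use the truncations $\mu_R$. They are compactly supported, hence lie in $\mathcal P_{q/2}$, so it suffices to prove that $\|\widehat\mu_R-\widehat\mu\|_{H_q}\to0$ if and only if $\mu\in\widetilde{\mathcal P}_q$. The ``only if'' direction comes from the previous paragraph: $(\mu_R)$ is then MMD-Cauchy, its limit in $H_q$ is $\widehat\mu-1$ by pointwise convergence, and so $\widehat\mu-1\in H_q$. For the ``if'' direction, write $\widehat\mu_R-\widehat\mu=\widehat{\lambda_R}$ with $\lambda_R:=\mu(B_R^c)\delta_0-\mu\lfloor_{B_R^c}$, and split frequencies.
\begin{enumerate}
\item At high frequencies $|\xi|\ge1$, we have $|\widehat{\lambda_R}|\le2\mu(B_R^c)\to0$ against an integrable weight.
\item At low frequencies $|\xi|<1$, the weight $|\xi|^{-(d+q)}$ is non-integrable, and we need $\int_{|\xi|<1}|\widehat{\lambda_R}|^2|\xi|^{-d-q}\,\ud\xi\to0$ using only $\widehat\mu-1\in H_q$. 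The pointwise bound $|\widehat{\lambda_R}(\xi)|\le\int_{B_R^c}|1-e^{-2\pi ix\cdot\xi}|\,\ud\mu$ alone does not suffice, because $\mu$ may lack the $q/2$ moment.
\end{enumerate}

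The main obstacle is this low-frequency estimate. The plan is to use the real part: write $1-\Real\widehat\mu(\xi)=\int(1-\cos 2\pi x\cdot\xi)\,\ud\mu(x)$ and note that $|1-e^{i\theta}|^2=2(1-\cos\theta)$. By Cauchy--Schwarz with respect to $\mu\lfloor_{B_R^c}$,
\[
|\widehat{\lambda_R}(\xi)|^2\le 2\mu(B_R^c)\int_{B_R^c}(1-\cos2\pi x\cdot\xi)\,\ud\mu(x).
\]
The right-hand side is dominated by $2(1-\Real\widehat\mu(\xi))\le 2|1-\widehat\mu(\xi)|$. That only gives an $L^1$-weighted bound, not the required $L^2$ bound, so the finiteness must come from elsewhere. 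Instead, integrate in $\xi$ first: for each fixed $x$, $\int(1-\cos2\pi x\cdot\xi)|\xi|^{-d-q}\,\ud\xi$ is proportional to $|x|^q$ by \eqref{eq:fourierkernel}. Then $\|\widehat\mu-1\|_{H_q}^2<\infty$ becomes, by the centered-kernel identity, the statement that $\iint k_q\,\ud(\mu-\delta_0)^{\otimes2}<\infty$ in a suitably renormalized sense. The step would be to show the tail functional $\int_{B_R^c}\!\int_{B_R^c}k_q$ is controlled by $\|\widehat\mu-1\|_{H_q}$ restricted to the tail. If this route proves awkward, a fallback is to smooth by $|\widehat\mu|$ near $0$ and use dominated convergence on $\xi\mapsto|\widehat{\lambda_R}(\xi)|^2|\xi|^{-d-q}$. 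Here $|\widehat{\lambda_R}|\to0$ pointwise, and we would seek a domination by $C\big(|1-\widehat\mu|^2+|1-\widehat{\mu}|\cdot\mu(B_R^c)\big)$ derived from the splitting $\widehat\mu_R-1=(\widehat\mu-1)-\widehat{\lambda_R}$. I expect the delicate point to be exactly this domination.
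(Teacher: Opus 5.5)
The reduction in your proposal is sound and matches the paper. The map $\mu\mapsto\widehat\mu-1$ is an isometry on $\mathcal P_{q/2}(\R^d)$. Your proof that every $H_q$-limit of $\widehat\mu_n-1$, with $\mu_n\in\mathcal P_{q/2}$, has the form $\widehat\mu-1$ with $\mu\in\widetilde{\mathcal P}_q$ (tightness from moment control by MMD, a narrow limit, a.e.\ identification) is exactly the paper's closedness lemma. The implication $\|\widehat\mu_R-\widehat\mu\|_{H_q}\to0\Rightarrow\mu\in\widetilde{\mathcal P}_q$ is also fine.

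\textbf{The gap} is the converse implication, $\mu\in\widetilde{\mathcal P}_q\Rightarrow\|\widehat\mu_R-\widehat\mu\|_{H_q}\to0$. This is the whole analytic content of the proposition: it is the density of $\mathcal P_{q/2}$ in $\widetilde{\mathcal P}_q$, and without it neither the completion identification nor the truncation equivalence follows. You leave it open.

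Your fallback cannot close it. Dominated convergence needs an $R$-independent majorant. The term $|1-\widehat\mu|$ in your proposed majorant is not integrable against $|\xi|^{-d-q}$ near $\xi=0$ for any $\mu\notin\mathcal P_q$. Indeed, by Tonelli and the Schoenberg--L\'evy formula, $\int_{|\xi|<1}(1-\Real\widehat\mu(\xi))|\xi|^{-d-q}\ud\xi\ge c\int_{|x|\ge1}|x|^q\ud\mu(x)$. The hypothesis $\widehat\mu-1\in H_q$ is a weighted $L^2$ condition and gives no weighted $L^1$ control; this is the same obstruction you already met with your Cauchy--Schwarz bound.

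Your first route starts from a correct identity: with $g_R:=\widehat\mu_R-1$, one has $\|g_S-g_R\|_{H_q}^2=\iint_{(B_S\setminus B_R)^2}k_q\,\ud\mu\,\ud\mu$. But asserting that this tail is controlled by the tail of $\|\widehat\mu-1\|_{H_q}$ restates the goal rather than proving it.

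\textbf{What the paper uses for $0<q\le1$.} Subadditivity gives $k_q\ge0$. Hence $R\mapsto\|g_R\|_{H_q}^2=\iint_{B_R\times B_R}k_q\,\ud\mu\,\ud\mu$ is nondecreasing, and $\|g_S-g_R\|_{H_q}^2\le\|g_S\|_{H_q}^2-\|g_R\|_{H_q}^2$ for $R<S$. Boundedness comes from the pairing identity $\Real\langle g_R,\widehat\mu-1\rangle_{H_q}=\iint_{B_R\times\R^d}k_q\,\ud\mu\,\ud\mu\ge\|g_R\|_{H_q}^2$. This identity must be proved without assuming $\mu\in\mathcal P_{q/2}$. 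The paper does so by an approximate-identity Plancherel argument, using that $y\mapsto\int_{B_R}k_q(x,y)\ud\mu(x)$ is bounded and continuous because $0\le k_q(x,y)\le|x|^q\wedge|y|^q$. Then $(g_R)$ is Cauchy and its limit is identified distributionally.

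\textbf{What the paper uses for $1<q<2$.} Here the positivity argument has no sign content, since $k_q(x,-x)<0$. Instead the paper argues in three steps.
\begin{enumerate}
\item Test the nonnegative function $1-\Real\widehat\mu$ on the frequency annuli $\{2^{-n-1}\le|\xi|\le2^{-n}\}$ and apply Cauchy--Schwarz. This gives the square-tail bound $\sum_n a_n^2<\infty$, where $a_n:=2^{nq/2}\mu(A_n)$ and $A_n:=\{2^n<|x|\le2^{n+1}\}$.
\item On well-separated annuli, $|k_q(x,y)|\lesssim|x|\,|y|^{q-1}$. With $f_n:=\int_{A_n}\Phi_x\ud\mu(x)$, this gives $|\Real\langle f_n,f_m\rangle_{H_q}|\lesssim2^{-(m-n)(1-q/2)}a_na_m$.
\item Schur's test then shows that the tails $\sum_{n\ge N}f_n=\widehat\mu-\widehat\mu_{2^N}$ tend to zero in $H_q$.
\end{enumerate}
This dyadic argument also covers $0<q\le1$: there $|k_q(x,y)|\lesssim|x|^q$ on separated annuli, which gives the decay factor $2^{-(m-n)q/2}$.

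Some such positivity or square-function input is indispensable. Adding one of these would complete your proof.
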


The proof has two parts.  First, compact truncations are dense in $\widetilde{\mathcal P}_q$, using positivity when $q\le1$ and dyadic square-tail control together with annular almost orthogonality \mbox{when $q>1$}.  Second, every $H_q$-limit of functions $\widehat\mu_j-1$, with $\mu_j\in\mathcal P_{q/2}(\R^d)$, is of the form $\widehat\mu-1$ for some $\mu\in\widetilde{\mathcal P}_q(\R^d)$.  With the notation above, we have
\begin{equation}\label{eq:centered-kernel-fourier-feature}
\Real\langle\Phi_x,\Phi_y\rangle_{H_q}=k_q(x,y).
\end{equation}
By \eqref{eq:centered-positive-kernel-intro}, when $0<q\le1$, subadditivity gives
\begin{equation}\label{eq:kq-positive-bounded}
0\le k_q(x,y)\le |x|^q\wedge|y|^q.
\end{equation}

To turn this pointwise positivity into an $H_q$-estimate for compact truncations, we first identify the relevant Hilbert-space pairing.

\begin{lemma}[Weak pairing identity in $H_q$]\label{lem:completion-weak-pairing}
Let $0<q\le1$ and fix $\mu\in\widetilde{\mathcal P}_q(\R^d)$.  Put
$g:=\widehat\mu-1\in H_q$.  If $\eta$ is a finite nonnegative Borel measure with compact
support, then
\begin{equation}
\Psi_\eta:=\int_{\R^d}\Phi_x\ud\eta(x)
\end{equation}
is a Bochner integral in $H_q$, and
\begin{equation}\label{eq:completion-weak-pairing}
\Real\langle \Psi_\eta,g\rangle_{H_q}
=\iint_{\R^d\times\R^d}k_q(x,y)\ud\eta(x)\ud\mu(y).
\end{equation}
In particular, for every fixed $x\in\R^d$,
\begin{equation}\label{eq:completion-duality}
\Real\langle \Phi_x,g\rangle_{H_q}
=\int_{\R^d}k_q(x,y)\ud\mu(y).
\end{equation}
\end{lemma}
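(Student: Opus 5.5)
The plan has two parts. First I show that $\Psi_\eta$ is a Bochner integral in $H_q$. Then I prove the pairing identity by truncating $\mu$ with the compactly supported $\mu_R$ of \eqref{eq:completion-truncations} and passing to the limit $R\to\infty$.

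For the Bochner integral, the map $x\mapsto\Phi_x$ is continuous into $H_q$, since $\|\Phi_x-\Phi_y\|_{H_q}\le C|x-y|^{q/2}$. It satisfies $\|\Phi_x\|_{H_q}=C|x|^{q/2}$, which is bounded on the compact support of $\eta$. As $\eta$ is finite, $\int\|\Phi_x\|\ud\eta<\infty$ and $\Psi_\eta$ is well defined. Moreover, because the evaluation $h\mapsto\Real\langle h,g\rangle_{H_q}$ is a continuous linear functional, it commutes with the Bochner integral. Hence
\begin{equation}
\Real\langle\Psi_\eta,g\rangle_{H_q}=\int\Real\langle\Phi_x,g\rangle_{H_q}\ud\eta(x),
\end{equation}
and it suffices to prove the pointwise identity \eqref{eq:completion-duality} together with enough integrability to apply Tonelli/Fubini on the right-hand side. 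Integrability comes from \eqref{eq:kq-positive-bounded}: $0\le k_q(x,y)\le|x|^q$, which is bounded for $x\in\supp\eta$ and all $y$.

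For \eqref{eq:completion-duality}, fix $x$. The case $\mu\in\mathcal P_{q/2}$ is handled directly. There $g=\int\Phi_y\ud\mu(y)$ as a Bochner integral, by the argument in the proof of \Cref{lem:genSob} (apply Bochner--Fubini to $\delta_0-\mu$, noting $\widehat{\delta_0}=1$ and the sign convention). Then
\begin{equation}
\Real\langle\Phi_x,g\rangle=\int\Real\langle\Phi_x,\Phi_y\rangle\ud\mu(y)=\int k_q(x,y)\ud\mu(y)
\end{equation}
by \eqref{eq:centered-kernel-fourier-feature}. For general $\mu\in\widetilde{\mathcal P}_q$, apply this to $\mu_R$. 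This gives
\begin{equation}
\Real\langle\Phi_x,\widehat\mu_R-1\rangle=\int_{B_R}k_q(x,y)\ud\mu(y),
\end{equation}
using $k_q(x,0)=0$. As $R\to\infty$, the right-hand side converges monotonically (since $k_q\ge0$ when $q\le1$) to $\int k_q(x,y)\ud\mu(y)$, which is finite by the bound $|x|^q$.

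The main obstacle is the left-hand side: we need $\widehat\mu_R-1\to g$, at least weakly in $H_q$. This is essentially the content of \eqref{eq:completion-truncation-convergence}, which we cannot assume, since this lemma is presumably used to prove it. I would argue by hand. Pointwise, $\widehat\mu_R(\xi)\to\widehat\mu(\xi)$ by dominated convergence, and $|\widehat\mu_R-1|$ is uniformly bounded by $2$, which controls high frequencies. For weak convergence it suffices to show that $\|\widehat\mu_R-1\|_{H_q}$ is bounded. Then weak compactness plus pointwise convergence identifies every weak limit as $g$, and pairing against the fixed $\Phi_x$ passes to the limit. The norm bound follows from positivity. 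We have
\begin{equation}
\|\widehat\mu_R-1\|_{H_q}^2=\iint k_q\ud\mu_R\ud\mu_R=\iint_{B_R\times B_R}k_q\ud\mu\ud\mu,
\end{equation}
which is monotone in $R$. By Fatou on the Fourier side, any increasing sequence of these values is majorized via the pointwise limit: bounding the $B_R\times B_R$ integral uniformly requires comparing it with $\|g\|^2$. For this I would use $\Real\langle\widehat\mu_R-1,g\rangle$, computed via truncation of the other factor at radius $R'\ge R$. Here positivity gives
\begin{equation}
\iint_{B_R\times B_R}k_q\ud\mu\ud\mu\le\liminf_{R'}\Real\langle\widehat\mu_R-1,\widehat\mu_{R'}-1\rangle,
\end{equation}
followed by Cauchy--Schwarz and a Fatou/lower-semicontinuity argument to close the bound by $\|g\|_{H_q}^2$. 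This closing step is where I expect the delicate work to lie.
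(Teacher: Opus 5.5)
Your reductions are sound: $\Psi_\eta$ is a Bochner integral, the continuous functional $h\mapsto\Real\langle h,g\rangle_{H_q}$ passes inside it, Tonelli applies because $0\le k_q(x,y)\le|x|^q$, the case $\mu\in\mathcal P_{q/2}$ follows from the feature-map identity, and $\int_{B_R}k_q(x,y)\ud\mu(y)$ increases to $\int k_q(x,y)\ud\mu(y)$.

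The gap is the step you flag yourself, namely $\Real\langle\Phi_x,\widehat\mu_R-1\rangle_{H_q}\to\Real\langle\Phi_x,g\rangle_{H_q}$. The route you sketch cannot close it.
Fatou on the Fourier side gives $\|g\|_{H_q}\le\liminf_{R}\|\widehat\mu_R-1\|_{H_q}$, which is the opposite of the upper bound you need. Cauchy--Schwarz against $\widehat\mu_{R'}-1$ only gives $\|\widehat\mu_R-1\|_{H_q}\le\|\widehat\mu_{R'}-1\|_{H_q}$, i.e.\ the monotonicity you already had. Both letting $R'\to\infty$ in $\Real\langle\widehat\mu_R-1,\widehat\mu_{R'}-1\rangle_{H_q}$ and using $\|\widehat\mu_R-1\|_{H_q}^2\le\Real\langle\widehat\mu_R-1,g\rangle_{H_q}$ require the very identity under proof, applied with $\eta=\mu\lfloor_{B_R}$.
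This is circular: the paper obtains the uniform bound $\|\widehat\mu_R-1\|_{H_q}\le\|g\|_{H_q}$ in \Cref{prop:completion-q-le-one} as a consequence of this lemma. Nor is there an elementary a priori bound, since $|\widehat\mu_R-1|\le2$ says nothing near $\xi=0$, where the weight $|2\pi\xi|^{-(d+q)}$ is not integrable. Given such a bound, your weak-compactness identification of the limit would be fine.

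The missing idea is to evaluate the pairing against $g$ directly, using only $g\in H_q$ and the finiteness of $\mu$, with no truncation of $\mu$. The paper computes the distributional Fourier transform of the bounded, uniformly continuous potential $\kappa_\eta(y)=\int k_q(x,y)\ud\eta(x)$. It is $C_{d,q}|2\pi\xi|^{-(d+q)}\Psi_\eta$ plus a multiple of $\delta_0$, in the principal-value sense when $q=1$; this is proved for atomic $\eta$ and extended by approximation. The paper then pairs it with the zero-mass measure $\mu-\delta_0$ after mollification. The Fourier side converges because $|2\pi\xi|^{-(d+q)}\Psi_\eta\overline{g}$ is integrable by Cauchy--Schwarz in $H_q$. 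The physical side converges because $\kappa_\eta$ is bounded and uniformly continuous with $\kappa_\eta(0)=0$.

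For $0<q<1$ your approach can be rescued more cheaply. Since $|\Phi_x(\xi)|\le2\pi|x||\xi|$, one has $\Phi_x\in L^1(|2\pi\xi|^{-(d+q)}\ud\xi)$. Fubini applied to $\overline{g(\xi)}=\int\overline{\Phi_y(\xi)}\ud\mu(y)$, with $|\Phi_y|\le2$, then yields \eqref{eq:completion-duality} without any truncation. At $q=1$ this integrability fails logarithmically at low frequencies. That is exactly why an argument exploiting the symmetric cancellation, as in the paper, is needed there.
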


\begin{proof}
The Bochner integral defining $\Psi_\eta$ is well-defined because $\eta$ has compact support and
$\|\Phi_x\|_{H_q}=C_{d,q}^{1/2}|x|^{q/2}$.  The right-hand side of
\eqref{eq:completion-weak-pairing} is finite by \eqref{eq:kq-positive-bounded}, since
$k_q(x,y)\le |x|^q$ and $\eta$ has compact support.

Define
\begin{equation}
\kappa_\eta(y):=\int_{\R^d}k_q(x,y)\ud\eta(x).
\end{equation}
Then $\kappa_\eta$ is bounded and continuous, and $\kappa_\eta(0)=0$.  More precisely, in $\mathcal S'(\R^d)$,
\begin{equation}\label{eq:kappa-eta-fourier-distribution}
\widehat{\kappa_\eta}
=C_{d,q}|2\pi\xi|^{-(d+q)}\Psi_\eta(\xi)
+\frac12\left(\int_{\R^d}|x|^q\ud\eta(x)\right)\delta_0.
\end{equation}
When $q=1$, the first term on the right-hand side is understood in the symmetric principal-value sense.  To see that \eqref{eq:kappa-eta-fourier-distribution} holds, we argue as follows.  For $\eta=\delta_x$, it is the Fourier form of the centered Schoenberg--L\'evy identity
\eqref{eq:centered-kernel-fourier-feature}; the constant term $\frac12|x|^q$ in $k_q(x,\cdot)$ produces $\frac12|x|^q\delta_0$.  By linearity, the identity therefore holds for every finite atomic measure $\eta$.

For a general compactly supported $\eta$, fix a compact set
$K\supset\supp\eta$.  Choose finite Borel partitions
$K=\bigsqcup_j A_{n,j}$ with
$\delta_n:=\max_j\diam(A_{n,j})\to0$, choose $x_{n,j}\in A_{n,j}$,
and set $\eta_n:=\sum_j\eta(A_{n,j})\delta_{x_{n,j}}$.  Using the
feature-map estimate
$\|\Phi_x-\Phi_y\|_{H_q}\le C_{d,q}|x-y|^{q/2}$ and Minkowski's
inequality, we obtain
\begin{align}
\|\Psi_{\eta_n}-\Psi_\eta\|_{H_q}
&\le \sum_j\int_{A_{n,j}}
   \|\Phi_{x_{n,j}}-\Phi_x\|_{H_q}\ud\eta(x)\notag\\
&\le C_{d,q}\eta(K)\delta_n^{q/2}\longrightarrow0.
\end{align}
Thus $\Psi_{\eta_n}\to\Psi_\eta$ strongly in $H_q$.  Uniform continuity of $x\mapsto|x|^q$ on the fixed compact set gives
\begin{equation}
\int_{\R^d}|x|^q\ud\eta_n(x)
\longrightarrow
\int_{\R^d}|x|^q\ud\eta(x).
\end{equation}
Moreover, $\kappa_{\eta_n}\to\kappa_\eta$ uniformly on compact sets, and \eqref{eq:kq-positive-bounded} gives a uniform bound on $\kappa_{\eta_n}$.  Passing to the limit in $\mathcal S'(\R^d)$ proves \eqref{eq:kappa-eta-fourier-distribution}.  Its $\delta_0$ term does not contribute when paired with $\mu-\delta_0$, since $\widehat\mu(0)-1=0$.

We now pair \eqref{eq:kappa-eta-fourier-distribution} with the zero-mass finite measure
$\mu-\delta_0$, whose Fourier transform is $g$.  To make the weak pairing explicit, convolve
$\kappa_\eta$ and $\mu-\delta_0$ with a Schwartz approximate identity.  For the smoothed objects,
Plancherel gives
\begin{equation}
\int \kappa_{\eta,\varepsilon}\ud(\mu-\delta_0)_\varepsilon
=
\Real\int C_{d,q}|2\pi\xi|^{-(d+q)}\Psi_\eta(\xi)\overline{g(\xi)}
|\widehat\theta_\varepsilon(\xi)|^2\ud\xi.
\end{equation}
The Fourier side converges to $\Real\langle\Psi_\eta,g\rangle_{H_q}$ by Cauchy--Schwarz in $H_q$.
The physical side converges to $\int\kappa_\eta\ud(\mu-\delta_0)$, because
$\kappa_\eta$ is bounded and uniformly continuous and $\mu-\delta_0$ is a finite measure.  Since
$\kappa_\eta(0)=0$, this physical limit is $\int\kappa_\eta\ud\mu$.  Therefore
\begin{equation}
\Real\langle\Psi_\eta,g\rangle_{H_q}
=\int_{\R^d}\kappa_\eta(y)\ud\mu(y)
=\iint_{\R^d\times\R^d}k_q(x,y)\ud\eta(x)\ud\mu(y),
\end{equation}
which proves \eqref{eq:completion-weak-pairing}.  Taking $\eta=\delta_x$ proves
\eqref{eq:completion-duality}.
\end{proof}

\begin{lemma}[Truncation approximation for $0<q\le1$]\label{prop:completion-q-le-one}
Let $0<q\le1$ and let $\mu\in\widetilde{\mathcal P}_q(\R^d)$.  For the truncations $\mu_R$ defined in \eqref{eq:completion-truncations}, one has
\begin{equation}
\mu_R\in\mathcal P_{q/2}(\R^d)
\end{equation}
and
\begin{equation}
\lim_{R\to\infty}\|\widehat\mu_R-\widehat\mu\|_{H_q}=0.
\end{equation}
Consequently, $\widehat\mu-1$ belongs to the $H_q$-closure of
$\{\widehat\nu-1:\nu\in\mathcal P_{q/2}(\R^d)\}$ for every
$\mu\in\widetilde{\mathcal P}_q(\R^d)$.
\end{lemma}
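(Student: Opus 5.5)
The plan is to write the truncation error as an explicit quadratic expression in $k_q$ and then use the positivity \eqref{eq:kq-positive-bounded} together with \Cref{lem:completion-weak-pairing}. Membership $\mu_R\in\mathcal P_{q/2}(\R^d)$ is immediate, since $\mu_R$ is supported in $\overline B_R$.

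\textbf{Step 1: rewrite the truncation error in terms of $k_q$.} Set $\eta_R:=\mu\lfloor_{B_R}$, a compactly supported finite nonnegative measure. Since $\int\Phi_x\ud\eta(x)=\widehat\eta-\eta(\R^d)$, we have
\begin{equation*}
\widehat\mu_R-1=\widehat{\eta_R}-\mu(B_R)=\Psi_{\eta_R}.
\end{equation*}
By \eqref{eq:centered-kernel-fourier-feature} and Fubini,
\begin{equation*}
\|\Psi_{\eta_R}\|_{H_q}^2=I_R:=\iint_{B_R\times B_R}k_q\ud\mu\ud\mu .
\end{equation*}
Put $g:=\widehat\mu-1$. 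By \eqref{eq:completion-weak-pairing},
\begin{equation*}
\Real\langle\Psi_{\eta_R},g\rangle_{H_q}=J_R:=\iint_{B_R\times\R^d}k_q\ud\mu\ud\mu .
\end{equation*}
Expanding the square therefore gives
\begin{equation*}
\|\widehat\mu_R-\widehat\mu\|_{H_q}^2=\|g\|_{H_q}^2-2J_R+I_R .
\end{equation*}

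\textbf{Step 2: uniform bound and monotone limits.} Since $k_q\ge0$ by \eqref{eq:kq-positive-bounded}, we have $I_R\le J_R$. Combining this with Cauchy--Schwarz gives
\begin{equation*}
\|\Psi_{\eta_R}\|^2\le\|\Psi_{\eta_R}\|\,\|g\|, \qquad\text{so}\qquad \|\Psi_{\eta_R}\|\le\|g\|.
\end{equation*}
By monotone convergence, both $I_R$ and $J_R$ increase to
\begin{equation*}
I:=\iint k_q\ud\mu\ud\mu\le\|g\|^2<\infty .
\end{equation*}
Hence the truncation error tends to $\|g\|^2-I$. It remains to prove the reverse inequality $I\ge\|g\|^2$.

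\textbf{Step 3: weak convergence $\Psi_{\eta_R}\rightharpoonup g$.} All the functions involved satisfy $f(-\xi)=\overline{f(\xi)}$. They therefore lie in the closed real subspace $H_q^{\mathrm{sym}}$, on which $\Real\langle\cdot,\cdot\rangle$ is a real inner product. For each $x$, \eqref{eq:completion-duality} and dominated convergence (using $0\le k_q(x,y)\le|x|^q$) give
\begin{equation*}
\Real\langle\Psi_{\eta_R},\Phi_x\rangle=\int_{B_R}k_q(x,y)\ud\mu(y)\to\int k_q(x,y)\ud\mu(y)=\Real\langle g,\Phi_x\rangle .
\end{equation*}
The family $\Psi_{\eta_R}$ is bounded by Step 2. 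If the real span of $\{\Phi_x\}$ is dense in $H_q^{\mathrm{sym}}$, weak convergence follows, and then
\begin{equation*}
\|g\|^2=\lim_{R\to\infty}\Real\langle\Psi_{\eta_R},g\rangle=\lim_{R\to\infty}J_R=I ,
\end{equation*}
so the truncation error tends to $0$.

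\textbf{Main obstacle: the density claim.} Suppose $h\in H_q^{\mathrm{sym}}$ satisfies $\Real\langle h,\Phi_x\rangle=0$ for all $x$. By the symmetry of $h$, this real part equals the full complex pairing. Set $w:=C_{d,q}|2\pi\xi|^{-(d+q)}h\overline{(\cdot)}$. The condition says
\begin{equation*}
\int w(\xi)\bigl(e^{-2\pi i x\cdot\xi}-1\bigr)\ud\xi=0\qquad\text{for all }x .
\end{equation*}
The locally integrable weight makes $w\,\Phi_x$ integrable, since $|\Phi_x(\xi)|\lesssim\min\{|x||\xi|,1\}$. Differencing in $x$ shows that the distributional Fourier transform of $w$, tested against $e^{-2\pi i x\cdot\xi}-e^{-2\pi i x'\cdot\xi}$, vanishes. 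Hence $\widehat w$ is a constant, so $w$ is supported at the origin. Because $w$ is a function, this forces $h=0$ almost everywhere. The final consequence, that $\widehat\mu-1$ lies in the closure of $\{\widehat\nu-1:\nu\in\mathcal P_{q/2}\}$, is then immediate.
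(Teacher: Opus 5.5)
Your argument is correct, and Steps~1--2 are the paper's own: $\|g_R\|_{H_q}^2=I_R$, and the weak pairing lemma gives $\Real\langle g_R,g\rangle_{H_q}=J_R\ge I_R$. Hence $\|g_R\|_{H_q}\le\|g\|_{H_q}$ and $I_R\uparrow I<\infty$, where $g_R=\widehat\mu_R-1=\Psi_{\eta_R}$.

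The two routes diverge when identifying the limit. The paper uses positivity of $k_q$ a second time: for $R<S$, $\|g_S-g_R\|_{H_q}^2=\iint_{(B_S\setminus B_R)^2}k_q\,\ud\mu\,\ud\mu\le\|g_S\|_{H_q}^2-\|g_R\|_{H_q}^2$, so $(g_R)$ is Cauchy. It then identifies the strong limit with $g$ via the embedding $H_q\hookrightarrow\mathcal S'(\R^d)$ and the narrow convergence $\mu_R\rightharpoonup\mu$. You instead compute the error exactly, $\|g\|^2-2J_R+I_R\to\|g\|^2-I$, and get $I=\|g\|^2$ from weak convergence $g_R\rightharpoonup g$. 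You derive that weak convergence from totality of $\{\Phi_x\}$ in the symmetric real subspace. That totality lemma is true and a nice structural fact about the feature map, but it is more than you need. Besides the paper's Cauchy estimate, weak convergence also follows from boundedness of $(g_R)$ together with the uniform bound $|g_R-g|\le2\mu(B_R^c)$. This bound gives $\langle g_R-g,\psi\rangle_{H_q}\to0$ for every bounded $\psi$ supported in a compact subset of $\R^d\setminus\{0\}$, which is a dense class in $H_q$.

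If you keep the density lemma, tighten its last step. The function $w$ need not be locally integrable at the origin, since the weight $|2\pi\xi|^{-(d+q)}$ is not. What does hold is $\min\{1,|\xi|\}\,w\in L^1$, by Cauchy--Schwarz in $H_q$; this is also the correct reason why $w\Phi_x\in L^1$. So the claim that $\widehat w$ is constant is not literally meaningful. Instead, integrate the identity $\int w(\xi)(e^{-2\pi i x\cdot\xi}-1)\,\ud\xi=0$ against $\phi\in\mathcal S(\R^d)$ with $\widehat\phi\in C_c^\infty(\R^d\setminus\{0\})$. Fubini is justified by $|e^{-2\pi i x\cdot\xi}-1|\le2\pi(1+|x|)\min\{1,|\xi|\}$. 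You obtain $\int w\,\widehat\phi\,\ud\xi=0$ for all such $\phi$, hence $w=0$ a.e.\ and $h=0$.
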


\begin{proof}
Set
\begin{equation}
g:=\widehat\mu-1\in H_q,
\qquad
g_R:=\widehat\mu_R-1=\int_{|x|\le R}\Phi_x\ud\mu(x).
\end{equation}
The measure $\mu_R$ is compactly supported and therefore belongs to $\mathcal P_{q/2}$.  Observe that
\begin{equation}\label{eq:completion-IR}
\|g_R\|_{H_q}^2
=\iint_{|x|\le R,\,|y|\le R}k_q(x,y)\ud\mu(x)\ud\mu(y).
\end{equation}
The function $R\mapsto\|g_R\|_{H_q}^2$ is increasing because $k_q\ge0$.

We claim that $\|g_R\|_{H_q}^2$ is bounded in $R$. Indeed, apply \Cref{lem:completion-weak-pairing} with $\eta=\mu\lfloor_{B_R}$.  Since
$\Psi_\eta=g_R$, we obtain
\begin{equation}
\Real\langle g_R,g\rangle_{H_q}
=\iint_{|x|\le R}k_q(x,y)\ud\mu(x)\ud\mu(y)
\ge \|g_R\|_{H_q}^2.
\end{equation}
Therefore
\begin{equation}
\|g_R\|_{H_q}^2
\le \Real\langle g_R,g\rangle_{H_q}
\le \|g_R\|_{H_q}\|g\|_{H_q},
\end{equation}
and hence either $g_R=0$ or $\|g_R\|_{H_q}\le\|g\|_{H_q}$.  Thus $R\mapsto\|g_R\|_{H_q}^2$ is increasing and bounded, so $\|g_R\|_{H_q}^2$ has a finite limit as $R\to\infty$.

If $0<R<S$, then by \eqref{eq:completion-IR} and $k_q\ge0$,
\begin{equation}
\|g_S-g_R\|_{H_q}^2
=\iint_{R<|x|\le S,\,R<|y|\le S}k_q(x,y)\ud\mu(x)\ud\mu(y)
\le \|g_S\|_{H_q}^2-\|g_R\|_{H_q}^2.
\end{equation}
Since $\|g_R\|_{H_q}^2$ converges, it follows that $(g_R)_R$ is Cauchy
in $H_q$; let $h$ be its limit.  Weighted Cauchy--Schwarz gives a
continuous embedding $H_q\hookrightarrow\mathcal S'(\R^d)$, so
$g_R\to h$ in $\mathcal S'(\R^d)$.  On the other hand,
$\mu_R\rightharpoonup\mu$ narrowly, hence $\mu_R\to\mu$ in
$\mathcal S'(\R^d)$.  Continuity of the Fourier transform on
$\mathcal S'(\R^d)$ therefore gives $g_R\to g$ in $\mathcal S'(\R^d)$.
By uniqueness of distributional limits, $h=g$.  Therefore $g_R\to g$
in $H_q$, which is the asserted convergence.
\end{proof}

\begin{remark}[Limitation of the direct truncation argument]\label{rmk:completion-q-gt-one}
The proof of \Cref{prop:completion-q-le-one} uses the pointwise positivity of
$k_q(x,y)=\frac12(|x|^q+|y|^q-|x-y|^q)$, which follows from subadditivity only when
$0<q\le1$.  For $1<q<2$,
\begin{equation}
k_q(x,-x)=\frac12(2-2^q)|x|^q<0.
\end{equation}
Thus the monotone truncation argument has no sign content beyond $q=1$.  The completion statement
nevertheless remains true in the full range $0<q<2$; for $1<q<2$, the proof below replaces
monotonicity by dyadic square-tail control and annular almost orthogonality.
\end{remark}

\begin{lemma}[Dyadic square-tail bound]\label{lem:completion-dyadic-square-tail}
Let $1<q<2$ and let $\mu\in\widetilde{\mathcal P}_q(\R^d)$.  For $n\ge0$ set
\begin{equation}\label{eq:completion-spatial-annuli}
A_n:=\{x\in\R^d:2^n<|x|\le2^{n+1}\},
\qquad
\beta_n:=\mu(A_n).
\end{equation}
Then
\begin{equation}\label{eq:completion-square-tail}
\sum_{n=0}^\infty 2^{nq}\beta_n^2<\infty.
\end{equation}
\end{lemma}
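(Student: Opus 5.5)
The plan is to bound each weight $2^{nq}\beta_n^2$ by the $H_q$-mass of $g:=\widehat\mu-1$ on a dyadic \emph{frequency} annulus at scale $2^{-n}$. Since these annuli are disjoint, summing over $n$ will give
\begin{equation}
\sum_{n\ge0}2^{nq}\beta_n^2\le C_{d,q}\|g\|_{H_q}^2<\infty .
\end{equation}
The only structural input is positivity of the real part,
\begin{equation}
-\Real g(\xi)=1-\Real\widehat\mu(\xi)=\int_{\R^d}\bigl(1-\cos(2\pi\xi\cdot x)\bigr)\ud\mu(x)\ge0 ,
\end{equation}
so that $|g(\xi)|\ge\int(1-\cos 2\pi\xi\cdot x)\ud\mu(x)$. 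This uses no moment of $\mu$, which is essential since $\mu$ need not lie in $\mathcal P_{q/2}$. The sign of $k_q$ plays no role, so the argument sidesteps the obstruction of \Cref{rmk:completion-q-gt-one} and in fact works for all $0<q<2$.

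\textbf{Step 1 (uniform averaged bound).} Let $\omega$ be the normalized Lebesgue measure on the unit frequency annulus $\{1\le|\eta|\le2\}$, and set $\Phi(y):=\int\cos(2\pi\eta\cdot y)\ud\omega(\eta)=\Real\widehat\omega(y)$. I claim there is $c_d>0$ with
\begin{equation}
1-\Phi(y)\ge c_d\qquad\text{for all }|y|\ge1 .
\end{equation}
Three facts give this.
\begin{itemize}
\item[(a)] $\Phi$ is continuous.
\item[(b)] $\Phi(y)<1$ for every $y\ne0$, since equality would force $\cos(2\pi\eta\cdot y)=1$ for $\omega$-a.e.\ $\eta$, which is impossible for an absolutely continuous $\omega$.
\item[(c)] $\Phi(y)\to0$ as $|y|\to\infty$, by Riemann--Lebesgue.
\end{itemize}
Continuity together with (b) controls $1\le|y|\le M$, and (c) controls $|y|\ge M$ for $M$ large. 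This compactness-plus-decay step is the only place where care is needed, and I expect it to be the main obstacle, albeit a mild one.

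\textbf{Step 2 (rescaling).} Let $\omega_n$ be the pushforward of $\omega$ under $\eta\mapsto2^{-n}\eta$, so $\omega_n$ is the normalized measure on $\Omega_n:=\{2^{-n}\le|\xi|\le2^{1-n}\}$. By Tonelli and Step 1 applied with $y=2^{-n}x$,
\begin{equation}
\int|g|\ud\omega_n\ge\int_{\R^d}\bigl(1-\Phi(2^{-n}x)\bigr)\ud\mu(x)\ge c_d\,\mu(\{|x|\ge2^n\})\ge c_d\beta_n .
\end{equation}
Jensen (Cauchy--Schwarz) then gives $\int|g|^2\ud\omega_n\ge c_d^2\beta_n^2$.

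\textbf{Step 3 (summation).} On $\Omega_n$ the weight satisfies $|2\pi\xi|^{-(d+q)}\gtrsim2^{n(d+q)}$, while $|\Omega_n|\asymp2^{-nd}$. Hence
\begin{equation}
\int_{\Omega_n}|g|^2\,|2\pi\xi|^{-(d+q)}\ud\xi\gtrsim2^{n(d+q)}2^{-nd}\int|g|^2\ud\omega_n\gtrsim2^{nq}\beta_n^2 .
\end{equation}
The annuli $\Omega_n$, $n\ge0$, overlap only on null boundary spheres, so summing over $n$ yields $\sum_n2^{nq}\beta_n^2\lesssim_{d,q}\|g\|_{H_q}^2$, which is \eqref{eq:completion-square-tail}. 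The argument actually proves the stronger tail statement $\sum_n2^{nq}\mu(\{|x|\ge2^n\})^2<\infty$.
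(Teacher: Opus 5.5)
Your proposal is correct and is essentially the paper's argument: positivity of $1-\Real\widehat\mu$, frequency annuli reciprocal to the spatial annuli, a rescaled unit-scale lower bound, Cauchy--Schwarz (your Jensen step against the normalized measure $\omega_n$ is the same inequality), and summation over the disjoint frequency annuli. The only difference is that you bound from below using all the mass in $\{|x|\ge2^n\}$, which requires the Riemann--Lebesgue decay of $\Phi$, whereas the paper keeps only $x\in A_n$, so that continuity and positivity on the compact annulus $\{1\le|z|\le2\}$ suffice; in exchange, your variant yields the slightly stronger tail bound $\sum_n2^{nq}\mu(\{|x|\ge2^n\})^2<\infty$.
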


\begin{proof}
The idea is to detect the mass $\beta_n$ on the spatial annulus $A_n$ by integrating the nonnegative quantity $1-\Real\widehat\mu$ against the $H_q$ weight on the reciprocal frequency annulus $\Omega_n$.  Positivity prevents cancellation, and Cauchy--Schwarz converts the resulting lower bound into the square-tail estimate.

Write
\begin{equation}
\nu(\xi):=\Real(1-\widehat\mu(\xi))
=\int_{\R^d}\bigl(1-\cos(2\pi\xi\cdot x)\bigr)\ud\mu(x)\ge0.
\end{equation}
For $n\ge0$ define the frequency annulus
\begin{equation}
\Omega_n:=\{\xi\in\R^d:2^{-n-1}\le |\xi|\le2^{-n}\}.
\end{equation}
We claim there is a constant $c=c(d,q)>0$ such that
\begin{equation}\label{eq:completion-osc-lower}
\int_{\Omega_n}\bigl(1-\cos(2\pi\xi\cdot x)\bigr)C_{d,q}|2\pi\xi|^{-(d+q)}\ud\xi
\ge c2^{nq}
\qquad
\forall n\ge0,\quad \forall x\in A_n.
\end{equation}
Indeed, after the changes of variables $\xi=2^{-n}\zeta$ and $x=2^n z$, the left-hand side equals
$2^{nq}$ times
\begin{equation}
C_{d,q}\int_{1/2\le |\zeta|\le1}
\bigl(1-\cos(2\pi\zeta\cdot z)\bigr)|2\pi\zeta|^{-(d+q)}\ud\zeta,
\end{equation}
where $1<|z|\le2$.  The last integral is a continuous positive function of $z$ on the compact
annulus $\{1\le |z|\le2\}$; positivity follows because the nonnegative integrand cannot vanish
identically unless $z=0$.

Using \eqref{eq:completion-osc-lower} and $\nu\ge0$,
\begin{equation}
\int_{\Omega_n}\nu(\xi)C_{d,q}|2\pi\xi|^{-(d+q)}\ud\xi
\ge \int_{A_n}\int_{\Omega_n}
\bigl(1-\cos(2\pi\xi\cdot x)\bigr)C_{d,q}|2\pi\xi|^{-(d+q)}\ud\xi\ud\mu(x)
\ge c2^{nq}\beta_n.
\end{equation}
Moreover, $\int_{\Omega_n}C_{d,q}|2\pi\xi|^{-(d+q)}\ud\xi\le C2^{nq}$.  Hence Cauchy--Schwarz gives
\begin{equation}
\int_{\Omega_n}|\widehat\mu(\xi)-1|^2C_{d,q}|2\pi\xi|^{-(d+q)}\ud\xi
\ge \int_{\Omega_n}\nu(\xi)^2C_{d,q}|2\pi\xi|^{-(d+q)}\ud\xi
\ge c2^{nq}\beta_n^2.
\end{equation}
The annuli $\Omega_n$ are disjoint and $\widehat\mu-1\in H_q$, so summing in $n$ proves
\eqref{eq:completion-square-tail}.
\end{proof}

\begin{lemma}[Annular almost orthogonality]\label{lem:completion-annular-almost-orthogonality}
Let $1<q<2$.  For $n\ge0$, let $\eta_n$ be a finite nonnegative measure supported in
the annulus $A_n$ from \eqref{eq:completion-spatial-annuli}, set $\beta_n:=\eta_n(A_n)$, and define
\begin{equation}
f_n:=\int_{A_n}\Phi_x\ud\eta_n(x)\in H_q.
\end{equation}
Then, for all $0\le n\le m$,
\begin{equation}\label{eq:completion-annular-cross}
\bigl|\Real\langle f_n,f_m\rangle_{H_q}\bigr|
\le C_{d,q}2^{-(m-n)(1-q/2)}
\bigl(2^{nq/2}\beta_n\bigr)
\bigl(2^{mq/2}\beta_m\bigr).
\end{equation}
\end{lemma}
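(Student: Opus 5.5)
We will bilinearize the pairing and reduce it to a pointwise estimate on the centered kernel. By \eqref{eq:centered-kernel-fourier-feature} and Bochner--Fubini (the integrals are finite because $\eta_n,\eta_m$ have compact support and $\|\Phi_x\|_{H_q}\lesssim|x|^{q/2}$),
\begin{equation}
\Real\langle f_n,f_m\rangle_{H_q}=\iint_{A_n\times A_m}k_q(x,y)\ud\eta_n(x)\ud\eta_m(y).
\end{equation}
It therefore suffices to prove the pointwise bound
\begin{equation}
|k_q(x,y)|\le C_{d,q}\,|x|\,|y|^{q-1},\qquad |x|\le|y|,
\end{equation}
and then to specialize it to $x\in A_n$, $y\in A_m$ with $n\le m$.

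For the pointwise bound, write
\begin{equation}
2k_q(x,y)=|x|^q-\bigl(|y-x|^q-|y|^q\bigr).
\end{equation}
When $|x|\le|y|/2$, the mean-value theorem applied to $z\mapsto|z|^q$ along the segment from $y$ to $y-x$ controls the bracket. Every point $w$ on that segment satisfies $|w|\ge|y|/2$, so $|\nabla|w|^q|\lesssim|y|^{q-1}$, and the bracket is at most $C|x||y|^{q-1}$. The remaining term $|x|^q=|x|\,|x|^{q-1}$ is bounded by the same quantity, since $q-1>0$ and $|x|\le|y|$ give $|x|^{q-1}\le|y|^{q-1}$.

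When $|y|/2<|x|\le|y|$, all three distances are at most $2|y|$. Hence $|k_q|\lesssim|y|^q\lesssim|x||y|^{q-1}$, using $|x|\ge|y|/2$.

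Now specialize to $x\in A_n$, $y\in A_m$ with $n\le m$. The annuli give $|x|\le|y|$, up to the common factor $2$ when $n=m$, which is absorbed into the constant. Then
\begin{equation}
|x||y|^{q-1}\lesssim 2^{n}2^{m(q-1)}=2^{nq/2}2^{mq/2}\,2^{-(m-n)(1-q/2)},
\end{equation}
since $n+m(q-1)=\tfrac{nq}{2}+\tfrac{mq}{2}-(m-n)\bigl(1-\tfrac q2\bigr)$. Integrating against $\eta_n\otimes\eta_m$ supplies the factor $\beta_n\beta_m$ and yields \eqref{eq:completion-annular-cross}.

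The main point requiring care is the first step. Since $q>1$, $k_q$ is not positive, so we cannot argue by positivity as in the $q\le1$ case. Instead we must use the first-order cancellation in $|y-x|^q-|y|^q$: the naive bound $|k_q|\lesssim|y|^q$ would lose the decay factor $2^{-(m-n)(1-q/2)}$. The identification of $\Real\langle f_n,f_m\rangle_{H_q}$ with the kernel integral is routine.
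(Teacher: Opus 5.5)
Your proposal is correct and follows essentially the paper's route: identify $\Real\langle f_n,f_m\rangle_{H_q}$ with $\iint k_q\,\ud\eta_n\,\ud\eta_m$, then use the mean-value bound $|k_q(x,y)|\le C_q|x|\,|y|^{q-1}$. The only difference is that you treat the near-diagonal pairs by the direct bound $|k_q|\lesssim|y|^q$, whereas the paper uses Cauchy--Schwarz with the Minkowski estimate $\|f_n\|_{H_q}\lesssim2^{nq/2}\beta_n$. One small correction to the mean-value step: since $q>1$, the gradient bound $q|w|^{q-1}\lesssim|y|^{q-1}$ follows from the upper bound $|w|\le|y|+|x|\le\frac32|y|$ on the segment, not from $|w|\ge|y|/2$ (because $|z|^q\in C^1$ for $q>1$, the bracket needs no case split at all).
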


\begin{proof}
If $m=n$, then Minkowski's inequality and
$\|\Phi_x\|_{H_q}=C_{d,q}^{1/2}|x|^{q/2}$ give
$\|f_n\|_{H_q}\le C_{d,q}2^{nq/2}\beta_n$, which proves
\eqref{eq:completion-annular-cross} after enlarging the constant.  If $0<m-n\le2$, then
Cauchy--Schwarz and the diagonal case give the desired bound, again after enlarging the constant.  Assume
$m-n\ge3$.  Then $|x|\le |y|/2$ for $x\in A_n$ and $y\in A_m$.  By the mean-value theorem,
\begin{equation}
\bigl||y|^q-|x-y|^q\bigr|\le C_q |x|\,|y|^{q-1},
\end{equation}
and, since $q>1$ and $|x|\le |y|$,
\begin{equation}
|x|^q\le |x|\,|y|^{q-1}.
\end{equation}
Consequently,
\begin{equation}\label{eq:completion-kq-separated}
|k_q(x,y)|\le C_q |x|\,|y|^{q-1}
\le C_q2^n2^{m(q-1)}.
\end{equation}
Using
\begin{equation}
\Real\langle f_n,f_m\rangle_{H_q}
=\iint_{A_n\times A_m}k_q(x,y)\ud\eta_n(x)\ud\eta_m(y),
\end{equation}
we obtain
\begin{equation}
\bigl|\Real\langle f_n,f_m\rangle_{H_q}\bigr|
\le C_q2^n2^{m(q-1)}\beta_n\beta_m
=C_q2^{-(m-n)(1-q/2)}
\bigl(2^{nq/2}\beta_n\bigr)
\bigl(2^{mq/2}\beta_m\bigr).
\end{equation}
This proves \eqref{eq:completion-annular-cross}.
\end{proof}

We can now combine the dyadic square-tail bound with annular almost orthogonality to control the truncation tails in $H_q$.

\begin{lemma}[Truncation approximation for $1<q<2$]\label{prop:completion-q-gt-one}
Let $1<q<2$ and let $\mu\in\widetilde{\mathcal P}_q(\R^d)$.  For the truncations $\mu_R$ defined in \eqref{eq:completion-truncations}, one has
\begin{equation}
\mu_R\in\mathcal P_{q/2}(\R^d)
\end{equation}
and
\begin{equation}
\lim_{R\to\infty}\|\widehat\mu_R-\widehat\mu\|_{H_q}=0.
\end{equation}
Consequently, $\widehat\mu-1$ belongs to the $H_q$-closure of
$\{\widehat\nu-1:\nu\in\mathcal P_{q/2}(\R^d)\}$ for every
$\mu\in\widetilde{\mathcal P}_q(\R^d)$.
\end{lemma}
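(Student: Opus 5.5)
The plan is to decompose $g_R:=\widehat\mu_R-1=\int_{|x|\le R}\Phi_x\ud\mu(x)$ dyadically and show that the resulting series converges in $H_q$ to $g:=\widehat\mu-1$. Membership $\mu_R\in\mathcal P_{q/2}(\R^d)$ holds trivially by compact support. Put $f_{-1}:=\int_{|x|\le1}\Phi_x\ud\mu(x)$ and $f_n:=\int_{A_n}\Phi_x\ud\mu(x)$ for $n\ge0$. These are Bochner integrals, by $\|\Phi_x\|_{H_q}=C_{d,q}^{1/2}|x|^{q/2}$ and the finiteness of the mass on each annulus. Then $g_{2^{N+1}}=f_{-1}+\sum_{n=0}^N f_n$.

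First I will show that $S_N:=\sum_{n\le N}f_n$ is Cauchy in $H_q$. For $M<N$, expand
\begin{equation*}
\|S_N-S_M\|_{H_q}^2=\sum_{M<n,m\le N}\Real\langle f_n,f_m\rangle_{H_q}.
\end{equation*}
Apply \Cref{lem:completion-annular-almost-orthogonality} with $\eta_n=\mu\lfloor_{A_n}$ and set $a_n:=2^{nq/2}\beta_n$. The double sum is then bounded by $C\sum_{n,m>M}2^{-|m-n|(1-q/2)}a_na_m$. Since $1-q/2>0$, the kernel $2^{-|k|(1-q/2)}$ is summable, so Young's (Schur's) inequality gives the bound $C\sum_{n>M}a_n^2$. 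By \Cref{lem:completion-dyadic-square-tail}, $\sum a_n^2<\infty$, so this tail tends to zero. Hence $S_N\to h$ in $H_q$ for some $h\in H_q$. For intermediate radii $2^{N}<R\le2^{N+1}$, the difference $g_R-g_{2^N}=\int_{A_N\cap\{|x|\le R\}}\Phi_x\ud\mu$ has norm at most $C2^{Nq/2}\beta_N=Ca_N\to0$, by Minkowski. So the full family $g_R$ converges in $H_q$ to $h$, not merely along dyadic radii.

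The remaining task is to identify $h=g$, which I will do by the same distributional argument as in \Cref{prop:completion-q-le-one}. Weighted Cauchy--Schwarz gives the continuous embedding $H_q\hookrightarrow\mathcal S'(\R^d)$. Since the weight $|\xi|^{-(d+q)}$ is nonintegrable only at the origin and only polynomially at infinity, Schwartz test functions pair boundedly with $H_q$ after weighting. Hence $g_R\to h$ in $\mathcal S'(\R^d)$. On the other hand, $\mu_R\rightharpoonup\mu$ narrowly, so $\widehat\mu_R\to\widehat\mu$ pointwise and boundedly, and therefore in $\mathcal S'(\R^d)$. Uniqueness of distributional limits then gives $h=g$, which yields $\|\widehat\mu_R-\widehat\mu\|_{H_q}\to0$. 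The closure statement follows immediately from this convergence.

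I expect the main step to be the almost-orthogonal summation: combining the cross-term bound with the square-tail bound through Schur's test. It is already set up by the two preceding lemmas, so the only care needed is checking that the near-diagonal terms ($|m-n|\le2$) and the decay exponent $1-q/2$ fit the convolution bound uniformly in the truncation indices.
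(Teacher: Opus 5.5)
Your proposal is correct and follows the paper's proof essentially step by step. Both arguments use the dyadic annular pieces $f_n$, the cross-term bound combined with Schur's test and the square-tail lemma to show the series is Cauchy, and the intermediate-radius correction of size $Ca_N$. The only difference is how you identify the limit: you use the $\mathcal S'(\R^d)$-uniqueness argument from the $0<q\le1$ case, whereas the paper identifies the tails $T_N$ pointwise $\mathcal L^d$-a.e. via bounded convergence and an a.e.-convergent subsequence. Either identification works.
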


\begin{proof}
With $A_n$ and $\beta_n$ as in \eqref{eq:completion-spatial-annuli}, set
\begin{equation}
f_n:=\int_{A_n}\Phi_x\ud\mu(x).
\end{equation}
By \Cref{lem:completion-dyadic-square-tail}, the sequence
$a_n:=2^{nq/2}\beta_n$ belongs to $\ell^2(\mathbb N_0)$.  We claim that the series
$\sum_{n\ge0}f_n$ converges in $H_q$.  Indeed, expanding the square and applying
\eqref{eq:completion-annular-cross} to the diagonal and off-diagonal terms gives, for $0\le N\le M$,
\begin{equation}
\left\|\sum_{n=N}^M f_n\right\|_{H_q}^2
\le C_{d,q}\left(\sum_{n=N}^M a_n^2
+\sum_{N\le n<m\le M}2^{-(m-n)(1-q/2)}a_na_m\right)
\le C_{d,q}\sum_{n=N}^M a_n^2.
\end{equation}
The last step is Schur's inequality for the convolution kernel
$j\mapsto2^{-j(1-q/2)}\in\ell^1(\mathbb N)$.
Since $a\in\ell^2$, the right-hand side tends to zero as $N\to\infty$, uniformly in $M\ge N$.

Hence the series converges in $H_q$, and its tails
\begin{equation}
T_N:=\sum_{n=N}^\infty f_n
\end{equation}
satisfy $\|T_N\|_{H_q}\to0$.  For each fixed $\xi$, the finite partial
sums over $N\le n\le M$ converge by bounded convergence to
\begin{equation}
\int_{|x|>2^N}(e^{-2\pi i\xi\cdot x}-1)\ud\mu(x),
\end{equation}
and a subsequence of the same partial sums converges
$\mathcal L^d$-a.e.\ to $T_N$.  Hence
\begin{equation}
T_N(\xi)=\int_{|x|>2^N}(e^{-2\pi i\xi\cdot x}-1)\ud\mu(x)
\quad\text{for $\mathcal L^d$-a.e. }\xi.
\end{equation}
Therefore $\|\widehat\mu_{2^N}-\widehat\mu\|_{H_q}=\|T_N\|_{H_q}\to0$.

It remains only to pass from dyadic radii to arbitrary $R\to\infty$.  If $2^N\le R<2^{N+1}$, then
\begin{equation}
\widehat\mu_R-\widehat\mu
=-\int_{|x|>R}\Phi_x\ud\mu(x)
=-\int_{A_N\cap\{|x|>R\}}\Phi_x\ud\mu(x)-T_{N+1}.
\end{equation}
The first term has $H_q$ norm at most $C_{d,q}2^{Nq/2}\beta_N=C_{d,q}a_N$, which tends to zero,
and $\|T_{N+1}\|_{H_q}\to0$.  This proves the asserted convergence.
\end{proof}

The two preceding truncation arguments show the set inclusion
$\{\widehat\mu-1:\mu\in\widetilde{\mathcal P}_q(\R^d)\}
\subset
\overline{\{\widehat\nu-1:\nu\in\mathcal P_{q/2}(\R^d)\}}^{H_q}$.
For the reverse inclusion, it
remains to show that every $H_q$-limit of a sequence $\widehat\mu_j-1$, with
$\mu_j\in\mathcal P_{q/2}(\R^d)$, is of the form $\widehat\mu-1$ for some
$\mu\in\widetilde{\mathcal P}_q(\R^d)$.

\begin{lemma}[Identification of probability-measure limits in $H_q$]\label{lem:completion-closed-probability-part}
Let $0<q<2$.  Suppose that $\mu_j\in\mathcal P_{q/2}(\R^d)$ and
\begin{equation}
\widehat\mu_j-1\to h\qquad\text{in }H_q.
\end{equation}
Then there exists $\mu\in\widetilde{\mathcal P}_q(\R^d)$ such that
\begin{equation}
h=\widehat\mu-1\qquad\mathcal L^d\text{-a.e.}
\end{equation}
Consequently,
\begin{equation}
\overline{
\bigl\{\widehat\mu-1:\mu\in\mathcal P_{q/2}(\R^d)\bigr\}
}^{H_q}
\subset
\bigl\{\widehat\mu-1:\mu\in\widetilde{\mathcal P}_q(\R^d)\bigr\}.
\end{equation}
\end{lemma}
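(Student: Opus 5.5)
The plan is to extract a narrow limit of the $\mu_j$ via tightness, identify $h$ with $\widehat\mu-1$ through distributional convergence, and then read off $\mu\in\widetilde{\mathcal P}_q$ directly from $h\in H_q$.

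\emph{Tightness.} Since $\widehat\mu_j-1$ is Cauchy in $H_q$, the sequence is Cauchy for $\mmd_q$ by \Cref{lem:genSob}. By \Cref{prop:mmdtomoment}, $\mathcal M_r(\mu_j)\le\mathcal M_r(\mu_1)+C\,\mmd_q(\mu_j,\mu_1)^{2r/q}$ for any fixed $0<r<q/2$. Hence the $r$-moments are uniformly bounded and $(\mu_j)$ is tight. Prokhorov then gives a subsequence $\mu_{j_k}\rightharpoonup\mu$ narrowly for some $\mu\in\mathcal P(\R^d)$.

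\emph{Identification.} Narrow convergence gives $\widehat\mu_{j_k}\to\widehat\mu$ pointwise with $|\widehat\mu_{j_k}|\le1$. Strong $H_q$ convergence gives a further subsequence with $\widehat\mu_{j_k}-1\to h$ $\mathcal L^d$-a.e., since $H_q$ is a weighted $L^2$ space with positive weight, so $L^2$ convergence yields a.e. convergence along a subsequence. Hence $h=\widehat\mu-1$ a.e. Alternatively, both the $H_q$ limit and the narrow limit can be compared in $\mathcal S'(\R^d)$, using the continuous embedding $H_q\hookrightarrow\mathcal S'$ noted in the proof of \Cref{prop:completion-q-le-one}.

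\emph{Membership.} Since $h\in H_q$, we get $\widehat\mu-1\in H_q$, that is, $\mu\in\widetilde{\mathcal P}_q(\R^d)$. The full sequence converges to $h$ by assumption, so no subsequence issue remains. The inclusion of the closure then follows immediately, because any element of the closure is such an $H_q$-limit.

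The only delicate point is the tightness step: one must check that $\mmd_q$-Cauchy sequences within $\mathcal P_{q/2}$ have uniformly bounded sub-$q/2$ moments. This is exactly the content of \Cref{prop:mmdtomoment} applied against the fixed reference $\mu_1$, so I expect no real obstacle. Without tightness, mass could escape to infinity and the limit of $\widehat\mu_j$ would fail to be a characteristic function at $\xi=0$. Tightness rules this out and guarantees that $\mu$ is a probability measure.
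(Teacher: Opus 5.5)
Your proposal is correct and follows the paper's proof essentially step for step: bounded $\mmd_q$ against the fixed reference $\mu_1$ and \Cref{prop:mmdtomoment} give tightness, then a narrow subsequential limit plus an $\mathcal L^d$-a.e.\ convergent subsequence in $H_q$ identify $h=\widehat\mu-1$. Membership $\mu\in\widetilde{\mathcal P}_q(\R^d)$ is then read off from $h\in H_q$.
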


\begin{proof}
Fix $r\in(0,q/2)$.  Since $\widehat\mu_j-1$ is Cauchy in $H_q$, the quantities
\begin{equation}
\mmd_q(\mu_j,\mu_1)=\|(\widehat\mu_j-1)-(\widehat\mu_1-1)\|_{H_q}
\end{equation}
are bounded.  By
\Cref{prop:mmdtomoment},
\begin{equation}
\sup_j\mathcal M_r(\mu_j)<\infty.
\end{equation}
Thus $\{\mu_j\}$ is tight.  Passing to a subsequence, $\mu_j\rightharpoonup\mu$ narrowly for some
$\mu\in\mathcal P(\R^d)$, and therefore $\widehat\mu_j(\xi)-1\to\widehat\mu(\xi)-1$ for every
$\xi\in\R^d$.

After passing to a further subsequence,
$\widehat\mu_j-1\to h$ $\mathcal L^d$-a.e.
Hence $h=\widehat\mu-1$ $\mathcal L^d$-a.e.
Since $h\in H_q$, this implies $\mu\in\widetilde{\mathcal P}_q(\R^d)$.
\end{proof}

We now combine the preceding truncation and closedness results to prove
\Cref{prop:completion-full-range}.

\begin{proof}[Proof of \Cref{prop:completion-full-range}]
The inclusion $\mathcal P_{q/2}\subset\widetilde{\mathcal P}_q$ follows exactly as in the proof of
\Cref{lem:genSob}: since $\|\Phi_x\|_{H_q}=C_{d,q}^{1/2}|x|^{q/2}$, the Bochner integral
$\int\Phi_x\ud\mu(x)$ is well-defined in $H_q$ whenever $\mu\in\mathcal P_{q/2}$, and the resulting
$H_q$ element has continuous representative $\widehat\mu-1$.

The density of the compactly supported truncations in
$\widetilde{\mathcal P}_q(\R^d)$ is \Cref{prop:completion-q-le-one} for $0<q\le1$ and
\Cref{prop:completion-q-gt-one} for $1<q<2$.  The reverse implication in
\eqref{eq:completion-truncation-convergence} is immediate: if the displayed $H_q$ norm tends to zero, then $\widehat\mu-\widehat\mu_R\in H_q$ for all sufficiently large $R$; since $\widehat\mu_R-1\in H_q$, it follows that $\widehat\mu-1\in H_q$.  The reverse inclusion in the completion identification follows from
\Cref{lem:completion-closed-probability-part}.  Since \Cref{lem:genSob} identifies $\mmd_q(\mu,\nu)$
with $\|\widehat\mu-\widehat\nu\|_{H_q}$, the stated completion
identification follows.
\end{proof}

\printbibliography
\end{document}